\declaretheorem[name=Theorem,
style=plain,
numberwithin=subsection,
refname={Theorem,Theorems},
Refname={Theorem,Theorems}]{theorem}
\declaretheorem[name=Lemma,
style=plain,
sharenumber=theorem,
refname={Lemma,Lemmas},
Refname={Lemma,Lemmas}]{lemma}
\declaretheorem[name=Corollary,
style=plain,
sharenumber=theorem,
refname={Corollary,Corollaries},
Refname={Corollary,Corollaries}]{corollary}
\declaretheorem[name=Proposition,
style=plain,
sharenumber=theorem,
refname={Proposition,Propositions},
Refname={Proposition,Propositions}]{proposition}
\declaretheorem[name=Definition,
style=definition,
sharenumber=theorem,
refname={Definition,Definitions},
Refname={Definition,Definitions}]{definition}
\declaretheorem[name=Remark,
style=remark,
sharenumber=theorem,
refname={Remark,Remarks},
Refname={Remark,Remarks}]{remark}
\renewcommand{\itemautorefname}{\@gobble}
\crefname{enumi}{}{}
\renewcommand*{\eqref}[1]{%
  \hyperref[{#1}]{\textup{\tagform@{\ref*{#1}}}}%
}
\numberwithin{equation}{section}
\DeclareMathOperator{\Spl}{Spl}
\DeclareMathOperator{\End}{End}
\DeclareMathOperator{\Princ}{Princ}
\DeclareMathOperator{\Div}{Div}
\DeclareMathOperator{\characteristic}{char}
\DeclareMathOperator{\tors}{tors}
\DeclareMathOperator{\divisor}{div}
\DeclareMathOperator{\Gal}{Gal}
\DeclareMathOperator{\Fix}{Fix}
\DeclareMathOperator{\Aut}{Aut}
\DeclareMathOperator{\WM}{WM}
\DeclareMathOperator{\wt}{wt}
\DeclareMathOperator{\Frob}{Frob}
\DeclareMathOperator{\lcm}{lcm}
\DeclareMathOperator{\Spec}{Spec}
\DeclareMathOperator{\Norm}{Norm}
\DeclareMathOperator{\AJ}{AJ}
\g@addto@macro\bfseries{\boldmath} 
\begin{document}
\author{Vishal Arul}
\title{Torsion points on Fermat quotients of the form $y^{n} = x^{d} + 1$}
\thanks{This research was supported in part by grants from the Simons Foundation
(\#402472 to Bjorn Poonen, and \#550033).}

\begin{abstract}
We study geometric torsion points on curves of the form $y^{n} = x^{d} + 1$
where $n$ and $d$ are coprime. When $n + d \ge 8$, we show that the only torsion
points on this curve are: (i) those whose $x$-coordinate is zero, (ii) those
whose $y$-coordinate is zero, (iii) the point at infinity. When $n + d = 7$,
there are more torsion points and we classify them all. In addition, we classify
all geometric torsion points on the generic superelliptic curve $y^n = (x - a_1)
\cdots (x - a_d)$ when $(n, d) = 1$ and $n, d \ge 2$.
\end{abstract}

\maketitle

\section{Introduction and summary of new results}
\label{Section:TorsionSummary}

\begin{definition}
Suppose that $X$ is a smooth proper geometrically irreducible curve defined over
a field $K$. Let $J$ be the jacobian variety of $X$ and let $B \in
X(\overline{K})$.  Define the Abel--Jacobi map with respect to $B$ by 
\[
\AJ_{B} \colon P \in X \mapsto [P - B] \in J.
\]

We say that $P \in X(\overline{K})$ is a torsion point of $X$ with respect to
$B$ if $\AJ_{B}(P) \in J(\overline{K})_{\tors}$, i.e., $k[P - B] = 0$ for some
integer $k \ge 1$. Denote by $T_{B}(X)$ the set of torsion points with respect
to $B$.
\end{definition}

We recall Raynaud's theorem (formerly the Manin-Mumford conjecture).
\begin{theorem}[Raynaud]
Suppose that the genus of $X$ is at least 2. Then for each $B \in
X(\overline{K})$, $T_{B}(X)$ is finite.
\end{theorem}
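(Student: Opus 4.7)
The plan is to embed $X$ in its Jacobian via $\AJ_B$; this turns $T_B(X)$ into $\AJ_B(X)(\overline{K}) \cap J(\overline{K})_{\tors}$, and we want finiteness of this intersection. A standard spreading-out and specialization argument reduces to the case where $K$ is a number field (torsion specializes injectively away from finitely many primes, so nothing is lost). From here I would follow the Pila--Zannier o-minimality strategy, which in my view is the cleanest modern route.

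Fix an embedding $K \hookrightarrow \mathbb{C}$ and write the complex uniformization $u \colon \mathbb{C}^{g} \to J(\mathbb{C}) = \mathbb{C}^{g}/\Lambda$. Choose a bounded fundamental domain $\mathcal{F}$ for $\Lambda$ and consider $\widetilde{X} \colonequals u^{-1}(\AJ_B(X(\mathbb{C}))) \cap \mathcal{F}$. After identifying $\mathcal{F}$ with a box in $\mathbb{R}^{2g}$ via a basis of $\Lambda$, a torsion point of exact order $N$ in $J(\mathbb{C})$ pulls back to a rational point in $\mathcal{F}$ of height exactly $N$, and $\widetilde{X}$ is a subanalytic set, hence definable in the o-minimal structure $\mathbb{R}_{\mathrm{an}}$. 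The Pila--Wilkie theorem then bounds the number of rational points of height at most $N$ lying on the \emph{transcendental part} of $\widetilde{X}$ by $O_{\epsilon}(N^{\epsilon})$ for every $\epsilon > 0$.

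To convert this into finiteness I need two complementary ingredients. First, that the \emph{algebraic part} of $\widetilde{X}$ is empty: a connected positive-dimensional semi-algebraic arc in $\widetilde{X}$ would descend under $u$ to a coset of a positive-dimensional abelian subvariety contained in $\AJ_B(X)$, which is impossible because $g(X) \ge 2$ forces $X$ not to be an elliptic curve and, more generally, a smooth curve of genus $\ge 2$ embedded in an abelian variety contains no translate of a positive-dimensional subgroup (this is a geometric fact going back to Abramovich/Ran). Second, a Galois orbit lower bound of the form $[K(P):K] \gg_{\epsilon} N^{\delta}$ for some $\delta > 0$ and every torsion point $P$ of exact order $N$, due to Masser. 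Since a torsion point of order $N$ in $T_B(X)$ produces $\gg N^{\delta}$ conjugate rational points of height $\le N$ on the transcendental part of $\widetilde{X}$, comparing against the Pila--Wilkie bound $O_{\epsilon}(N^{\epsilon})$ with $\epsilon < \delta$ forces $N$ to be bounded, and finiteness follows.

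The hard part is unquestionably the Galois orbit lower bound; the ``no subgroup translates'' input is elementary for curves of genus $\ge 2$, and the counting step is a black-box application of Pila--Wilkie once the o-minimality of $\widetilde{X}$ has been set up. Everything else is formal.
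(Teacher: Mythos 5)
The paper itself does not give a proof of this theorem; it is cited as Raynaud's theorem (the Manin--Mumford conjecture). What you have sketched is the Pila--Zannier proof, which is a genuine alternative to Raynaud's original $p$-adic argument, and the overall architecture you describe --- reduce to a number field, uniformize $J$, count torsion preimages in a bounded fundamental domain via Pila--Wilkie, and beat the count using a polynomial Galois-orbit lower bound of Masser --- is correct.

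There is, however, one genuine gap in your treatment of the algebraic part. You assert that a connected positive-dimensional semi-algebraic arc $W \subset \widetilde{X}$ ``descends under $u$ to a coset of a positive-dimensional abelian subvariety contained in $\AJ_B(X)$,'' and then dismiss this as a formality. It is not. Literally, $u(W)$ is a one-real-dimensional real-analytic arc inside $\AJ_B(X)$, not a subgroup coset; the statement you actually need is that the Zariski closure of $u(W)$ in $J$ is a translate of a positive-dimensional abelian subvariety. That is exactly the content of Ax's 1972 functional transcendence theorem (the Ax--Lindemann statement for abelian varieties), and it is one of the three substantial inputs to the Pila--Zannier argument, on a par with Pila--Wilkie and Masser. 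The fact you label as elementary --- that a curve of genus $\ge 2$ embedded in its Jacobian contains no translate of a positive-dimensional abelian subvariety --- is indeed easy, but it is only the second half of the argument; you have silently elided the Ax-theoretic first half, which is what turns ``algebraic arc'' into ``abelian subvariety coset'' in the first place. You should cite Ax explicitly alongside Masser and Pila--Wilkie. Two smaller points: the preimage of a torsion point of exact order $N$ has height at most $N$, not exactly $N$ (the orbit count still goes through); and the theorem as stated in the paper places no restriction on the characteristic of $K$, whereas your argument, like Raynaud's, requires characteristic zero --- over $\overline{\mathbf{F}_p}$ every point of $J(\overline{\mathbf{F}_p})$ is torsion and the conclusion fails as written.
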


In this paper, we will study torsion points on superelliptic curves.

\begin{definition}
Let $n, d \ge 2$ be coprime integers and let $K$ be a field such that
$\characteristic(K) \nmid n$. Let $f(x) \in K[x]$ be separable with
$\deg(f) = d$ and suppose that
\[
f(x) = (x + \alpha_{1}) \dots (x + \alpha_{d})
\]
where $\alpha_{1}, \dots, \alpha_{d} \in \overline{K}$.  Let $\mathcal{C}$ be
the smooth projective model of the affine plane $K$-curve given by the equation
\[
y^{n} = f(x).
\]
Then we call $\mathcal{C}$ a superelliptic curve. When $n = 2$, we call
$\mathcal{C}$ a hyperelliptic curve.

Since $n$ and $d$ are coprime, $\mathcal{C}$ has a unique point at infinity,
denoted by $\infty$. For $i \in [1, d]$, let $\mathcal{W}_{i} \colonequals
(-\alpha_{i}, 0) \in \mathcal{C}(\overline{K})$. Let $\mathcal{W} \colonequals
\{ \mathcal{W}_{1}, \dots, \mathcal{W}_{d} \}$.  Use $g$ to denote the genus of
$\mathcal{C}$. By the Riemann-Hurwitz formula, 
\[
g = (n - 1)(d - 1) / 2.
\]

When we speak of torsion points on $\mathcal{C}$ (without specifying a
basepoint), we mean elements of $T_{\infty}(\mathcal{C})$.
\end{definition}

\begin{definition}
\label{Definition:1MinusZetaEndomorphism}
Suppose that $\mathcal{C}$ is the superelliptic curve $y^{n} = f(x)$ defined
over a field $K$ which contains a primitive $n$th root of unity $\zeta_{n}$. We
abuse notation and use $\zeta_{n}$ to denote the automorphism of $\mathcal{C}$
given by $(x, y) \mapsto (x, \zeta_{n} y)$. We also use $\zeta_{n}$ to denote
the induced automorphism on its jacobian $\mathcal{J}$. Then $1 - \zeta_{n}$ is
an endomorphism of $\mathcal{J}$. Let $Z_{n}$ be the subgroup of
$\Aut(\mathcal{C})$ generated by $\zeta_{n}$. 
\end{definition}

It is easy to check that $\mathcal{J}[1 - \zeta_{n}] \subseteq \mathcal{J}[n]$,
so since $\{ \infty \} \cup \mathcal{W}$ are points of
$\mathcal{C}(\overline{K})$ fixed by $\zeta_{n}$, 
\[
\{ \infty \} \cup \mathcal{W} \subseteq T_{\infty}(\mathcal{C}).
\]

\begin{definition}
Let $n, d \ge 2$ be coprime. Define $\mathcal{C}_{n, d}$ to be the superelliptic
``Catalan curve'' with defining equation $y^{n} = x^{d} + 1$ over $\mathbf{C}$.
Let $\mathcal{J}_{n, d}$ be the jacobian of $\mathcal{C}_{n, d}$.

Note that $\mathcal{C}_{n, d}$ is a superelliptic curve in ``two ways'': writing
the defining equation as $y^{n} = x^{d} + 1$ expresses $\mathcal{C}_{n, d}$ as
a degree $n$ cyclic branched covering of $\mathbf{P}^{1}$, and writing it as
$x^{d} = y^{n} - 1$ expresses $\mathcal{C}_{n, d}$ as a degree $d$ cyclic
branched covering of $\mathbf{P}^{1}$. Consequently, we may define $\zeta_{n}$
and $\zeta_{d}$ to be automorphisms of $\mathcal{C}_{n, d}$ and $\mathcal{J}_{n,
d}$ that scale the $y$-coordinate by $\zeta_{n}$ and the $x$-coordinate by
$\zeta_{d}$ respectively. Let $Z$ be the subgroup of $\Aut(\mathcal{C}_{n, d})$
generated by $Z_{n}$ and $Z_{d}$.
\end{definition}

\begin{definition}
Let $m \ge 1$ be an integer. Define the Fermat curve $F_{m}$ over a field of
characteristic not dividing $m$ to be the smooth plane curve given by the
equation $X^{m} + Y^{m} + Z^{m} = 0$.
\end{definition}

Note that $\mathcal{C}_{n, d}$ is a quotient of $F_{n d}$.  In
\autoref{Section:TorsionPointsSuperellipticCatalan}, we determine
$T_{\infty}(\mathcal{C}_{n, d})$.

\begin{definition}
An element of $T_{\infty}(\mathcal{C}_{n, d})$ is an \textit{exceptional}
torsion point if it is not fixed by any element of $Z$.
\end{definition}

Our main result is the following classification.
\begin{restatable*}{theorem}{MainTheoremOfPaper}
\label{Theorem:MainTheoremOfPaper}
Suppose that $n, d$ are coprime integers with $n, d \ge 2$. 

\begin{enumerate}[label=\upshape(\arabic*),
ref={\autoref{Theorem:MainTheoremOfPaper}(\arabic*)}]
\item \label{Theorem:MainTheoremOfPaperC23}
If $(n, d) = (2, 3)$, then $\mathcal{C}_{2, 3}$ is an elliptic curve, so it has
infinitely many torsion points.

\item \label{Theorem:MainTheoremOfPaperC25}
If $(n, d) = (2, 5)$, then the set of exceptional torsion points of
$\mathcal{C}_{2, 5}$ is the $Z$-orbit of $(\sqrt[5]{4}, \sqrt{5})$. Each has exact order $(1 -
\zeta_{5})^{3}$; in particular, each is killed by $5$.

\item \label{Theorem:MainTheoremOfPaperC34}
If $(n, d) = (4, 3)$, then the set of exceptional torsion points
of $\mathcal{C}_{4, 3}$ is the $Z$-orbit of $(2, \sqrt{3})$. Each has exact order $(1 -
\zeta_{4})(1 - \zeta_{3})^{2}$; in particular, each is killed by $12$.

\item \label{Theorem:MainTheoremOfPaperCdn}
If $(n, d) \in \{ (3, 2), (5, 2), (3, 4) \}$, then $\mathcal{C}_{n, d} \simeq
\mathcal{C}_{d, n}$ via $(x, y) \in \mathcal{C}_{n, d} \mapsto (\zeta_{2 n} y,
\zeta_{2 d} x) \in \mathcal{C}_{d, n}$, so one of \autoref{Theorem:MainTheoremOfPaperC23},
\autoref{Theorem:MainTheoremOfPaperC25}, \autoref{Theorem:MainTheoremOfPaperC34} applies.

\item \label{Theorem:MainTheoremOfPaperNoETP}
Otherwise, $\mathcal{C}_{n, d}$ has no exceptional torsion points.

\end{enumerate}
\end{restatable*}

The case $(n, d) = (2, 5)$ was already handled in the last two pages of
\cite{coleman1986torsion}.  The case when $n = 2$ and $d \ge 7$ is prime was
already proven as Theorem 1.1 of \cite{grant2004cuspidal}.

Similar results are proven in \cites{coleman1986torsion,coleman1998cuspidal} for
$F_{m}$. These papers show that whenever $P$ and $Q$ are points of
$F_{m}(\mathbf{C})$ such that $P - Q$ is torsion and $P$ is a cusp (a point such
that one of its coordinates is zero), then $Q$ is also necessarily a cusp.  Our
result for $y^{n} = x^{d} + 1$ implies their result for $F_{n d}$.

The ideas in our proof of \autoref{Theorem:MainTheoremOfPaper} are quite
different from those used in \cite{coleman1986torsion}. The classification of
torsion points on $F_{m}$ in \cite{coleman1986torsion} uses Coleman integration,
while we exploit the Galois action on torsion points.  If $P$ is a torsion point
of $\mathcal{C}_{n, d}$, then so are all of its Galois conjugates. If the Galois
action is large enough, there will be many relations among these torsion points,
which will force low-degree maps to $\mathbf{P}^1$. Now we obtain consequences
from these low-degree maps using two geometric techniques: the
Castelnuovo--Severi inequality and Riemann's theorem on the sum of the
Weierstrass weights on a Riemann surface.  Eventually we reduce to checking
finitely many points on finitely many $\mathcal{C}_{n, d}$, which we complete
with the aid of a computer.

During the analysis, we work out explicitly the torsion field
$\mathbf{Q}(\mathcal{J}_{p, q}[p], \mu_{p q})$ when $p$ and $q$ are distinct
primes (see \autoref{Theorem:pAndqTorsionFields}). The key ingredient is an
understanding of the $p$-adic and $q$-adic valuation of certain Jacobi sums;
this analysis is performed in \cite{ArulJacobi}. There is related work by
J{\k{e}}drzejak: in \cites{jkedrzejak2014torsion,jkedrzejak2016note} he studies
$J(\mathbf{Q})_{\tors}$ for the Jacobian $J$ of the curve $y^q = x^p + a$, where
$a \in \mathbf{Z}$. 

Additionally, we classify the torsion points on the generic superelliptic curve
$y^n = (x - a_1) \cdots (x - a_d)$ over $\mathbf{Q}(a_1, \dots, a_d)$ in the
case of coprime $n, d \ge 2$. The key idea is to specialize to the curves $y^n =
x^d + 1$ and $y^n = x^d + x$ and use \autoref{Theorem:MainTheoremOfPaper}.  
\begin{restatable*}{theorem}{GenericResult}
\label{Theorem:GenericResult}
Suppose that $n, d \ge 2$ are coprime and satisfy $n + d \ge 7$. Let
$\mathscr{C}_{n}$ be the curve over $k \colonequals \mathbf{Q}(a_1, \ldots,
a_d)$ defined by the equation 
\[
y^n = \prod_{x = 1}^{d} (x - a_i).
\]
Suppose that $\mathscr{C}_{n}$ is embedded into its jacobian $\mathscr{J}_{n}$
using the unique point $\infty$ at infinity. Points fixed by $\zeta_{n}$ are
torsion points of order dividing $n$.
\begin{enumerate}[label=\upshape(\arabic*),
ref=\autoref{Theorem:GenericResult}(\arabic*)]
\item \label{Theorem:GenericResultd3}
If $d \ge 3$, there are no other torsion points defined over $\overline{k}$.

\item \label{Theorem:GenericResultd2n7}
If $d = 2$ and $n \neq 5$, the only other torsion points defined over
$\overline{k}$ are
\[
\left\{ \left(
\frac{a_1 + a_2}{2}, - \zeta_n^i \sqrt[n]{\left( \frac{a_1 - a_2}{2} \right)^2}
\right) : 0 \le i \le n - 1 \right\}.
\]

\item \label{Theorem:GenericResultd2n5}
If $d = 2$ and $n = 5$, the only other torsion points defined over
$\overline{k}$ are 
\begin{align*}
&\left\{ \left( \frac{a_1 + a_2}{2}, - \zeta_5^i \sqrt[5]{\left( \frac{a_1 -
a_2}{2} \right)^2} \right) : 0 \le i \le 4 \right\} \bigcup \\
&\left\{ \left(
\frac{\pm(a_2 - a_1) \sqrt{5} + (a_1 + a_2)}{2}, \zeta_5^i \sqrt[5]{(a_2 -
a_1)^2}
\right)  : 0 \le i \le 4 \right\}.
\end{align*}
\end{enumerate}
\end{restatable*}

\autoref{Theorem:GenericResult} generalizes Theorem 7.1 of
\cite{poonen2014most}, which handles the $n = 2$ case.

\section{Tools from geometry}
\label{Section:ToolsFromGeometry}

\subsection{The Castelnuovo--Severi Inequality}
\label{Subsection:SuperellipticCSConsequences}

\begin{proposition}[Castelnuovo--Severi inequality]
\label{Proposition:Castelnuovo--Severi}
Let $k$ be a perfect field.  Let $F$, $F_1$, $F_2$ be function fields of curves
over $k$, of genera $g$, $g_1$, $g_2$, respectively.  Suppose that $F_i
\subseteq F$ for $i=1,2$ and the compositum of $F_1$ and $F_2$ in $F$ equals
$F$.  Let $d_i=[F:F_i]$ for $i=1,2$.  Then
\[
g \le d_1 g_1 + d_2 g_2 + (d_1-1)(d_2-1).
\]
\end{proposition}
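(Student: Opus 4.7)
My plan is to realize the curve $X$ with function field $F$ inside the product surface $Y := X_1 \times_k X_2$, and to bound its genus using the arithmetic genus of its image there, which I will compute via the adjunction formula on $Y$. After base change to $\overline{k}$ (which changes none of the invariants involved), the inclusions $F_i \subseteq F$ correspond to finite surjective morphisms $\pi_i \colon X \to X_i$ of degree $d_i$, where $X, X_1, X_2$ denote the smooth projective curves with function fields $F, F_1, F_2$. The compositum hypothesis $F = F_1 \cdot F_2$ is equivalent to saying that the product morphism $\Phi = (\pi_1, \pi_2) \colon X \to Y$ is birational onto its image $C \subseteq Y$, since the function field of $C$ is precisely the compositum $F_1 \cdot F_2$ taken inside $F$.

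Next, I would compute the numerical class of $C$ on the smooth surface $Y$. Write $\mathfrak{f}_1$ and $\mathfrak{f}_2$ for the classes of fibers of the two projections onto $X_1$ and $X_2$; these satisfy $\mathfrak{f}_1^2 = \mathfrak{f}_2^2 = 0$ and $\mathfrak{f}_1 \cdot \mathfrak{f}_2 = 1$. A generic fiber $\{x_1\} \times X_2$ of the first projection meets $C$ in the $d_1$ points $\{(x_1, \pi_2(p)) : p \in \pi_1^{-1}(x_1)\}$, so that $C \cdot \mathfrak{f}_1 = d_1$ and, symmetrically, $C \cdot \mathfrak{f}_2 = d_2$; hence numerically $[C] = d_2\, \mathfrak{f}_1 + d_1\, \mathfrak{f}_2$. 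Combined with $K_Y = (2g_1 - 2)\, \mathfrak{f}_1 + (2g_2 - 2)\, \mathfrak{f}_2$, the adjunction formula $2 p_a(C) - 2 = C \cdot (C + K_Y)$ simplifies to
\[
p_a(C) = d_1 g_1 + d_2 g_2 + (d_1 - 1)(d_2 - 1).
\]
Since $X$ is the normalization of $C$, and the geometric genus of an integral projective curve is at most its arithmetic genus, this yields $g = g(X) \le p_a(C)$, as desired.

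The main technical point I expect is step one: verifying cleanly that $\Phi \colon X \to C$ is birational under the compositum hypothesis. This should follow from the identification of function fields together with the fact that $C$, as the image of an integral variety, is itself integral; however, the argument needs some care distinguishing the scheme-theoretic image from its reduction, and justifying that $C$ is a curve (which it is, because neither $\pi_i$ is constant). The remaining intersection-number and adjunction computations on the smooth surface $Y$ are then routine.
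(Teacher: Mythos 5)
Your product-surface argument is a genuine alternative to the paper's proof, which simply cites the function-field-theoretic argument in Stichtenoth. However, it has a gap at the step where you compute the numerical class of $C$. From $C \cdot \mathfrak{f}_1 = d_1$ and $C \cdot \mathfrak{f}_2 = d_2$ you conclude ``hence numerically $[C] = d_2\,\mathfrak{f}_1 + d_1\,\mathfrak{f}_2$,'' but these two intersection numbers do not determine the class: the N\'eron--Severi group of $X_1 \times X_2$ is not in general spanned by the two fiber classes, since correspondences between $X_1$ and $X_2$ contribute further divisor classes. A concrete counterexample is the diagonal $\Delta$ in $X \times X$ when $g(X) \ge 1$: there $d_1 = d_2 = 1$, yet $\Delta^2 = 2 - 2g(X) \neq 2 = (\mathfrak{f}_1 + \mathfrak{f}_2)^2$, so $[\Delta] \not\equiv \mathfrak{f}_1 + \mathfrak{f}_2$. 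In general your claimed identity $p_a(C) = d_1 g_1 + d_2 g_2 + (d_1 - 1)(d_2 - 1)$ is false, and the argument as written does not deliver the bound.

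The fix is the Hodge index theorem, which is precisely the ingredient that makes the surface-theoretic proof of Castelnuovo--Severi work. Set $\gamma = [C] - d_2\,\mathfrak{f}_1 - d_1\,\mathfrak{f}_2$; then $\gamma \cdot \mathfrak{f}_1 = \gamma \cdot \mathfrak{f}_2 = 0$, so $\gamma$ is orthogonal to the ample class $\mathfrak{f}_1 + \mathfrak{f}_2$, and the Hodge index theorem gives $\gamma^2 \le 0$. Hence $C^2 = 2 d_1 d_2 + \gamma^2 \le 2 d_1 d_2$. Your computation of $C \cdot K_Y$ is unaffected, since $K_Y$ really does lie in the span of $\mathfrak{f}_1$ and $\mathfrak{f}_2$; adjunction therefore gives the \emph{inequality} $p_a(C) \le d_1 g_1 + d_2 g_2 + (d_1 - 1)(d_2 - 1)$, and combining with $g = g(X) \le p_a(C)$ (since $X$ normalizes $C$) completes the proof. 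The preliminary points you flag --- that $\Phi$ is birational onto its integral one-dimensional image, and that passing to $\overline{k}$ is harmless --- are routine and work out as you describe.
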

\begin{proof}
See Theorem 3.11.3 of \cite{stichtenoth2009algebraic}.
\end{proof}

\begin{corollary}
\label{Corollary:FollowFromCS}
Let $\mathcal{C}$ be a superelliptic curve given by an equation of the form
$y^{n} = f(x)$ and let $d \colonequals \deg f$. Suppose that $\mathcal{C}$ has a
degree $d_{1}$ map to a genus zero curve and a degree $d_{2}$ map to a genus
zero curve. If $d_{1}$ and $d_{2}$ are coprime, then 
\[
(n - 1)(d - 1) / 2 \le (d_{1} - 1)(d_{2} - 1).
\]
\end{corollary}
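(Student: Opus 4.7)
The plan is to apply the Castelnuovo--Severi inequality (\autoref{Proposition:Castelnuovo--Severi}) directly, with the only real content being the verification of the compositum hypothesis.

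Let $F$ denote the function field of $\mathcal{C}$ and let $F_1$, $F_2$ be the function fields of the two genus zero curves, viewed as subfields of $F$ via the given maps, with $[F : F_i] = d_i$. By Riemann--Hurwitz applied to $y^n = f(x)$, the genus of $\mathcal{C}$ is $g = (n-1)(d-1)/2$. So once we know CS applies, we get
\[
g \le d_1 \cdot 0 + d_2 \cdot 0 + (d_1 - 1)(d_2 - 1),
\]
which is exactly what we want.

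The one thing to check is that $F_1 F_2 = F$. Let $E = F_1 F_2 \subseteq F$. Since $F_1 \subseteq E \subseteq F$, the degree $[F : E]$ divides $[F : F_1] = d_1$, and similarly it divides $d_2$. Because $\gcd(d_1, d_2) = 1$, we conclude $[F : E] = 1$, i.e.\ $F = F_1 F_2$, so the hypothesis of \autoref{Proposition:Castelnuovo--Severi} is satisfied.

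There is no real obstacle here: the coprimality of $d_1$ and $d_2$ forces the compositum to fill up $F$, and then CS together with the Riemann--Hurwitz genus formula gives the stated bound immediately.
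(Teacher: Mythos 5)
Your proof is correct and follows essentially the same route as the paper: embed both genus-zero function fields into $F$, observe that coprimality of $d_1$ and $d_2$ forces the compositum to be all of $F$, and then apply the Castelnuovo--Severi inequality with $g_1 = g_2 = 0$ together with the genus formula $g = (n-1)(d-1)/2$. You simply spell out the divisibility argument for the compositum step a bit more explicitly than the paper does.
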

\begin{proof}
Let $F$ be the function field of $\mathcal{C}$. Each map gives an embedding of
the function field of a genus zero curve into $F$; let their images be $F_{1}$
and $F_{2}$. Since $[F : F_{i}] = d_{i}$ and the $d_{i}$ are coprime, the
compositum $F_{1} F_{2}$ is $F$. Since $g = (n - 1)(d - 1) / 2$, we are done by
applying \autoref{Proposition:Castelnuovo--Severi} in this situation with $g_{1}
= g_{2} = 0$.
\end{proof}

\begin{lemma}
\label{Lemma:NotHyperelliptic}
Let $\mathcal{C}$ be a superelliptic curve given by an equation of the form
$y^{n} = f(x)$ and let $d \colonequals \deg f$. If $n, d \ge 3$, then
$\mathcal{C}$ cannot have a $2$-to-$1$ map to a genus zero curve.
\end{lemma}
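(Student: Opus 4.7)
The plan is to assume the contrary and derive a contradiction by applying \autoref{Corollary:FollowFromCS} twice, using the two natural genus zero maps from $\mathcal{C}$: the degree $n$ projection $(x, y) \mapsto x$ and, crucially, the degree $d$ projection $(x, y) \mapsto y$. The latter is valid because, viewed as a polynomial in $x$ with coefficients in $k(y)$, the defining equation $y^n - f(x)$ has degree $d$, so $[k(\mathcal{C}) : k(y)] = d$.

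Suppose $\mathcal{C}$ admits a degree $2$ map $\pi \colon \mathcal{C} \to Y$ with $Y$ of genus zero. Since $\gcd(n, d) = 1$, at least one of $n, d$ is odd, and I would split into two cases. If $n$ is odd, then $\gcd(n, 2) = 1$, so \autoref{Corollary:FollowFromCS} applied to the $x$-projection and $\pi$ gives
\[
\frac{(n-1)(d-1)}{2} \le (2-1)(n-1) = n - 1,
\]
forcing $d \le 3$, hence $d = 3$ (using $d \ge 3$). I would then reapply \autoref{Corollary:FollowFromCS} with the $y$-projection (now of odd degree $3$, coprime to $2$) and $\pi$ to obtain $n - 1 \le (2-1)(3-1) = 2$, whence $n \le 3$; but coprimality of $n$ with $d = 3$ rules out $n = 3$, so $n \ge 4$, contradiction.

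If instead $n$ is even, then $d$ is odd, and applying \autoref{Corollary:FollowFromCS} with the $y$-projection and $\pi$ yields $(n-1)(d-1)/2 \le d - 1$, which forces $n \le 3$. Since $n$ is even and $n \ge 3$, we have $n \ge 4$, a contradiction.

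The only real conceptual step is recognizing that $\mathcal{C}$ has a second natural map to $\mathbf{P}^1$, of degree $d$, via $y$; once this is in hand, the Castelnuovo--Severi corollary handles everything in a few lines. I expected the boundary case $d = 3$ to give trouble, since a single application of the corollary via the $x$-projection leaves the inequality tight when $d = 3$; the fix is to switch to the $y$-projection once $d = 3$ is known, which sidesteps the issue cleanly.
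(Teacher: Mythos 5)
Your proposal is correct and follows essentially the same route as the paper: both apply \autoref{Corollary:FollowFromCS} with the hypothetical degree $2$ map paired first with the odd-degree coordinate projection to force the other degree down to $3$, and then pair with the remaining projection to reach a contradiction with coprimality (your case split on the parity of $n$ versus the paper's symmetric treatment of ``$n$ odd'' and ``$d$ odd'' is only a cosmetic difference). No gaps.
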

\begin{proof}
For contradiction, suppose that $\varphi$ is a map from $\mathcal{C}$ to a genus
0 curve. We also have the degree $d$ map $y : \mathcal{C} \to \mathbf{P}^{1}$
and the degree $n$ map $x : \mathcal{C} \to \mathbf{P}^{1}$. Since $n$ and $d$
are coprime, they cannot both be even.

Suppose that $n$ is odd. Applying \autoref{Corollary:FollowFromCS} with
$\varphi$ and the $x$-map yields $(n - 1)(d - 1) / 2 \le (2 - 1)(n - 1)$, which
implies $d \le 3$, so since $d \ge 3$ by assumption, $d = 3$. Now $d$ is odd, so
similarly, $n = 3$, contradicting the assumption that $n$ and $d$ are coprime.

The case $d$ is odd is similar.
\end{proof}

\subsection{Weights and gaps on compact Riemann surfaces}
\label{Subsection:WeightsGaps}
Let $X$ be a compact Riemann surface of genus $g$.

\begin{definition}
\label{Definition:WeierstrassMonoid}
For each point $P$ on $X$, define $\WM(P)$ to be the set of pole orders at $P$
of meromorphic functions on $X$ which are holomorphic on $X \setminus \{ P \}$.
Then $\WM(P)$ is a monoid, and it is called the Weierstrass monoid of $P$.
\end{definition}

\begin{lemma}
\label{Lemma:WhereTheGapsAre}
$\mathbf{Z}_{\ge 0} \setminus \WM(P)$ is a subset of $[1, 2 g - 1]$ of size
exactly $g$.
\end{lemma}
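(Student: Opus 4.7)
The plan is to apply Riemann--Roch to the divisors $k P$ for $k \ge 0$. Let $\ell(k P) \colonequals \dim L(k P)$, where $L(k P)$ denotes the space of meromorphic functions on $X$ with pole of order at most $k$ at $P$ and no other poles. Then $k \in \WM(P)$ iff there exists $f \in L(k P)$ with pole of order exactly $k$ at $P$, which is equivalent to $\ell(k P) > \ell((k - 1) P)$ (with the convention $\ell(-P) = 0$). Since enlarging the polar divisor by one point raises the dimension by at most one, this difference is always $0$ or $1$, so every $k \ge 0$ is either a \emph{gap} (jump $0$) or a \emph{non-gap} (jump $1$).

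First I would show that all gaps lie in $[1, 2 g - 1]$. The constant function $1$ witnesses $0 \in \WM(P)$, handling $k = 0$. For $k \ge 2 g$, Riemann--Roch gives $\ell(k P) = k + 1 - g + \ell(K - k P)$, and $\deg(K - k P) = 2 g - 2 - k < 0$ forces $\ell(K - k P) = 0$, so $\ell(k P) = k + 1 - g$. The same formula applies at $k - 1 \ge 2 g - 1$, whose relevant Serre dual also has negative degree, giving $\ell((k - 1) P) = k - g$. Subtracting yields $\ell(k P) - \ell((k - 1) P) = 1$, so $k \in \WM(P)$, i.e., $k$ is not a gap.

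To count the gaps inside $[1, 2 g - 1]$, I telescope: the number of non-gaps in this interval equals $\ell((2 g - 1) P) - \ell(0) = g - 1$, using $\ell(0) = 1$ (constants) together with $\ell((2 g - 1) P) = g$, which follows from the same Riemann--Roch argument applied at $k = 2 g - 1$ since $\deg(K - (2 g - 1) P) = -1 < 0$. Hence the number of gaps is $(2 g - 1) - (g - 1) = g$, completing the proof. The argument is entirely standard once the Riemann--Roch inputs are in place, so there is no real obstacle; the only care needed is identifying, for each $k$, when the Serre dual term $\ell(K - k P)$ vanishes.
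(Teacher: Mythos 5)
Your proof is correct and is exactly the standard Riemann--Roch argument that the paper invokes (it simply says the lemma is a straightforward consequence of Riemann--Roch without writing out the details). Your fleshed-out version — jumps of $\ell(kP)$ are $0$ or $1$, nongaps for $k\ge 2g$ via vanishing of the Serre dual term, and telescoping to count $g$ gaps in $[1,2g-1]$ — is precisely that intended argument.
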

\begin{proof}
This is a straightforward consequence of the Riemann--Roch theorem.
\end{proof}

\begin{definition}
\label{Definition:WeierstrassWeightPoint}
For each point $P$ on $X$, \autoref{Lemma:WhereTheGapsAre} implies that there
are $g$ integers $1 \le k_{1} < k_{2} < \cdots < k_{g} \le 2 g - 1$ such that
$\mathbf{Z}_{\ge 0} \setminus \WM(P) = \{ k_{1}, k_{2}, \dots, k_{g} \}$. Each
$k_{i}$ is called a \textit{gap} of $P$, and elements of $\WM(P)$ are called the
\textit{nongaps} of $P$. Define
\[
\wt_{P}(\mathcal{L}) \colonequals \sum_{i = 1}^{g} (k_{i} - (i - 1)).
\]
to be the Weierstrass weight of $P$. A point $P$ on $X$ is called a Weierstrass
point if $\wt(P) \ge 1$.
\end{definition}

\begin{theorem}
\label{Theorem:SumWeierstrassWeights}
Suppose that $g \ge 1$. Then
\[
\sum_{P \in X} \wt(P) = g^{3} - g.
\]
In particular, $X$ only has finitely many Weierstrass points.
\end{theorem}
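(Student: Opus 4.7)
The plan is the classical Wronskian argument on Riemann surfaces. Fix a basis $\omega_1, \ldots, \omega_g$ of $H^0(X, \Omega^1)$. In a local coordinate $z$ at a point $P$, write $\omega_i = f_i(z)\,dz$ and form the Wronskian
\[
W(z) \colonequals \det\left( \frac{d^{\,j-1} f_i}{dz^{\,j-1}} \right)_{1 \le i, j \le g}.
\]
A chain-rule computation shows that under a change of local coordinate $z = \phi(w)$, the Wronskian transforms by the factor $(\phi'(w))^{g(g+1)/2}$, so $W \cdot (dz)^{\otimes g(g+1)/2}$ glues to a well-defined global section $\widehat{W}$ of the line bundle $(\Omega^1_X)^{\otimes g(g+1)/2}$. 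Its degree is
\[
\frac{g(g+1)}{2} \cdot (2g - 2) \;=\; g^3 - g.
\]

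Since $\omega_1, \ldots, \omega_g$ are linearly independent, $\widehat{W}$ is not identically zero, so $\sum_{P \in X} \ord_P(\widehat{W}) = g^3 - g$; in particular, $\widehat{W}$ has only finitely many zeros. It remains to identify $\ord_P(\widehat{W})$ with $\wt(P)$. The bridge is Riemann--Roch (already the main tool in \autoref{Lemma:WhereTheGapsAre}): applied to the divisors $k P$, it shows that $k$ is a gap at $P$ if and only if some holomorphic differential vanishes to exact order $k - 1$ at $P$. Hence one may choose a basis of $H^0(X, \Omega^1)$ adapted to $P$ whose members vanish at $P$ to distinct orders $k_1 - 1 < k_2 - 1 < \cdots < k_g - 1$, where $k_1 < \cdots < k_g$ are the gaps at $P$.

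The remaining step, which I expect to be the main technical point, is the calculation that the Wronskian of functions $f_i$ with distinct vanishing orders $m_i$ at $P$ vanishes to order $\sum_{i=1}^g (m_i - (i-1))$. By row reductions and tracking leading terms, one reduces to the monomial case $f_i = z^{m_i}$, where the determinant equals a nonzero Vandermonde-type constant times $z^{\sum_i (m_i - (i-1))}$; distinctness of the $m_i$ ensures the constant is nonzero. Substituting $m_i = k_i - 1$ gives $\ord_P(\widehat{W}) = \sum_{i=1}^g (k_i - i)$, which is the Weierstrass weight $\wt(P)$ (note that at a generic point the gaps are $1, 2, \ldots, g$, giving weight $0$, consistent with the finiteness of Weierstrass points). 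Summing over $P$ yields $\sum_P \wt(P) = g^3 - g$ and the finiteness claim.
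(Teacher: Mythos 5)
Your proof is correct and is exactly the classical Wronskian argument that the paper's cited reference (Farkas--Kra, equation (5.11.1)) uses; the paper gives no argument of its own beyond that citation. The only step worth flagging for rigor is the reduction to the monomial case in the final order-of-vanishing computation, but the generalized Vandermonde coefficient $\prod_{i<j}(m_j - m_i)$ (times nonzero factorial factors) is indeed nonzero precisely because the $m_i$ are distinct, so the argument goes through as you describe.
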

\begin{proof}
See equation (5.11.1) on page 88 of \cite{farkas1992riemann}.
\end{proof}

\subsection{The homology of superelliptic curves}
\label{Subsection:HomologyBackground}

In this section, we compute $H_{1}(\mathcal{C}, \mathbf{Z})$ using topology. We
will apply results in Section 3 of \cite{molin2019computing}.

Fix $B \in \mathcal{C}(\mathbf{C}) \setminus \left( \mathcal{W} \cup \{ \infty
\} \right)$. For each $i \in [1, d]$, choose a loop $\beta_{i}$ in
$\mathbf{P}^{1} \setminus \pi \left( \mathcal{W} \cup \{ \infty \} \right)$ that
starts and ends at $\pi(B)$ which goes around $-\alpha_{i}$ once and does not go
around $\infty$ or $-\alpha_{k}$ for any $k \neq i$.

Then $\pi_{1}(\mathbf{P}^{1} \setminus \pi\left( \mathcal{W} \cup \{ \infty \}
\right), \pi(B))$ is the free group generated by $\beta_{1}, \cdots, \beta_{d}$.
Take the subscripts of $\beta$ modulo $d$, so that $\beta_{i + d} \colonequals
\beta_{i}$.  By Galois theory of covering spaces, $\pi_{1}(\mathcal{C} \setminus
\left( \mathcal{W} \cup \{ \infty \} \right), B)$ is the kernel of the map \[
\nu \colon \pi_{1}(\mathbf{P}^{1} \setminus \pi\left( \mathcal{W} \cup \{
\infty \} \right), \pi(B)) \to \mathbf{Z} / n \mathbf{Z} \] which sends each
$\beta_{i}$ to $1 \pmod{n}$. By the van Kampen theorem, $\pi_{1}(\mathcal{C},
B)$ is a quotient of $\pi_{1}(\mathcal{C} \setminus \left( \mathcal{W} \cup \{
\infty \} \right), B)$. In this way, we will view $\pi_{1}(\mathcal{C}, B)$ as a
subquotient of the free group generated by $\beta_{1}, \cdots, \beta_{d}$.
Recall that $H_{1}(\mathcal{C}, \mathbf{Z})$ is the abelianization of
$\pi_{1}(\mathcal{C}, B)$, so for each $\beta \in \pi_{1}(\mathcal{C}, B)$ we
will use $[\beta]$ to denote the class of $\beta$ in $H_{1}(\mathcal{C},
\mathbf{Z})$.

\begin{definition}
\label{Definition:XiLoops}
For each $i \in [1, d]$, $\beta_{i} \beta_{i + 1}^{-1}$ lies in $\ker \nu$, so
for each $j \in [0, n - 1]$, define $\psi_{i, j} \colonequals \zeta_{n}^{j}
[\beta_{i} \beta_{i + 1}^{-1}]$. Define $\Psi \colonequals \{ \psi_{i, j} \colon
i \in [1, d - 1] \text{ and } j \in [0, n - 2] \}$.
\end{definition}

\begin{lemma}
\label{Lemma:ZdActsCorrectlyOnLoops}
For each $j \in [0, n - 1]$, 
\[
\psi_{1, j} + \psi_{2, j} + \cdots + \psi_{d, j} = 0.
\]
\end{lemma}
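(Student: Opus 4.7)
The plan is to reduce the claim to a telescoping identity in the fundamental group of the punctured base. Since $\zeta_n$ acts on $H_1(\mathcal{C}, \mathbf{Z})$ as a group homomorphism, I can factor $\zeta_n^{j}$ out of the sum to obtain
\[
\sum_{i = 1}^{d} \psi_{i, j} = \zeta_{n}^{j} \sum_{i = 1}^{d} [\beta_{i} \beta_{i + 1}^{-1}],
\]
so it suffices to show that the inner sum vanishes in $H_{1}(\mathcal{C}, \mathbf{Z})$.

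For this, I would work directly in the free group $\pi_{1}(\mathbf{P}^{1} \setminus \pi(\mathcal{W} \cup \{\infty\}), \pi(B))$ on the generators $\beta_{1}, \dots, \beta_{d}$ and observe that, with the cyclic convention $\beta_{d + 1} = \beta_{1}$, the ordered product telescopes:
\[
(\beta_{1} \beta_{2}^{-1})(\beta_{2} \beta_{3}^{-1}) \cdots (\beta_{d - 1} \beta_{d}^{-1})(\beta_{d} \beta_{1}^{-1}) = \beta_{1} \beta_{1}^{-1} = 1.
\]
Each factor lies in $\ker \nu = \pi_{1}(\mathcal{C} \setminus (\mathcal{W} \cup \{\infty\}), B)$, so this identity is already an identity in $\pi_{1}(\mathcal{C} \setminus (\mathcal{W} \cup \{\infty\}), B)$. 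Pushing forward along the van Kampen surjection to $\pi_{1}(\mathcal{C}, B)$ and abelianizing converts the ordered product into the desired sum, giving $\sum_{i = 1}^{d} [\beta_{i} \beta_{i + 1}^{-1}] = 0$ in $H_{1}(\mathcal{C}, \mathbf{Z})$. Combining with the first step yields $\sum_{i = 1}^{d} \psi_{i, j} = 0$.

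There is no genuine obstacle here; the argument is formal. The only point requiring care is the cyclic indexing $\beta_{d + 1} = \beta_{1}$, which is what causes the product to telescope all the way to the identity rather than leaving a residual $\beta_{1} \beta_{d + 1}^{-1}$ term. Everything else reduces to the remark that the class of a product of loops in an abelian homology group is the sum of the classes, and that $\zeta_{n}^{j}$ commutes with addition.
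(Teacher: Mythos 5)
Your proposal is correct and is essentially the same argument as the paper's: both establish the identity by observing that the cyclic product $(\beta_1\beta_2^{-1})(\beta_2\beta_3^{-1})\cdots(\beta_d\beta_1^{-1})$ telescopes to the identity and then passing to homology, with the $j$-dependence handled by applying $\zeta_n^j$. The only (inessential) difference is that you factor out $\zeta_n^j$ at the start while the paper proves the $j=0$ case and then applies $\zeta_n^j$.
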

\begin{proof}
Observe that $(\beta_{1} \beta_{2}^{-1}) (\beta_{2} \beta_{3}^{-1}) \cdots
(\beta_{d} \beta_{1}^{-1}) = 1$, so taking its image in $H_{1}(\mathcal{C},
\mathbf{Z})$ yields $\psi_{1, 0} + \psi_{2, 0} + \cdots + \psi_{d, 0} = 0$.
Apply $\zeta_{n}^{j}$ to both sides to finish.
\end{proof}

\begin{lemma}
\label{Lemma:ZnActsCorrectlyOnLoops}
For each $i \in [1, d]$, 
\[
\psi_{i, 0} + \psi_{i, 1} + \cdots + \psi_{i, n - 1} = 0.
\]
\end{lemma}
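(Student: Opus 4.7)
The plan is to identify $\sum_{j=0}^{n-1} \psi_{i,j}$ with the homology class of the full preimage $\pi^{-1}(\beta_{i}\beta_{i+1}^{-1})$ in $\mathcal{C}$, and then to exhibit this preimage as a boundary by using that $\beta_i \beta_{i+1}^{-1}$ bounds in the simply connected $\mathbf{P}^{1}$.

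First, because $\beta_{i}\beta_{i+1}^{-1}$ lies in $\ker \nu$, it lifts to a single loop $\tilde{\beta}$ in $\mathcal{C} \setminus (\mathcal{W} \cup \{\infty\})$ based at $B$. The deck translates $\tilde{\beta}, \zeta_{n}\tilde{\beta}, \ldots, \zeta_{n}^{n-1}\tilde{\beta}$ are the $n$ distinct lifts of $\beta_{i}\beta_{i+1}^{-1}$, and their disjoint union equals the full preimage $\pi^{-1}(\beta_{i}\beta_{i+1}^{-1})$ as a $1$-chain. Passing to homology,
\[
\sum_{j=0}^{n-1} \psi_{i,j} = \sum_{j=0}^{n-1} \zeta_{n}^{j}[\tilde{\beta}] = \bigl[\pi^{-1}(\beta_{i}\beta_{i+1}^{-1})\bigr] \in H_{1}(\mathcal{C}, \mathbf{Z}).
\]

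Next, since $\mathbf{P}^{1}(\mathbf{C})$ is simply connected, the loop $\beta_{i}\beta_{i+1}^{-1}$ is the boundary of some singular $2$-chain $D$ in $\mathbf{P}^{1}$. The pullback $\pi^{-1}(D)$ is a $2$-chain in $\mathcal{C}$ satisfying $\partial \pi^{-1}(D) = \pi^{-1}(\partial D) = \pi^{-1}(\beta_{i}\beta_{i+1}^{-1})$, so the preimage $1$-cycle is a boundary in $\mathcal{C}$ and therefore vanishes in $H_{1}(\mathcal{C}, \mathbf{Z})$.

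The one point that needs care is the construction of the chain-level pullback $\pi^{-1}(D)$ across the ramification locus and the verification of $\partial \pi^{-1}(D) = \pi^{-1}(\partial D)$ there. This is handled by subdividing $D$ so that each $2$-simplex meets the branch locus in at most one point; away from branch points the cover trivializes and each simplex lifts to $n$ copies, while near a branch point the required compatibility reduces to a direct calculation on the local model $z \mapsto z^{n}$.
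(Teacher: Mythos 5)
Your argument is correct in substance but follows a genuinely different route from the paper's. The paper (following the proof of Theorem 3.6 in Molin--Neurohr) argues completely explicitly: there is a path $p_{i}$ in $\mathcal{C}$ joining $\mathcal{W}_{i}$ to $\mathcal{W}_{i+1}$ and an $l$ such that $\psi_{i,0}$ is homotopic to $(\zeta_{n}^{l}p_{i})(\zeta_{n}^{-(l+1)}p_{i})^{-1}$, so the sum $\psi_{i,0}+\cdots+\psi_{i,n-1}$ telescopes to zero; the work is in producing that path, and the payoff is an explicit cycle representation. You instead observe that $\sum_{j}\psi_{i,j}$ is the transfer (sum of lifts) of the cycle $\beta_{i}\beta_{i+1}^{-1}$, which is null-homologous downstairs since $H_{1}(\mathbf{P}^{1},\mathbf{Z})=0$, and then kill its transfer upstairs by pulling back a bounding $2$-chain; this is more conceptual, needs no explicit path, and in fact proves the stronger statement that the sum of lifts of any loop in $\ker\nu$ dies in $H_{1}$ of any branched cover of a simply connected base. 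The one place where your write-up is thinner than it should be is exactly the step you flag: for a $2$-simplex whose image contains a branch point there are \emph{no} $n$ lifts, so ``each simplex lifts to $n$ copies'' cannot be the whole story, and the ``direct calculation on the local model $z\mapsto z^{n}$'' has to be an actual construction. The cleanest ways to discharge it are either (i) an acyclicity argument: after subdividing so every simplex lands in an evenly covered set or in a small disk $\Delta$ around a single branch point, define the pullback of a bad simplex $\sigma$ to be any $2$-chain in the (contractible) ramification disk $\pi^{-1}(\Delta)$ bounding the already-defined $1$-cycle obtained from $\partial\sigma$; or (ii) sidestep chain-level pullback at branch points entirely by noting that in $\mathbf{P}^{1}$ minus the branch points, $\beta_{i}\beta_{i+1}^{-1}$ is homologous to the difference of small circles around $-\alpha_{i}$ and $-\alpha_{i+1}$, whose transfers are the boundary circles of the ramification disks and hence bound in $\mathcal{C}$. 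With either completion your proof is sound; as written, the local-model step is asserted rather than carried out, so make sure you can supply one of these fillings if pressed.
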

\begin{proof}
This is shown in the proof of Theorem 3.6 of \cite{molin2019computing}. Briefly,
the idea is that there exists a path $p_{i}$ from $\mathcal{W}_{i}$ to
$\mathcal{W}_{i + 1}$ in $\mathcal{C}$ and some $l \in \mathbf{Z} / n
\mathbf{Z}$ such that the cycle $\psi_{i, 0}$ is homotopic to $(\zeta_{n}^{l}
p_{i})(\zeta_{n}^{-(l + 1)} p_{i})^{-1}$, so the sum $\psi_{i, 0} + \psi_{i, 1}
+ \cdots + \psi_{i, n - 1}$ telescopes to give zero.
\end{proof}

\begin{proposition}
\label{Proposition:Cite36Molin}
The inclusion $\Psi \subseteq H_{1}(\mathcal{C}, \mathbf{Z})$ induces an
isomorphism
\[
\mathbf{Z}^{\Psi} \to H_{1}(\mathcal{C}, \mathbf{Z}).
\]
\end{proposition}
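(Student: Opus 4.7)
Since $|\Psi| = (n - 1)(d - 1) = 2g$ equals the $\mathbf{Z}$-rank of $H_{1}(\mathcal{C}, \mathbf{Z})$, and any surjection between finitely generated free abelian groups of the same rank is automatically an isomorphism, it suffices to prove that the natural map $\mathbf{Z}^{\Psi} \to H_{1}(\mathcal{C}, \mathbf{Z})$ is surjective.

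My plan is to verify surjectivity in two stages. First, I would show that the enlarged collection $\mathcal{G} \colonequals \{ \psi_{i, j} : i \in [1, d], \, j \in [0, n - 1] \}$ of $nd$ elements generates $H_{1}(\mathcal{C}, \mathbf{Z})$. Second, I would invoke \autoref{Lemma:ZdActsCorrectlyOnLoops} and \autoref{Lemma:ZnActsCorrectlyOnLoops} to eliminate from $\mathcal{G}$ the $n$ elements $\psi_{d, j}$ for $j \in [0, n - 1]$ (using the first lemma) and the $d - 1$ elements $\psi_{i, n - 1}$ for $i \in [1, d - 1]$ (using the second); these $n + (d - 1)$ eliminations leave exactly the $(n - 1)(d - 1)$ elements of $\Psi$ as a generating set. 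The one remaining relation one might expect (namely $\sum_{j} \psi_{d, j} = 0$, from the second lemma applied at $i = d$) is already implied by the relations used.

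For the first stage, I would apply van Kampen's theorem to the decomposition of $\mathcal{C}$ as the union of $\mathcal{C} \setminus (\mathcal{W} \cup \{ \infty \})$ with small disks around the $d + 1$ filled-in points. By Nielsen--Schreier applied to the transversal $\{ 1, \beta_{1}, \ldots, \beta_{1}^{n - 1} \}$, the group $\ker \nu$ is freely generated by $\beta_{1}^{n}$ together with $\beta_{1}^{k} \beta_{i} \beta_{1}^{-(k + 1)}$ for $(k, i) \in [0, n - 2] \times [2, d]$ and $\beta_{1}^{n - 1} \beta_{i}$ for $i \in [2, d]$. Van Kampen then imposes the relations $[\beta_{i}^{n}] = 0$ in $H_{1}(\mathcal{C}, \mathbf{Z})$ (a loop $n$ times around $-\alpha_{i}$ in the base lifts to a loop once around $\mathcal{W}_{i}$, which bounds a disk in $\mathcal{C}$) together with an analogous relation for $\infty$. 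Using that conjugation by $\beta_{1}^{k}$ on $\ker \nu$ abelianizes to the $\zeta_{n}^{k}$-action on $H_{1}(\mathcal{C}, \mathbf{Z})$ (since the quotient of the free group by $\ker \nu$ is cyclic of order $n$ generated by $\beta_{1}$), and combining with the telescoping identity $[\beta_{i} \beta_{1}^{-1}] = -(\psi_{1, 0} + \cdots + \psi_{i - 1, 0})$ in $H_{1}(\mathcal{C}, \mathbf{Z})$, each free Schreier generator reduces modulo the van Kampen relations to a $\mathbf{Z}$-combination of elements of $\mathcal{G}$.

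The main obstacle is the bookkeeping in the first stage, particularly identifying the ``wrap-around'' free generators $\beta_{1}^{n - 1} \beta_{i}$ in terms of the other Schreier generators and the van Kampen relations. Fortunately, exactly this analysis is carried out in Section 3 of \cite{molin2019computing}, so the cleanest completion of the proof is to cite their Theorem 3.6 directly.
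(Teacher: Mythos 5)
Your proposal is correct and, like the paper, ultimately defers to Theorem~3.6 of \cite{molin2019computing}; the paper's proof is nothing more than that citation. Your sketch of the underlying topology is accurate and worth noting: the Schreier generators for $\ker\nu$ relative to the transversal $\{1,\beta_1,\dots,\beta_1^{n-1}\}$ are exactly as you list (and their number, $1 + (n-1)(d-1) + (d-1) = n(d-1)+1$, matches the Nielsen--Schreier formula), the van Kampen relations do correspond to filling in the ramification points, conjugation by $\beta_1^k$ abelianizes to the $\zeta_n^{\pm k}$-action, and the rank-counting observation that surjectivity implies bijectivity is a legitimate shortcut. The elimination step from $\mathcal{G}$ to $\Psi$ and your check that the ``extra'' relation $\sum_j \psi_{d,j}=0$ is a consequence of the others are both correct.
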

\begin{proof}
This is Theorem 3.6 of \cite{molin2019computing}.
\end{proof}

\begin{definition}
Let $S \colonequals \mathbf{Z}[T] / (1 + T + \dots + T^{n - 1})$.
\end{definition}

\begin{corollary}
\label{Corollary:FreeSModule}
$H_{1}(\mathcal{C}, \mathbf{Z})$ is a free $S$-module of rank $d -
1$ for which $T$ acts as $\zeta_{n}$. 
\end{corollary}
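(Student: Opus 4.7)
The plan is to exhibit an explicit $S$-module isomorphism $\phi \colon S^{d-1} \to H_1(\mathcal{C},\mathbf{Z})$. First I would note that as a $\mathbf{Z}$-module, $S$ is free of rank $n-1$ with basis $1, T, \dots, T^{n-2}$: the defining relation $1 + T + \cdots + T^{n-1} = 0$ lets us eliminate $T^{n-1} = -(1 + T + \cdots + T^{n-2})$, and no lower-degree $\mathbf{Z}$-linear relation holds since the polynomial $1 + T + \cdots + T^{n-1}$ has degree $n-1$. Hence $S^{d-1}$ is a free $\mathbf{Z}$-module of rank $(d-1)(n-1) = 2g$, matching the rank of $H_1(\mathcal{C}, \mathbf{Z})$.

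Define $\phi$ by sending the $i$-th standard basis vector $e_i$ to $\psi_{i, 0}$ for $i \in [1, d-1]$, extended $S$-linearly where $T$ acts as $\zeta_n$. By construction $\phi(T^j e_i) = \zeta_n^j \psi_{i, 0} = \zeta_n^j \cdot \zeta_n^0 [\beta_i \beta_{i+1}^{-1}] = \psi_{i, j}$ for all $j \ge 0$. For this to be well-defined on $S^{d-1}$, I need to check compatibility with the single relation defining $S$, namely that $(1 + \zeta_n + \cdots + \zeta_n^{n-1}) \psi_{i, 0} = 0$ holds in $H_1(\mathcal{C}, \mathbf{Z})$ for each $i$; this is precisely \autoref{Lemma:ZnActsCorrectlyOnLoops}.

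Finally I would verify that $\phi$ is a $\mathbf{Z}$-module isomorphism. The set $\{T^j e_i : 1 \le i \le d-1,\ 0 \le j \le n-2\}$ is a $\mathbf{Z}$-basis of $S^{d-1}$, and $\phi$ carries it bijectively onto $\Psi = \{\psi_{i, j} : 1 \le i \le d-1,\ 0 \le j \le n-2\}$, which is a $\mathbf{Z}$-basis of $H_1(\mathcal{C}, \mathbf{Z})$ by \autoref{Proposition:Cite36Molin}. Hence $\phi$ is a $\mathbf{Z}$-isomorphism, and since it was constructed to be $S$-linear, it is an isomorphism of $S$-modules. There is no serious obstacle here: the content of the corollary is already encoded in \autoref{Lemma:ZnActsCorrectlyOnLoops} (giving the $S$-module structure) and \autoref{Proposition:Cite36Molin} (giving the rank and freeness), so the only real task is to transcribe the compatibility and count bases correctly.
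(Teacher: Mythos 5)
Your proof is correct and follows essentially the same route as the paper's: both use \autoref{Lemma:ZnActsCorrectlyOnLoops} to get the $S$-module structure and \autoref{Proposition:Cite36Molin} to read off that $\psi_{1,0},\dots,\psi_{d-1,0}$ is an $S$-basis (equivalently, that the $\psi_{i,j}$ with $i\in[1,d-1]$, $j\in[0,n-2]$ form a $\mathbf{Z}$-basis). Your version just spells out the well-definedness check and the basis count a bit more explicitly than the paper's two-sentence argument.
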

\begin{proof}
\autoref{Lemma:ZnActsCorrectlyOnLoops} implies that $1 + \zeta_{n} + \dots +
\zeta_{n}^{n - 1}$ acts trivially on all the $\psi_{i, j}$, and since these
generate $H_{1}(\mathcal{C}, \mathbf{Z})$ by \autoref{Proposition:Cite36Molin},
$H_{1}(\mathcal{C}, \mathbf{Z})$ is an $S$-module for which $T$ acts as
$\zeta_{n}$.  From \autoref{Proposition:Cite36Molin}, $\psi_{1, 0}$, \dots,
$\psi_{d - 1, 0}$ is an $S$-module basis for $H_{1}(\mathcal{C}, \mathbf{Z})$.
\end{proof}

Suppose now that $n = p$ is a prime. Then $S \simeq \mathbf{Z}[\zeta_{p}]$. 
\begin{definition}
Define the Tate module $T_{p} \mathcal{J} \simeq \varprojlim_{i}
\mathcal{J}[p^{i}]$. Since $ T_{p} \mathcal{J} \simeq H_{1}(\mathcal{C},
\mathbf{Z}) \otimes_{\mathbf{Z}} \mathbf{Z}_{p}$,
\autoref{Corollary:FreeSModule} gives that $ T_{p} \mathcal{J} $ is a free
$\mathbf{Z}_{p}[\zeta_{p}]$-module of rank $d - 1$. Define
$\End_{\mathbf{Z}_{p}[\zeta_{p}]} \left( T_{p} \mathcal{J} \right)$ to be the
ring of endomorphisms of $T_{p} \mathcal{J}$ that commute with $\zeta_{p}$. Then
\[
\End_{\mathbf{Z}_{p}[\zeta_{p}]} \left( T_{p} \mathcal{J} \right) \simeq M_{d -
1} \left( \mathbf{Z}_{p}[\zeta_{p}] \right).
\]
The relation $(1 - \zeta_{p})^{p - 1} \in p S^{\times}$ implies $\mathcal{J}[p]
= \mathcal{J}[(1 - \zeta_{p})^{p - 1}]$, so we also make the identifications
\[
T_{p} \mathcal{J} \simeq \varprojlim_{i} \mathcal{J}[(1 - \zeta_{p})^{i}]
\]
and
\[
\mathcal{J}[(1 - \zeta_{p})^{i}] \simeq T_{p} \mathcal{J} / (1 - \zeta_{p})^{i}.
\]
\end{definition}

\begin{lemma}
\label{Lemma:HowToKillpTorsionGeneralSuperelliptic}
Suppose $\eta \in \End_{\mathbf{Z}_{p}[\zeta_{p}]} \left( T_{p} \mathcal{J}
\right)$. Then $\eta$ kills $\mathcal{J}[(1 - \zeta_{p})^{i}]$ if and only if
$\eta \in (1 - \zeta_{p})^{i} \End_{\mathbf{Z}_{p}[\zeta_{p}]} \left( T_{p}
\mathcal{J} \right)$.
\end{lemma}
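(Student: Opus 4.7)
The plan is to reduce the statement to a matrix-level computation using the freeness of $T_p \mathcal{J}$ over $\mathbf{Z}_p[\zeta_p]$. First I would invoke the identification $\mathcal{J}[(1 - \zeta_p)^i] \simeq T_p \mathcal{J} / (1 - \zeta_p)^i T_p \mathcal{J}$ stated just before the lemma. Under this identification, the condition that $\eta$ kills $\mathcal{J}[(1 - \zeta_p)^i]$ is equivalent to the assertion that $\eta$ induces the zero endomorphism on the quotient $T_p \mathcal{J} / (1 - \zeta_p)^i T_p \mathcal{J}$, which in turn is equivalent to the containment
\[
\eta(T_p \mathcal{J}) \subseteq (1 - \zeta_p)^i T_p \mathcal{J}.
\]

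Next I would use the freeness of $T_p \mathcal{J}$ as a $\mathbf{Z}_p[\zeta_p]$-module (of rank $d - 1$, from \autoref{Corollary:FreeSModule}) to pick an ordered basis and represent $\eta$ by a matrix $M \in M_{d - 1}(\mathbf{Z}_p[\zeta_p])$. The columns of $M$ are the images of the basis vectors, so the containment above is equivalent to every column of $M$ lying in $(1 - \zeta_p)^i T_p \mathcal{J}$; since the basis is a $\mathbf{Z}_p[\zeta_p]$-basis and $\mathbf{Z}_p[\zeta_p]$ is a DVR with uniformizer $1 - \zeta_p$, this is equivalent to every entry of $M$ being divisible by $(1 - \zeta_p)^i$ in $\mathbf{Z}_p[\zeta_p]$. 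Writing $M = (1 - \zeta_p)^i M'$ with $M' \in M_{d - 1}(\mathbf{Z}_p[\zeta_p])$ and translating back through the isomorphism $\End_{\mathbf{Z}_p[\zeta_p]}(T_p \mathcal{J}) \simeq M_{d - 1}(\mathbf{Z}_p[\zeta_p])$ gives $\eta \in (1 - \zeta_p)^i \End_{\mathbf{Z}_p[\zeta_p]}(T_p \mathcal{J})$.

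The converse direction is immediate: if $\eta = (1 - \zeta_p)^i \eta'$ for some $\eta' \in \End_{\mathbf{Z}_p[\zeta_p]}(T_p \mathcal{J})$, then $\eta(T_p \mathcal{J}) \subseteq (1 - \zeta_p)^i T_p \mathcal{J}$, so $\eta$ vanishes on the quotient $T_p \mathcal{J} / (1 - \zeta_p)^i T_p \mathcal{J} \simeq \mathcal{J}[(1 - \zeta_p)^i]$. There is no genuine obstacle here; the entire content of the lemma is that divisibility of a matrix by a uniformizer can be checked entrywise in a free module over a DVR, and the only care needed is in handling the identification $\mathcal{J}[(1 - \zeta_p)^i] \simeq T_p \mathcal{J} / (1 - \zeta_p)^i T_p \mathcal{J}$ compatibly with the $\End$-action.
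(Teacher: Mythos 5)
Your proof is correct and follows the same route as the paper's: both pass through the identification $\mathcal{J}[(1-\zeta_p)^i] \simeq T_p\mathcal{J}/(1-\zeta_p)^i T_p\mathcal{J}$, use $\End_{\mathbf{Z}_p[\zeta_p]}(T_p\mathcal{J}) \simeq M_{d-1}(\mathbf{Z}_p[\zeta_p])$, and identify the kernel of the reduction map as $(1-\zeta_p)^i \End_{\mathbf{Z}_p[\zeta_p]}(T_p\mathcal{J})$. You simply spell out the entrywise divisibility step that the paper leaves implicit.
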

\begin{proof}
Note that $\eta$ kills $\mathcal{J}[(1 - \zeta_{p})^{i}]$ if and only if it lies
in the kernel of the reduction map
\[
\End_{\mathbf{Z}_{p}[\zeta_{p}]} \left( T_{p} \mathcal{J} \right)
\longrightarrow \End_{\mathbf{Z}_{p}[\zeta_{p}]} \left( T_{p} \mathcal{J} / (1 -
\zeta_{p})^{i} \right) = \End_{\mathbf{Z}_{p}[\zeta_{p}]} \left( \mathcal{J}[(1
- \zeta_{p})^{i}] \right).
\]
Since $\End_{\mathbf{Z}_{p}[\zeta_{p}]} \left( T_{p} \mathcal{J} \right) \simeq
M_{d - 1}(\mathbf{Z}_{p}[\zeta_{p}])$ and $\End_{\mathbf{Z}_{p}[\zeta_{p}]}
\left( T_{p} \mathcal{J} / (1 - \zeta_{p})^{i} \right) \simeq M_{d -
1}(\mathbf{Z}_{p}[\zeta_{p}] / (1 - \zeta_{p})^{i})$, the kernel of the
reduction map is $(1 - \zeta_{p})^{i} \End_{\mathbf{Z}_{p}[\zeta_{p}]} \left(
T_{p} \mathcal{J} \right)$, so we are done.
\end{proof}

\begin{definition}
\label{Definition:Thetap}
Define
\[
\theta_{p} \colon \mathbf{Z}_{p}\left[ \Gal\left( \mathbf{Q}(\mu_{p},
\mathcal{J}[p^{\infty}]) / \mathbf{Q}(\mu_{p}) \right) \right] \to
\End_{\mathbf{Z}_{p}[\zeta_{p}]} \left( T_{p} \mathcal{J} \right)
\]
to be the map which sends $\gamma \in \mathbf{Z}_{p}\left[ \Gal\left(
\mathbf{Q}(\mu_{p}, \mathcal{J}[p^{\infty}]) / \mathbf{Q}(\mu_{p}) \right)
\right]$ to its action on $T_{p} \mathcal{J}$.
\end{definition}

\begin{corollary}
\label{Corollary:KillTorsionInGeneral}
An element $\epsilon \in \mathbf{Z}_{p}\left[ \Gal\left( \mathbf{Q}(\mu_{p},
\mathcal{J}[p^{\infty}]) / \mathbf{Q}(\mu_{p}) \right) \right]$ kills
$\mathcal{J}[(1 - \zeta_{p})^{i}]$ if and only if
\[
\theta_{p}(\epsilon) \in (1 - \zeta_{p})^{i} \End_{\mathbf{Z}_{p}[\zeta_{p}]}
\left( T_{p} \mathcal{J} \right).
\]
\end{corollary}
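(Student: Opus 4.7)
The plan is to deduce this directly from \autoref{Lemma:HowToKillpTorsionGeneralSuperelliptic} by unwinding the definition of $\theta_{p}$. The corollary is essentially a translation: the lemma characterizes when an endomorphism of $T_{p}\mathcal{J}$ commuting with $\zeta_{p}$ kills $\mathcal{J}[(1-\zeta_{p})^{i}]$, and $\theta_{p}(\epsilon)$ is precisely such an endomorphism.

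First I would check that $\theta_{p}$ actually lands in $\End_{\mathbf{Z}_{p}[\zeta_{p}]}(T_{p}\mathcal{J})$, i.e.\ that the Galois action commutes with $\zeta_{p}$. This uses that $\zeta_{p} = \zeta_{n}$ is realized by the automorphism $(x, y) \mapsto (x, \zeta_{p} y)$ of $\mathcal{C}$, which is defined over $\mathbf{Q}(\mu_{p})$; hence every element of $\Gal(\mathbf{Q}(\mu_{p}, \mathcal{J}[p^{\infty}])/\mathbf{Q}(\mu_{p}))$ commutes with $\zeta_{p}$ in its action on $T_{p}\mathcal{J}$, so $\theta_{p}$ is well-defined with the stated codomain.

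Next I would observe that under the identification $\mathcal{J}[(1-\zeta_{p})^{i}] \simeq T_{p}\mathcal{J}/(1-\zeta_{p})^{i}$ recorded just above the lemma, the Galois action of $\epsilon$ on $\mathcal{J}[(1-\zeta_{p})^{i}]$ agrees with the endomorphism induced by $\theta_{p}(\epsilon)$ on the quotient. Thus $\epsilon$ kills $\mathcal{J}[(1-\zeta_{p})^{i}]$ if and only if $\theta_{p}(\epsilon)$ does. Applying \autoref{Lemma:HowToKillpTorsionGeneralSuperelliptic} with $\eta = \theta_{p}(\epsilon)$ then yields the equivalence
\[
\theta_{p}(\epsilon) \in (1 - \zeta_{p})^{i} \End_{\mathbf{Z}_{p}[\zeta_{p}]}(T_{p}\mathcal{J}),
\]
which is the claim.

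There is no substantive obstacle here; the only point worth double-checking is the commutation of $\zeta_{p}$ with the Galois action (hence the role of adjoining $\mu_{p}$ in the definition of $\theta_{p}$). Everything else is a formal combination of the preceding lemma with the identification of $\mathcal{J}[(1-\zeta_{p})^{i}]$ as a quotient of $T_{p}\mathcal{J}$.
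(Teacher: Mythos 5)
Your proposal is correct and follows exactly the same route as the paper, which simply cites the definition of $\theta_{p}$ together with \autoref{Lemma:HowToKillpTorsionGeneralSuperelliptic}; you have just spelled out the two unwinding steps (commutation of the Galois action with $\zeta_{p}$ and compatibility of $\theta_{p}(\epsilon)$ with the quotient identification) that the paper leaves implicit.
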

\begin{proof}
This follows from the definition of $\theta_{p}$ and 
\autoref{Lemma:HowToKillpTorsionGeneralSuperelliptic}.
\end{proof}

\begin{lemma}
\label{Lemma:CreateKillerElements}
Let $i \ge 0$ be an integer and $\gamma \in \mathbf{Z}_{p}\left[
\Gal\left( \mathbf{Q}(\mu_{p}, \mathcal{J}[p^{\infty}]) / \mathbf{Q}(\mu_{p})
\right) \right]$. Suppose that $\gamma - 1$ kills $\mathcal{J}[(1 -
\zeta_{p})^{i}]$. 
\begin{enumerate}[label=\upshape(\arabic*),
ref=\autoref{Lemma:CreateKillerElements}(\arabic*)]
\item \label{Lemma:CreateKillerElementsPower}
For any integer $k \ge 0$, $(\gamma - 1)^{k}$ kills $\mathcal{J}[(1 -
\zeta_{p})^{i k}]$.

\item \label{Lemma:CreateKillerElementsGeoSum}
$\gamma^{p - 1} + \gamma^{p - 2} + \dots + 1$ kills $\mathcal{J}[p] =
\mathcal{J}[(1 - \zeta_{p})^{p - 1}]$.

\item \label{Lemma:CreateKillerElementsPpower}
$\gamma^{p} - 1$ kills $\mathcal{J}[(1 - \zeta_{p})^{p - 1 + i}]$.
\end{enumerate}
\end{lemma}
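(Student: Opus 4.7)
The plan is to route everything through Corollary \ref{Corollary:KillTorsionInGeneral}, which translates each killing statement into a membership $\theta_{p}(\cdot) \in (1 - \zeta_{p})^{j} \End_{\mathbf{Z}_{p}[\zeta_{p}]}(T_{p} \mathcal{J})$. Two supporting observations I would use throughout: first, $1 - \zeta_{p}$ is central in $M_{d - 1}(\mathbf{Z}_{p}[\zeta_{p}])$, so $(1 - \zeta_{p})^{j} \End$ is a two-sided ideal and its powers multiply correctly as $\left((1 - \zeta_{p})^{i} \End\right)^{k} = (1 - \zeta_{p})^{i k} \End$; second, $p \in (1 - \zeta_{p})^{p - 1} \mathbf{Z}_{p}[\zeta_{p}]^{\times}$, which converts $p$-divisibility into $(1 - \zeta_{p})$-divisibility.

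Part (1) is then immediate: the hypothesis gives $\theta_{p}(\gamma - 1) \in (1 - \zeta_{p})^{i} \End$, so since $\theta_{p}$ is a ring homomorphism, $\theta_{p}((\gamma - 1)^{k}) = \theta_{p}(\gamma - 1)^{k} \in (1 - \zeta_{p})^{i k} \End$, and applying Corollary \ref{Corollary:KillTorsionInGeneral} in reverse concludes. For (2) and (3), I would rewrite $\gamma^{m} = (1 + (\gamma - 1))^{m}$ by the binomial theorem (valid in $\mathbf{Z}_{p}[\Gal]$ since $\gamma$ commutes with $1$) and assemble using the hockey-stick identity to obtain
\[
\sum_{m = 0}^{p - 1} \gamma^{m} \;=\; \sum_{k = 0}^{p - 1} \binom{p}{k + 1} (\gamma - 1)^{k}, \qquad \gamma^{p} - 1 \;=\; \sum_{k = 1}^{p} \binom{p}{k} (\gamma - 1)^{k}.
\]

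Now I would estimate the $\theta_{p}$-image of each term. Whenever the binomial coefficient is strictly between $\binom{p}{0}$ and $\binom{p}{p}$, it is divisible by $p$ and therefore by $(1 - \zeta_{p})^{p - 1}$; combined with the factor $(1 - \zeta_{p})^{i k}$ from part (1), such a term lands in $(1 - \zeta_{p})^{p - 1 + i k} \End$. The top-degree term (with binomial coefficient $1$) contributes $(\gamma - 1)^{p - 1} \in (1 - \zeta_{p})^{i(p - 1)} \End$ for (2) and $(\gamma - 1)^{p} \in (1 - \zeta_{p})^{i p} \End$ for (3). Taking the worst exponent gives $p - 1$ in (2) and $p - 1 + i$ in (3), and a final application of Corollary \ref{Corollary:KillTorsionInGeneral} finishes. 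The only real bookkeeping obstacle is controlling that top-degree term, which requires verifying $i(p - 1) \ge p - 1$ for (2) and $i p \ge p - 1 + i$ for (3); both reduce to $i \ge 1$, which is the effective content of the hypothesis.
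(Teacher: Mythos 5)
Your proof is correct and follows essentially the same route as the paper: everything is filtered through $\theta_p$ and \autoref{Corollary:KillTorsionInGeneral}, with the identity $p \in (1-\zeta_p)^{p-1}\mathbf{Z}_p[\zeta_p]^{\times}$ converting $p$-divisibility of binomial coefficients into $(1-\zeta_p)$-divisibility. The one small stylistic difference is in part (3): instead of expanding $\gamma^p - 1 = \sum_{k=1}^{p}\binom{p}{k}(\gamma-1)^k$ and estimating each term, the paper factors $\gamma^p - 1 = (\gamma - 1)(\gamma^{p-1} + \cdots + 1)$ and simply multiplies the containment already established in part (2) by $\theta_p(\gamma - 1) \in (1-\zeta_p)^i\End$, which is a bit cleaner and avoids rechecking the top-degree term. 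Your observation that the argument really requires $i \ge 1$ (to control the $\binom{p}{p} = 1$ term) is correct and applies to the paper's proof as well; the lemma is only invoked in the paper with $i \ge 1$, so no harm results, but the stated hypothesis $i \ge 0$ is slightly too generous for parts (2) and (3).
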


\begin{proof}
Define $\varepsilon \colonequals \gamma - 1$ and $\eta \colonequals
\theta_{p}(\varepsilon)$. Then $\varepsilon$ kills $\mathcal{J}[(1 -
\zeta_{p})^{i}]$, so \autoref{Corollary:KillTorsionInGeneral} implies that $\eta
\in (1 - \zeta_{p})^{i} \End_{\mathbf{Z}_{p}[\zeta_{p}]} \left( T_{p}
\mathcal{J} \right)$.
\begin{enumerate}[label=\upshape(\arabic*),
ref={the proof of \autoref{Lemma:CreateKillerElements}(\arabic*)}]
\item \label{LemmaProof:CreateKillerElementsPower}
Then $\eta^{k} \in (1 - \zeta_{p})^{i k} \End_{\mathbf{Z}_{p}[\zeta_{p}]} \left(
T_{p} \mathcal{J} \right)$, so we are done by
\autoref{Corollary:KillTorsionInGeneral}.

\item \label{LemmaProof:CreateKillerElementsGeoSum}
Using
\[
\gamma^{p - 1} + \gamma^{p - 2} + \dots + 1 \in (\gamma - 1)^{p - 1} + p
\mathbf{Z}[\gamma]
\]
yields
\begin{equation}
\label{Equation:GeometricSeriesGamma}
\theta_{p}(\gamma^{p - 1} + \gamma^{p - 2} + \dots + 1) \in p
\End_{\mathbf{Z}_{p}[\zeta_{p}]} \left( T_{p} \mathcal{J} \right) = (1 -
\zeta_{p})^{p - 1} \End_{\mathbf{Z}_{p}[\zeta_{p}]} \left( T_{p} \mathcal{J}
\right),
\end{equation}
and we are done by \autoref{Corollary:KillTorsionInGeneral}.

\item \label{LemmaProof:CreateKillerElementsPpower}
Multiplying both sides of \eqref{Equation:GeometricSeriesGamma} by
$\theta_{p}(\gamma - 1)$ yields 
\[
\theta_{p}(\gamma^{p} - 1) \in (1 - \zeta_{p})^{p - 1 + i}
\End_{\mathbf{Z}_{p}[\zeta_{p}]} \left( T_{p} \mathcal{J} \right),
\]
so we are done by \autoref{Corollary:KillTorsionInGeneral}. \qedhere
\end{enumerate}
\end{proof}

\begin{corollary}
\label{Corollary:ExponentOfTorsionFieldIsP}
For any $i \ge 1$, $\Gal(\mathbf{Q}(\mu_{p}, \mathcal{J}[(1 - \zeta_{p})^{i(p -
1) + 1}]) / \mathbf{Q}(\mu_{p}, \mathcal{J}[1 - \zeta_{p}]))$ is a group whose
exponent divides $p^{i}$.
\end{corollary}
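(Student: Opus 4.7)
The plan is to use \autoref{Lemma:CreateKillerElementsPpower} inductively. Let $\gamma$ be an arbitrary element of the Galois group $\Gal(\mathbf{Q}(\mu_{p}, \mathcal{J}[(1 - \zeta_{p})^{i(p - 1) + 1}]) / \mathbf{Q}(\mu_{p}, \mathcal{J}[1 - \zeta_{p}]))$; showing that $\gamma^{p^{i}}$ acts trivially on $\mathcal{J}[(1 - \zeta_{p})^{i(p - 1) + 1}]$ is equivalent to bounding the exponent. Lift $\gamma$ to some $\tilde{\gamma} \in \Gal(\mathbf{Q}(\mu_{p}, \mathcal{J}[p^{\infty}]) / \mathbf{Q}(\mu_{p}))$; since $\gamma$ fixes $\mathcal{J}[1 - \zeta_{p}]$, so does $\tilde{\gamma}$, which is to say $\tilde{\gamma} - 1$ kills $\mathcal{J}[(1 - \zeta_{p})^{1}]$.

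Next, I would prove by induction on $k \ge 0$ the claim
\[
\tilde{\gamma}^{p^{k}} - 1 \text{ kills } \mathcal{J}[(1 - \zeta_{p})^{k(p - 1) + 1}].
\]
The base case $k = 0$ is exactly the observation in the previous paragraph. For the inductive step, set $\delta \colonequals \tilde{\gamma}^{p^{k}}$; the inductive hypothesis says $\delta - 1$ kills $\mathcal{J}[(1 - \zeta_{p})^{k(p - 1) + 1}]$, so \autoref{Lemma:CreateKillerElementsPpower} applied to $\delta$ with index $k(p - 1) + 1$ yields that $\delta^{p} - 1 = \tilde{\gamma}^{p^{k + 1}} - 1$ kills $\mathcal{J}[(1 - \zeta_{p})^{(p - 1) + k(p - 1) + 1}] = \mathcal{J}[(1 - \zeta_{p})^{(k + 1)(p - 1) + 1}]$, completing the induction.

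Setting $k = i$ shows that $\tilde{\gamma}^{p^{i}}$ acts trivially on $\mathcal{J}[(1 - \zeta_{p})^{i(p - 1) + 1}]$, hence $\gamma^{p^{i}}$ is the identity of the Galois group. Since $\gamma$ was arbitrary, the exponent of the group divides $p^{i}$. There is no real obstacle here beyond correctly threading the indices through \autoref{Lemma:CreateKillerElementsPpower}; the key structural input has already been packaged into \autoref{Lemma:CreateKillerElements}.
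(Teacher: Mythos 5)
Your proof is correct and takes essentially the same approach as the paper: both pick a Galois element fixing $\mathcal{J}[1 - \zeta_{p}]$ and iterate \autoref{Lemma:CreateKillerElementsPpower} to show $\gamma^{p^{i}} - 1$ kills $\mathcal{J}[(1 - \zeta_{p})^{i(p-1)+1}]$. You merely spell out the lift to $\Gal(\mathbf{Q}(\mu_{p}, \mathcal{J}[p^{\infty}]) / \mathbf{Q}(\mu_{p}))$ and the index bookkeeping of the induction, which the paper leaves implicit.
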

\begin{proof}
Suppose that $\gamma \in \Gal(\mathbf{Q}(\mu_{p}, \mathcal{J}[p^{\infty}]) /
\mathbf{Q}(\mu_{p}, \mathcal{J}[1 - \zeta_{p}]))$. By assumption, $\gamma - 1$
kills $\mathcal{J}[1 - \zeta_{p}]$, so by induction with
\autoref{Lemma:CreateKillerElementsPpower}, $\gamma^{p^{i}} - 1$ kills
$\mathcal{J}[(1 - \zeta_{p})^{i(p - 1) + 1}]$, which means that $\gamma^{p^{i}}$
acts as the identity on $\mathcal{J}[(1 - \zeta_{p})^{i(p - 1) + 1}]$.
\end{proof}

\subsection{The homology of \texorpdfstring{$\mathcal{C}_{n, d}$}{C\_\{n,d\}}}
\label{Section:HomologySuperellipticCatalan}
\begin{definition}
Let $R$ be the ring
\[
R \colonequals \mathbf{Z}[T] / \left( 1 + T^{n} + T^{2 n} + \dots + T^{(d - 1)
n}, 1 + T^{d} + T^{2 d} + \dots + T^{(n - 1) d} \right).
\]
Define 
\[
\varphi_{n, d}(T) \colonequals \frac{(T^{n d} - 1)(T - 1)}{(T^{n} - 1)(T^{d} -
1)} \in \mathbf{Z}[T].
\]
\end{definition}

\begin{lemma}
\label{Lemma:IabEuclidean}
For integers $a, b \ge 1$, define the ideal 
\[
I_{a, b} \colonequals \left( 1 + T + \dots + T^{a - 1}, 1 + T + \dots + T^{b -
1} \right)
\]
of $\mathbf{Z}[T]$. Then $I_{a, b}$ is generated by $1 + T + \dots +
T^{\gcd(a, b) - 1}$.
\end{lemma}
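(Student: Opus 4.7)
The plan is to run the classical Euclidean algorithm directly on the exponents. Write $[k] \colonequals 1 + T + \cdots + T^{k-1} = (T^{k}-1)/(T-1) \in \mathbf{Z}[T]$ and let $g \colonequals \gcd(a,b)$. For one inclusion, note that $g \mid a$ implies $T^{g} - 1 \mid T^{a} - 1$ in $\mathbf{Z}[T]$, so division by $T - 1$ gives $[g] \mid [a]$, and similarly $[g] \mid [b]$. Hence both generators of $I_{a,b}$ lie in $([g])$, yielding $I_{a, b} \subseteq ([g])$.

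For the reverse inclusion, I will induct on $\max(a,b)$, showing $I_{a,b} = I_{a-b,\,b}$ whenever $a > b$; this suffices since $\gcd(a-b, b) = g$ and the base case $a = b = g$ is trivial. The key identity is
\[
[a] \;=\; [b] \;+\; T^{b}\,[a - b],
\]
which immediately gives $[a] \in I_{a-b,\,b}$ (so $I_{a,b} \subseteq I_{a-b,\,b}$) and shows $T^{b}[a - b] \in I_{a,b}$. To recover $[a-b]$ itself, observe that $T^{b} - 1 = (T - 1)\,[b]$ is a multiple of $[b]$, so $(T^{b} - 1)[a - b] \in ([b]) \subseteq I_{a,b}$; subtracting from $T^{b}[a - b]$ yields $[a - b] \in I_{a,b}$, hence $I_{a-b,\,b} \subseteq I_{a,b}$.

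The only real subtlety is the spurious factor of $T^{b}$ appearing in the Euclidean step, which prevents one from reading off $[a-b] \in I_{a,b}$ directly from the identity above; the remedy is the elementary observation that $T^{b} \equiv 1 \pmod{[b]}$. Once past this point, the induction is routine and terminates at the base case where the ideal collapses to $([g])$.
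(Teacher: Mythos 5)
Your proof is correct and follows the same Euclidean-algorithm strategy as the paper, so this is essentially the same approach. The one small difference is your choice of identity: the paper instead writes $[a-b] = [a] - T^{a-b}[b]$, placing the power of $T$ on the $[b]$ term rather than the $[a-b]$ term. This expresses $[a-b]$ directly as a $\mathbf{Z}[T]$-combination of $[a]$ and $[b]$, so both inclusions $I_{a,b} = I_{a-b,b}$ drop out immediately with no need for the auxiliary observation $T^b \equiv 1 \pmod{[b]}$ — the ``subtlety'' you flag simply does not arise. Your separate forward-inclusion paragraph ($I_{a,b} \subseteq ([g])$ via divisibility) is also redundant once the induction gives the full equality $I_{a,b} = I_{g,g} = ([g])$, though it is of course harmless.
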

\begin{proof}
If $a \ge b$, then 
\[
1 + T + \dots + T^{a - b - 1} = (1 + T + \dots + T^{a - 1}) - T^{a - b} (1 + T +
\dots + T^{b - 1})
\]
implies $I_{a, b} = I_{a - b, b}$, so by the Euclidean algorithm, $I_{a, b} =
I_{\gcd(a, b), 0}$.
\end{proof}

\begin{corollary} 
\label{Corollary:RIsom} \hfill
\begin{enumerate}[label=\upshape(\arabic*),
ref=\autoref{Corollary:RIsom}(\arabic*)]

\item \label{Corollary:RIsomphiND}
$R \simeq \mathbf{Z}[T] / (\varphi_{n, d}(T))$.

\item \label{Corollary:RIsomBasis}
$\{ T^{d a + n b} \colon a \in [0, n - 2] \text{ and } b \in [0, d - 2] \}$ is a
$\mathbf{Z}$-basis of $R$.

\end{enumerate}
\end{corollary}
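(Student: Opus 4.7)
The plan is to prove the two parts in order. Throughout, set $A \colonequals 1 + T^{n} + \cdots + T^{(d-1) n}$ and $B \colonequals 1 + T^{d} + \cdots + T^{(n-1) d}$, the two generators of the defining ideal of $R$.

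For \autoref{Corollary:RIsomphiND}, I would show $(A, B) = (\varphi_{n, d})$ in $\mathbf{Z}[T]$ directly. Using $T^{nd} - 1 = (T^{n} - 1) A = (T^{d} - 1) B$ together with $(T - 1)(1 + T + \cdots + T^{d - 1}) = T^{d} - 1$, a short calculation yields
\[
A = \varphi_{n, d}(T) \cdot (1 + T + \cdots + T^{d - 1}), \qquad B = \varphi_{n, d}(T) \cdot (1 + T + \cdots + T^{n - 1}),
\]
so $(A, B) \subseteq (\varphi_{n, d})$. For the reverse inclusion, since $\gcd(n, d) = 1$, \autoref{Lemma:IabEuclidean} produces $P, Q \in \mathbf{Z}[T]$ with $P \cdot (1 + T + \cdots + T^{n-1}) + Q \cdot (1 + T + \cdots + T^{d-1}) = 1$; multiplying through by $\varphi_{n, d}$ writes $\varphi_{n, d} = P B + Q A \in (A, B)$.

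For \autoref{Corollary:RIsomBasis}, I would invoke the Chinese Remainder Theorem isomorphism $\mathbf{Z}/nd\mathbf{Z} \simeq \mathbf{Z}/n\mathbf{Z} \times \mathbf{Z}/d\mathbf{Z}$ at the level of group rings. The substitution $T \mapsto XY$ gives an isomorphism
\[
\mathbf{Z}[T]/(T^{nd} - 1) \simeq \mathbf{Z}[X]/(X^{n} - 1) \otimes_{\mathbf{Z}} \mathbf{Z}[Y]/(Y^{d} - 1),
\]
since $(1, 1)$ generates the direct product. Under this map $A \mapsto 1 + Y^{n} + Y^{2n} + \cdots + Y^{(d-1)n}$, and because $n$ is a unit modulo $d$ the exponents permute $\mathbf{Z}/d\mathbf{Z}$, so $A = 1 + Y + \cdots + Y^{d-1}$; similarly $B = 1 + X + \cdots + X^{n-1}$. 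Since $X^{n} - 1 = (X - 1)(1 + X + \cdots + X^{n-1})$ makes $X^{n} - 1$ redundant in the further quotient (and likewise for $Y$), one gets
\[
R \simeq \mathbf{Z}[X]/(1 + X + \cdots + X^{n - 1}) \otimes_{\mathbf{Z}} \mathbf{Z}[Y]/(1 + Y + \cdots + Y^{d - 1}).
\]
Under this identification, $T^{da + nb}$ corresponds to $X^{da \bmod n} \cdot Y^{nb \bmod d}$. As $a$ ranges over $[0, n - 2]$ the $n - 1$ residues $da \bmod n$ are distinct and cover all of $\mathbf{Z}/n\mathbf{Z}$ except one class; the relation $\sum_{i=0}^{n-1} X^{i} = 0$ shows that any $n - 1$ of $X^{0}, \ldots, X^{n-1}$ form a $\mathbf{Z}$-basis of $\mathbf{Z}[X]/(1 + X + \cdots + X^{n-1})$, and the analogous statement holds for $Y$. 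Tensoring these two bases gives the claimed $\mathbf{Z}$-basis of $R$.

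The main obstacle is upgrading the relation $\gcd(A, B) = \varphi_{n, d}$ from $\mathbf{Q}[T]$, where it is transparent from the cyclotomic factorization of $T^{nd} - 1$, to $\mathbf{Z}[T]$; this is exactly what \autoref{Lemma:IabEuclidean} provides, via the integrality of the B\'ezout coefficients. Once this is in hand, the rest of the argument is formal manipulation with tensor products of group rings.
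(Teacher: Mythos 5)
Your proposal is correct. For part (1) you reprove the paper's ideal identity $(A, B) = (\varphi_{n,d})$ via a two-inclusion argument anchored on the B\'ezout identity from \autoref{Lemma:IabEuclidean}; the paper instead writes a single chain of ideal equalities $(\varphi_{n,d}) = \varphi_{n,d} \cdot I_{n,d} = (A,B)$ after noting $I_{n,d} = (1)$, but these are the same computation rearranged.

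For part (2) you take a genuinely different route. The paper's argument is a ``span and count'': it shows the $(n-1)(d-1)$ monomials $T^{da+nb}$ with $a \in [0,n-2]$, $b \in [0,d-2]$ generate $R$ as a $\mathbf{Z}$-module (using that $T^{nd}=1$ in $R$ and the two defining relations to eliminate the $a=n-1$ and $b=d-1$ rows), and then invokes part (1) plus $\deg \varphi_{n,d} = (n-1)(d-1)$ to conclude a generating set of the right cardinality must be a basis. You instead identify $R$ structurally, via the group-ring Chinese Remainder isomorphism $\mathbf{Z}[T]/(T^{nd}-1) \simeq \mathbf{Z}[X]/(X^n-1) \otimes_{\mathbf{Z}} \mathbf{Z}[Y]/(Y^d-1)$ with $T \mapsto XY$, under which $A$ and $B$ become $1 + Y + \cdots + Y^{d-1}$ and $1 + X + \cdots + X^{n-1}$ respectively (here the coprimality of $n$ and $d$ enters twice: once for CRT, once to permute the exponents), so that $R \simeq \mathbf{Z}[X]/(1+\cdots+X^{n-1}) \otimes_{\mathbf{Z}} \mathbf{Z}[Y]/(1+\cdots+Y^{d-1})$; the claimed monomials are then literally products of basis elements of the two factors. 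One small step you leave implicit is that $T^{nd}-1 \in (A,B)$ (since $T^{nd}-1 = (T^n-1)A$), which is needed to pass from $R = \mathbf{Z}[T]/(A,B)$ to the further quotient of $\mathbf{Z}[T]/(T^{nd}-1)$; worth a sentence. Your route is somewhat cleaner in that the freeness of $R$ and the basis fall out simultaneously without invoking part (1), and it identifies $R$ as $\mathbf{Z}[\zeta_n] \otimes \mathbf{Z}[\zeta_d]$-like, which is structurally illuminating; the paper's proof is shorter and more elementary, avoiding tensor-product formalism.
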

\begin{proof}\hfill
\begin{enumerate}[label=\upshape(\arabic*),
ref={proof of \autoref{Corollary:RIsom}(\arabic*)}]

\item \label{CorollaryProof:RIsomphiND}
\autoref{Lemma:IabEuclidean} implies $I_{n, d}$ is the unit ideal, so
\begin{align*}
(\varphi_{n, d}(T)) &= \varphi_{n, d}(T) \left( I_{n, d} \right) \\
&= \frac{(T^{n d} - 1)(T - 1)}{(T^{n} - 1)(T^{d} - 1)} \left(
\frac{T^{n} - 1}{T - 1}, \frac{T^{d} - 1}{T - 1} \right) \\
&= \left( \frac{T^{n d} - 1}{T^{d} - 1}, \frac{T^{n d} - 1}{T^{n} - 1} \right)
\\
&= \left(  1 + T^{d} + T^{2 d} + \dots + T^{(n - 1) d}, 1 + T^{n} + T^{2 n} +
\dots + T^{(d - 1) n} \right),
\end{align*}
so applying this to the definition of $R$ yields $ R \simeq \mathbf{Z}[T] /
(\varphi_{n, d}(T))$.

\item \label{CorollaryProof:RIsomBasis}
For nonnegative integers $u, v$, define $B_{u, v} \colonequals \{ T^{d a + n b}
\colon a \in [0, u] \text{ and } j \in [0, v] \} $. Since $T^{n d} = 1$ in $R$
and the set $\{ d a + n b : a \in [0, n - 1] \text{ and } b \in [0, d - 1] \}$
contains every residue class modulo $n d$, $B_{n - 1, d - 1}$ must generate $R$
as a $\mathbf{Z}$-module. Using $1 + T^{n} + T^{2 n} + \dots + T^{(d - 1) n} =
0$ and $1 + T^{d} + T^{2 d} + \dots + T^{(n - 1) d} = 0$ shows that $B_{n - 2, d
- 2}$ generates $R$ as a $\mathbf{Z}$-module.  \autoref{Corollary:RIsomphiND}
implies that $R$ is a free $\mathbf{Z}$-module of rank $\deg \varphi_{n, d} = (n
- 1)(d - 1) = \# S_{n - 2, d - 2}$, so $S_{n - 2, d - 2}$ must be a basis.
\qedhere
\end{enumerate}
\end{proof}

\begin{proposition}
\label{Proposition:H1CndFreeRModule}
$H_{1}(\mathcal{C}_{n, d}, \mathbf{Z})$ is a free $R$-module of
rank 1 for which $T$ acts by $\zeta_{n d}$. 
\end{proposition}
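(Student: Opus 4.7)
The strategy is to leverage the two superelliptic structures on $\mathcal{C}_{n, d}$. Applying \autoref{Corollary:FreeSModule} to the degree $n$ cover via $x$ shows that $H_{1}(\mathcal{C}_{n, d}, \mathbf{Z})$ is a free module of rank $d - 1$ over $S_{n} \colonequals \mathbf{Z}[T_{y}]/(1 + T_{y} + \cdots + T_{y}^{n - 1})$, with $T_{y} \colonequals \zeta_{n}$ acting. Applying the same corollary to the degree $d$ cover via $y$ yields another free module structure, of rank $n - 1$, over $S_{d} \colonequals \mathbf{Z}[T_{x}]/(1 + T_{x} + \cdots + T_{x}^{d - 1})$ with $T_{x} \colonequals \zeta_{d}$ acting. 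Because $\zeta_{n}$ and $\zeta_{d}$ commute as automorphisms of $\mathcal{C}_{n, d}$, these combine to make $H_{1}(\mathcal{C}_{n, d}, \mathbf{Z})$ a module over $\mathbf{Z}[T_{x}, T_{y}]$ that is annihilated by both $1 + T_{y} + \cdots + T_{y}^{n - 1}$ and $1 + T_{x} + \cdots + T_{x}^{d - 1}$.

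Next I would identify the resulting quotient ring with $R$. Because $\gcd(n, d) = 1$, Bezout produces integers $a, b$ with $a n + b d = 1$, and setting $\zeta_{n d} \colonequals \zeta_{n}^{b} \zeta_{d}^{a}$ gives $\zeta_{n d}^{n} = \zeta_{d}$ and $\zeta_{n d}^{d} = \zeta_{n}$. The assignments $T_{x} \mapsto T^{n}$, $T_{y} \mapsto T^{d}$ define a ring homomorphism from $\mathbf{Z}[T_{x}, T_{y}]/(1 + T_{y} + \cdots + T_{y}^{n - 1}, 1 + T_{x} + \cdots + T_{x}^{d - 1})$ to $R$, and $T \mapsto T_{x}^{a} T_{y}^{b}$ furnishes an inverse, so the two rings are isomorphic. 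Under this identification $T$ acts on $H_{1}(\mathcal{C}_{n, d}, \mathbf{Z})$ as $\zeta_{n d}$.

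To exhibit a free rank-$1$ generator, I take $\psi_{1, 0}$. Choosing the loops $\beta_{1}, \ldots, \beta_{d}$ $\zeta_{d}$-equivariantly about the $d$-th roots of $-1$, the automorphism $\zeta_{d}$ cycles them: $T_{x} \psi_{i, j} = \psi_{i + 1, j}$ (indices of $i$ modulo $d$, subject to the relation in \autoref{Lemma:ZdActsCorrectlyOnLoops}). Hence $T_{x}^{i - 1} T_{y}^{j} \psi_{1, 0} = \psi_{i, j}$, and as $i$ ranges over $[1, d - 1]$ and $j$ over $[0, n - 2]$, these recover exactly the $\mathbf{Z}$-basis $\Psi$ of \autoref{Proposition:Cite36Molin}. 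Hence $\psi_{1, 0}$ generates $H_{1}(\mathcal{C}_{n, d}, \mathbf{Z})$ as an $R$-module, giving a surjection $R \twoheadrightarrow H_{1}(\mathcal{C}_{n, d}, \mathbf{Z})$; comparing $\mathbf{Z}$-ranks, which both equal $(n - 1)(d - 1)$ by \autoref{Corollary:RIsomBasis} and \autoref{Proposition:Cite36Molin}, shows that this surjection is an isomorphism.

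The main obstacle is verifying the cyclic action $T_{x} \psi_{i, j} = \psi_{i + 1, j}$. The natural $\zeta_{d}$-fixed points on $\mathbf{P}^{1}$ are $0$ and $\infty$, both of which lie in the branch locus of the $x$-map, so care is needed to arrange compatible loops and to track the effect on the basepoint $B$ under $\zeta_{d}$ (conjugation by a connecting path). One can either carry out this geometric bookkeeping directly, or bypass it by working rationally: after tensoring with $\mathbf{Q}$, $H_{1}(\mathcal{C}_{n, d}, \mathbf{Q})$ decomposes into one-dimensional simultaneous eigenspaces under the commuting $\zeta_{n}$ and $\zeta_{d}$, hence is free of rank $1$ over $\mathbf{Q}[\zeta_{n d}] = R \otimes \mathbf{Q}$; the $\mathbf{Z}$-rank match then promotes this to integral freeness.
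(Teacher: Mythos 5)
Your main line of argument agrees with the paper's: both hinge on choosing the loops $\beta_i$ to be $\zeta_d$-equivariant (the paper sets $\alpha_i = \zeta_d^{i-1}\alpha_1$, $\beta_i = \zeta_d^{i-1}\beta_1$, basepoint $B = (0,1)$), from which $\psi_{i,j} = \zeta_n^j \zeta_d^{i-1}\psi_{1,0}$ is immediate, and then the $\mathbf{Z}$-basis of $R$ from \autoref{Corollary:RIsomBasis} is matched to the $\mathbf{Z}$-basis $\Psi$ of \autoref{Proposition:Cite36Molin}, yielding an isomorphism $R \to H_1$. Your opening detour through a double application of \autoref{Corollary:FreeSModule} and an explicit ring isomorphism via B\'ezout is correct but unnecessary; the paper goes straight to building the $R$-module map once the two annihilation relations (\autoref{Lemma:ZdActsCorrectlyOnLoops}, \autoref{Lemma:ZnActsCorrectlyOnLoops}) are in hand.

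Two small corrections. First, you claim that $0$ lies in the branch locus of the $x$-map; it does not. The branch points of $x \colon \mathcal{C}_{n,d} \to \mathbf{P}^1$ are the $d$-th roots of $-1$ together with $\infty$, and $0^d + 1 = 1 \neq 0$. Indeed $0$ is chosen as $\pi(B)$ \emph{because} it is a $\zeta_d$-fixed point not in the branch locus, and since $\zeta_d$ fixes $B = (0,1)$ on the nose, no basepoint-change/conjugation bookkeeping is needed. Second, your fallback ``rational'' route has a genuine gap: knowing $H_1 \otimes \mathbf{Q}$ is free of rank $1$ over $R \otimes \mathbf{Q}$ and that $H_1$ and $R$ have the same $\mathbf{Z}$-rank does \emph{not} give integral freeness. $R \simeq \mathbf{Z}[T]/(\varphi_{n,d}(T))$ is an order in a product of cyclotomic fields with possibly nontrivial class group, so a torsion-free $R$-module of $\mathbf{Z}$-rank $(n-1)(d-1)$ that is rationally free of rank $1$ could a priori be a non-principal ideal (or a more general locally free rank-one module). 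The integral statement genuinely requires exhibiting a generator, which is what the equivariant-loop computation accomplishes.
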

\begin{proof}

We apply the results of \autoref{Subsection:HomologyBackground}. Our basepoint
will be $B \colonequals (0, 1)$. Choose $\alpha_{1} \in \mathbf{C}$ to be a root
of $(-x)^{d} + 1 = 0$, and define $\alpha_{i} \colonequals \zeta_{d}^{i - 1}
\alpha_{1}$.  Let $\beta_{1}$ be a loop in $\mathbf{P}^{1} \setminus \{
-\alpha_{1}, \dots, -\alpha_{d}, \infty \}$ starting and ending at $0$ which
encircles $-\alpha_{1}$ positively and does not encircle any of $\{ -\alpha_{2},
\dots, -\alpha_{d}, \infty \}$. Define $\beta_{i} \colonequals \zeta_{d}^{i - 1}
\beta_{1}$. As in \autoref{Definition:XiLoops}, for $i \in [1, d]$ and $j
\in [0, n - 1]$, define $\psi_{i, j} \in H_{1}(\mathcal{C}_{n, d}, \mathbf{Z})$
to be the cycle $\zeta_{n}^{j} [\beta_{i} \beta_{i + 1}^{-1}]$.  Therefore,
\[
\psi_{i, j} =  \zeta_{n}^{j} \zeta_{d}^{i - 1} \psi_{1, 0},
\]
so \autoref{Proposition:Cite36Molin} implies that $\{ \zeta_{n}^{j} \zeta_{d}^{i
- 1} \psi_{1, 0} \colon i \in [1, d - 1] \text{ and } j \in [0, n - 2] \}$ is a
$\mathbf{Z}$-basis for $H_{1}(\mathcal{C}_{n, d}, \mathbf{Z})$.
\autoref{Lemma:ZdActsCorrectlyOnLoops} and
\autoref{Lemma:ZnActsCorrectlyOnLoops} give the relations
\begin{align*}
(1 + \zeta_{n} + \dots + \zeta_{n}^{n - 1}) \gamma_{1, 0} &= 0, \\
(1 + \zeta_{d} + \dots + \zeta_{d}^{d - 1}) \gamma_{1, 0} &= 0,
\end{align*}
so there is an $R$-module map $R \to H_{1}(\mathcal{C}_{n, d}, \mathbf{Z})$
sending $1$ to $\gamma_{1, 0}$. The $\mathbf{Z}$-basis of $R$ given in
\autoref{Corollary:RIsomBasis} gets mapped to the $\mathbf{Z}$-basis $\{
\zeta_{n}^{a} \zeta_{d}^{b} \gamma_{1, 0} \colon a \in [0, d - 2] \text{
and } b \in [0, n - 2] \}$ of $H_{1}(\mathcal{C}_{n, d}, \mathbf{Z})$, so this
is an isomorphism.
\end{proof}

\begin{corollary}
\label{Corollary:SubringByXi}
The map $\mathbf{Z}[T] \to \End \mathcal{J}_{n, d}$ sending $T$ to $\zeta_{n d}$
has kernel $(\varphi_{n, d}(T))$.
\end{corollary}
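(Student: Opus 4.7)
The plan is to deduce the corollary from \autoref{Proposition:H1CndFreeRModule} together with the standard fact that the action of $\End(\mathcal{J}_{n,d})$ on $H_{1}(\mathcal{C}_{n,d}, \mathbf{Z}) = H_{1}(\mathcal{J}_{n,d}, \mathbf{Z})$ is faithful. More precisely, since $\mathcal{J}_{n,d}$ is a complex abelian variety, writing $\mathcal{J}_{n,d} = V/\Lambda$ with $\Lambda = H_{1}(\mathcal{J}_{n,d}, \mathbf{Z})$ identifies $\End(\mathcal{J}_{n,d})$ with the subring of $\End_{\mathbf{Z}}(\Lambda)$ consisting of those $\mathbf{Z}$-linear endomorphisms of $\Lambda$ whose $\mathbf{R}$-linear extension preserves the complex structure on $V$. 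In particular, the natural map $\End(\mathcal{J}_{n,d}) \to \End_{\mathbf{Z}}(H_{1}(\mathcal{C}_{n,d}, \mathbf{Z}))$ is injective, so the kernel of the composed map
\[
\mathbf{Z}[T] \to \End(\mathcal{J}_{n,d}) \hookrightarrow \End_{\mathbf{Z}}(H_{1}(\mathcal{C}_{n,d}, \mathbf{Z}))
\]
agrees with the kernel of the original map.

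Now, by \autoref{Proposition:H1CndFreeRModule}, $H_{1}(\mathcal{C}_{n,d}, \mathbf{Z})$ is a free $R$-module of rank $1$ on which $T$ acts as $\zeta_{nd}$. For a free module of rank $1$ over a commutative ring, the $\mathbf{Z}[T]$-annihilator is precisely the kernel of the surjection $\mathbf{Z}[T] \twoheadrightarrow R$, which by \autoref{Corollary:RIsomphiND} is exactly $(\varphi_{n,d}(T))$. Combining this with the injectivity above completes the proof.

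There is no real obstacle here; the work has already been done in \autoref{Proposition:H1CndFreeRModule} and \autoref{Corollary:RIsom}. The only subtle point to articulate is the faithfulness of the $\End(\mathcal{J}_{n,d})$-action on $H_{1}$, but this is a standard consequence of the uniformization $\mathcal{J}_{n,d} = V/\Lambda$ and does not require any further input from the paper.
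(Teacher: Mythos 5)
Your proposal is correct and matches the paper's proof essentially verbatim: both factor the map through $\End H_{1}(\mathcal{C}_{n,d}, \mathbf{Z})$, invoke \autoref{Proposition:H1CndFreeRModule} together with \autoref{Corollary:RIsomphiND} to identify the kernel of the composite with $(\varphi_{n,d}(T))$, and conclude by the injectivity of $\End\mathcal{J}_{n,d}\to\End H_{1}(\mathcal{J}_{n,d},\mathbf{Z})$. Your extra paragraph justifying that injectivity via the uniformization $\mathcal{J}_{n,d}=V/\Lambda$ is a harmless elaboration of a fact the paper simply asserts.
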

\begin{proof}
By \autoref{Proposition:H1CndFreeRModule}, $\mathbf{Z}[T] \to \End
\mathcal{J}_{n, d} \to \End H_{1}(\mathcal{J}_{n, d}, \mathbf{Z}) = \End
H_{1}(\mathcal{C}_{n, d}, \mathbf{Z})$ has kernel equal to $(\varphi_{n,
d}(T))$. Since the map $\End \mathcal{J}_{n, d} \to \End H_{1}(\mathcal{J}_{n,
d}, \mathbf{Z})$ is injective, we are done.
\end{proof}

In view of \autoref{Corollary:SubringByXi}, we will view $R$ as the
subring of $\End \mathcal{J}_{n, d}$ generated by $Z$.

\section{The \texorpdfstring{$1 - \zeta_{n}$}{1 - zeta\_n} endomorphism and
\texorpdfstring{$(1 - \zeta_{n})$}{(1 - zeta\_n)}-descent}
\label{Section:1MinusZeta}

Suppose that $K$ contains a primitive $n$th root of unity $\zeta_{n}$.  Define
$\zeta_{n} \in \Aut(\mathcal{C})$, $\zeta_{n} \in \Aut(\mathcal{J})$, and $1 -
\zeta_{n} \in \End(J)$ as in \autoref{Definition:1MinusZetaEndomorphism}.  We
now state a few properties about the $1 - \zeta_{n}$ endomorphism. We adapt the
main results of Sections 6.1, 6.2, and 6.3 of \cite{poonen2006lectures}, which
states everything in the hyperelliptic case.  However, the extension to
superelliptic curves is straightforward and we omit aspects of the proofs that
generalize immediately. The case when $n$ is prime is considered in
\cites{poonen1997explicit,schaefer1998computing}.

We use a few more definitions.
\begin{align*}
\overline{K} &\colonequals\text{a separable closure of }K \\
\overline{\mathcal{C}} &\colonequals \mathcal{C} \times_{K} \overline{K} \\
G &\colonequals \Gal(\overline{K} / K) \\
\pi &\colonequals \text{the }x\text{-coordinate map }\mathcal{C} \to
\mathbf{P}^1 \\
\mathcal{W}_i &\colonequals (-\alpha_i, 0) \in \mathcal{C}(\overline{K}) \\
\mathcal{W} &\colonequals \{ \mathcal{W}_1, \ldots, \mathcal{W}_d \} \\
(\mathbf{Z} / n \mathbf{Z})^{\mathcal{W}} &\colonequals \text{the free
}\mathbf{Z} / n \mathbf{Z}\text{-module with basis }\mathcal{W}_1, \ldots,
\mathcal{W}_d.
\end{align*}

Observe that $\mathcal{W} \cup \{ \infty \}$ is the set of ramification points
of $\pi$ over $\overline{K}$ and that $\mathcal{W}$ is a $G$-module.

\begin{proposition} 
\label{Proposition:SuperellipticProp611Poonen}
There is a split exact sequence of $G$-modules
\begin{center}
\begin{tikzpicture}
\node(0m) at (0, 0) {$0$};
\node(1m) at (3, 0) {$\mathbf{Z} / n \mathbf{Z}$};
\node(2m) at (6, 0) {$(\mathbf{Z} / n \mathbf{Z})^{\mathcal{W}}$};
\node(3m) at (9, 0) {$\mathcal{J}[1 - \zeta_{n}]$};
\node(4m) at (12, 0) {$0$};
\draw[->] (0m) -- (1m);
\draw[->] (1m) -- (2m) node[midway, above] {$\Delta$};
\draw[->] (2m) -- (3m) node[midway, above] {$s$};
\draw[->] (3m) -- (4m);
\end{tikzpicture}
\end{center}
where
\begin{align*}
\Delta(1) &= (1,\; \dots,\; 1) \\
s(a_1, \dots, a_d) &= \sum_{i = 1}^d a_i [\mathcal{W}_i - \infty].
\end{align*}
\end{proposition}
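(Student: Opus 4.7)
The plan has three parts: verify that $s$ and $\Delta$ are well-defined $G$-equivariant homomorphisms into the claimed groups, prove exactness, and exhibit the splitting. Everything is driven by two divisor computations, which I do first.

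Since $\pi \colon \mathcal{C} \to \mathbf{P}^{1}$ has $\mathcal{W}_{i}$ as its unique preimage of $-\alpha_{i}$ with ramification index $n$, and (using $\gcd(n, d) = 1$) the single point $\infty$ of $\mathcal{C}$ over $\infty \in \mathbf{P}^{1}$ also ramifies with index $n$,
\[
\divisor(x + \alpha_{i}) = n \mathcal{W}_{i} - n \infty.
\]
Similarly, $y$ is a local parameter at each $\mathcal{W}_{i}$ (since $y^{n} = f(x)$ and $f$ has a simple zero at $-\alpha_{i}$) and has a pole of order $d$ at $\infty$, so
\[
\divisor(y) = \mathcal{W}_{1} + \dots + \mathcal{W}_{d} - d \infty.
\]
The first relation makes $s$ well-defined, since $n [\mathcal{W}_{i} - \infty] = 0$. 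Because $\zeta_{n}$ fixes each $\mathcal{W}_{i}$ and $\infty$, we get $(1 - \zeta_{n})[\mathcal{W}_{i} - \infty] = 0$, so the image of $s$ lies in $\mathcal{J}[1 - \zeta_{n}]$. The second relation yields $s(\Delta(1)) = [\divisor(y)] = 0$, so the image of $\Delta$ lies in $\ker s$. Both maps are $G$-equivariant because $G$ permutes $\mathcal{W}$, fixes $\infty$, and fixes $\Delta(1) = (1, \ldots, 1)$.

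Next I argue that $s$ is surjective. Given $[D] \in \mathcal{J}[1 - \zeta_{n}]$, we have $\zeta_{n}^{\ast} D \sim D$, so $\zeta_{n}^{\ast} D - D = \divisor(f)$ for some $f$. The product $\prod_{j=0}^{n-1} \zeta_{n}^{j}(f)$ has trivial divisor, hence is a nonzero constant $c$; rescaling $f$ by $c^{-1/n} \in \overline{K}^{\times}$ makes $\prod_{j} \zeta_{n}^{j}(f) = 1$. Since $\overline{K}(\mathcal{C}) / \overline{K}(\mathbf{P}^{1})$ is cyclic of degree $n$ with Galois group $\langle \zeta_{n} \rangle$, Hilbert 90 produces $g \in \overline{K}(\mathcal{C})^{\times}$ with $f = \zeta_{n}(g)/g$, and replacing $D$ by $D - \divisor(g)$ yields a representative $D'$ with $\zeta_{n}^{\ast} D' = D'$ exactly. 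The $\zeta_{n}$-orbits on $\mathcal{C}$ are either the fixed points $\mathcal{W} \cup \{\infty\}$ or full fibers of $\pi$ of size $n$, so $D' = \sum_{i} c_{i} \mathcal{W}_{i} + c_{\infty} \infty + \pi^{\ast} E$ for some $E \in \Div(\mathbf{P}^{1})$. Because $\mathbf{P}^{1}$ has trivial Jacobian, $E \sim (\deg E) \cdot \infty$ in $\Div(\mathbf{P}^{1})$ and hence $\pi^{\ast} E \sim n (\deg E) \infty$ on $\mathcal{C}$; bookkeeping of degrees gives $[D] = [D'] = \sum_{i} c_{i} [\mathcal{W}_{i} - \infty] = s(c_{1}, \ldots, c_{d})$.

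Finally I match sizes to conclude $\ker s = \Delta(\mathbf{Z}/n\mathbf{Z})$. By \autoref{Corollary:FreeSModule}, $H_{1}(\mathcal{C}, \mathbf{Z})$ is a free $S$-module of rank $d - 1$ on which $T$ acts as $\zeta_{n}$. The determinant of multiplication by $(1 - T)$ on $S$ (viewed as a free $\mathbf{Z}$-module of rank $n - 1$) equals $\prod_{j=1}^{n-1}(1 - \zeta_{n}^{j}) = \Phi(1) = n$, so $1 - \zeta_{n}$ acts on $H_{1}$ with determinant $n^{d-1}$, giving $|\mathcal{J}[1 - \zeta_{n}]| = n^{d-1}$. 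This equals $|(\mathbf{Z}/n\mathbf{Z})^{d} / \Delta(\mathbf{Z}/n\mathbf{Z})|$, so together with the surjectivity above and the inclusion $\Delta(\mathbf{Z}/n\mathbf{Z}) \subseteq \ker s$, equality $\ker s = \Delta(\mathbf{Z}/n\mathbf{Z})$ follows by counting. For the splitting, define
\[
t \colon (\mathbf{Z}/n\mathbf{Z})^{\mathcal{W}} \to \mathbf{Z}/n\mathbf{Z}, \qquad (a_{1}, \ldots, a_{d}) \mapsto d^{-1} \sum_{i} a_{i},
\]
which is well-defined since $\gcd(n, d) = 1$, is manifestly $G$-equivariant (both sides carry trivial $G$-action after taking the symmetric sum), and satisfies $t \circ \Delta = \operatorname{id}$. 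The main obstacle is the Hilbert 90 step for recognizing a $\zeta_{n}$-invariant representative, but this is standard for cyclic covers and follows the treatment in Sections 6.2--6.3 of \cite{poonen2006lectures}.
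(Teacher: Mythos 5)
Your proof is correct, and it takes a genuinely different route from the paper for the kernel computation. The paper runs the whole argument through group cohomology of the cyclic group $Z_{n}$: two short exact sequences ($\overline{K}^\times \to \overline{K}(\mathcal{C})^\times \to \Princ$ and $\Princ \to \Div^{0} \to \mathcal{J}$), Hilbert 90 and the vanishing of $H^{1}(\overline{K}(\mathcal{C})^\times)$ and $H^{2}(\overline{K}^\times)$, to extract simultaneously that $s$ is surjective with kernel generated by $(1,\dots,1)$. You prove surjectivity by a hands-on Hilbert 90 descent to a $\zeta_{n}$-invariant representative divisor (essentially the same ingredient, repackaged), but for the kernel you substitute a counting argument: $\lvert\mathcal{J}[1-\zeta_{n}]\rvert = n^{d-1}$ computed from the free $S$-module structure of $H_{1}(\mathcal{C},\mathbf{Z})$. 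That is a cleaner way to pin down the kernel and avoids the $H^{0}(\Princ)$ bookkeeping.

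One caveat worth noting: \autoref{Corollary:FreeSModule} is a statement about the singular homology of $\mathcal{C}(\mathbf{C})$, whereas \autoref{Proposition:SuperellipticProp611Poonen} is stated over an arbitrary (perfect) field $K$ with $\characteristic(K) \nmid n$, and the paper's cohomological proof is field-agnostic. Your counting step literally applies only when $K$ embeds into $\mathbf{C}$. To make it work in positive characteristic you would need to replace the homology argument by its $\ell$-adic analogue (e.g., $1-\zeta_{n}$ is a separable isogeny of degree $n^{d-1}$, where the degree is read off the Tate module, whose structure agrees with that of $H_{1}$ by lifting/smooth-proper base change). This is routine but should be said; as written the argument has a small scope gap that the paper's proof does not.
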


\begin{proof} (c.f.~\cite{poonen2006lectures}, Proposition 6.1.1)

\begin{enumerate}[label=\textbf{Step \arabic*:}, ref={Step \arabic*},
leftmargin=*, itemindent=25pt]
\item \label{Step:S611PsIsWellDefined}
$s$\textit{ is well-defined.}

Each point in $\mathcal{W} \cup \{ \infty \}$ is fixed by $\zeta_{n}$, so
$[\mathcal{W}_i - \infty] \in \mathcal{J}[1 - \zeta_{n}]$. The calculation
\begin{equation}
\label{Equation:WeierstrassPointsNTorsion}
\divisor(x + \alpha_i) = n \mathcal{W}_i - n \infty
\end{equation}
shows that the divisor classes $[\mathcal{W}_i - \infty]$ are $n$-torsion. 

\item \label{Step:S611PGModHoms}
$\Delta$\textit{ and }$s$\textit{ are }$G$\textit{-module homomorphisms.}

This is clear.

\item \label{Step:S611PsDeltaZero}
$s \circ \Delta = 0$\textit{.}

This follows from $\divisor(y) = \displaystyle\sum_{i = 1}^{d} [\mathcal{W}_i -
\infty]$.

\item \label{Step:S611Pkers}
$\ker(s)$\textit{ is generated by }$(1, \dots, 1)$\textit{.}

\item \label{Step:S611Pcokers}
$s$\textit{ is surjective.}

We modify the proof of Proposition {3.2} in \cite{schaefer1998computing} to
prove \autoref{Step:S611Pkers} and \autoref{Step:S611Pcokers} simultaneously.
Use $\Div^{0}$ to denote the degree-zero divisors on $\mathcal{C}$ and use
$\Princ$ to denote the subgroup of principal divisors.  The following are exact
sequences of $\mathbf{Z}[\zeta_{n}]$-modules.

\begin{center}
\begin{tikzpicture}
\node(mm) at (-3, 1) {$0$};
\node(0m) at (0, 1) {$\overline{K}^\times$};
\node(1m) at (3, 1) {$\overline{K}(\mathcal{C})^\times$};
\node(2m) at (6, 1) {$\Princ$};
\node(3m) at (9, 1) {$0$;};
\node(mp) at (-3, 0) {$0$};
\node(0p) at (0, 0) {$\Princ$};
\node(1p) at (3, 0) {$\Div^0$};
\node(2p) at (6, 0) {$\mathcal{J}$};
\node(3p) at (9, 0) {$0$.};
\draw[->] (mm) -- (0m);
\draw[->] (0m) -- (1m);
\draw[->] (1m) -- (2m);
\draw[->] (2m) -- (3m);
\draw[->] (mp) -- (0p);
\draw[->] (0p) -- (1p);
\draw[->] (1p) -- (2p);
\draw[->] (2p) -- (3p);
\end{tikzpicture}
\end{center}
We now apply group cohomology with the group $Z_{n} = \langle \zeta_{n}
\rangle$. 

\begin{enumerate}[label=\upshape{(\roman*)}, ref={\theenumi(\roman*)}]
\item \label{Step:S611PcokersGalCohoHilb90}
Since $Z_{n} \simeq \Gal(\overline{K}(\mathcal{C}) / \overline{K}(x))$,
\begin{equation}
\label{Equation:H0KC}
H^0(\overline{K}(\mathcal{C})^\times) = \overline{K}(x)^\times
\end{equation}
and Hilbert's Theorem 90 yields
\begin{equation}
\label{Equation:H1KC}
H^1(\overline{K}(\mathcal{C})^\times) = 0.
\end{equation}

\item \label{Step:S611PcokersGalCohoTrivAction}
Since $\overline{K}^\times$ is a trivial $Z_{n}$-module, 
\begin{align}
H^0(\overline{K}^\times) &= 0 \label{Equation:H0K} \\
H^1(\overline{K}^\times) &= \mu_{n}(\overline{K}) \label{Equation:HoddK} \\
H^2(\overline{K}^\times) &= \overline{K}^\times / \overline{K}^{\times n} = 0.
\label{Equation:H2K}
\end{align}
Substituting \eqref{Equation:H1KC} and \eqref{Equation:H2K} into
\begin{center}
\begin{tikzpicture}
\node(0m) at (0, 1) {$H^1(\overline{K}(\mathcal{C})^\times)$};
\node(1m) at (3, 1) {$H^1(\Princ)$};
\node(2m) at (6, 1) {$H^2(\overline{K}^\times)$.};
\draw[->] (0m) -- (1m);
\draw[->] (1m) -- (2m);
\end{tikzpicture}
\end{center}
yields
\begin{equation}
\label{Equation:H1Princ}
H^1(\Princ) = 0.
\end{equation}

\item \label{Step:S611PcokersGalCohoTrivH0Princ}
Substituting \eqref{Equation:H0K}, \eqref{Equation:H0KC},
\eqref{Equation:HoddK}, \eqref{Equation:H1KC} into 
\begin{center}
\begin{tikzpicture}
\node(0m) at (0, 1) {$H^0(\overline{K}^\times)$};
\node(1m) at (2.75, 1) {$H^0(\overline{K}(C)^\times)$};
\node(2m) at (5.5, 1) {$H^0(\Princ)$};
\node(3m) at (8.25, 1) {$H^1(\overline{K}^\times)$};
\node(4m) at (11, 1) {$H^1(\overline{K}(\mathcal{C})^\times)$};
\draw[->] (0m) -- (1m);
\draw[->] (1m) -- (2m);
\draw[->] (2m) -- (3m);
\draw[->] (3m) -- (4m);
\end{tikzpicture}
\end{center}
yields
\begin{center}
\begin{tikzpicture}
\node(0m) at (0, 1) {$0$};
\node(1m) at (3, 1) {$\overline{K}(x)^\times$};
\node(2m) at (6, 1) {$H^0(\Princ)$};
\node(3m) at (9, 1) {$\mu_{n}(\overline{K})$};
\node(4m) at (12, 1) {$0$,};
\draw[->] (0m) -- (1m);
\draw[->] (1m) -- (2m);
\draw[->] (2m) -- (3m);
\draw[->] (3m) -- (4m);
\end{tikzpicture}
\end{center}
so since the image of $\divisor(y) \in H^{0}(\Princ)$ generates
$\mu_{n}(\overline{K})$, 
\begin{equation}
H^0(\Princ) \text{ is generated by } \{ \divisor(y) \} \cup \{ \divisor(u)
\colon u \in \overline{K}(x)^\times  \}.
\label{Equation:H0Princ}
\end{equation}

\item \label{Step:S611PcokersGalCohoTrivJ1mz}
We substitute \eqref{Equation:H1Princ} into the long exact sequence
\begin{center}
\begin{tikzpicture}
\node(mm) at (-3, 1) {$0$};
\node(0m) at (0, 1) {$H^0(\Princ)$};
\node(1m) at (3, 1) {$H^0(\Div^0)$};
\node(2m) at (6, 1) {$\mathcal{J}[1 - \zeta_{n}]$};
\node(3m) at (9, 1) {$H^1(\Princ)$};
\draw[->] (mm) -- (0m);
\draw[->] (0m) -- (1m);
\draw[->] (1m) -- (2m);
\draw[->] (2m) -- (3m);
\end{tikzpicture}
\end{center}
to obtain
\begin{center}
\begin{tikzpicture}
\node(mm) at (-3, 1) {$0$};
\node(0m) at (0, 1) {$H^0(\Princ)$};
\node(1m) at (3, 1) {$H^0(\Div^0)$};
\node(2m) at (6, 1) {$\mathcal{J}[1 - \zeta_{n}]$};
\node(3m) at (9, 1) {$0$.};
\draw[->] (mm) -- (0m);
\draw[->] (0m) -- (1m);
\draw[->] (1m) -- (2m);
\draw[->] (2m) -- (3m);
\end{tikzpicture}
\end{center}
The group $H^0(\Div^0)$ consists of the $\zeta_{n}$-fixed divisors, so it is
generated by $[\mathcal{W}_i - \infty]$ and $\Norm(P - \infty)$ for arbitrary $P
\in \mathcal{C}(\overline{K})$. Observe that 
\begin{align*}
\divisor (x - x(P)) &= \Norm(P - \infty), \\
\divisor (y) &= \sum [\mathcal{W}_i - \infty],
\end{align*}
so by \eqref{Equation:H0Princ}, $H^0(\Princ)$ is generated by $\sum
[\mathcal{W}_i - \infty]$ and $\Norm(P - \infty)$ for arbitrary $P \in
\mathcal{C}(\overline{K})$. Therefore, the $[\mathcal{W}_i - \infty]$ generate
$\mathcal{J}[1 - \zeta_{n}] \simeq H^0(\Div^0) / H^0(\Princ)$ and the only relation
is $\sum [\mathcal{W}_i - \infty] = 0$.
\end{enumerate}

\item \label{Step:S611Psplits}
\textit{The exact sequence in the statement of
\autoref{Proposition:SuperellipticProp611Poonen} splits.}

The splitting is given by
\begin{center}
\begin{tikzpicture}
\node(0m) at (0, 1) {$(\mathbf{Z} / n \mathbf{Z})^{\mathcal{W}}$};
\node(1m) at (4, 1) {$\mathbf{Z} / n \mathbf{Z}$};
\node(0p) at (0, 0) {$(a_1, \ldots, a_d)$};
\node(1p) at (4, 0) {$d^{-1} \displaystyle\sum a_i$. \qedhere};
\draw[->] (0m) -- (1m);
\draw[->] (0p) -- (1p);
\end{tikzpicture}
\end{center}
\end{enumerate}
\end{proof}

\begin{corollary}
\label{Corollary:UniqueRepresentationJ1mZ}
Each element of $\mathcal{J}[1 - \zeta_{n}]$ has a unique representation of the form
\[
\sum_{i = 1}^{d} a_{i} [\mathcal{W}_{i} - \infty]
\]
for $a_{i} \in \mathbf{Z} / n \mathbf{Z}$ satisfying $a_{1} + \dots + a_{d}
\equiv 0 \pmod{n}$.
\end{corollary}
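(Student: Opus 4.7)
The plan is to deduce this immediately from \autoref{Proposition:SuperellipticProp611Poonen}, which identifies $\mathcal{J}[1-\zeta_{n}]$ with the cokernel of $\Delta \colon \mathbf{Z}/n\mathbf{Z} \to (\mathbf{Z}/n\mathbf{Z})^{\mathcal{W}}$. The corollary just asks for a canonical transversal to $\mathrm{im}(\Delta)$, and the candidate transversal is the subgroup $H \colonequals \{(a_{1},\dots,a_{d}) : a_{1}+\cdots+a_{d}\equiv 0 \pmod{n}\}$. So the whole game reduces to showing that $(\mathbf{Z}/n\mathbf{Z})^{\mathcal{W}} = \mathrm{im}(\Delta)\oplus H$, after which the map $s$ restricts to an isomorphism $H \xrightarrow{\sim} \mathcal{J}[1-\zeta_{n}]$.

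First I would introduce the sum map $\sigma\colon (\mathbf{Z}/n\mathbf{Z})^{\mathcal{W}} \to \mathbf{Z}/n\mathbf{Z}$ sending $(a_{1},\dots,a_{d})$ to $\sum_{i} a_{i}$, so that $H = \ker\sigma$. The key computation is that $\sigma\circ\Delta$ is multiplication by $d$ on $\mathbf{Z}/n\mathbf{Z}$. Since the standing hypothesis gives $\gcd(n,d)=1$, the integer $d$ is a unit modulo $n$, so $\sigma\circ\Delta$ is an isomorphism. This forces $\mathrm{im}(\Delta)\cap H = 0$. Comparing orders, $|\mathrm{im}(\Delta)|\cdot|H| = n\cdot n^{d-1} = n^{d} = |(\mathbf{Z}/n\mathbf{Z})^{\mathcal{W}}|$, so the intersection being trivial forces the direct sum decomposition $(\mathbf{Z}/n\mathbf{Z})^{\mathcal{W}} = \mathrm{im}(\Delta) \oplus H$.

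Combining this with the exact sequence in \autoref{Proposition:SuperellipticProp611Poonen}, the composite $H \hookrightarrow (\mathbf{Z}/n\mathbf{Z})^{\mathcal{W}} \xrightarrow{s} \mathcal{J}[1-\zeta_{n}]$ is an isomorphism, which is precisely the claim. There is no real obstacle here beyond identifying the correct splitting; the only subtlety, an entirely painless one, is spotting that coprimality of $n$ and $d$ is exactly what is needed to make $\ker\sigma$ a complement to $\mathrm{im}(\Delta)$ rather than merely a subgroup transverse to a proper sub-lattice.
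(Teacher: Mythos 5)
Your argument is correct and is essentially the paper's proof in slightly more structural clothing: both rest on \autoref{Proposition:SuperellipticProp611Poonen} (so $\ker s = \langle(1,\dots,1)\rangle$) together with the invertibility of $d$ modulo $n$, your ``$\sigma\circ\Delta = d$ is an isomorphism'' plus order count being exactly the paper's uniqueness step plus its explicit adjustment $a_i = a_i' - d^{-1}\sum_j a_j'$ for existence. No substantive difference.
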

\begin{proof}
If $ \sum_{i = 1}^{d} a_{i} [\mathcal{W}_{i} - \infty] = 0$, then
\autoref{Proposition:SuperellipticProp611Poonen} implies that $a_{1} \equiv
\cdots \equiv a_{d} \pmod{n}$, so since $a_{1} + \dots + a_{d} = 0$ and $(n, d)
= 1$, we see that $a_{1} \equiv \cdots \equiv a_{d} \equiv 0 \pmod{n}$; hence
the representation is unique.

For existence, \autoref{Proposition:SuperellipticProp611Poonen} implies that
each element of $\mathcal{J}[1 - \zeta_{n}]$ has a representation of the form
$\sum_{i = 1}^{d} a_{i}' [\mathcal{W}_{i} - \infty]$ for $a_{i}' \in \mathbf{Z}
/ n \mathbf{Z}$, so if we let $a_{i} = a_{i}' - d^{-1} \left( a_{1}' + \dots +
a_{d}' \right)$, then $a_{1} + \dots + a_{d} \equiv 0 \pmod{n}$ and
\begin{align*}
\sum_{i = 1}^{d} a_{i} [\mathcal{W}_{i} - \infty] &= \sum_{i = 1}^{d} a_{i}'
[\mathcal{W}_{i} - \infty] - d^{-1} \left( a_{1}' + \dots + a_{d}' \right)
\sum_{i = 1}^{d} [\mathcal{W}_{i} - \infty]\\
&= \sum_{i = 1}^{d} a_{i}' [\mathcal{W}_{i} - \infty]
\end{align*}
since $\sum_{i = 1}^{d} [\mathcal{W}_{i} - \infty] = 0$.
\end{proof}

Define
\[
L \colonequals K[T] / (f(T)).
\]
and 
\[
\overline{L} \colonequals L \otimes_{K} \overline{K} \simeq \overline{K}[T] /
(f(T)) \simeq \prod \overline{K}[T] / (T + \alpha_i) \simeq
\overline{K}^{\mathcal{W}}.
\]
We have the natural norm homomorphism $\Norm \colon \overline{L} \to
\overline{K}$ which sends a tuple $(a_1, \dots, a_d) \in
\overline{K}^{\mathcal{W}}$ to the product $a_1 \cdots a_d \in \overline{K}$.
For any ring $R$, let $\mu_n(R) \colonequals \{ r \in R \colon r^n = 1 \}$.

\begin{proposition}
\label{Proposition:SuperellipticProp612Poonen}
There is a split exact sequence of $G$-modules
\begin{center}
\begin{tikzpicture}
\node(0m) at (0, 0) {$0$};
\node(1m) at (3, 0) {$\mathcal{J}[1 - \zeta_{n}]$};
\node(2m) at (6, 0) {$\mu_n(\overline{L})$};
\node(3m) at (9, 0) {$\mu_n(\overline{K})$};
\node(4m) at (12, 0) {$0$.};
\draw[->] (0m) -- (1m);
\draw[->] (1m) -- (2m);
\draw[->] (2m) -- (3m) node[midway, above] {$\Norm$};
\draw[->] (3m) -- (4m);
\end{tikzpicture}
\end{center}
\end{proposition}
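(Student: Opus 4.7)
The plan is to deduce this proposition directly from Proposition \ref{Proposition:SuperellipticProp611Poonen} and Corollary \ref{Corollary:UniqueRepresentationJ1mZ}, by making all of the abstract modules of ``exponent $n$'' concrete via the fixed $\zeta_n$.

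First I would use the canonical identification $\mu_n(\overline{L}) \simeq \mu_n(\overline{K})^{\mathcal{W}}$ coming from the given decomposition $\overline{L} \simeq \overline{K}^{\mathcal{W}}$. This identification is $G$-equivariant, and under it the norm $\overline{L} \to \overline{K}$ restricts on $\mu_n$ to the product-of-coordinates map $(a_1, \dots, a_d) \mapsto a_1 \cdots a_d$. Since $\zeta_n \in K$, $\mu_n(\overline{K})$ is a trivial $G$-module of order $n$, and choosing $\zeta_n$ as generator yields a $G$-equivariant isomorphism $\mu_n(\overline{K}) \simeq \mathbf{Z}/n\mathbf{Z}$. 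Under these two identifications, the sequence we want becomes
\[
0 \to \mathcal{J}[1 - \zeta_n] \to (\mathbf{Z}/n\mathbf{Z})^{\mathcal{W}} \xrightarrow{\Sigma} \mathbf{Z}/n\mathbf{Z} \to 0,
\]
where $\Sigma(a_1,\dots,a_d) = \sum a_i$.

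Next I would verify exactness. Surjectivity of $\Sigma$ is immediate (take $(k,0,\ldots,0)$). For the left half, Corollary \ref{Corollary:UniqueRepresentationJ1mZ} shows that each element of $\mathcal{J}[1 - \zeta_n]$ has a unique representation $\sum a_i [\mathcal{W}_i - \infty]$ with $\sum a_i \equiv 0 \pmod n$. Thus the assignment $(a_1, \ldots, a_d) \in \ker \Sigma \mapsto \sum a_i [\mathcal{W}_i - \infty]$ is a bijection onto $\mathcal{J}[1-\zeta_n]$, and it is manifestly $G$-equivariant since $G$ permutes the indices of $\mathcal{W}$ in the same way on both sides. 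This supplies the desired injection $\mathcal{J}[1-\zeta_n] \hookrightarrow \mu_n(\overline{L})$ with image equal to $\ker \Norm$.

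Finally, for the splitting, I would exploit that $\gcd(n,d) = 1$ so $d$ is invertible mod $n$: define $\mu_n(\overline{K}) \to \mu_n(\overline{L})$ by $a \mapsto (a^{d^{-1}}, \dots, a^{d^{-1}})$, which lands in the diagonal and is clearly $G$-equivariant (both sides trivial on the generator $\zeta_n$, and $G$ acts by permutation on the codomain, fixing the diagonal). Composition with $\Norm$ gives $a \mapsto a^{d \cdot d^{-1}} = a$, so this is a section. The main obstacle, such as it is, is purely bookkeeping: one must check that every identification (of $\overline{L}$ with $\overline{K}^{\mathcal{W}}$, of $\mu_n(\overline{K})$ with $\mathbf{Z}/n\mathbf{Z}$, and of $\ker\Sigma$ with $\mathcal{J}[1-\zeta_n]$) is $G$-equivariant, which is why the hypothesis $\zeta_n \in K$ is essential.
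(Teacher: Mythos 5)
The paper provides no argument for this proposition, only a citation to Proposition 6.1.2 of \cite{poonen2006lectures}, so there is no ``paper's own proof'' to compare against; your proof is a correct, self-contained filling-in of that gap. The essential move is to use $\zeta_n \in K$ to trivialize the coefficients: $\mu_n(\overline{K}) \simeq \mathbf{Z}/n\mathbf{Z}$ as a trivial $G$-module, and $\mu_n(\overline{L}) \simeq (\mathbf{Z}/n\mathbf{Z})^{\mathcal{W}}$ as a $G$-permutation module, under which the norm becomes the coordinate-sum $\Sigma$; then \autoref{Corollary:UniqueRepresentationJ1mZ} says precisely that $s$ restricts to a $G$-equivariant bijection $\ker\Sigma \xrightarrow{\sim} \mathcal{J}[1-\zeta_n]$, whose inverse is your injection. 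Your section $a \mapsto (a^{d^{-1}},\dots,a^{d^{-1}})$ is the dual of the averaging splitting the paper uses in \autoref{Proposition:SuperellipticProp611Poonen}, again via $\gcd(n,d)=1$. Two small points worth making explicit. First, the $G$-equivariance of $\overline{L} \simeq \overline{K}^{\mathcal{W}}$ deserves a sentence: the $G$-action on the right is the twisted one $\sigma\cdot(a_w)_w = (\sigma a_{\sigma^{-1}w})_w$, which matches the action on $L\otimes_K\overline{K}$ because the isomorphism sends $g(T)\otimes c$ to $(g(-\alpha_w)c)_w$ with $g$ having $K$-coefficients. Second, your injection $\mathcal{J}[1-\zeta_n]\hookrightarrow\mu_n(\overline{L})$ is constructed abstractly from the unique-representation corollary, and while this proves the proposition exactly as stated, one should be aware that \autoref{Theorem:SuperellipticThm631Poonen} implicitly uses a specific choice of this injection (the one arising in Schaefer's ``$x-T$'' computation); reconciling the conventions is straightforward (they differ at most by a sign) but is a step a careful reader would want to see when the proposition is invoked downstream.
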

\begin{proof}
This is a straightforward generalization of Proposition 6.1.2 of
\cite{poonen2006lectures}.
\end{proof}

\begin{proposition}
\label{Proposition:SuperellipticProp621Poonen}
We have
\begin{equation}
\label{Equation:SuperellipticProp621Poonen}
H^{1}(K, \mathcal{J}[1 - \zeta_{n}]) \simeq \ker \left( \frac{L^\times}{L^{\times
n}} \xrightarrow{\Norm} \frac{K^\times}{K^{\times n}} \right).
\end{equation}
\end{proposition}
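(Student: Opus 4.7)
The plan is to take Galois cohomology of the split short exact sequence provided by \autoref{Proposition:SuperellipticProp612Poonen}, namely
\[
0 \to \mathcal{J}[1 - \zeta_{n}] \to \mu_{n}(\overline{L}) \xrightarrow{\Norm} \mu_{n}(\overline{K}) \to 0,
\]
and identify the $H^{1}$ terms on the outside via Kummer theory. The associated long exact sequence contains the segment
\[
H^{0}(K, \mu_{n}(\overline{L})) \xrightarrow{\Norm} H^{0}(K, \mu_{n}(\overline{K})) \to H^{1}(K, \mathcal{J}[1-\zeta_{n}]) \to H^{1}(K, \mu_{n}(\overline{L})) \xrightarrow{\Norm} H^{1}(K, \mu_{n}(\overline{K})),
\]
and because the original sequence splits as $G$-modules, every map on the right is split surjective. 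In particular the connecting homomorphism vanishes, so the middle term is exactly the kernel of the last arrow.

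Next I would identify the two outer $H^{1}$ groups. On the right, Kummer theory gives $H^{1}(K, \mu_{n}(\overline{K})) = K^{\times}/K^{\times n}$. On the left, decompose $L = \prod_{j} L_{j}$ into its field factors; then $\mu_{n}(\overline{L}) = \bigoplus_{j} \mu_{n}(\overline{L_{j}})$ as $G$-modules, where each summand is the induced module $\mathrm{Ind}_{G_{L_{j}}}^{G_{K}} \mu_{n}(\overline{L_{j}})$ (this is just the fact that the $G$-set of embeddings $L_{j} \hookrightarrow \overline{K}$ is $G/G_{L_{j}}$). Shapiro's lemma then gives
\[
H^{1}(K, \mu_{n}(\overline{L})) \simeq \bigoplus_{j} H^{1}(L_{j}, \mu_{n}) \simeq \bigoplus_{j} L_{j}^{\times}/L_{j}^{\times n} = L^{\times}/L^{\times n},
\]
again by Kummer theory applied over each $L_{j}$.

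Finally I would check that under these identifications the induced map $H^{1}(K, \mu_{n}(\overline{L})) \to H^{1}(K, \mu_{n}(\overline{K}))$ is indeed the norm map $L^{\times}/L^{\times n} \to K^{\times}/K^{\times n}$. This is a naturality check: Kummer theory is functorial, and under the identifications the norm on $\mu_{n}(\overline{L})$ (product of coordinates in $\overline{K}^{\mathcal{W}}$) is compatible with multiplication of Kummer classes, which is exactly the field-theoretic norm from each $L_{j}$ to $K$. Combining the three steps gives the claimed isomorphism.

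The only genuinely nontrivial point is the Shapiro-style identification $H^{1}(K, \mu_{n}(\overline{L})) \simeq L^{\times}/L^{\times n}$ when $L$ is an étale $K$-algebra (not necessarily a field); everything else is routine once the splitness of the sequence is invoked to kill the connecting map. This is the essentially the same argument used in Proposition 6.2.1 of \cite{poonen2006lectures} in the hyperelliptic case, and it transfers verbatim to the superelliptic setting.
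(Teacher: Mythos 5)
Your proposal is essentially the paper's own proof: both invoke the splitness of the sequence from \autoref{Proposition:SuperellipticProp612Poonen} to conclude that $H^{1}(K, -)$ preserves exactness, giving $H^1(K,\mathcal{J}[1-\zeta_n])$ as the kernel of the norm on the $H^1$ level, and both then identify $H^1(K,\mu_n(\overline{L}))$ with $L^\times/L^{\times n}$ via the étale-algebra extension of Hilbert 90 (the paper cites the Serre exercise; you spell out the Shapiro's-lemma/induced-module mechanism behind it). The extra naturality check you mention is implicit in the paper's citation, so there is no substantive difference.
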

\begin{proof}
(c.f. Proposition 6.2.1 of \cite{poonen2006lectures}).  Since the short exact
sequence in \autoref{Proposition:SuperellipticProp612Poonen} is split, it
induces short exact sequences after applying $H^{1}(K, -)$, so
\begin{equation}
\label{Equation:H1SESSplit}
H^{1}(K, \mathcal{J}[1 - \zeta_{n}]) \simeq \ker \left( H^{1}(K,
\mu_{n}(\overline{L})) \xrightarrow{\Norm} H^{1}(K, \mu_{n}(\overline{K}))
\right).
\end{equation}
Applying an extension of Hilbert's Theorem 90 (exercise 2 on page 152 of
\cite{serre2013local}) gives the identifications
\begin{align} 
H^{1}(K, \mu_{n}(\overline{L})) &\simeq L^\times / L^{\times n}
\label{Equation:H1KmunL}\\ 
H^{1}(K, \mu_{n}(\overline{K})) &\simeq K^\times / K^{\times n}
\label{Equation:H1KmunK},
\end{align}
so we are done by substituting \eqref{Equation:H1KmunL} and
\eqref{Equation:H1KmunK} into \eqref{Equation:H1SESSplit}.
\end{proof}

Consider the short exact sequence
\begin{center}
\begin{tikzpicture}
\node(0m) at (0, 0) {$0$};
\node(1m) at (3, 0) {$\mathcal{J}[1 - \zeta_{n}]$};
\node(2m) at (6, 0) {$\mathcal{J}$};
\node(3m) at (9, 0) {$\mathcal{J}$};
\node(4m) at (12, 0) {$0$.};
\draw[->] (0m) -- (1m);
\draw[->] (1m) -- (2m);
\draw[->] (2m) -- (3m) node[midway, above] {$1 - \zeta_{n}$};
\draw[->] (3m) -- (4m);
\end{tikzpicture}
\end{center}
The first coboundary map in Galois cohomology induces the following
injective homomorphism, which we denote by $\delta$.
\[
\frac{\mathcal{J}(K)}{(1 - \zeta_{n}) \mathcal{J}(K)}
\stackrel{\delta}\hookrightarrow H^{1}(K, \mathcal{\mathcal{J}}[1 - \zeta_{n}]).
\]
Composing with the isomorphism of \eqref{Equation:SuperellipticProp621Poonen},
we obtain an injective homomorphism
\begin{equation}
\label{Equation:SecretlyXminusT}
\frac{\mathcal{J}(K)}{(1 - \zeta_{n}) \mathcal{J}(K)} \hookrightarrow  \ker \left(
\frac{L^\times}{L^{\times n}} \xrightarrow{\Norm} \frac{K^\times}{K^{\times n}}
\right).
\end{equation}
\begin{theorem}\hfill
\label{Theorem:SuperellipticThm631Poonen}
\begin{enumerate}[label={\upshape{(\arabic*)}},
ref={\autoref{Theorem:SuperellipticThm631Poonen}(\arabic*)}]

\item 
\label{Theorem:SuperellipticThm631PoonenNotWeierstrass}
Suppose that $P = (x_P, y_P) \in \mathcal{C}(K)$ and that $y_P \neq 0$. 
The image of 
\[
[P - \infty] \in \frac{\mathcal{J}(K)}{(1 - \zeta_{n}) \mathcal{J}(K)}
\]
under the map \eqref{Equation:SecretlyXminusT} equals
\[
[x_P - T] \in \ker \left( \frac{L^\times}{L^{\times n}} \xrightarrow{\Norm}
\frac{K^\times}{K^{\times n}} \right).
\]

\item
\label{Theorem:SuperellipticThm631PoonenWeierstrass}
Suppose that $\mathcal{W}_{1}, \cdots, \mathcal{W}_{d}$ are defined over $K$.
The image of
\[
[\mathcal{W}_{j} - \infty] \in \frac{\mathcal{J}(K)}{(1 - \zeta_{n}) \mathcal{J}(K)}
\]
under the map \eqref{Equation:SecretlyXminusT} is 
\[
(-\alpha_{j} - T) + \prod_{i \neq j} (-\alpha_{i} - T)^{n - 1} \pmod{L^{\times
n}}.
\]

\end{enumerate}
\end{theorem}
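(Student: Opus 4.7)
The statement is the $(1 - \zeta_n)$-descent analog of Proposition 6.3.1 of \cite{poonen2006lectures}, and the plan is to imitate that proof with the hyperelliptic multiplication-by-$2$ map replaced by the superelliptic $1 - \zeta_n$ isogeny. The central observation is the norm identity
\[
\Norm_{L(\mathcal{C}) / K(\mathcal{C})}(x - T) = \prod_{i=1}^{d}(x + \alpha_i) = f(x) = y^n,
\]
which ensures that the evaluation map $P \mapsto [x_P - T]$ takes values in the correct subgroup of $L^\times / L^{\times n}$: for $P = (x_P, y_P)$ with $y_P \neq 0$ we have $\Norm_{L/K}(x_P - T) = f(x_P) = y_P^n \in K^{\times n}$, so $[x_P - T]$ lies in $\ker(L^\times / L^{\times n} \xrightarrow{\Norm} K^\times / K^{\times n})$, the target of the map \eqref{Equation:SecretlyXminusT}.

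To prove part (1), I would unwind the Kummer identifications in \autoref{Proposition:SuperellipticProp621Poonen} explicitly. The recipe is: extract a formal $n$-th root $y_T$ of $x - T$ in the integral closure of $L(\mathcal{C})$, using that $\divisor_{\mathcal{C}_L}(x - T) = n \cdot [(T, 0) - \infty_L]$ (since $y^n = f(x) \equiv f'(T)(x - T)$ modulo higher order near $(T, 0)$, and $y$ is a uniformizer there). Evaluating $y_T$ at $P$ yields $\beta \in \overline{L}^\times$ with $\beta^n = x_P - T$, and the Galois cocycle $\sigma \mapsto \sigma(\beta) / \beta \in \mu_n(\overline{L})$ represents both $\delta([P - \infty])$ (via the inclusion $H^1(K, \mathcal{J}[1 - \zeta_n]) \hookrightarrow H^1(K, \mu_n(\overline{L}))$ coming from \autoref{Proposition:SuperellipticProp612Poonen}) and, by Kummer theory, the class $[x_P - T] \in L^\times / L^{\times n}$; comparing these two descriptions of the same cocycle yields the claim.

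For part (2), the naive image $-\alpha_j - T$ is a zero-divisor in $L$ (it vanishes at the $j$-th component of $\overline{L} \simeq \overline{K}^{\mathcal{W}}$), so one must regularize. A direct check shows that $\gamma_j \colonequals (-\alpha_j - T) + \prod_{i \neq j}(-\alpha_i - T)^{n-1}$ is a unit with $\Norm_{L/K}(\gamma_j) \in K^{\times n}$, and that $\prod_j \gamma_j \in L^{\times n}$, consistent with the relation $\sum_j [\mathcal{W}_j - \infty] = 0$ coming from $\divisor(y)$. To identify $\gamma_j$ with the image of $[\mathcal{W}_j - \infty]$, I would degenerate: pick a non-Weierstrass point $P(t)$ over $K(t)$ specializing to $\mathcal{W}_j$ at $t = 0$, apply part (1) to compute $[x_{P(t)} - T]$, and extract the $t \to 0$ limit in $L^\times / L^{\times n}$. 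Using $y_{P(t)}^n = \prod_i (x_{P(t)} + \alpha_i)$ to rewrite the vanishing $j$-th component of $x_{P(t)} - T$ as $y_{P(t)}^n / \prod_{i \neq j}(x_{P(t)} + \alpha_i)$ (an $n$-th power times a nonvanishing factor) recovers $\gamma_j$ in the limit.

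The main obstacle will be the cocycle bookkeeping in part (1): carefully threading $\delta$ through the split exact sequences of \autoref{Proposition:SuperellipticProp611Poonen} and \autoref{Proposition:SuperellipticProp612Poonen} and matching the result with the Kummer class $[x_P - T]$. Once part (1) is in hand, part (2) reduces to the limit/regularization computation sketched above.
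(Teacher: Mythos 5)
The paper gives essentially no proof here — it defers to Schaefer (Proposition 3.3 and the computation on page 461 of \cite{schaefer1998computing}), which carries out a direct cocycle computation. Your plan for part (1) is the same in spirit: unwind the Kummer identification from \autoref{Proposition:SuperellipticProp621Poonen}, pick an $n$th root $\beta$ of $x_P - T$, and compare the cocycle $\sigma \mapsto \sigma(\beta)/\beta$ with the coboundary of $[P - \infty]$. Be aware, though, that the sentence ``the Galois cocycle $\sigma \mapsto \sigma(\beta)/\beta$ represents both $\delta([P-\infty])$ and the class $[x_P - T]$'' is precisely the thing to be proved, not an observation; the second identification is Kummer theory, but the first is the substance of the theorem and requires tracing exactly how the embedding $\mathcal{J}[1-\zeta_n] \hookrightarrow \mu_n(\overline{L})$ from \autoref{Proposition:SuperellipticProp612Poonen} is built out of the splitting in \autoref{Proposition:SuperellipticProp611Poonen}. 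You flag this as the ``main obstacle,'' which is accurate — that is where the proof lives, and nothing in your sketch yet discharges it. A small side point: $x - T$ is not an $n$th power in $L(\mathcal{C})$ (since $[(T,0) - \infty]$ is genuinely $n$-torsion), so $y_T$ lives in a degree-$n$ extension of the function field; the phrase ``integral closure of $L(\mathcal{C})$'' should be made precise.

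For part (2) you propose a degeneration argument — pick $P(t)$ specializing to $\mathcal{W}_j$ at $t = 0$, apply part (1) over $K(t)$, and take the limit — rather than the direct algebraic computation that Schaefer (and hence the paper) uses. Your formula does come out right: rewriting the vanishing $j$th component as $y_{P(t)}^n/\prod_{i\neq j}(x_{P(t)} + \alpha_i) \equiv \prod_{i\neq j}(x_{P(t)} + \alpha_i)^{n-1} \pmod{n\text{th powers}}$ and specializing gives $\prod_{i\neq j}(\alpha_i - \alpha_j)^{n-1}$, which agrees with $\prod_{i\neq j}(\alpha_j - \alpha_i)^{n-1}$ because $(n-1)(d-1) = 2g$ is even. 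However, to make the limit rigorous you must choose the family so that the relevant valuations at $t = 0$ are multiples of $n$ (e.g.\ work over $K[[t]]$ and arrange $y_{P(t)} = t$), and you must justify that the $x - T$ map is compatible with specialization of the base, which is a genuine extra step that the direct evaluation argument avoids. So: part (1) follows the cited approach; part (2) substitutes a degeneration in place of the standard direct computation, which is valid but buys no simplification and adds a compatibility-with-specialization step that you would need to argue.
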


\begin{proof}
This is a straightforward generalization of Proposition 3.3 of
\cite{schaefer1998computing}; see the computation on page 461 of
\cite{schaefer1998computing}.
\end{proof}

As is standard, we will call \eqref{Equation:SecretlyXminusT} the ``$x - T$''
descent map. 

\begin{lemma}
\label{Lemma:DivisionFieldOnePoint}
Suppose that $\alpha_{1}, \dots, \alpha_{d} \in K$ and that $P = (x_P, y_P) \in
\mathcal{C}(K)$. Then 
\[
[P - \infty] \in (1 - \zeta_{n}) \mathcal{J}(K)
\]
if and only if
\[
x_{P} + \alpha_{i} \in K^{n} \text{ for all } i \in [1, d].
\]
\end{lemma}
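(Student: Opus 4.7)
\emph{Plan.} The strategy is to feed $[P-\infty]$ into the $x-T$ descent map
\[
\delta \colon \frac{\mathcal{J}(K)}{(1-\zeta_n)\mathcal{J}(K)} \hookrightarrow \ker\left(\frac{L^\times}{L^{\times n}} \xrightarrow{\Norm} \frac{K^\times}{K^{\times n}}\right)
\]
from \eqref{Equation:SecretlyXminusT} and exploit its injectivity. Under this map, $[P-\infty]$ is sent to the class of $x_P - T$ by \autoref{Theorem:SuperellipticThm631PoonenNotWeierstrass} (assuming for the moment $y_P \neq 0$). Since $\delta$ is injective, $[P-\infty] \in (1-\zeta_n)\mathcal{J}(K)$ if and only if $x_P - T \in L^{\times n}$.

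The point is now to translate the condition $x_P - T \in L^{\times n}$ into the pointwise condition on the $x_P + \alpha_i$. Because the $\alpha_i$ lie in $K$ and are distinct, the Chinese Remainder Theorem gives an isomorphism
\[
L = K[T]/\prod_{i=1}^{d}(T + \alpha_i) \;\xrightarrow{\sim}\; \prod_{i=1}^{d} K, \qquad g(T) \mapsto (g(-\alpha_i))_{i=1}^{d}.
\]
Under this identification, the class of $x_P - T$ corresponds to $\bigl([x_P + \alpha_i]\bigr)_{i=1}^{d} \in \prod_{i=1}^{d} K^\times/K^{\times n}$, so it is trivial if and only if each factor $x_P + \alpha_i$ is an $n$th power in $K^\times$. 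This proves the lemma in the case $y_P \neq 0$ (note $y_P \neq 0$ forces $x_P + \alpha_i \neq 0$ for every $i$ since $y_P^n = \prod_i (x_P + \alpha_i)$, so there is no ambiguity about $0 \in K^n$).

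When $y_P = 0$, so $P = \mathcal{W}_j$ for some $j$ and $x_P + \alpha_j = 0$, the formula in \autoref{Theorem:SuperellipticThm631PoonenNotWeierstrass} does not apply directly because $x_P - T$ is a zero divisor in $L$; instead, \autoref{Theorem:SuperellipticThm631PoonenWeierstrass} says that $\delta([\mathcal{W}_j - \infty])$ is represented by $(-\alpha_j - T) + \prod_{i \neq j}(-\alpha_i - T)^{n-1}$. Evaluating this element at $T = -\alpha_k$ via CRT gives $\alpha_k - \alpha_j$ at the $k$-th component for $k \neq j$, and $\prod_{i \neq j}(\alpha_j - \alpha_i)^{n-1}$ at the $j$-th component. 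I expect the main technical nuisance to be verifying that this latter condition is equivalent to ``$x_P + \alpha_i \in K^n$ for all $i$'' (interpreting $0 \in K^n$): for $i \neq j$ this is immediate, and for $i = j$ one uses $\gcd(n-1, n) = 1$ together with the product $\prod_{i \neq j}(\alpha_j - \alpha_i)^{n-1}$ being an $n$th power if and only if each $\alpha_j - \alpha_i$ is, which in turn is equivalent to the $i \neq j$ condition we already have. Once this bookkeeping is done, the lemma follows uniformly.
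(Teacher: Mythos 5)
Your overall plan matches the paper's: inject $[P-\infty]$ into $\delta$, use \autoref{Theorem:SuperellipticThm631PoonenNotWeierstrass} or \autoref{Theorem:SuperellipticThm631PoonenWeierstrass} to name its image, and then decompose $L \simeq \prod_i K$ via CRT. Your treatment of the case $y_P \neq 0$ is correct. The problem is in the Weierstrass case, where the ``bookkeeping'' you flag as a nuisance is in fact the crux, and your sketch of it contains two errors.

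After applying CRT to the element $(-\alpha_j - T) + \prod_{i\neq j}(-\alpha_i - T)^{n-1}$, triviality in $\prod_i K^\times/K^{\times n}$ amounts to (a) $\alpha_k - \alpha_j \in K^{\times n}$ for every $k \neq j$, together with (b) $\prod_{i \neq j}(\alpha_j - \alpha_i)^{n-1} \in K^{\times n}$. Condition (a) is exactly the statement of the lemma (once one interprets $0 \in K^n$ at $i = j$, as you do), so what remains is to show (a) implies (b). Your chain ``$\prod_{i\neq j}(\alpha_j-\alpha_i)^{n-1}$ is an $n$th power \emph{iff} each $\alpha_j-\alpha_i$ is, which is equivalent to the $i\neq j$ condition'' has two false links. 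First, a product of elements of $K^\times$ being an $n$th power does not force each factor to be one, so the ``only if'' is wrong; fortunately you only need the ``if'' direction. Second, and more substantively, (a) controls $\alpha_i - \alpha_j$ while (b) involves $\alpha_j - \alpha_i = -(\alpha_i - \alpha_j)$, and $-1$ need not lie in $K^{\times n}$. The tool that rescues this is not $\gcd(n-1, n) = 1$ (that would be used for the converse direction, which you don't need) but rather the parity observation that $(n-1)(d-1) = 2g$ is even, so $(-1)^{(n-1)(d-1)} = 1$ and
\[
\prod_{i\neq j}(\alpha_j - \alpha_i)^{n-1} = \prod_{i\neq j}(\alpha_i - \alpha_j)^{n-1},
\]
which lies in $K^{\times n}$ by (a). That is exactly the paper's closing step, and it is the ingredient missing from your sketch.
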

\begin{proof}
Since $\alpha_{1}, \dots, \alpha_{d} \in K$, we have an isomorphism
\begin{equation}
\label{Equation:LBreaksUpIntoKs}
L \simeq \prod_{i = 1}^{d} \frac{K[T]}{(T + \alpha_{i})} \simeq \prod_{i =
1}^{d} K,
\end{equation}
such that the image of $g(T) \in L$ is $\left( g(-\alpha_{1}), \dots,
g(-\alpha_{d}) \right)$.

Since the map of \eqref{Equation:SecretlyXminusT} is an embedding, $[P - \infty]
\in (1 - \zeta_{n}) \mathcal{J}(K)$ if and only if 
\begin{equation}
\label{Equation:ConditionToBeDivisible}
\text{the image of }[P -
\infty]\text{ in }\ker \left( \frac{L^\times}{L^{\times n}} \xrightarrow{\Norm}
\frac{K^\times}{K^{\times n}} \right)\text{ is trivial.}
\end{equation}

\begin{enumerate}[label=\textbf{Case~\Alph*:}, ref={Case~\Alph*},
leftmargin=*, itemindent=25pt]
\item \label{Case:DivisionFieldOnePointPnotinW}
$P \not\in \mathcal{W}$

\autoref{Theorem:SuperellipticThm631PoonenNotWeierstrass} implies
\eqref{Equation:ConditionToBeDivisible} is equivalent to $[x_{P} - T] \in
L^{\times n}$, which from \eqref{Equation:LBreaksUpIntoKs} is equivalent to
$x_{P} + \alpha_{i} \in K^{\times n}$ for $i \in [1, d]$, and since $x_{P}
\not\in \{ -\alpha_{i} : i \in [1, d]\}$, this is equivalent to $x_{P} +
\alpha_{i} \in K^{n}$ for $i \in [1, d]$.

\item \label{Case:DivisionFieldOnePointPWj}
$P = \mathcal{W}_{j}$

\autoref{Theorem:SuperellipticThm631PoonenWeierstrass} implies
\eqref{Equation:ConditionToBeDivisible} is equivalent to 
\[
(-\alpha_{j} - T) + \prod_{i \neq j} (-\alpha_{i} - T)^{n - 1} \in L^{\times n},
\]
which from \eqref{Equation:LBreaksUpIntoKs} is equivalent to the two conditions
\begin{enumerate}[label=(\alph*)]
\item 
\label{ConditionAwayFromWP}
$\alpha_{i} - \alpha_{j} \in K^{\times n}$ for all $i \in [1, d] \setminus \{ j
\}$

\item 
\label{ConditionAtWP}
$\prod_{i \neq j} (\alpha_{j} - \alpha_{i})^{n - 1} \in K^{\times n}$

\end{enumerate}

Note that \autoref{ConditionAwayFromWP} implies that $K^{\times n} \ni \prod_{i
\neq j} (\alpha_{i} - \alpha_{j})^{n - 1} = \prod_{i \neq j} (\alpha_{j} -
\alpha_{i})^{n - 1}$ since $(n - 1)(d - 1) = 2 g$ is even, so
\autoref{ConditionAwayFromWP} implies \autoref{ConditionAtWP}. In particular,
the two conditions together are equivalent to $\alpha_{i} - \alpha_{j} \in
K^{n}$ for all $i \in [1, d]$. \qedhere
\end{enumerate}
\end{proof}

\begin{corollary}
\label{Corollary:DivisionFieldForAnyPoint}
Suppose that $P = (x_P, y_P) \in \mathcal{C}(K)$. Let $K'$ be the field
\[
K' = K\left( [D] \in \mathcal{J}(\overline{K}) \colon (1 - \zeta_{n})[D] = [P -
\infty]  \right).
\]
Then
\[
K' = K(\sqrt[n]{x_P + \alpha_i} \colon 1 \le i \le d).
\]
\end{corollary}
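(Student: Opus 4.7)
The plan is to apply \autoref{Lemma:DivisionFieldOnePoint} twice: once over $F \colonequals K(\sqrt[n]{x_P + \alpha_i} : 1 \le i \le d)$ to obtain $K' \subseteq F$, and once over $K'$ itself to obtain $K' \supseteq F$. The lemma is stated only when all $\alpha_i$ lie in the base field, so in each direction I must first verify that containment.

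For $K' \subseteq F$: observe that $F$ contains every $\alpha_i = (\sqrt[n]{x_P + \alpha_i})^n - x_P$, and visibly every $x_P + \alpha_i$ is an $n$-th power in $F$. So \autoref{Lemma:DivisionFieldOnePoint} produces a preimage $D_0 \in \mathcal{J}(F)$ of $[P - \infty]$ under $1 - \zeta_n$. Any other preimage has the form $D_0 + \tau$ for $\tau \in \mathcal{J}[1 - \zeta_n]$, and by \autoref{Proposition:SuperellipticProp611Poonen} each such $\tau$ is a $\mathbf{Z}$-combination of the $F$-rational classes $[\mathcal{W}_i - \infty]$. Hence every preimage is $F$-rational, so $K' \subseteq F$.

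For $K' \supseteq F$: the first task is to show that each $\alpha_i$ lies in $K'$. Since $\mathcal{J}[1 - \zeta_n]$ has order $n^{d - 1} \ge 2$, there are at least two preimages of $[P - \infty]$, and their pairwise differences exhaust $\mathcal{J}[1 - \zeta_n]$; therefore $K' \supseteq K(\mathcal{J}[1 - \zeta_n])$. Any $\sigma \in G_K$ fixing $[\mathcal{W}_i - \infty]$ must fix $\mathcal{W}_i$ itself, for otherwise $\sigma(\mathcal{W}_i) - \mathcal{W}_i$ would be a nonzero principal divisor of the form $P - Q$ with $P \ne Q$, giving a degree-one map to $\mathbf{P}^1$ and contradicting $g = (n - 1)(d - 1)/2 \ge 1$. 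Thus $\alpha_i \in K(\mathcal{J}[1 - \zeta_n]) \subseteq K'$. Now \autoref{Lemma:DivisionFieldOnePoint} applies with base field $K'$: by the definition of $K'$ some preimage is $K'$-rational, so $x_P + \alpha_i \in (K')^n$ for each $i$, and because $\zeta_n \in K \subseteq K'$ this lifts to $\sqrt[n]{x_P + \alpha_i} \in K'$. Hence $F \subseteq K'$.

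The only mildly delicate step is showing that $K'$ contains each coordinate $\alpha_i$ and not merely the class $[\mathcal{W}_i - \infty]$; this is where the genus-one hypothesis enters, via the fact that distinct points on a non-rational curve cannot be linearly equivalent.
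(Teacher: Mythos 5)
Your proof is correct and follows essentially the same route as the paper's: reduce to the case where the $\alpha_i$ lie in the relevant base field, then apply \autoref{Lemma:DivisionFieldOnePoint}. The one place where you go beyond the paper is worth highlighting: the paper asserts $K_1 = K(\mathcal{J}[1-\zeta_n], [D_1]) = K(\alpha_1, \dots, \alpha_d, [D_1])$, tacitly using the equality $K(\mathcal{J}[1-\zeta_n]) = K(\alpha_1, \dots, \alpha_d)$; you supply the nontrivial inclusion explicitly via the observation that a Galois element fixing $[\mathcal{W}_i - \infty]$ must fix $\mathcal{W}_i$ itself, since otherwise the difference of two distinct points would be principal, forcing genus $0$. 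That is a clean way to see that the coordinates of the branch points, and not just the divisor classes, descend to the torsion field. The two-inclusion structure you use in place of the paper's \textquotedblleft{}for every extension $M$\textquotedblright{} reformulation is a cosmetic difference, not a substantive one.
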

\begin{proof}

To avoid confusion, define
\begin{align*}
K_{1} &\colonequals K\left( [D] \in \mathcal{J}(\overline{K}) \colon (1 -
\zeta_{n})[D] = [P - \infty]  \right) \\
K_{2} &\colonequals K(\sqrt[n]{x_P + \alpha_i} \colon 1 \le i \le d).
\end{align*}

For any $[D_{1}] \in \mathcal{J}(\overline{K})$ satisfying $(1 -
\zeta_{n})[D_{1}] = [P - \infty]$,
\[
\{ [D] \in \mathcal{J}(\overline{K}) \colon (1 - \zeta_{n})[D] = [P - \infty] \} =
\{ [D_{1}] + T \colon T \in \mathcal{J}[1 - \zeta_{n}] \},
\]
so
\begin{equation}
\label{Equation:OneDForEntireDivision}
K_{1} = K\left( \mathcal{J}[1 - \zeta_{n}], [D_{1}] \right) = K\left( \alpha_{1},
\dots, \alpha_{d}, [D_{1}] \right).
\end{equation}
Observe that $\alpha_{i} = \left(\sqrt[n]{x_P + \alpha_i}\right)^{n} - x_{P}$
must lie in $K_{2}$, so we may as well assume that $K$ contains $\alpha_{1},
\dots, \alpha_{d}$.

Let $M \supseteq K$ be any extension. By \eqref{Equation:OneDForEntireDivision},
$M \supseteq K_{1}$ if and only if $[P - \infty] \in (1 - \zeta_{n})\mathcal{J}(M)$,
which by \autoref{Lemma:DivisionFieldOnePoint} holds if and only if $x_{P} +
\alpha_{i} \in M^{n}$, which is equivalent to $M \supseteq K_{2}$. Hence $K_{1}
= K_{2}$.
\end{proof}

\begin{corollary}
\label{Corollary:1mZSquaredDivisionField}
$K\left( \mathcal{J}[(1 - \zeta_{n})^{2}] \right) = K\left( \alpha_{1}, \dots,
\alpha_{d}, \sqrt[n]{\alpha_{i} - \alpha_{j}} \colon 1 \le i, j \le d \right).$
\end{corollary}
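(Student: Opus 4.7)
The plan is to apply \autoref{Corollary:DivisionFieldForAnyPoint} to each Weierstrass point $\mathcal{W}_{j}$ and then take a compositum.

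First, I would observe that $\mathcal{J}[(1-\zeta_{n})^{2}]$ is the preimage of $\mathcal{J}[1 - \zeta_{n}]$ under the surjective isogeny $1 - \zeta_{n}$ on $\mathcal{J}(\overline{K})$, giving a short exact sequence
\[
0 \to \mathcal{J}[1 - \zeta_{n}] \to \mathcal{J}[(1-\zeta_{n})^{2}] \xrightarrow{1 - \zeta_{n}} \mathcal{J}[1-\zeta_{n}] \to 0.
\]
Since by \autoref{Proposition:SuperellipticProp611Poonen} the right-hand group is generated as an abelian group by the classes $[\mathcal{W}_{j} - \infty]$ for $1 \le j \le d$, the group $\mathcal{J}[(1-\zeta_{n})^{2}]$ is generated by $\mathcal{J}[1-\zeta_{n}]$ together with any choice of preimage of each $[\mathcal{W}_{j} - \infty]$. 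Hence $K(\mathcal{J}[(1-\zeta_{n})^{2}])$ is the compositum of $K(\mathcal{J}[1-\zeta_{n}])$ with the fields generated by \emph{all} preimages of each $[\mathcal{W}_{j} - \infty]$ under $1 - \zeta_{n}$.

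Next, I would identify $K(\mathcal{J}[1 - \zeta_{n}]) = K(\alpha_{1}, \dots, \alpha_{d})$. The inclusion $\subseteq$ is immediate since each $[\mathcal{W}_{j} - \infty]$ is defined over $K(\alpha_{j})$. For the reverse inclusion, any $\sigma \in \Gal(\overline{K}/K)$ fixing every $[\mathcal{W}_{j} - \infty]$ in $\mathcal{J}$ must send $\mathcal{W}_{j}$ to a point with the same divisor class; on a curve of genus $g \ge 1$ this forces $\sigma(\mathcal{W}_{j}) = \mathcal{W}_{j}$, and hence $\sigma$ fixes each $\alpha_{j}$.

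Setting $K' \colonequals K(\alpha_{1}, \dots, \alpha_{d})$, each $\mathcal{W}_{j} = (-\alpha_{j}, 0)$ lies in $\mathcal{C}(K')$, so \autoref{Corollary:DivisionFieldForAnyPoint} applied over $K'$ to $P = \mathcal{W}_{j}$ (with $x_{P} = -\alpha_{j}$, so $x_{P} + \alpha_{i} = \alpha_{i} - \alpha_{j}$) shows that the field generated by all preimages of $[\mathcal{W}_{j} - \infty]$ is $K'(\sqrt[n]{\alpha_{i} - \alpha_{j}} : 1 \le i \le d)$; the $i = j$ term is $0 \in K'$ and contributes nothing. Taking the compositum over $j \in [1, d]$ produces precisely the right-hand side of the claim. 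The only non-routine step is the middle identification $K(\mathcal{J}[1-\zeta_{n}]) = K(\alpha_{1}, \dots, \alpha_{d})$; everything else is formal bookkeeping combining \autoref{Proposition:SuperellipticProp611Poonen} and \autoref{Corollary:DivisionFieldForAnyPoint}.
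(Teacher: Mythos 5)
Your proposal is correct and follows essentially the same route as the paper: reduce to $K$ containing $\alpha_1, \dots, \alpha_d$, observe that $\mathcal{J}[(1-\zeta_n)^2]$ is generated by preimages of the $[\mathcal{W}_j - \infty]$ under $1 - \zeta_n$, and then apply \autoref{Corollary:DivisionFieldForAnyPoint} to each $P = \mathcal{W}_j$. The only cosmetic difference is that you supply a justification for $K(\mathcal{J}[1-\zeta_n]) = K(\alpha_1, \dots, \alpha_d)$ (via genus $\ge 1$ forcing $\sigma(\mathcal{W}_j) = \mathcal{W}_j$), which the paper asserts without comment.
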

\begin{proof}
Since
\[
K \left( \mathcal{J}[(1 - \zeta_{n})] \right) = K \left( \alpha_{1}, \dots,
\alpha_{d} \right),
\]
we may as well assume that $K$ contains $\alpha_{1}, \dots, \alpha_{d}$.  Since
the $[\mathcal{W}_{i} - \infty]$ generate $\mathcal{J}[(1 - \zeta_{n})]$, 
\[
K\left( \mathcal{J}[(1 - \zeta_{n})^{2}] \right) = K \left( [D] \in
\mathcal{J}(\overline{K}) : (1 - \zeta_{n})[D] \in \{ [\mathcal{W}_{i} - \infty]
\colon i \in [1, d] \} \right),
\]
so we are done by applying \autoref{Corollary:DivisionFieldForAnyPoint} to
$P \in \mathcal{W}$.
\end{proof}

\section{The structure of \texorpdfstring{$T_{\ell} \mathcal{J}_{n,
d}$}{T\_ell J\_\{n,d\}} as a Galois representation}

Let $\zeta_{n d} \in \overline{\mathbf{Q}}$ be a primitive $n d$th root of
unity. Let
\begin{align*}
\zeta_{n} &\colonequals \zeta_{n d}^{d} \\
\zeta_{d} &\colonequals \zeta_{n d}^{n} \\
E &\colonequals \mathbf{Q}(\zeta_{n d}) \\
\mathcal{O}_{E} &\colonequals \mathbf{Z}[\zeta_{n d}].
\end{align*}
Suppose that $\xi_{n}$ and $\xi_{d}$ are primitive $n$th and $d$th roots of
unity respectively such that $\zeta_{n d} = \xi_{n} \xi_{d}$; then $\xi_{n}^{d}
= \zeta_{n}$ and $\xi_{d}^{n} = \zeta_{d}$. We will abuse notation and define
\begin{align*}
\zeta_{n d} &\colonequals \text{automorphism of }\mathcal{C}_{n, d}\text{ which
sends }(x, y)\text{ to }(\xi_{d} x, \xi_{n} y) \\
\zeta_{n} &\colonequals \zeta_{n d}^{d}\text{, which is the automorphism of
}\mathcal{C}_{n, d}\text{ which sends }(x, y)\text{ to }(x, \zeta_{n} y) \\
\zeta_{d} &\colonequals \zeta_{n d}^{n}\text{, which is the automorphism of
}\mathcal{C}_{n, d}\text{ which sends }(x, y)\text{ to }(\zeta_{d} x,  y) \\
Z &\colonequals \text{the subgroup of }\Aut(\mathcal{C}_{n, d})\text{
generated by } \zeta_{n d} \\
Z_n &\colonequals \text{the subgroup of }Z\text{ generated by } \zeta_{n} \\
Z_d &\colonequals \text{the subgroup of }Z\text{ generated by } \zeta_{d}.
\end{align*}

Let $\ell$ be a prime and $\lambda$ be a prime of $E$ lying above $\ell$. Let $r
\nmid \ell n d$ be another prime and $\mathfrak{r}$ be a prime of $E$ lying
above $r$. Define $\mathbf{F}_{\mathfrak{r}}$ to be the residue field at
$\mathfrak{r}$.  Define $\Frob_{\mathfrak{r}} \in \Gal \left( E(\mathcal{J}_{n,
d}[\ell^{\infty}]) / E \right)$ to be the Frobenius automorphism for
$\mathfrak{r}$; it is well-defined since the extension $E(\mathcal{J}_{n,
d}[\ell^{\infty}]) / E$ is unramified because $\mathcal{C}_{n, d}$ has good
reduction at $\ell$.

Define
\begin{align*}
T_{\ell} \mathcal{J}_{n, d} &\colonequals \text{the Tate module }
\varprojlim_{i} \mathcal{J}_{n, d}[\ell^i] \\
V_{\ell} \mathcal{J}_{n, d} &\colonequals \text{the rational Tate module }
(T_{\ell} \mathcal{J}_{n, d}) \otimes_{\mathbf{Z}_{\ell}} \mathbf{Q}_{\ell} \\
R_{\ell} &\colonequals R \otimes_{\mathbf{Z}} \mathbf{Z}_{\ell}.
\end{align*}

\begin{definition}
Define $\End_{R_{\ell}} \left( T_{\ell} \mathcal{J}_{n, d} \right)$ to be the
ring of endomorphisms of $T_{\ell} \mathcal{J}_{n, d}$ that commute with
$\zeta_{n d}$. 
\end{definition}

\begin{lemma}
\label{Lemma:TellJndFreeRellModule}
$T_{\ell} \mathcal{J}_{n, d}$ is free $R_{\ell}$-module of rank 1.  Hence, 
\begin{align}
\Aut_{R_{\ell}} \left( T_{\ell} \mathcal{J}_{n, d} \right) \simeq
R_{\ell}^{\times},  \label{Equation:AutTellJnd} \\
\End_{R_{\ell}} \left( T_{\ell} \mathcal{J}_{n, d} \right) \simeq R_{\ell}.
\label{Equation:EndTellJnd}
\end{align}
\end{lemma}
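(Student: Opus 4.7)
The plan is to deduce this from the already-established freeness of the singular homology as an $R$-module, together with the standard comparison between $\ell$-adic homology and the Tate module.

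First, I would recall the comparison isomorphism
\[
T_{\ell} \mathcal{J}_{n, d} \;\simeq\; H_{1}(\mathcal{C}_{n, d}, \mathbf{Z}) \otimes_{\mathbf{Z}} \mathbf{Z}_{\ell},
\]
which comes from identifying $\mathcal{J}_{n, d}[\ell^{i}]$ with $H_{1}(\mathcal{C}_{n, d}, \mathbf{Z}) / \ell^{i}$ and taking the inverse limit. This isomorphism is equivariant for any endomorphism of $\mathcal{J}_{n, d}$; in particular it intertwines the action of $\zeta_{n d}$ on both sides. Composing with \autoref{Proposition:H1CndFreeRModule}, which says $H_{1}(\mathcal{C}_{n, d}, \mathbf{Z})$ is free of rank $1$ over $R$ with $T$ acting as $\zeta_{n d}$, gives an isomorphism of $R_{\ell}$-modules
\[
T_{\ell} \mathcal{J}_{n, d} \;\simeq\; R \otimes_{\mathbf{Z}} \mathbf{Z}_{\ell} \;=\; R_{\ell},
\]
which proves freeness of rank $1$.

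The two displayed formulas \eqref{Equation:AutTellJnd} and \eqref{Equation:EndTellJnd} are then formal: for any commutative ring $A$, a free rank-$1$ module $M$ has $\End_{A}(M) \simeq A$ (every $A$-linear endomorphism is multiplication by the scalar by which it sends a chosen generator) and $\Aut_{A}(M) \simeq A^{\times}$ (the invertible such scalars). Applying this with $A = R_{\ell}$ and $M = T_{\ell} \mathcal{J}_{n, d}$ finishes the proof.

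The only point that might need a sentence of justification is the $R$-equivariance of the comparison isomorphism, but this follows immediately because $R$ is, by \autoref{Corollary:SubringByXi}, identified with the subring of $\End(\mathcal{J}_{n, d})$ generated by $\zeta_{n d}$, and every endomorphism of $\mathcal{J}_{n, d}$ acts compatibly on both $H_{1}(\mathcal{C}_{n, d}, \mathbf{Z})$ and on $T_{\ell} \mathcal{J}_{n, d}$ via the natural functoriality of the Tate module. Thus there is no substantial obstacle; the lemma is essentially a reformulation of \autoref{Proposition:H1CndFreeRModule} after the harmless base change $-\otimes_{\mathbf{Z}} \mathbf{Z}_{\ell}$.
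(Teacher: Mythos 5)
Your proof is correct and follows exactly the same route as the paper's: tensor the $R$-freeness of $H_{1}(\mathcal{C}_{n, d}, \mathbf{Z})$ from \autoref{Proposition:H1CndFreeRModule} with $\mathbf{Z}_{\ell}$, use the comparison isomorphism $T_{\ell} \mathcal{J}_{n, d} \simeq H_{1}(\mathcal{C}_{n, d}, \mathbf{Z}) \otimes_{\mathbf{Z}} \mathbf{Z}_{\ell}$, and read off \eqref{Equation:AutTellJnd} and \eqref{Equation:EndTellJnd} formally. The extra paragraph spelling out the $R$-equivariance of the comparison isomorphism is a reasonable addition that the paper leaves implicit.
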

\begin{proof}
\autoref{Proposition:H1CndFreeRModule} gives that $H_{1}(\mathcal{C}_{n, d},
\mathbf{Z})$ is a free $R$-module of rank 1. Therefore, $T_{\ell}
\mathcal{J}_{n, d} \simeq H_{1}(\mathcal{C}_{n, d}, \mathbf{Z})
\otimes_{\mathbf{Z}} \mathbf{Z}_{\ell}$ is a free $R_{\ell}$-module of rank 1,
and this implies \eqref{Equation:AutTellJnd} and \eqref{Equation:EndTellJnd}. 
\end{proof}

\begin{definition}
\label{Definition:Thetaell}
Using \eqref{Equation:AutTellJnd}, define
\[
\theta_{\ell} \colon \Gal\left( E \left( \mathcal{J}_{n, d}[\ell^{\infty}]
\right) / E \right) \hookrightarrow \Aut_{R_{\ell}} \left( T_{\ell}
\mathcal{J}_{n, d} \right) \simeq R_{\ell}^{\times}
\]
to be the injective group homomorphism which sends each element of $\Gal\left( E
\left( \mathcal{J}_{n, d}[\ell^{\infty}] \right) / E \right)$ to its action on
$T_{\ell} \mathcal{J}_{n, d}$. Extend $\theta_{\ell}$ linearly to a ring
homomorphism
\[
\theta_{\ell} \colon \mathbf{Z}_{\ell}\left[ \Gal\left( E \left(
\mathcal{J}_{n, d}[\ell^{\infty}] \right) / E \right) \right] \to
\End_{R_{\ell}} \left( T_{\ell} \mathcal{J}_{n, d} \right) \simeq R_{\ell}.
\]
\end{definition}
\begin{remark}
When $n = p$ is prime, recall that $\theta_{p}$ was previously defined in
\autoref{Definition:Thetap} as a map from $\mathbf{Z}_{p}\left[
\Gal\left( \mathbf{Q}(\mu_{p}, \mathcal{J}_{p, d}[p^{\infty}]) /
\mathbf{Q}(\mu_{p}) \right) \right]$, but we are now defining $\theta_{p}$ to be
its restriction to the subring $\mathbf{Z}_{p}\left[ \Gal\left( E \left(
\mathcal{J}_{p, d}[p^{\infty}] \right) / E \right) \right]$; this is a subring
because the field of definition of $\mathcal{J}_{p, d}[p^{\infty}] \supseteq
\mathcal{J}_{p, d}[1 - \zeta_{p}]$ contains $\mathbf{Q}(\mu_{d})$, so
$\mathbf{Q}(\mu_{p}, \mathcal{J}[p^{\infty}])$ does contain $E$. 
\end{remark}

\begin{lemma}
\label{Lemma:KillTorsionInCpq}
Suppose that $n = p$ is prime.  Then $\gamma \in \mathbf{Z}_{p}\left[ \Gal\left(
E(\mathcal{J}_{p, d}[p^{\infty}]) / E \right) \right]$ kills $\mathcal{J}_{p,
d}[(1 - \zeta_{p})^{i}]$ if and only if
\[
\theta_{p}(\gamma) \in (1 - \zeta_{p})^{i} R_{p}.
\]
\end{lemma}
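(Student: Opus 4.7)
The plan is to mimic the argument of \autoref{Lemma:HowToKillpTorsionGeneralSuperelliptic} and \autoref{Corollary:KillTorsionInGeneral}, using the stronger structural input that $T_{p}\mathcal{J}_{p, d}$ is not merely free over $\mathbf{Z}_{p}[\zeta_{p}]$ but free of rank $1$ over the larger ring $R_{p}$. First I would invoke \autoref{Lemma:TellJndFreeRellModule} to identify $\End_{R_{p}}(T_{p} \mathcal{J}_{p, d}) \simeq R_{p}$ and hence view $\theta_{p}$ as the ring map
\[
\theta_{p} \colon \mathbf{Z}_{p}\bigl[\Gal(E(\mathcal{J}_{p, d}[p^{\infty}])/E)\bigr] \to R_{p}
\]
already set up in \autoref{Definition:Thetaell}. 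The point of restricting to this subring of the full $\mathbf{Z}_{p}[\Gal(\mathbf{Q}(\mu_{p}, \mathcal{J}_{p, d}[p^{\infty}])/\mathbf{Q}(\mu_{p}))]$ of \autoref{Definition:Thetap} is precisely that $E$ contains $\mu_{p d}$, so every element of $Z$ is defined over $E$, and the Galois action commutes with the whole $R$-action rather than just the $\mathbf{Z}[\zeta_{p}]$-action.

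Next I would translate the torsion-killing condition into a reduction statement. Since $(1 - \zeta_{p})^{p - 1}$ is a unit multiple of $p$ in $\mathbf{Z}_{p}[\zeta_{p}] \subseteq R_{p}$, the identifications of \autoref{Subsection:HomologyBackground} give $\mathcal{J}_{p, d}[(1 - \zeta_{p})^{i}] \simeq T_{p} \mathcal{J}_{p, d} / (1 - \zeta_{p})^{i} T_{p} \mathcal{J}_{p, d}$, and the $R$-action descends to this quotient, making it a free $R_{p}/(1 - \zeta_{p})^{i} R_{p}$-module of rank $1$. Thus the Galois action on $\mathcal{J}_{p, d}[(1 - \zeta_{p})^{i}]$ is given by the composition
\[
\mathbf{Z}_{p}\bigl[\Gal(E(\mathcal{J}_{p, d}[p^{\infty}])/E)\bigr] \xrightarrow{\theta_{p}} R_{p} \twoheadrightarrow R_{p}/(1 - \zeta_{p})^{i} R_{p}.
\]
Then $\gamma$ kills $\mathcal{J}_{p, d}[(1 - \zeta_{p})^{i}]$ iff $\theta_{p}(\gamma)$ lies in the kernel of the right-hand surjection, which is $(1 - \zeta_{p})^{i} R_{p}$ by definition.

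The only nontrivial checks are bookkeeping: that $(1 - \zeta_{p})$, viewed as an element of $\End \mathcal{J}_{p, d}$, really corresponds to the element $1 - T^{d} \in R_{p}$ under the isomorphism of \autoref{Corollary:SubringByXi} (since $\zeta_{p} = \zeta_{n d}^{d}$ is identified with $T^{d}$), and that the kernel of $R_{p} \twoheadrightarrow R_{p}/(1 - \zeta_{p})^{i} R_{p}$ is literally $(1 - \zeta_{p})^{i} R_{p}$. I expect the main conceptual obstacle to be the first of these, i.e.\ verifying that the $R$-action coming from $Z \subseteq \Aut(\mathcal{C}_{p, d})$ and the $\mathbf{Z}[\zeta_{p}]$-action coming from $\zeta_{p} \in \Aut(\mathcal{C})$ in \autoref{Definition:1MinusZetaEndomorphism} are compatible in the way needed, but this is already implicit in the construction of $R$ and the fact that $Z_{p} \subseteq Z$. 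Once that is in hand the argument is just the tautological statement that multiplication on a free rank-$1$ module by $\theta_{p}(\gamma)$ is zero mod $(1 - \zeta_{p})^{i}$ iff $\theta_{p}(\gamma) \in (1 - \zeta_{p})^{i} R_{p}$.
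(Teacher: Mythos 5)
Your proposal is correct and is essentially the paper's own argument: the paper proves this lemma by citing \autoref{Lemma:TellJndFreeRellModule} (freeness of $T_{p}\mathcal{J}_{p,d}$ as a rank-one $R_{p}$-module) and then repeating the reduction-map argument of \autoref{Lemma:HowToKillpTorsionGeneralSuperelliptic} and \autoref{Corollary:KillTorsionInGeneral}, exactly as you do. Your extra bookkeeping remarks (the identification $\mathcal{J}_{p,d}[(1-\zeta_{p})^{i}] \simeq T_{p}\mathcal{J}_{p,d}/(1-\zeta_{p})^{i}$ and the compatibility of the $Z_{p}$- and $R$-actions) are the same points the paper leaves implicit.
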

\begin{proof}
Since $T_{p} \mathcal{J}_{p, d}$ is a free $R_{p}$-module of rank 1 by
\autoref{Lemma:TellJndFreeRellModule}, this lemma is proved in the same way as
\autoref{Corollary:KillTorsionInGeneral}.
\end{proof}

\begin{lemma}
\label{Lemma:MetabelianTorsionField}
For each positive integer $m$, $\Gal\left( E(\mathcal{J}_{n, d}[m^{\infty}]) / E
\right)$ is abelian.
\end{lemma}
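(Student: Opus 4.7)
The plan is to reduce to the single-prime case and then invoke the machinery already set up in the excerpt, specifically \autoref{Definition:Thetaell} and \autoref{Lemma:TellJndFreeRellModule}.

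First I would reduce to prime powers. Writing $m = \prod_{\ell \mid m} \ell^{v_\ell(m)}$, the Chinese Remainder decomposition of torsion gives
\[
\mathcal{J}_{n,d}[m^k] \;=\; \bigoplus_{\ell \mid m} \mathcal{J}_{n,d}\!\left[\ell^{k \, v_\ell(m)}\right]
\]
for every $k \ge 1$, so $E(\mathcal{J}_{n,d}[m^\infty])$ is the compositum of the fields $E(\mathcal{J}_{n,d}[\ell^\infty])$ as $\ell$ ranges over the prime divisors of $m$. Galois theory of compositums then provides an injection
\[
\Gal\bigl(E(\mathcal{J}_{n,d}[m^\infty])/E\bigr) \hookrightarrow \prod_{\ell \mid m} \Gal\bigl(E(\mathcal{J}_{n,d}[\ell^\infty])/E\bigr).
\]
Since a subgroup of a product of abelian groups is abelian, it suffices to prove that each factor on the right is abelian.

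For a single prime $\ell$, the key observation is that the automorphism $\zeta_{nd}$ of $\mathcal{C}_{n,d}$ is defined over $E = \mathbf{Q}(\zeta_{nd})$, so the action of $\Gal(\overline{E}/E)$ on $T_\ell \mathcal{J}_{n,d}$ commutes with the endomorphism induced by $\zeta_{nd}$. Consequently, this action factors through $\Aut_{R_\ell}(T_\ell \mathcal{J}_{n,d})$. By \autoref{Lemma:TellJndFreeRellModule}, $T_\ell \mathcal{J}_{n,d}$ is a free $R_\ell$-module of rank $1$, so $\Aut_{R_\ell}(T_\ell \mathcal{J}_{n,d}) \simeq R_\ell^\times$. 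This is precisely the injection $\theta_\ell$ of \autoref{Definition:Thetaell}. Since $R_\ell$ is commutative, $R_\ell^\times$ is abelian, and therefore so is $\Gal(E(\mathcal{J}_{n,d}[\ell^\infty])/E)$.

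There is no real obstacle here; the lemma follows essentially formally from the rank-one freeness established in \autoref{Lemma:TellJndFreeRellModule}. The only point worth verifying carefully is that the argument applies uniformly for all primes $\ell$, including those dividing $nd$: the freeness of $T_\ell \mathcal{J}_{n,d}$ over $R_\ell$ holds for every $\ell$ (the proof via \autoref{Proposition:H1CndFreeRModule} makes no restriction), and the commutation of the Galois action with $\zeta_{nd}$ only requires that $\zeta_{nd}$ be defined over $E$, which it is by construction.
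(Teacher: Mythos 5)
Your proposal is correct and takes essentially the same route as the paper: the paper also embeds $\Gal(E(\mathcal{J}_{n,d}[m^\infty])/E)$ into $\prod_{\ell\mid m} R_\ell^\times$ via the injective maps $\theta_\ell$ and concludes from the commutativity of each $R_\ell$. The only difference is that you spell out the justification for why the Galois action commutes with $\zeta_{nd}$ and lands in $\Aut_{R_\ell}$, which the paper leaves implicit in \autoref{Definition:Thetaell}.
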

\begin{proof} 
Using the fact that the $\theta_{\ell}$ are injective, we see that
\[
\Gal\left( E(\mathcal{J}_{n, d}[m^\infty]) / E \right) \hookrightarrow
\prod_{\ell | m} \Gal\left( E(\mathcal{J}_{n, d}[\ell^\infty]) / E \right)
\hookrightarrow \prod_{\ell | m} R_{\ell}^{\times},
\]
so we are done since $R_{\ell}^{\times}$ is abelian.
\end{proof}

\begin{definition}
\label{Definition:JacobiSum}
Suppose that $\mathbf{F}_{Q}$ is a finite field and that $\mathcal{S}$ is a
ring. Suppose that $\chi_{1}, \chi_{2} \colon \mathbf{F}_{Q}^{\times} \to
\mathcal{S}^{\times}$ are nontrivial characters. For each integer $k \ge 1$,
define the Jacobi sum
\[
J_{k}(\chi_{1}, \chi_{2}) \colonequals \sum_{\alpha \in \mathbf{F}_{Q^{k}}
\setminus \{ 0, 1 \}} \chi_{1}\left(\alpha^{(Q^k - 1) / (Q - 1)}\right)
\chi_{2}\left( (1 - \alpha)^{(Q^k - 1) / (Q - 1)}\right).
\]
\end{definition}

\begin{definition}
\label{Definition:KappaGammandfrakr}
Define natural isomorphisms
\begin{align*}
\kappa_{n, \mathfrak{r}} &\colon \mu_{n}(\mathbf{F}_{\mathfrak{r}}) \to Z_{n} \\
\kappa_{d, \mathfrak{r}} &\colon \mu_{d}(\mathbf{F}_{\mathfrak{r}}) \to Z_{d}.
\end{align*}
Compose the ``exponentiation by $(\#\mathbf{F}_{\mathfrak{r}} -
1) / n$'' map and $\kappa_{n, \mathfrak{r}}$ to define
\[ 
\gamma_{n, \mathfrak{r}} \colon \mathbf{F}_{\mathfrak{r}}^\times \to
\mu_{n}(\mathbf{F}_{\mathfrak{r}}) \to Z_{n}.  
\] 
In an analogous fashion, define the composite morphism 
\[ 
\gamma_{d, \mathfrak{r}} \colon \mathbf{F}_{\mathfrak{r}}^\times \to
\mu_{d}(\mathbf{F}_{\mathfrak{r}}) \to Z_{d}.
\]
\end{definition}

\begin{definition}
For characters $\rho_{n} \colon Z_{n} \to E^\times$ and $\rho_{d} \colon Z_d \to
E^\times$, use $\left(V_{\ell} \mathcal{J}_{n, d}\right)^{(\rho_{n}, \rho_{d})}$
to denote the $(\rho_{n}, \rho_{d})$-isotypic component of $(V_{\ell}
\mathcal{J}_{n, d}) \otimes_{\mathbf{Q}_{\ell}} E_{\lambda}$, i.e., 
\[
\left(V_{\ell} \mathcal{J}_{n, d}\right)^{(\rho_{n}, \rho_{d})} \colonequals
\left\{ v \in (V_{\ell} \mathcal{J}_{n, d}) \otimes_{\mathbf{Q}_{\ell}}
E_{\lambda} : \begin{tabu}{l} z_{n}(v) = \rho_{n}(z_{n}) v \text{ for all }
z_{n} \in Z_{n} \\ z_{d}(v) = \rho_{d}(z_{d}) v \text{ for all } z_{d} \in Z_{d}
\end{tabu}\right\}.
\]
\end{definition}

\begin{proposition}
\label{Proposition:RationalTateModuleEigenspaces} \hfill
\begin{enumerate}[label=\upshape(\arabic*),
ref=\autoref{Proposition:RationalTateModuleEigenspaces}(\arabic*)]

\item \label{Proposition:RationalTateModuleEigenspacesDirectSum}
There is a direct sum decomposition
\[
(V_{\ell} \mathcal{J}_{n, d}) \otimes_{\mathbf{Q}_{\ell}} E_{\lambda} \simeq
\bigoplus_{\substack{\rho_n \colon Z \to E^{\times} \\ \rho_d \colon Z \to
E^{\times} }} \left( V_{\ell} \mathcal{J}_{n, d} \right)^{(\rho_n, \rho_d)}.
\]

\item \label{Proposition:RationalTateModuleEigenspacesNonzero}
The $\mathbf{Q}_{\ell}$-dimension of $\left( V_{\ell} \mathcal{J}_{n, d}
\right)^{(\rho_n, \rho_d)}$ is 
\[
\dim_{\mathbf{Q}_{\ell}} \left( V_{\ell} \mathcal{J}_{n, d}
\right)^{(\rho_n, \rho_d)} = \begin{cases}
1 &\text{if } \rho_n \neq 1 \text{ and } \rho_d \neq 1, \\
0 &\text{otherwise}.
\end{cases}
\]

\end{enumerate}
\end{proposition}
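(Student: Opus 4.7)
The plan is as follows. By \autoref{Lemma:TellJndFreeRellModule}, $T_{\ell} \mathcal{J}_{n, d}$ is free of rank one over $R_{\ell}$, so tensoring with $E_{\lambda}$ over $\mathbf{Q}_{\ell}$ makes $(V_{\ell} \mathcal{J}_{n, d}) \otimes_{\mathbf{Q}_{\ell}} E_{\lambda}$ a free module of rank one over $R \otimes_{\mathbf{Z}} E_{\lambda}$. The entire statement therefore reduces to decomposing the ring $R \otimes_{\mathbf{Z}} E_{\lambda}$ into $E_{\lambda}$-factors and matching its primitive idempotents with the character pairs $(\rho_{n}, \rho_{d})$.

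By \autoref{Corollary:RIsomphiND}, $R \simeq \mathbf{Z}[T]/(\varphi_{n, d}(T))$. I would first work out the factorization of $\varphi_{n, d}(T) = (T^{nd}-1)(T-1)/((T^{n}-1)(T^{d}-1))$ over $E$. A quick order-of-vanishing computation at each $nd$-th root of unity shows that the roots of $\varphi_{n, d}$ are precisely the elements of $\mu_{nd} \setminus (\mu_{n} \cup \mu_{d})$, each appearing simply, and the count $nd - n - d + 1 = (n-1)(d-1)$ matches $\deg \varphi_{n, d}$. Since $E \supseteq \mu_{nd}$, we obtain
\[
R \otimes_{\mathbf{Z}} E_{\lambda} \;\simeq\; \prod_{\zeta \in \mu_{nd} \setminus (\mu_{n} \cup \mu_{d})} E_{\lambda},
\]
and correspondingly a direct sum decomposition of $(V_{\ell} \mathcal{J}_{n, d}) \otimes E_{\lambda}$ into one-dimensional $E_{\lambda}$-eigenspaces on which $\zeta_{nd}$ acts by the scalar $\zeta$.

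The remaining step is to translate this $\zeta$-indexed decomposition into the $(\rho_{n}, \rho_{d})$-parametrization. Each $\zeta \in \mu_{nd} \setminus (\mu_{n} \cup \mu_{d})$ can be written uniquely as $\xi_{n}^{a} \xi_{d}^{b}$ with $a \in [1, n-1]$ and $b \in [1, d-1]$: the condition $\zeta \notin \mu_{n}$ forces $b \neq 0$ and $\zeta \notin \mu_{d}$ forces $a \neq 0$. On the corresponding eigenspace, $\zeta_{n} = \zeta_{nd}^{d}$ acts by $\zeta^{d} = \zeta_{n}^{a}$ and $\zeta_{d} = \zeta_{nd}^{n}$ acts by $\zeta^{n} = \zeta_{d}^{b}$, so this eigenspace coincides with the $(\rho_{n}, \rho_{d})$-isotypic component defined by $\rho_{n}(\zeta_{n}) = \zeta_{n}^{a}$ and $\rho_{d}(\zeta_{d}) = \zeta_{d}^{b}$, with both characters nontrivial. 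The map $\zeta \mapsto (\rho_{n}, \rho_{d})$ is a bijection between $\mu_{nd} \setminus (\mu_{n} \cup \mu_{d})$ and the set of pairs of nontrivial characters, which establishes (2) for nontrivial pairs and gives the decomposition (1); pairs with $\rho_{n} = 1$ or $\rho_{d} = 1$ correspond to $a = 0$ or $b = 0$, which are excluded from the factorization of $\varphi_{n, d}$, so their isotypic components vanish.

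I do not anticipate any serious obstacle: once the structural input (freeness of $T_{\ell} \mathcal{J}_{n,d}$ of rank one over $R_{\ell}$) is in hand, the argument is a purely combinatorial identification of the roots of $\varphi_{n,d}$ with nontrivial character pairs. The only care needed is the bookkeeping that $\xi_{n}^{d} = \zeta_{n}$ and $\xi_{d}^{n} = \zeta_{d}$, so that the action of $\zeta_{nd}$ by the scalar $\zeta = \xi_{n}^{a}\xi_{d}^{b}$ translates correctly into the actions of $\zeta_{n}$ and $\zeta_{d}$.
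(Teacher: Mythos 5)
Your proposal is correct and takes essentially the same approach as the paper: both arguments rest on the same two structural inputs, namely that $H_{1}(\mathcal{C}_{n,d},\mathbf{Z})$ is free of rank one over $R \simeq \mathbf{Z}[T]/(\varphi_{n,d}(T))$ and that the roots of $\varphi_{n,d}$ are exactly $\mu_{nd}\setminus(\mu_{n}\cup\mu_{d})$. The paper's proof is more terse (citing Katz for the direct-sum decomposition and reading off the eigenvalues from the characteristic polynomial), whereas you derive both parts at once from the factorization of $R\otimes E_{\lambda}$ into copies of $E_{\lambda}$ and then carry out the explicit translation between eigenvalues $\xi_{n}^{a}\xi_{d}^{b}$ and character pairs $(\rho_{n},\rho_{d})$, which the paper leaves implicit.
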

\begin{proof}
\autoref{Proposition:RationalTateModuleEigenspacesDirectSum} is just
representation theory (c.f. page 172 of \cite{katz1981crystalline}). 

By \autoref{Proposition:H1CndFreeRModule} and \autoref{Corollary:RIsomphiND}, we
know that the characteristic polynomial of $\zeta_{n d}$ on
$H_{1}(\mathcal{C}_{n, d}, \mathbf{Z})$ is $\varphi_{n, d}(T)$, so the
eigenvalues of $\zeta_{n d}$ on $T_{\ell} \mathcal{J}_{n, d} =
H_{1}(\mathcal{C}_{n, d}, \mathbf{Z}) \otimes_{\mathbf{Z}} \mathbf{Z}_{\ell}$
are $\mu_{n d} (\overline{\mathbf{Q}_{\ell}}) \setminus \left( \mu_{n}
(\overline{\mathbf{Q}_{\ell}}) \cup \mu_{d} (\overline{\mathbf{Q}_{\ell}})
\right)$; \autoref{Proposition:RationalTateModuleEigenspacesNonzero} follows.
\end{proof}

\begin{proposition} 
\label{Proposition:ApplyKatz}
Suppose that $\rho_{n} \colon Z_{n} \to E^\times$ and $\rho_{d} \colon Z_d \to
E^\times$ are nontrivial multiplicative characters.
\begin{enumerate}[label=\upshape(\arabic*),
ref=\autoref{Proposition:ApplyKatz}(\arabic*)]

\item \label{Proposition:ApplyKatzFrobeniusEigenvalue}
Let $\chi_{n, \mathfrak{r}} = \rho_n \circ \gamma_{n, \mathfrak{r}}$ and
$\chi_{d, \mathfrak{r}} = \rho_d \circ \gamma_{d, \mathfrak{r}}$. Then the
eigenvalue of $\Frob_{\mathfrak{r}}$ acting upon $\left( V_{\ell}
\mathcal{J}_{n, d} \right)^{(\rho_n, \rho_d)}$ is
\[
-\chi_{d, \mathfrak{r}}(-1) J_{1}(\chi_{n, \mathfrak{r}}, \chi_{d,
\mathfrak{r}}).
\]

\item \label{Proposition:ApplyKatzFrobeniusInZ}
Let $\gamma'_{n, \mathfrak{r}} \colon \mathbf{F}_{\mathfrak{r}}^{\times} \to
R^{\times}$ be the composite of $\gamma_{n, \mathfrak{r}}$ with the inclusion
$Z_{n} \subseteq R^{\times}$. Define $\gamma'_{d, \mathfrak{r}}$ similarly. Then
\begin{equation}
\label{Equation:ApplyKatzFrobeniusInZEquality}
\theta_{\ell}\left( \Frob_{\mathfrak{r}} \right) = - \gamma'_{d,
\mathfrak{r}}(-1) J_{1}(\gamma'_{n, \mathfrak{r}}, \gamma'_{d, \mathfrak{r}})
\in R.
\end{equation}

\end{enumerate}
\end{proposition}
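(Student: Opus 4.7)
The plan is to apply Katz's formula for Frobenius eigenvalues on Fermat quotients (page 172 of \cite{katz1981crystalline}) for (1), and to derive (2) as a formal consequence of (1) together with \autoref{Lemma:TellJndFreeRellModule} and the isotypic decomposition of \autoref{Proposition:RationalTateModuleEigenspaces}.

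For (1), I would pass to the reduction $\overline{\mathcal{C}}_{n, d}$ of $\mathcal{C}_{n, d}$ modulo $\mathfrak{r}$ and invoke the Lefschetz trace formula. Since $Z$ is defined over $E$ and $\mathfrak{r}$ is unramified in $E(\mathcal{J}_{n,d}[\ell^\infty])/E$, Frobenius commutes with $Z$, so $\Frob_{\mathfrak{r}}$ acts by a scalar on each $(\rho_n, \rho_d)$-isotypic component. This scalar is extracted from the weighted point count
\[
\sum_{\substack{x, y \in \mathbf{F}_{\mathfrak{r}}^\times \\ y^n = x^d + 1}} \chi_{n, \mathfrak{r}}(y) \, \chi_{d, \mathfrak{r}}(x),
\]
where $\chi_{n, \mathfrak{r}} = \rho_n \circ \gamma_{n, \mathfrak{r}}$ and $\chi_{d, \mathfrak{r}} = \rho_d \circ \gamma_{d, \mathfrak{r}}$. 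The substitution $u = y^n$, $v = -x^d$ (so $u + v = 1$) rewrites this sum as a classical Jacobi sum $J_{1}(\chi_{n, \mathfrak{r}}, \chi_{d, \mathfrak{r}})$, with the change of variable $v = -x^d$ producing the sign $-\chi_{d, \mathfrak{r}}(-1)$. Contributions from points with $x = 0$ or $y = 0$, and from the point at infinity, drop out because both characters are nontrivial.

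For (2), \autoref{Lemma:TellJndFreeRellModule} identifies $\End_{R_{\ell}}(T_{\ell} \mathcal{J}_{n, d})$ with $R_{\ell}$, and since $\Frob_{\mathfrak{r}}$ commutes with $Z$, the element $\theta_{\ell}(\Frob_{\mathfrak{r}})$ lies in $R_{\ell}$. Extending scalars to $E_{\lambda}$, the direct sum decomposition of \autoref{Proposition:RationalTateModuleEigenspacesDirectSum} yields an isomorphism
\[
R_{\ell} \otimes_{\mathbf{Z}_{\ell}} E_{\lambda} \;\simeq\; \prod_{\rho_n, \rho_d \neq 1} E_{\lambda},
\]
under which $\theta_{\ell}(\Frob_{\mathfrak{r}})$ corresponds to the tuple of its eigenvalues on the various isotypic components, already computed in (1). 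The element $-\gamma'_{d, \mathfrak{r}}(-1) J_{1}(\gamma'_{n, \mathfrak{r}}, \gamma'_{d, \mathfrak{r}})$ lies in $R$ by construction, being a $\mathbf{Z}$-linear combination of products of elements of $Z_n, Z_d \subseteq R^{\times}$; its $(\rho_n, \rho_d)$-projection under the isomorphism above is $-\chi_{d, \mathfrak{r}}(-1) J_{1}(\chi_{n, \mathfrak{r}}, \chi_{d, \mathfrak{r}})$, which matches (1). Hence the two sides of \eqref{Equation:ApplyKatzFrobeniusInZEquality} agree componentwise and therefore in $R_{\ell}$, establishing the claimed equality in $R$.

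The main obstacle is the sign and normalization bookkeeping in (1): matching the conventions of Katz with the definition of $J_{1}$ adopted in \autoref{Definition:JacobiSum}, tracking in particular the $-\chi_{d, \mathfrak{r}}(-1)$ prefactor, and verifying that boundary points truly vanish from each isotypic piece. Once the scalar on a single isotypic component is pinned down, part (2) is a purely formal translation through the isotypic decomposition.
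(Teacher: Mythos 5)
Your overall strategy for (1) — reduce mod $\mathfrak{r}$, use a Lefschetz-type count, identify the result as a Jacobi sum, extract the scalar from the one-dimensional isotypic piece — is the same as the paper's (it cites Katz's Section 1 and Lemma 1.1 directly), and your treatment of (2) is essentially identical to the paper's. However, there is a concrete error in the key formula for (1). You propose to compute the eigenvalue from
\[
\sum_{\substack{x, y \in \mathbf{F}_{\mathfrak{r}}^\times \\ y^n = x^d + 1}} \chi_{n, \mathfrak{r}}(y)\, \chi_{d, \mathfrak{r}}(x),
\]
a sum over $\mathbf{F}_{\mathfrak{r}}$-rational points of the curve weighted by the characters evaluated at the coordinates themselves. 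This is not the quantity governed by the Lefschetz trace formula for the $(\rho_n,\rho_d)$-isotypic piece, and it does not equal $J_1(\chi_{n,\mathfrak{r}},\chi_{d,\mathfrak{r}})$. Grouping by $\alpha = y^n$, your sum only receives contributions from those $\alpha \in \mathbf{F}_\mathfrak{r}$ for which $\alpha$ is an $n$th power and $\alpha - 1$ is a $d$th power, and each such $\alpha$ contributes $\bigl(\sum_{y^n=\alpha}\chi_{n,\mathfrak{r}}(y)\bigr)\bigl(\sum_{x^d=\alpha-1}\chi_{d,\mathfrak{r}}(x)\bigr)$, which can vanish entirely (e.g.\ when $n^2 \nmid \#\mathbf{F}_{\mathfrak{r}} - 1$, so that $\chi_{n,\mathfrak{r}}$ restricted to $\mu_n(\mathbf{F}_\mathfrak{r})$ is nontrivial). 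The Jacobi sum, by contrast, sums $\chi_{n,\mathfrak{r}}(\alpha)\chi_{d,\mathfrak{r}}(1-\alpha)$ over \emph{all} $\alpha \in \mathbf{F}_{\mathfrak{r}} \setminus \{0,1\}$, and has absolute value $\sqrt{\#\mathbf{F}_\mathfrak{r}}$, so the two cannot agree.

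The correct quantity is Katz's
\[
S(\rho,1) = \frac{1}{\#Z}\sum_{z\in Z}\rho(z)\,\#\Fix\bigl(\Frob_{\mathfrak{r}}\,z^{-1}\bigr),
\]
which is a sum over points $(x,y)$ in $\overline{\mathbf{F}_{\mathfrak{r}}}$ (not $\mathbf{F}_{\mathfrak{r}}$) fixed by the \emph{twisted} Frobenius $\Frob_{\mathfrak{r}} z^{-1}$, weighted by $\rho(z)$. Equivalently, it is the sum over $\alpha \in \mathbf{F}_\mathfrak{r}\setminus\{0,1\}$ of the trace of Frobenius on the $(\rho_n,\rho_d)$-stalk of the direct-image sheaf at $\alpha$, and this trace is $\chi_{n,\mathfrak{r}}(\alpha)\chi_{d,\mathfrak{r}}(\alpha-1)$ — characters evaluated at $\alpha$ and $\alpha - 1$, not at $y$ and $x$. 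That is what the paper's claim establishes, and it is what actually equals $\chi_{d,\mathfrak{r}}(-1)J_1(\chi_{n,\mathfrak{r}},\chi_{d,\mathfrak{r}})$. A second, more minor, omission: after obtaining $S(\rho,1)$ you still need Katz's Lemma 1.1 (together with the Weil bound $|J_k| = (\#\mathbf{F}_{\mathfrak{r}})^{k/2}$ for all $k$) to conclude that the Frobenius eigenvalue on the one-dimensional isotypic piece is $-S(\rho,1)$; the paper does this explicitly, whereas you implicitly assume the alternating trace sum collapses to minus the $H^1$-eigenvalue without addressing the $H^0$ and $H^2$ contributions.
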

\begin{proof}
We apply the results in Section 1 of Katz \cite{katz1981crystalline} to the
group $Z$ acting on the curve $\mathcal{C}_{n, d, \mathfrak{r}}$, which is the
reduction of $\mathcal{C}_{n, d}$ at the prime ideal $\mathfrak{r}$.

Define $\rho \colon Z \to E^\times$ to be the character that restricts to
$\rho_{n}$ on $Z_{n}$ and to $\rho_{d}$ on $Z_{d}$. As on page 172 of
\cite{katz1981crystalline}, for every integer $k \ge 1$, define $\Fix \left(
\Frob_{\mathfrak{r}}^k z^{-1} \right)$ to be the subset of $\mathcal{C}_{n, d,
\mathfrak{r}}(\overline{\mathbf{F}_{\mathfrak{r}}})$ fixed by
$\Frob_{\mathfrak{r}}^k z^{-1}$ and
\begin{align}
S(\rho, k) &\colonequals \frac{1}{\# Z} \sum_{z \in Z} \rho(z) \# \Fix
\left( \Frob_{\mathfrak{r}}^k z^{-1} \right) \nonumber \\
&= \frac{1}{\# Z} \sum_{P \in \mathcal{C}_{n, d,
\mathfrak{r}}(\overline{\mathbf{F}_{\mathfrak{r}}})} \sum_{\substack{z \in Z \\
\Frob_{\mathfrak{r}}^k z^{-1} P = P} } \rho(z)
\label{Equation:DefinitionOfSrhok}.
\end{align}

\

\noindent\textbf{Claim.} Let $Q = \# \mathbf{F}_{\mathfrak{r}}$. For every $k
\ge 1$, we have 
\[
S(\rho, k) = (\chi_{d, \mathfrak{r}}(-1))^{(Q^{k} - 1) / (Q - 1)} J_{k}(\chi_{n,
\mathfrak{r}}, \chi_{d, \mathfrak{r}}).
\]
\textbf{Proof of claim.} Suppose that $P \in \mathcal{C}_{n, d,
\mathfrak{r}}(\overline{\mathbf{F}_{\mathfrak{r}}})$ is fixed by $Z_{n}$. Then
$\{ z \in Z \colon \Frob_{\mathfrak{r}}^k z^{-1} P = P \} $ will be a union of
cosets for the subgroup $Z_{n}$ of $Z$. Since $\rho_{n} \neq 1$, the sum of
$\rho(z)$ for any coset of $Z_{n}$ is zero, and hence the inner sum of
\eqref{Equation:DefinitionOfSrhok} is zero. Hence we may ignore $P$ that are
fixed by $Z_{n}$. Similarly, we may also ignore $P$ that are fixed by $Z_{d}$.
Therefore, we may restrict to the subset
\begin{align*}
\mathcal{C}_{n, d, \mathfrak{r}}(\overline{\mathbf{F}_{Q}})^{*} &\colonequals \{
P \in \mathcal{C}_{n, d, \mathfrak{r}}(\overline{\mathbf{F}_{Q}}) \colon P
\text{ is not fixed by }
Z_{n} \text{ nor by } Z_{d} \} \\
&= \{ (x, y) \in \overline{\mathbf{F}_{\mathfrak{r}}}^{\times} \times
\overline{\mathbf{F}_{\mathfrak{r}}}^{\times} \colon y^{n} = x^{d} + 1 \}. 
\end{align*}
Suppose that $P = (x, y) \in \mathcal{C}_{n, d,
\mathfrak{r}}(\overline{\mathbf{F}_{Q}})^{*}$ and that $z \in Z$ satisfy
$\Frob_{\mathfrak{r}}^{k} z^{-1} P = P$. Since $\mathbf{F}_{\mathfrak{r}}$
contains an $nd$th root of unity, $\Frob_{\mathfrak{r}}^{k}$ and $z^{-1}$
commute, so this is equivalent to $\Frob_{\mathfrak{r}}^{k} (x, y) = z(x, y)$.
Since $z$ scales the $x$-coordinate by a $d$th root of unity and scales the
$y$-coordinate by a $n$th root of unity, we see that $x^{d}, y^{n}$ are both
fixed by $\Frob_{\mathfrak{r}}^{k}$. Define $\alpha \colonequals y^{n} = x^{d} +
1$, so that $\alpha$ is fixed by $\Frob_{\mathfrak{r}}^{k}$. Also, $\alpha \not
\in \{ 0, 1 \}$ since $x, y \neq 0$. Rewrite \eqref{Equation:DefinitionOfSrhok}
as
\[
S(\rho, k) = \frac{1}{\# Z} \sum_{\substack{\alpha \in
\overline{\mathbf{F}_{\mathfrak{r}}} \setminus \{ 0, 1 \} \\
\Frob_{\mathfrak{r}}^{k} \alpha = \alpha} } \; \; \sum_{\substack{(x,
y, z) \in \overline{\mathbf{F}_{\mathfrak{r}}}^{\times} \times
\overline{\mathbf{F}_{\mathfrak{r}}}^{\times} \times Z \\ \Frob_{\mathfrak{r}}^k
z^{-1} (x, y) = (x, y) \\ \alpha = y^{n} = x^{d} + 1} } \rho(z).
\]
Define $z_{n} \in Z_{n}$ and $z_{d} \in Z_{d}$ such that $z = z_{n} z_{d}$. By
definition of $\kappa_{n, \mathfrak{r}}$ and $\kappa_{d, \mathfrak{r}}$, we know
that $z(x, y) = \left( \kappa_{d, \mathfrak{r}}^{-1}(z_{d}) x, \kappa_{n,
\mathfrak{r}}^{-1}(z_{n}) y  \right)$, so the condition that
$\Frob_{\mathfrak{r}}^{k} (x, y) = z(x, y)$ is equivalent to the two conditions
$x^{Q^{k}} = \kappa_{d, \mathfrak{r}}^{-1}(z_{d}) x$ and $y^{Q^{k}} = \kappa_{n,
\mathfrak{r}}^{-1}(z_{d}) y$, which is equivalent to the two conditions
$z_{d} = \kappa_{d, \mathfrak{r}}(x^{Q^{k} - 1})$ and 
$z_{n} = \kappa_{n, \mathfrak{r}}(y^{Q^{k} - 1})$, which by definition of
$\alpha$, is equivalent to $z_{d} = \kappa_{d, \mathfrak{r}}((\alpha -
1)^{(Q^{k} - 1) / d})$ and $z_{n} = \kappa_{n, \mathfrak{r}}(\alpha^{(Q^{k} -
1) / n})$, which uniquely determines $z$. Since $\rho(z) = \rho(z_{n} z_{d}) =
\rho_{n}(z_{n}) \rho_{d}(z_{d})$, we may rewrite the sum as
\[
S(\rho, k) = \frac{1}{\# Z} \sum_{\substack{\alpha \in
\overline{\mathbf{F}_{\mathfrak{r}}} \setminus \{ 0, 1 \} \\
\Frob_{\mathfrak{r}}^{k} \alpha = \alpha} } \; \; \sum_{\substack{(x, y) \in
\overline{\mathbf{F}_{\mathfrak{r}}}^{\times} \times
\overline{\mathbf{F}_{\mathfrak{r}}}^{\times} \\ \alpha = y^{n} = x^{d} + 1} }
\rho_{n} \left( \kappa_{n, \mathfrak{r}}(\alpha^{(Q^{k} - 1) / n}) \right)
\rho_{d}\left( \kappa_{d, \mathfrak{r}}((\alpha - 1)^{(Q^{k} - 1) / d}) \right).
\]
For each $\alpha$, there are $n d = \# Z$ pairs $(x, y) \in
\overline{\mathbf{F}_{\mathfrak{r}}}^{\times} \times
\overline{\mathbf{F}_{\mathfrak{r}}}^{\times}$ satisfying $y^{n} = x^{d} + 1$,
so we simplify to
\[
S(\rho, k) = \sum_{\substack{\alpha \in \overline{\mathbf{F}_{\mathfrak{r}}}
\setminus \{ 0, 1 \} \\ \Frob_{\mathfrak{r}}^{k} \alpha = \alpha} } \rho_{n}
\left( \kappa_{n, \mathfrak{r}}(\alpha^{(Q^{k} - 1) / n}) \right) \rho_{d}\left(
\kappa_{d, \mathfrak{r}}((\alpha - 1)^{(Q^{k} - 1) / d}) \right).
\]
By definition of $\chi_{n, \mathfrak{r}}$, we know that $\chi_{n,
\mathfrak{r}}(\alpha) = \rho_{n}\left( \gamma_{n, \mathfrak{r}}(\alpha) \right)
= \rho_{n}\left( \kappa_{n, \mathfrak{r}} \left( \alpha^{(Q - 1) / n)} \right)
\right)$. A similar statement holds for $\chi_{d, \mathfrak{r}}$, so this sum
equals
\begin{align*}
S(\rho, k) &= \sum_{\substack{\alpha \in \overline{\mathbf{F}_{\mathfrak{r}}}
\setminus \{ 0, 1 \} \\ \Frob_{\mathfrak{r}}^{k} \alpha = \alpha} }
\chi_{n, \mathfrak{r}} \left( \alpha^{(Q^{k} - 1) / (Q - 1)} \right)  \chi_{d,
\mathfrak{r}} \left( (\alpha - 1)^{(Q^{k} - 1) / (Q - 1)} \right) \\
&= \left(\chi_{d, \mathfrak{r}}(-1)\right)^{(Q^{k} - 1) / (Q - 1)}
\sum_{\substack{\alpha \in \overline{\mathbf{F}_{\mathfrak{r}}} \setminus \{ 0,
1 \} \\ \Frob_{\mathfrak{r}}^{k} \alpha = \alpha} } \chi_{n, \mathfrak{r}}
\left( \alpha^{(Q^{k} - 1) / (Q - 1)} \right)  \chi_{d, \mathfrak{r}} \left( (1
- \alpha)^{(Q^{k} - 1) / (Q - 1)} \right) \\
&= \left(\chi_{d, \mathfrak{r}}(-1)\right)^{(Q^{k} - 1) / (Q - 1)}
J_{k}(\chi_{n, \mathfrak{r}}, \chi_{d, \mathfrak{r}})
\end{align*}
by definition of the Jacobi sum (\autoref{Definition:JacobiSum}). So
\textbf{the proof of the claim is complete.}

Theorem 2.1.3(b) of \cite{berndt1998gauss} implies that $|J_{k}(\chi_{n},
\chi_{d})| = Q^{k / 2}$, so by the claim, $|S(\rho, k)| = Q^{k / 2}$. Using
Lemma 1.1 of \cite{katz1981crystalline} (3) $\Longleftrightarrow$ (6), the
eigenvalue of $\Frob_{\mathfrak{r}}$ on $\left( V_{\ell} \mathcal{J}_{n, d,
\mathfrak{r}} \right)^{(\rho_n, \rho_d)}$ is $-S(\rho, 1) = - \chi_{d,
\mathfrak{r}}(-1) J_{1}(\chi_{n, \mathfrak{r}}, \chi_{d, \mathfrak{r}})$, which
gives \autoref{Proposition:ApplyKatzFrobeniusEigenvalue}.

We may check \autoref{Proposition:RationalTateModuleEigenspacesNonzero} after
tensoring up to $E_{\lambda}$ to work with $(V_{\ell} \mathcal{J}_{n, d})
\otimes_{\mathbf{Q}_{\ell}} E_{\lambda}$, and
\autoref{Proposition:RationalTateModuleEigenspaces} implies that it is
sufficient to check this on each $(V_{\ell} \mathcal{J}_{n, d})^{(\rho_n,
\rho_d)}$ whenever $\rho_n, \rho_d \neq 1$. For any $z_{n} \in Z_{n}$, the
eigenvalue of $z_{n}$ on $(V_{\ell} \mathcal{J}_{n, d})^{(\rho_n, \rho_d)}$ is
$\rho_{n}(z_{n})$, so the eigenvalue of $\gamma'_{n, \mathfrak{r}}(\alpha)$
acting on $(V_{\ell} \mathcal{J}_{n, d})^{(\rho_n, \rho_d)}$ is $\rho_{n}\left(
\gamma_{n, \mathfrak{r}}(\alpha) \right) = \chi_{n, \mathfrak{r}}(\alpha)$ (and
similarly for $\gamma'_{d, \mathfrak{r}}(1 - \alpha)$), meaning that the right
hand side of \eqref{Equation:ApplyKatzFrobeniusInZEquality} acts on $(V_{\ell}
\mathcal{J}_{n, d})^{(\rho_n, \rho_d)}$ by the scalar $-\chi_{n,
\mathfrak{r}}(-1) J_{1}(\chi_{n, \mathfrak{r}}, \chi_{d, \mathfrak{r}})$, so we
are done by \autoref{Proposition:ApplyKatzFrobeniusEigenvalue}.
\end{proof}

\begin{definition}
By Galois theory, $\Gal(E(\mathcal{J}_{n, d}[m^\infty]) / \mathbf{Q})$ acts by
conjugation on the subgroup $\Gal\left( E(\mathcal{J}_{n, d}[m^\infty]) / E
\right)$. By \autoref{Lemma:MetabelianTorsionField}, the subgroup $\Gal\left(
E(\mathcal{J}_{n, d}[m^\infty]) / E \right)$ is abelian, so this action factors
through an action of $\Gal(E / \mathbf{Q})$ on $\Gal\left( E(\mathcal{J}_{n,
d}[m^\infty]) / E \right)$. For $h \in \Gal\left( E(\mathcal{J}_{n,
d}[m^\infty]) / E \right)$ and $\gamma \in \Gal(E / \mathbf{Q})$, write
${}^{\gamma} \! h$ to denote the action of $\gamma$ on $h$.  Let $\sigma \in
\Gal(E / \mathbf{Q})$ be complex conjugation.
\end{definition}

\begin{proposition}
\label{Proposition:TauConstruction}
$\Frob_{\mathfrak{r}} \cdot {}^{\sigma}\!\Frob_{\mathfrak{r}}$ acts on $T_{\ell}
\mathcal{J}_{n, d}$ as multiplication by $\# \mathbf{F}_{\mathfrak{r}}$.
\end{proposition}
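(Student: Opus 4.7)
The plan is to apply the ring homomorphism $\theta_{\ell}$ of \autoref{Definition:Thetaell} to both sides and verify $\theta_{\ell}(\Frob_{\mathfrak{r}} \cdot {}^{\sigma}\!\Frob_{\mathfrak{r}}) = \#\mathbf{F}_{\mathfrak{r}}$ inside $R_{\ell}$. The key will be to identify $\theta_{\ell}({}^{\sigma}\!\Frob_{\mathfrak{r}})$ with the image of $\theta_{\ell}(\Frob_{\mathfrak{r}})$ under the involution $\sigma_{R}$ of $R$ sending $T \mapsto T^{-1}$ (the ``complex conjugation'' on $R$), after which \autoref{Proposition:ApplyKatzFrobeniusInZ} reduces the claim to the classical Jacobi sum identity $J(\chi_{1}, \chi_{2}) J(\chi_{1}^{-1}, \chi_{2}^{-1}) = q$.

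First I would prove that for any $g \in \Gal(E(\mathcal{J}_{n, d}[\ell^{\infty}])/E)$, one has $\theta_{\ell}({}^{\sigma}\!g) = \sigma_{R}(\theta_{\ell}(g))$. Choose any lift $\tilde{\sigma}$ of $\sigma$ to $\Gal(\overline{\mathbf{Q}}/\mathbf{Q})$. Since the automorphism $\zeta_{n d}$ of $\mathcal{C}_{n, d}$ is defined over $E$ by $(x, y) \mapsto (\xi_{d} x, \xi_{n} y)$ and complex conjugation inverts $\xi_{n}, \xi_{d}$, we get $\tilde{\sigma} \circ \zeta_{n d} \circ \tilde{\sigma}^{-1} = \zeta_{n d}^{-1}$ as endomorphisms of $T_{\ell}\mathcal{J}_{n, d}$. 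Writing $\theta_{\ell}(g) = \sum_{i} a_{i} T^{i}$ with $a_{i} \in \mathbf{Z}_{\ell}$, the action of ${}^{\sigma}\!g = \tilde{\sigma} g \tilde{\sigma}^{-1}$ on $P \in T_{\ell}\mathcal{J}_{n, d}$ unfolds to $\sum_{i} a_{i} \zeta_{n d}^{-i}(P)$, which is multiplication by $\sigma_{R}(\theta_{\ell}(g))$.

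Combining this identification with \autoref{Proposition:ApplyKatzFrobeniusInZ} yields
\[
\theta_{\ell}(\Frob_{\mathfrak{r}} \cdot {}^{\sigma}\!\Frob_{\mathfrak{r}}) = \gamma'_{d, \mathfrak{r}}(-1)^{2} \cdot J_{1}(\gamma'_{n, \mathfrak{r}}, \gamma'_{d, \mathfrak{r}}) \cdot J_{1}((\gamma'_{n, \mathfrak{r}})^{-1}, (\gamma'_{d, \mathfrak{r}})^{-1}),
\]
using that $\gamma'_{d, \mathfrak{r}}(-1) \in \{\pm 1\} \subseteq R$ is fixed by $\sigma_{R}$ and that $\sigma_{R}$ takes $J_{1}(\chi_{n}, \chi_{d}) = \sum_{\alpha} \chi_{n}(\alpha) \chi_{d}(1 - \alpha)$ to $J_{1}(\chi_{n}^{-1}, \chi_{d}^{-1})$ term-by-term. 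The first factor is $1$; for the Jacobi sum product, I would use the decomposition $R \hookrightarrow \prod_{e} \mathbf{Z}[\zeta_{e}]$ over divisors $e \mid nd$ with $e \nmid n$ and $e \nmid d$ from \autoref{Corollary:RIsomphiND} and apply the classical identity $J(\chi_{1}, \chi_{2}) J(\chi_{1}^{-1}, \chi_{2}^{-1}) = \#\mathbf{F}_{\mathfrak{r}}$ componentwise. The required nontriviality of $\chi_{n}$, $\chi_{d}$, and $\chi_{n} \chi_{d}$ in each factor follows from $e \nmid n$, $e \nmid d$, and the observation that any $e > 1$ dividing both $nd$ and $n + d$ must have a prime divisor $p$ that divides both $n$ and $d$, contradicting $\gcd(n, d) = 1$.

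The main obstacle is the first step: rigorously identifying how a lift of $\sigma$ conjugates the $R$-action on $T_{\ell}\mathcal{J}_{n, d}$. This requires carefully using that the generating endomorphism $\zeta_{nd}$ is defined over $E$ but not over $\mathbf{Q}$, so that a Galois lift of complex conjugation inverts it on the Tate module, making $\sigma_{R}$ (rather than the identity) the correct involution to apply to Jacobi sum expressions.
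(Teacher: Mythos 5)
Your proof is correct, and it takes a genuinely different intermediate route from the paper's. The paper verifies the claim eigenspace-by-eigenspace on $(V_{\ell}\mathcal{J}_{n,d}) \otimes E_{\lambda}$: it uses the class-field-theory identity ${}^{\sigma}\!\Frob_{\mathfrak{r}} = \Frob_{\sigma(\mathfrak{r})}$, applies $\sigma$ formally to the eigenvalue formula of \autoref{Proposition:ApplyKatzFrobeniusEigenvalue}, and reads off $J \cdot \sigma(J) = \#\mathbf{F}_{\mathfrak{r}}$ from $|J| = \sqrt{\#\mathbf{F}_{\mathfrak{r}}}$ (Theorem 2.1.3(b) of \cite{berndt1998gauss}). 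You instead establish the cleaner structural statement $\theta_{\ell}({}^{\sigma}\!g) = \sigma_{R}(\theta_{\ell}(g))$ for arbitrary $g$, deduced from the Galois-equivariance $\tilde{\sigma}\,\zeta_{nd}\,\tilde{\sigma}^{-1} = \zeta_{nd}^{-1}$ of the $E$-rational (but not $\mathbf{Q}$-rational) automorphism $\zeta_{nd}$; this gives an intrinsic explanation of how conjugation interacts with the $R$-module structure on $T_{\ell}$, and is arguably more illuminating than the paper's shortcut. After that you work inside $R \hookrightarrow \prod_{e} \mathbf{Z}[\zeta_{e}]$ and invoke the same classical Jacobi-sum identity componentwise; your verification of the nontriviality of $\chi_{n}$, $\chi_{d}$, and $\chi_{n}\chi_{d}$ in each factor is correct (the coprimality of the orders of $\chi_{n}$ and $\chi_{d}$ gives the last one directly, and your argument via $e \nmid n+d$ also works for the natural choice of $\kappa_{n,\mathfrak{r}}$, $\kappa_{d,\mathfrak{r}}$). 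One small imprecision: $\gamma'_{d,\mathfrak{r}}(-1)$ need not literally lie in $\{\pm 1\} \subseteq R$ when $d$ is even and $d > 2$ (for $(n,d) = (3,4)$ it can be $T^{6}$, which maps to $(1,-1)$ in $\mathbf{Z}[\zeta_{6}] \times \mathbf{Z}[\zeta_{12}]$); however, the two facts you actually need — that $\sigma_{R}$ fixes it (since $T^{nd} = 1$ forces $T^{-nd/2} = T^{nd/2}$) and that its square is $1$ — both hold, so the argument goes through unchanged.
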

\begin{proof}
By \autoref{Proposition:RationalTateModuleEigenspaces}, we may as well verify
this on each eigenspace $\left( V_{\ell} \mathcal{J}_{n, d} \right)^{(\rho_{n},
\rho_{d})}$.  Apply $\sigma$ to everything in \autoref{Proposition:ApplyKatz}
to see that $\Frob_{\sigma(\mathfrak{r})}$ acts on $\left( V_{\ell}
\mathcal{J}_{n, d} \right)^{(\rho_{n}, \rho_{d})}$ as multiplication by
$\chi_{d, \sigma(\mathfrak{r})}(-1) J_{1}\left( \chi_{n, \sigma(\mathfrak{r})},
\chi_{d, \sigma(\mathfrak{r})} \right) = \sigma\left( \chi_{d, \mathfrak{r}}(-1)
J_{1}(\chi_{n, \mathfrak{r}}, \chi_{d, \mathfrak{r}}) \right)$, implying that
$\Frob_{\mathfrak{r}} \cdot {}^{\sigma}\!\Frob_{\mathfrak{r}} =
\Frob_{\mathfrak{r}} \Frob_{\sigma(\mathfrak{r})}$ acts on $\left( V_{\ell}
\mathcal{J}_{n, d} \right)^{(\rho_{n}, \rho_{d})}$ as multiplication by
$J_{1}(\chi_{n, \mathfrak{r}}, \chi_{d, \mathfrak{r}}) \cdot \sigma\left(
J_{1}(\chi_{n, \mathfrak{r}}, \chi_{d, \mathfrak{r}}) \right)$, which equals $\#
\mathbf{F}_{\mathfrak{r}}$ by Theorem 2.1.3(b) of \cite{berndt1998gauss}.
\end{proof}

\begin{corollary}
\label{Corollary:TauConstruction1.5}
Let $m$ be a nonnegative integer and let $h \in \Gal(E(\mathcal{J}_{n,
d}[m^{\infty}]) / E)$. By the Weil pairing, $E(\mathcal{J}_{n, d}[m^{\infty}])$
contains $E(\mu_{m^{\infty}})$, so suppose that $h$ acts on $\mu_{m^{\infty}}$
as multiplication by $c$.  Then $h \cdot {}^{\sigma}\! h$ acts on
$\mathcal{J}[m^{\infty}]$ as multiplication by $c$. 
\end{corollary}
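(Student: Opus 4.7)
The plan is to deduce the statement from \autoref{Proposition:TauConstruction} via a Chebotarev density argument. Let $G \colonequals \Gal\left( E(\mathcal{J}_{n, d}[m^{\infty}]) / E \right)$; this is abelian by \autoref{Lemma:MetabelianTorsionField}. Let $\chi \colon G \to \left( \prod_{\ell \mid m} \mathbf{Z}_{\ell} \right)^{\times}$ be the cyclotomic character describing the $G$-action on $\mu_{m^{\infty}}$, so that $\chi(h) = c$ in the notation of the corollary. Each $c$ in the image defines a scalar multiplication automorphism $[c]$ of $\mathcal{J}_{n, d}[m^{\infty}]$, so I would introduce the map
\[
\psi \colon G \longrightarrow \Aut\left( \mathcal{J}_{n, d}[m^{\infty}] \right), \qquad \psi(h) \colonequals (h \cdot {}^{\sigma}\! h) \cdot [\chi(h)]^{-1}.
\]
The corollary is equivalent to the assertion that $\psi \equiv \mathrm{id}$.

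First, I would verify that $\psi$ is a continuous group homomorphism. For any $h_{1}, h_{2} \in G$, the abelianness of $G$ together with the fact that each $[c]$ commutes with every Galois automorphism (being a $\mathbf{Q}$-rational endomorphism of $\mathcal{J}_{n, d}$) gives
\[
(h_{1} h_{2}) \cdot {}^{\sigma}\!(h_{1} h_{2}) = h_{1} \cdot {}^{\sigma}\! h_{1} \cdot h_{2} \cdot {}^{\sigma}\! h_{2},
\]
and combined with the multiplicativity of $\chi$ this yields $\psi(h_{1} h_{2}) = \psi(h_{1}) \psi(h_{2})$. Continuity is automatic since the $G$-action on each $\mathcal{J}_{n, d}[m^{k}]$ factors through a finite quotient of $G$, so $\ker \psi$ is a closed subgroup.

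Next, I would show that $\psi(\Frob_{\mathfrak{r}}) = \mathrm{id}$ for every prime $\mathfrak{r}$ of $E$ coprime to $mnd$. Such a Frobenius acts on $\mu_{m^{\infty}}$ by the $\#\mathbf{F}_{\mathfrak{r}}$-th power map, so $\chi(\Frob_{\mathfrak{r}}) = \#\mathbf{F}_{\mathfrak{r}}$, and \autoref{Proposition:TauConstruction} applied for each prime $\ell \mid m$ shows that $\Frob_{\mathfrak{r}} \cdot {}^{\sigma}\!\Frob_{\mathfrak{r}}$ acts on $T_{\ell}\mathcal{J}_{n, d}$, hence on $\mathcal{J}_{n, d}[m^{\infty}] = \bigoplus_{\ell \mid m} \mathcal{J}_{n, d}[\ell^{\infty}]$, as multiplication by $\#\mathbf{F}_{\mathfrak{r}}$. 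Thus $\psi(\Frob_{\mathfrak{r}}) = \mathrm{id}$. Chebotarev density implies that these Frobenii are dense in $G$, so $\ker \psi = G$, completing the proof. The substantive content is packaged in \autoref{Proposition:TauConstruction}, so this corollary is a formal bootstrap; the only delicate point is the homomorphism property of $\psi$, which crucially relies on the abelianness of $G$ from \autoref{Lemma:MetabelianTorsionField}.
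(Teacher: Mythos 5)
Your argument is correct and takes the paper's route exactly: reduce to Frobenius elements via the Chebotarev density theorem, then apply \autoref{Proposition:TauConstruction} for each prime $\ell$ dividing $m$. The extra care you take in constructing $\psi$ and verifying that it is a continuous homomorphism (using \autoref{Lemma:MetabelianTorsionField} and the $\mathbf{Q}$-rationality of scalar multiplication) fills in precisely the justification that the paper's two-line proof leaves implicit.
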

\begin{proof}
Frobenius elements are dense by the Chebotarev density theorem, so we reduce to
checking on $h = \Frob_{\mathfrak{r}}$. By definition of Frobenius, $h$ acts on
$\mu_{m^{\infty}}$ as multiplication by $\# \mathbf{F}_{\mathfrak{r}}$, so we
are done by applying \autoref{Proposition:TauConstruction} to every
prime $\ell$ dividing $m$.
\end{proof}

\section{Galois action on the torsion of \texorpdfstring{$\mathcal{J}_{p,
q}$}{J\_\{p,q\}}}

In this section, we assume that $p$ and $q$ are distinct odd primes.

\subsection{Computation of the \texorpdfstring{$p$}{p}-torsion field and
\texorpdfstring{$q$}{q}-torsion field} 
\label{Subsection:TorsionFieldJacobiSum}

In this section, we use results of \cite{ArulJacobi} to compute some torsion
fields.  Since $p$ and $q$ are distinct odd primes, we may identify $R$ with
$\mathcal{O}_{E}$.

\begin{definition}
For nonnegative $i, j$, define the torsion field
\[
L_{i, j} \colonequals E( \mathcal{J}_{p, q}[(1 - \zeta_{p})^{i} (1 -
\zeta_{q})^{j}] ).
\]
\end{definition}

\begin{lemma}
\label{Lemma:LijFacts}
\hfill
\begin{enumerate}[label=\upshape(\arabic*),
ref=\autoref{Lemma:LijFacts}(\arabic*)]

\item \label{Lemma:LijFactsAbelian}
Each $L_{i, j}$ is an abelian extension of $E$.

\item \label{Lemma:LijFactsL11}
$L_{0, 0} = L_{0, 1} = L_{1, 0} = L_{1, 1} = E$. 

\item \label{Lemma:LijDisjointComposite}
$L_{i, j} = L_{i, 0} L_{0, j}$ and $E = L_{i, 0} \cap L_{0, j}$.

\item \label{Lemma:LijFactsLi0i1}
$L_{i, 1} = L_{i, 0} = E(\mathcal{J}_{p, q}[(1 - \zeta_{p})^{i}])$. 

\item \label{Lemma:LijFactsL0j01}
$L_{1, j} = L_{0, j} = E(\mathcal{J}_{p, q}[(1 - \zeta_{q})^{j}])$. 

\item \label{Lemma:LijFactsLpm11}
$L_{p - 1, 1} = L_{p - 1, 0} = E(\mathcal{J}_{p, q}[p])$.

\item \label{Lemma:LijFactsLqm11}
$L_{1, q - 1} = L_{0, q - 1} = E(\mathcal{J}_{p, q}[q])$.

\item \label{Lemma:LijFactsL21}
$L_{2, 1} = E\left(\sqrt[p]{1 - \zeta_{q}^{i}} : 1 \le i \le q - 1\right)$ and
$[L_{2, 1} : E] > 1$.

\item \label{Lemma:LijFactsL12}
$L_{1, 2} = E\left(\sqrt[q]{1 - \zeta_{p}^{i}} : 1 \le i \le p - 1\right)$ and
$[L_{1, 2} : E] > 1$.

\item \label{Lemma:LijFactsLp1}
$L_{p, 1} / E$ is a $p$-Kummer extension, i.e., it is generated by $p$th roots
of elements of $E$.

\item \label{Lemma:LijFactsL1q}
$L_{1, q} / E$ is a $q$-Kummer extension, i.e., it is generated by $q$th roots
of elements of $E$.

\end{enumerate}
\end{lemma}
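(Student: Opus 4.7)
I would establish the eleven parts in dependency order. Part (1) is immediate from \autoref{Lemma:MetabelianTorsionField} applied to the inclusion $L_{i,j} \subseteq E(\mathcal{J}_{p,q}[(pq)^\infty])$. For (2), I would use both superelliptic presentations of $\mathcal{C}_{p,q}$: since $p$ and $q$ are odd, the Weierstrass points of $y^p = x^q + 1$ have $x$-coordinates $-\zeta_q^i \in E$, and those of $x^q = y^p - 1$ have $y$-coordinates $\zeta_p^i \in E$, so \autoref{Proposition:SuperellipticProp611Poonen} gives $\mathcal{J}_{p,q}[1-\zeta_p], \mathcal{J}_{p,q}[1-\zeta_q] \subseteq \mathcal{J}_{p,q}(E)$. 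Combined with $\mathcal{J}_{p,q}[1] = 0$ and the CRT decomposition below, this yields (2).

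For (3), the ideals $(1-\zeta_p)^i$ and $(1-\zeta_q)^j$ of $R = \mathcal{O}_E$ are coprime (they lie over the distinct rational primes $p$ and $q$), so by CRT
\[
\mathcal{J}_{p,q}[(1-\zeta_p)^i (1-\zeta_q)^j] = \mathcal{J}_{p,q}[(1-\zeta_p)^i] \oplus \mathcal{J}_{p,q}[(1-\zeta_q)^j],
\]
and hence $L_{i,j} = L_{i,0} L_{0,j}$. For the intersection, I would argue $\Gal(L_{i,0}/E)$ is a finite $p$-group: using that $\mathcal{J}_{p,q}[(1-\zeta_p)^i]$ is a rank-one free module over $R/(1-\zeta_p)^i$ (from \autoref{Lemma:TellJndFreeRellModule}), the injection $\Gal(L_{i,0}/E) \hookrightarrow (R/(1-\zeta_p)^i)^\times$ composed with the reduction to $(R/(1-\zeta_p))^\times$ is the restriction to $L_{1,0} = E$, hence trivial; thus the image lies in the pro-$p$ kernel $1 + (1-\zeta_p) R/(1-\zeta_p)^i$. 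Symmetrically $\Gal(L_{0,j}/E)$ is a $q$-group, so $\Gal((L_{i,0} \cap L_{0,j})/E)$ is a common quotient and therefore trivial. Parts (4) and (5) combine (2) with (3); parts (6) and (7) use the identity $\mathcal{J}_{p,q}[p] = \mathcal{J}_{p,q}[(1-\zeta_p)^{p-1}]$ together with (4).

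For (8), apply \autoref{Corollary:1mZSquaredDivisionField} to $y^p = x^q + 1$: the Weierstrass $x$-coordinates are $-\alpha_i = -\zeta_q^{i-1}$, so $\alpha_i - \alpha_j = \zeta_q^{j-1}(\zeta_q^{i-j} - 1)$. Since $\gcd(p,q) = 1$ makes $\zeta_q$ a $p$th power in $E^\times$, and $-1 = (-1)^p$ because $p$ is odd, we obtain $E(\sqrt[p]{\alpha_i - \alpha_j}) = E(\sqrt[p]{1-\zeta_q^k})$ for the appropriate $k$; combined with (4) this gives the explicit description of $L_{2,1}$. For $[L_{2,1}:E] > 1$, pick any prime $\mathfrak{q}$ of $\mathcal{O}_E$ above $q$: the quotient $(1-\zeta_q^k)/(1-\zeta_q) = 1 + \zeta_q + \cdots + \zeta_q^{k-1}$ reduces to $k$ modulo $\mathfrak{q}$, a unit in the residue field, so $v_\mathfrak{q}(1-\zeta_q^k) = v_\mathfrak{q}(1-\zeta_q) = 1$, which is coprime to $p$; hence $1-\zeta_q^k$ is not a $p$th power in $E^\times$. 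Part (9) is entirely symmetric.

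Finally, for (10), I would combine (4) with \autoref{Corollary:ExponentOfTorsionFieldIsP} at $i=1$: since $E(\mu_p, \mathcal{J}_{p,q}[1-\zeta_p]) = E$ by (2), the corollary gives that $\Gal(L_{p,0}/E) = \Gal(L_{p,1}/E)$ has exponent dividing $p$, and by (1) it is abelian. Because $\mu_p \subseteq E$, Kummer theory concludes that $L_{p,1}/E$ is a $p$-Kummer extension. Part (11) is symmetric. The hardest part is (3): the CRT decomposition is formal, but the intersection statement requires the pro-$p$ versus pro-$q$ argument, which in turn depends critically on using $L_{1,0} = L_{0,1} = E$ from (2) to confine the Galois image to the kernel of reduction.
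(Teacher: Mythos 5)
Your proof is correct and takes essentially the same route as the paper: (1) from \autoref{Lemma:MetabelianTorsionField}, (2) from \autoref{Proposition:SuperellipticProp611Poonen} applied to both superelliptic presentations, (3) via the direct-sum decomposition of torsion plus a $p$-group/$q$-group disjointness argument, (4)--(7) as formal consequences, (8)--(9) from \autoref{Corollary:1mZSquaredDivisionField} together with a non-$p$th-power check, and (10)--(11) from \autoref{Corollary:ExponentOfTorsionFieldIsP} plus Kummer theory. The only genuine differences are presentational: for the $p$-group/$q$-group claim in (3), the paper cites \autoref{Corollary:ExponentOfTorsionFieldIsP} directly whereas you re-derive it via the injection $\Gal(L_{i,0}/E) \hookrightarrow (R/(1-\zeta_p)^i)^\times$ and its kernel of reduction (equivalent, slightly more self-contained); and for $[L_{2,1}:E]>1$ the paper compares ramification indices over $q$ whereas you compute $v_{\mathfrak{q}}(1-\zeta_q^k)=1$, which is the same obstruction phrased via a valuation. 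One small imprecision: in (10) you write $E(\mu_p, \mathcal{J}_{p,q}[1-\zeta_p])=E$ (trivially true since $\mu_p\subseteq E$) where what is actually needed to invoke the corollary is $\mathbf{Q}(\mu_p, \mathcal{J}_{p,q}[1-\zeta_p])=E$, which also holds by (2); the idea is right, just state the correct base field.
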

\begin{proof}
\hfill
\begin{enumerate}[label=\upshape(\arabic*),
ref={the proof of \autoref{Lemma:LijFacts}(\arabic*)}]

\item \label{LemmaProof:LijFactsAbelian}
Since $\mathcal{J}_{p, q}[1 - \zeta_{p}] \subseteq \mathcal{J}_{p, q}[p]$ and
$\mathcal{J}_{p, q}[1 - \zeta_{q}] \subseteq \mathcal{J}_{p, q}[q]$, this is a
special case of \autoref{Lemma:MetabelianTorsionField}.

\item \label{LemmaProof:LijFactsL11}
By \autoref{Proposition:SuperellipticProp611Poonen}, $\mathcal{J}_{p, q}[1 -
\zeta_{p}]$ is generated by $[(-\zeta_{q}^{i}, 0) - \infty]$ and
$\mathcal{J}_{p, q}[1 - \zeta_{q}]$ is generated by $[(0, \zeta_{p}^{j}) -
\infty]$, so $L_{1, 1} = E$.

\item \label{LemmaProof:LijDisjointComposite}
By definition, $L_{i, j} = L_{i, 0} L_{0, j}$. By
\autoref{Corollary:ExponentOfTorsionFieldIsP}, $[L_{i, 0} : L_{0, 0}]$ is a
power of $p$ and $[L_{0, j} : L_{0, 0}]$ is a power of $q$, so $L_{i, 0} \cap
L_{0, j} = L_{0, 0} = E$.

\item \label{LemmaProof:LijFactsLi0i1}
\autoref{Lemma:LijDisjointComposite} implies that $L_{i, 1} = L_{i, 0} L_{0,
1}$ and \autoref{Lemma:LijFactsL11} gives that $L_{0, 1} = E$, so $L_{i, 1} =
L_{i, 0}$. By definition, $L_{i, 0} = E(\mathcal{J}_{p, q}[(1 -
\zeta_{p})^{i}])$.

\item \label{LemmaProof:LijFactsL0j01}
Similar to the proof of \autoref{Lemma:LijFactsLi0i1}.

\item \label{LemmaProof:LijFactsLpm11}
Since $(1 - \zeta_{p})^{p - 1} \in p R^{\times}$, we see that $\mathcal{J}_{p,
q}[(1 - \zeta_{p})^{p - 1}] = \mathcal{J}_{p, q}[p]$, so $L_{p - 1, 0} =
E(\mathcal{J}_{p, q}[p])$, and we are done by \autoref{Lemma:LijFactsLi0i1}.

\item \label{LemmaProof:LijFactsLqm11}
Similar to the proof of \autoref{Lemma:LijFactsLpm11}.

\item \label{LemmaProof:LijFactsL21}
Apply \autoref{Corollary:1mZSquaredDivisionField} by using the $x$-coordinate
map to view $\mathcal{C}_{p, q}$ as a degree $p$ superelliptic cover of
$\mathbf{P}^{1}$. This shows that $L_{2, 1}$ is generated over $E$ by adjoining
$p$th roots of $\zeta_{q}^{a} - \zeta_{q}^{b}$. Since $\zeta_{q}^{a} -
\zeta_{q}^{b} = \zeta_{q}^{a} (1 - \zeta_{q}^{b - a})$ and $\zeta_{q}$ already
has a $p$th root in $E$, we see that $L_{2, 1}$ is generated over $E$ by
adjoining $p$th roots of $1 - \zeta_{q}^{i}$.

Consider the ramification of $L_{2, 1}$ and $E$ above the prime $q$. The field
$L_{2, 1}$ contains $(1 - \zeta_{q})^{1 / p}$, so 
\[
e_{q}(L_{2, 1} / \mathbf{Q}) \ge p (q - 1) > q - 1 = e_{q}(E / \mathbf{Q}),
\]
so $L_{2, 1}$ has to strictly contain $E$. 

\item \label{LemmaProof:LijFactsL12}
Similar to the proof of \autoref{Lemma:LijFactsL21}.

\item \label{LemmaProof:LijFactsLp1}
\autoref{Lemma:LijFactsAbelian} implies that $L_{p, 1} / E$ is abelian. Since
$E$ already contains the $p$th roots of unity and
\autoref{Corollary:ExponentOfTorsionFieldIsP} implies that the exponent of
$\Gal(L_{p, 1} / E)$ divides $p$, we are done by Kummer theory.

\item \label{LemmaProof:LijFactsL1q}
Similar to the proof of \autoref{Lemma:LijFactsLp1}. \qedhere
\end{enumerate}
\end{proof}

\begin{definition}
Suppose that $\mathfrak{r}$ is a prime of $E$ lying over a prime $r$ of
$\mathbf{Q}$ such that $r \not\in \{ p, q \}$. Abuse notation and write
$\zeta_{p}, \zeta_{q} \in \mathbf{F}_{\mathfrak{r}}$ to denote the images of
$\zeta_{p}, \zeta_{q} \in \mathcal{O}_{E}$ under the reduction map
$\mathcal{O}_{E} \to \mathbf{F}_{\mathfrak{r}}$. 

For integers $i \in [0, p - 2]$, $j \in [1, q - 1]$, and $s \in [0, p - 1]$,
define 
\begin{align*}
u_{s, j} &\colonequals 1 - \zeta_{q}^{j} \zeta_{p}^{s} \in \mathcal{O}_{E} \\
\eta_{i, j} &\colonequals \prod_{s = 0}^{p - 1} u_{s, j}^{\binom{s}{i}} \in
\mathcal{O}_{E} \\
\eta_{i, j}' &\colonequals \prod_{s = 0}^{p - 1} u_{s, j}^{s^{i}} \in
\mathcal{O}_{E}.
\end{align*}
(We adopt the convention $0^{0} = 1$ here.) We will also use $u_{i, j}, \eta_{i,
j}, \eta_{i, j}'$ to denote the images of the same expressions in
$\mathbf{F}_{\mathfrak{r}}$.
\end{definition}

\begin{theorem}
\label{Theorem:MainResultJacobi}
Let $\chi_{p}$ and $\chi_{q}$ denote characters
$\mathbf{F}_{\mathfrak{r}}^\times \to E^{\times}$ of exact order $p$ and $q$,
respectively. Let $k \in [1, p - 1]$. Let $J$ be the Jacobi sum $J_{1}(\chi_p,
\chi_q)$. The following are equivalent:
\begin{enumerate}[label={\upshape{(\arabic*)}},
ref={\autoref{Theorem:MainResultJacobi}(\arabic*)}]

\item 
\label{Theorem:MainResultJacobiCongruence}
$J + 1 \in (1 - \zeta_{p})^{k} \mathcal{O}_{E}$;

\item 
\label{Theorem:MainResultJacobiCongruenceUnits}
$\eta_{i, j} \in \mathbf{F}_{\mathfrak{r}}^{\times p}$ for all $i \in [0, k -
2]$ and $j \in [1, q - 1]$;

\item
\label{Theorem:MainResultJacobiCongruenceUnitsHalf}
$\eta_{i, j} \in \mathbf{F}_{\mathfrak{r}}^{\times p}$ for all $i \in [0, k -
2]$ and $j \in [1, (q - 1) / 2]$.

\end{enumerate}
\end{theorem}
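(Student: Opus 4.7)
My plan is to expand $J$ as a finite $(\zeta_p - 1)$-adic series and compare its coefficients to $\chi_p$-images of the units $\eta_{i,j}$.

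For each $\alpha \in \mathbf{F}_{\mathfrak{r}}^{\times}$ let $a(\alpha) \in \{0, 1, \ldots, p - 1\}$ be the integer with $\chi_p(\alpha) = \zeta_p^{a(\alpha)}$. Substituting $\zeta_p^{a(\alpha)} = \sum_{i \ge 0} \binom{a(\alpha)}{i}(\zeta_p - 1)^i$ into the definition of $J$ yields
\[
J = \sum_{i = 0}^{p - 1}(\zeta_p - 1)^i T_i, \qquad T_i \colonequals \sum_{\alpha \neq 0, 1} \binom{a(\alpha)}{i}\chi_q(1 - \alpha) \in \mathbf{Z}[\zeta_q].
\]
Since $\sum_{\alpha \neq 0, 1}\chi_q(1 - \alpha) = -1$, we have $T_0 = -1$, hence $J + 1 = \sum_{i \ge 1}(\zeta_p - 1)^i T_i$. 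The prime $p$ is unramified in $\mathbf{Q}(\zeta_q) / \mathbf{Q}$ and totally ramified of degree $p - 1$ in $E / \mathbf{Q}(\zeta_q)$, so the $(1 - \zeta_p)$-adic valuation of any nonzero element of $\mathbf{Z}[\zeta_q]$ is a nonnegative multiple of $p - 1$. The terms $(\zeta_p - 1)^i T_i$ with $i \in [1, p - 1]$ thus have distinct $(1 - \zeta_p)$-adic valuations modulo $p - 1$ and cannot cancel, so for $k \in [1, p - 1]$, condition $(1)$ is equivalent to $T_i \in \mathfrak{P}$ for all $i \in [1, k - 1]$, where $\mathfrak{P} \colonequals (1 - \zeta_p) \mathcal{O}_E \cap \mathbf{Z}[\zeta_q]$.

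The main obstacle is converting each condition $T_i \in \mathfrak{P}$ into the condition ``$\eta_{i - 1, j} \in \mathbf{F}_{\mathfrak{r}}^{\times p}$ for all $j \in [1, q - 1]$''. My strategy is to stratify the sum defining $T_i$ by the pair $(s, j)$ with $\chi_p(\alpha) = \zeta_p^s$ and $\chi_q(1 - \alpha) = \zeta_q^j$ and to use the Teichm\"uller lift to parametrize each fiber. A character-theoretic calculation, reorganizing the binomial coefficients, should identify $T_i \bmod \mathfrak{P}$ with an explicit combination of the $\chi_p$-indices $\ind_{\chi_p}(\eta_{i - 1, j}) \in \mathbf{F}_p$; showing that the resulting triangular system of conditions on the $T_i$ is equivalent to the vanishing of all the $\ind_{\chi_p}(\eta_{i - 1, j})$ then completes $(1) \Leftrightarrow (2)$. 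This character-sum identification is the combinatorial heart of the proof and is the most delicate step.

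Finally, for $(2) \Leftrightarrow (3)$, I apply $\chi_p$ to $u_{s, q - j} = -\zeta_q^{-j}\zeta_p^s \cdot u_{-s, j}$ (indices modulo $p$ and $q$) to get
\[
\chi_p(\eta_{i, q - j}) = \chi_p(-1)^{\binom{p}{i + 1}} \chi_p(\zeta_q)^{-j \binom{p}{i + 1}} \chi_p(\zeta_p)^{\sum_s s \binom{s}{i}} \prod_s \chi_p(u_{-s, j})^{\binom{s}{i}}.
\]
Since $p$ is odd and coprime to $q$, $\chi_p(-1) = \chi_p(\zeta_q) = 1$, and the hockey-stick identity gives $\sum_{s = 0}^{p - 1} s \binom{s}{i} = (i + 1)\binom{p}{i + 2} + i \binom{p}{i + 1} \equiv 0 \pmod{p}$ for $i \in [0, p - 3]$, killing the $\zeta_p$-factor. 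Using $\binom{p - s}{i} \equiv (-1)^i \binom{s + i - 1}{i} \pmod{p}$ and $\binom{s + i - 1}{i} = \sum_{k = 1}^{i}\binom{i - 1}{k - 1}\binom{s}{k}$ then expresses $\ind_{\chi_p}(\eta_{i, q - j})$ as an $\mathbf{F}_p$-linear combination of $\ind_{\chi_p}(\eta_{k, j})$ for $k \le i$. An induction on $i$ deduces $(2)$ from $(3)$; the converse is immediate.
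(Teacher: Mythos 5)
The paper's proof of this theorem is a one-line citation to Theorem~8.0.2 of \cite{ArulJacobi}, so there is no in-paper argument to compare against; you are attempting to prove the cited result from scratch.

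Your framework is sound as far as it goes. The $(\zeta_p-1)$-adic expansion $J = \sum_{i=0}^{p-1}(\zeta_p-1)^i T_i$ with $T_i \in \mathbf{Z}[\zeta_q]$, the computation $T_0 = -1$, and the distinct-valuation argument (each nonzero $T_i$ contributes valuation $\equiv i \pmod{p-1}$, so no cancellation occurs) correctly reduce condition~(1) to ``$T_i \in p\mathbf{Z}[\zeta_q]$ for all $i \in [1,k-1]$.'' Your argument for $(2)\Leftrightarrow(3)$ is also correct: the identity $u_{s,q-j} = -\zeta_q^{-j}\zeta_p^s u_{-s,j}$ is valid, $\chi_p(-1)=\chi_p(\zeta_q)=1$, the hockey-stick computation $\sum_{s=0}^{p-1}s\binom{s}{i} = (i+1)\binom{p}{i+2}+i\binom{p}{i+1} \equiv 0 \pmod p$ holds for $i \in [0,p-3]$, and the Vandermonde expansion $\binom{s+i-1}{i} = \sum_{k=1}^{i}\binom{i-1}{k-1}\binom{s}{k}$ is correct, so $\chi_p(\eta_{i,q-j})$ is indeed a monomial in the $\chi_p(\eta_{k,j})$ for $0 \le k \le i$.

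However, there is a genuine gap, and you have flagged it yourself: the bridge from ``$T_i \in p\mathbf{Z}[\zeta_q]$ for $i\in[1,k-1]$'' to ``$\eta_{i-1,j} \in \mathbf{F}_{\mathfrak{r}}^{\times p}$ for $i-1\in[0,k-2]$ and all $j$'' is only sketched. Writing $T_i$ as a double sum over $(s,j)$ with fiber counts $N(s,j) = \#\{\alpha : \chi_p(\alpha)=\zeta_p^s,\ \chi_q(1-\alpha)=\zeta_q^j\}$ is a reasonable first move, but the passage from those counts mod~$p$ (taken at every prime of $\mathbf{Z}[\zeta_q]$ above $p$ simultaneously) to the $\chi_p$-indices of the specific algebraic units $\eta_{i-1,j}$ is the entire substance of the cited Theorem~8.0.2 and is not a routine character-sum manipulation; it requires a Teichm\"uller/Gross--Koblitz-style analysis and careful tracking of the triangular change of basis between binomial coefficients and power sums. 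As written, ``should identify\ldots then completes $(1)\Leftrightarrow(2)$'' is an announcement of intent rather than a proof, so the argument is incomplete at precisely its central step.
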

\begin{proof}
This is a special case (since $q$ is odd) of Theorem 8.0.2 of
\cite{ArulJacobi}.
\end{proof}

\begin{corollary}
\label{Corollary:FrobFixJacobi2}
Let $k \in [1, p - 1]$ be an integer. The following are equivalent:

\begin{enumerate}[label={\upshape{(\arabic*)}},
ref={\autoref{Corollary:FrobFixJacobi2}(\arabic*)}]

\item 
\label{Corollary:FrobFixJacobi2Congruence}
$\mathfrak{r}$ splits in the field $E(\mathcal{J}_{p, q}[(1 - \zeta_{p})^{k}])$

\item 
\label{Corollary:FrobFixJacobi2CongruenceUnits}
$\eta_{i, j} \in \mathbf{F}_{\mathfrak{r}}^{\times p}$ for all $i \in [0, k -
2]$ and $j \in [1, q - 1]$;

\item
\label{Corollary:FrobFixJacobi2CongruenceUnitsHalf}
$\eta_{i, j} \in \mathbf{F}_{\mathfrak{r}}^{\times p}$ for all $i \in [0, k -
2]$ and $j \in [1, (q - 1) / 2]$.

\end{enumerate}

\end{corollary}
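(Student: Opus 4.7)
The plan is to reduce the statement about splitting of $\mathfrak{r}$ to the Jacobi-sum criterion of \autoref{Theorem:MainResultJacobi}, via the Galois action on $T_p \mathcal{J}_{p,q}$ described in \autoref{Proposition:ApplyKatz}. The equivalence of \autoref{Corollary:FrobFixJacobi2CongruenceUnits} and \autoref{Corollary:FrobFixJacobi2CongruenceUnitsHalf} is already contained in \autoref{Theorem:MainResultJacobi}, so the new content is \autoref{Corollary:FrobFixJacobi2Congruence} $\iff$ \autoref{Corollary:FrobFixJacobi2CongruenceUnits}.

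First, I would rewrite \autoref{Corollary:FrobFixJacobi2Congruence} in terms of Frobenius: since $L_{k,0} = E(\mathcal{J}_{p,q}[(1-\zeta_p)^k])$ is an abelian unramified extension of $E$ at $\mathfrak{r}$ (using \autoref{Lemma:LijFactsAbelian} and the fact that $\mathcal{C}_{p,q}$ has good reduction at $r$), $\mathfrak{r}$ splits in $L_{k,0}$ if and only if $\Frob_{\mathfrak{r}}$ acts trivially on $\mathcal{J}_{p,q}[(1-\zeta_p)^k]$, i.e.\ $\Frob_{\mathfrak{r}} - 1$ kills this group. By \autoref{Lemma:KillTorsionInCpq}, this is equivalent to $\theta_p(\Frob_{\mathfrak{r}}) - 1 \in (1-\zeta_p)^k R_p$.

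Next, I would plug in the explicit formula from \autoref{Proposition:ApplyKatzFrobeniusInZ}: $\theta_p(\Frob_{\mathfrak{r}}) = -\gamma'_{q,\mathfrak{r}}(-1) J_1(\gamma'_{p,\mathfrak{r}}, \gamma'_{q,\mathfrak{r}})$. Because $q$ is odd, $-1 = (-1)^q$ is a $q$-th power in $\mathbf{F}_{\mathfrak{r}}^\times$, so $\gamma'_{q,\mathfrak{r}}(-1) = 1$. Thus the Frobenius condition becomes $-J - 1 \in (1-\zeta_p)^k R_p$, where $J := J_1(\gamma'_{p,\mathfrak{r}}, \gamma'_{q,\mathfrak{r}})$ and the characters $\gamma'_{p,\mathfrak{r}}, \gamma'_{q,\mathfrak{r}} \colon \mathbf{F}_{\mathfrak{r}}^\times \to R^\times = \mathcal{O}_E^\times$ have exact orders $p$ and $q$ respectively—exactly the type of characters to which \autoref{Theorem:MainResultJacobi} applies.

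Finally, I would verify that $(1-\zeta_p)^k R_p \cap \mathcal{O}_E = (1-\zeta_p)^k \mathcal{O}_E$: since $E/\mathbf{Q}(\zeta_p)$ is unramified at every prime above $p$, the element $1 - \zeta_p$ is a uniformizer at each such prime $\mathfrak{p}$ of $\mathcal{O}_E$, and the ideal $(1-\zeta_p)\mathcal{O}_E = \prod_{\mathfrak{p} | p} \mathfrak{p}$ coincides with the contraction of the $R_p$-ideal. Combining, \autoref{Corollary:FrobFixJacobi2Congruence} is equivalent to $J + 1 \in (1-\zeta_p)^k \mathcal{O}_E$, which is \autoref{Theorem:MainResultJacobiCongruence} applied to $\chi_p = \gamma'_{p,\mathfrak{r}}$ and $\chi_q = \gamma'_{q,\mathfrak{r}}$. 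The equivalence of (1)–(3) follows. The only subtle step is the dictionary between the Frobenius action on the Tate module and the Jacobi sum identity, which is already laid down by \autoref{Proposition:ApplyKatz}; once that is accepted, the remaining bookkeeping (sign, value of $\gamma'_{q,\mathfrak{r}}(-1)$, contraction of ideals between $R_p$ and $\mathcal{O}_E$) is straightforward.
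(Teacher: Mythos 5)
Your proposal is correct and follows the same route as the paper: translate splitting of $\mathfrak{r}$ into $\Frob_{\mathfrak{r}} - 1$ killing $\mathcal{J}_{p,q}[(1-\zeta_p)^k]$, apply \autoref{Lemma:KillTorsionInCpq}, substitute the Jacobi-sum formula from \autoref{Proposition:ApplyKatzFrobeniusInZ}, note $\gamma'_{q,\mathfrak{r}}(-1)=1$ because $q$ is odd, and then invoke \autoref{Theorem:MainResultJacobi}. Your explicit verification that $(1-\zeta_p)^k R_p \cap \mathcal{O}_E = (1-\zeta_p)^k\mathcal{O}_E$ spells out a step the paper handles only by the parenthetical remark that $\theta_p(\Frob_{\mathfrak{r}})$ already lies in $R\simeq\mathcal{O}_E$, so yours is if anything a shade more careful.
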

\begin{proof}
Define $\gamma'_{p, \mathfrak{r}}, \gamma'_{q, \mathfrak{r}} \colon
\mathbf{F}_{\mathfrak{r}}^{\times} \to R^{\times} \simeq
\mathcal{O}_{E}^{\times}$ as in \autoref{Proposition:ApplyKatzFrobeniusInZ};
they are characters of exact orders $p$ and $q$. Then $\mathfrak{r}$ splits in
$E(\mathcal{J}_{p, q}[(1 - \zeta_{p})^{k}])$ if and only if
$\Frob_{\mathfrak{r}} - 1\text{ kills }\mathcal{J}_{p, q}[(1 - \zeta_{p})^{k}]$,
which by \autoref{Lemma:KillTorsionInCpq} is equivalent to
$\theta_{p}(\Frob_{\mathfrak{r}} - 1) \in (1 - \zeta_{p})^{k} R_{p}$, which by
\autoref{Proposition:ApplyKatzFrobeniusInZ} is equivalent to $- \gamma'_{q,
\mathfrak{r}}(-1) J(\gamma'_{p, \mathfrak{r}}, \gamma'_{q, \mathfrak{r}}) - 1
\in (1 - \zeta_{p})^{k} R = (1 - \zeta_{p})^{k} \mathcal{O}_{E}$ (recall from
\autoref{Proposition:ApplyKatzFrobeniusInZ} that
$\theta_{p}(\Frob_{\mathfrak{r}})$ lies in $R$). Since $q$ is odd, we know
$\gamma'_{q, \mathfrak{r}}(-1) = 1$, so we are done by
\autoref{Theorem:MainResultJacobi}.
\end{proof}

\begin{theorem}
\label{Theorem:pAndqTorsionFields}
Let $k \in [1, p - 1]$ be an integer. Then
\begin{align*}
L_{k, 1} &= E\left(\sqrt[p]{\eta_{i, j}} \colon i \in [0, k - 2]
\text{ and } j \in [1, q - 1]\right) \\
&= E\left(\sqrt[p]{\eta_{i, j}} \colon i \in [0, k - 2]
\text{ and } j \in [1, (q - 1) / 2]\right). 
\end{align*}
\end{theorem}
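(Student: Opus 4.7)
The plan is to prove both equalities simultaneously by a Chebotarev density argument, matching the splitting criterion for $L_{k,1}$ supplied by \autoref{Corollary:FrobFixJacobi2} with the standard Kummer-theoretic splitting criterion for the two fields on the right.

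First I would set $M_1 \colonequals E(\sqrt[p]{\eta_{i,j}} : i \in [0, k-2], j \in [1, q-1])$ and $M_2 \colonequals E(\sqrt[p]{\eta_{i,j}} : i \in [0, k-2], j \in [1, (q-1)/2])$, and observe that all three of $L_{k,1}, M_1, M_2$ are abelian extensions of $E$ of exponent dividing $p$: for $L_{k,1}$ this follows from \autoref{Lemma:LijFactsLp1} (it sits inside the $p$-Kummer extension $L_{p,1}/E$), and for $M_1, M_2$ it is immediate since $\mu_p \subseteq E$. Obviously $M_2 \subseteq M_1$, so it is enough to show $L_{k,1} \subseteq M_2$ and $M_1 \subseteq L_{k,1}$.

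Second, I would restrict attention to primes $\mathfrak{r}$ of $E$ not lying above $p$ or $q$, which omits only finitely many primes (hence a density-zero set). For such $\mathfrak{r}$, each factor $u_{s,j} = 1 - \zeta_q^j \zeta_p^s$ of $\eta_{i,j}$ is a unit at $\mathfrak{r}$ (the only non-units among the $u_{s,j}$ are the $u_{0,j} = 1 - \zeta_q^j$, which are uniformizers at the prime of $E$ above $q$; for $s \neq 0$, $\zeta_q^j \zeta_p^s$ is a primitive $pq$-th root of unity and $1 - \zeta_{pq}^{\bullet}$ is a unit since $pq$ is composite). Consequently $M_1$ (and $M_2$) is unramified at $\mathfrak{r}$, and similarly $L_{k,1}$ is unramified at $\mathfrak{r}$ because $\mathcal{C}_{p, q}$ has good reduction away from $pq$. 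The standard Kummer splitting criterion then gives that $\mathfrak{r}$ splits completely in $M_1$ (resp. $M_2$) if and only if $\eta_{i, j} \in \mathbf{F}_{\mathfrak{r}}^{\times p}$ for every $(i, j)$ in the corresponding index range.

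Third, I would invoke \autoref{Corollary:FrobFixJacobi2}: the three conditions
\[
\mathfrak{r} \text{ splits in } L_{k,1}, \quad \eta_{i,j} \in \mathbf{F}_\mathfrak{r}^{\times p} \text{ for all } j \in [1, q-1], \quad \eta_{i,j} \in \mathbf{F}_\mathfrak{r}^{\times p} \text{ for all } j \in [1, (q-1)/2]
\]
(with $i \in [0, k-2]$) are pairwise equivalent for every such $\mathfrak{r}$. Combined with the Kummer criterion above, this says that $L_{k,1}$, $M_1$, and $M_2$ have the same set of completely split primes, outside a density-zero set. By Chebotarev density applied to the Galois extensions $L_{k,1} \cdot M_1$ and $L_{k,1} \cdot M_2$, we conclude $M_1 = L_{k,1} = M_2$, as desired.

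The bulk of the work, namely the translation of the splitting condition in $L_{k,1}$ into an explicit $p$-th power condition on the cyclotomic units $\eta_{i,j}$, is already carried out in \autoref{Corollary:FrobFixJacobi2} (which in turn relies on the Jacobi-sum analysis of \cite{ArulJacobi}). So the only real point to verify in this theorem is the compatibility of the Kummer splitting criterion with that corollary, which reduces to the unit-at-$\mathfrak{r}$ check above; the main conceptual step, therefore, is realizing that $L_{k,1}$ is a Kummer subextension of $L_{p,1}/E$ so that Chebotarev applies cleanly.
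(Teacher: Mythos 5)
Your proposal is correct and follows essentially the same route as the paper: both identify the three fields as exponent-$p$ abelian extensions of $E$, use \autoref{Corollary:FrobFixJacobi2} together with the Kummer splitting criterion to show they have the same set of split primes outside $\{p, q\}$, and finish by Chebotarev. The paper compresses the Kummer criterion and unit-at-$\mathfrak{r}$ verification into the single citation of the corollary, but those are exactly the details you spell out, so your proof is a faithful (if more explicit) rendition of the same argument.
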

\begin{proof}
Define
\begin{align*}
L_{k, 1}' &\colonequals E\left(\sqrt[p]{\eta_{i, j}} \colon i \in [0, k - 2]
\text{ and } j \in [1, q - 1]\right), \\
L_{k, 1}'' &\colonequals E\left(\sqrt[p]{\eta_{i, j}} \colon i \in [0, k - 2]
\text{ and } j \in [1, (q - 1) / 2]\right).
\end{align*}
For any extension $M$ of $E$ and subset $S$ of primes of $\mathbf{Q}$, define
\[
\Spl_{S}(M / E) \colonequals \left\{ \mathfrak{r} \text{ is a prime of } E
\colon \begin{tabu}{c} \mathfrak{r}\text{ splits in } M \text{ and } \\
\mathfrak{r} \text{ does not lie above a prime in } S \end{tabu} \right\}.
\]
\autoref{Corollary:FrobFixJacobi2} implies that $\Spl_{ \{ p, q \} }(L_{k, 1} /
E) = \Spl_{ \{ p, q \} }(L_{k, 1}' / E) = \Spl_{ \{ p, q \} }(L_{k, 1}'' / E)$,
so since $L_{k, 1}$, $L_{k, 1}'$, and $L_{k, 1}''$ are Galois extensions of $E$,
the Chebotarev density theorem implies that $L_{k, 1} = L_{k, 1}' = L_{k, 1}''$.
\end{proof}

\begin{lemma}
\label{Lemma:EtaijEta'ij}
Suppose that $i \in [0, p - 3]$ and $j \in [1, q - 1]$.
\begin{enumerate}
\item 
The image of $\eta_{i, j}$ in $E^{\times} / E^{\times p}$ lies in the subgroup
generated by $\eta_{i, j}', \cdots, \eta_{0, j}'$.
\item 
The image of $\eta_{i, j}'$ in $E^{\times} / E^{\times p}$ lies in the subgroup
generated by $\eta_{i, j}, \cdots, \eta_{0, j}$.
\end{enumerate}
\end{lemma}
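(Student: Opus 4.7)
The plan is to prove both parts by translating the defining identities
$\eta_{i, j} = \prod_{s} u_{s, j}^{\binom{s}{i}}$ and
$\eta_{i, j}' = \prod_{s} u_{s, j}^{s^{i}}$
into change-of-basis identities between the two bases $\{1, s, \dots, s^{i}\}$ and $\{\binom{s}{0}, \dots, \binom{s}{i}\}$ of the space of polynomials in $s$ of degree at most $i$. The key elementary observation is that the map $P(s) \mapsto \prod_{s = 0}^{p - 1} u_{s, j}^{P(s)}$ is \emph{multiplicative} in $P$, so any linear relation $P = \sum_{k} c_{k} P_{k}$ with integer coefficients $c_{k}$ translates into a multiplicative relation among the corresponding products in $E^{\times}$.

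For part (2), the standard identity expressing powers in terms of falling factorials,
\[
s^{i} = \sum_{k = 0}^{i} k!\, S(i, k) \binom{s}{k},
\]
where $S(i, k)$ are the Stirling numbers of the second kind, has \emph{integer} coefficients. Substituting this into the definition of $\eta_{i, j}'$ immediately gives
\[
\eta_{i, j}' = \prod_{k = 0}^{i} \eta_{k, j}^{k!\, S(i, k)},
\]
so $\eta_{i, j}'$ lies in the subgroup of $E^{\times} / E^{\times p}$ generated by $\eta_{0, j}, \dots, \eta_{i, j}$.

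For part (1), the reverse change of basis requires clearing a denominator of $i!$, which is the main (minor) obstacle. Explicitly, using the signed Stirling numbers of the first kind $s(i, k)$,
\[
i!\, \binom{s}{i} = s (s - 1) \cdots (s - i + 1) = \sum_{k = 0}^{i} s(i, k)\, s^{k},
\]
which yields the multiplicative identity
\[
\eta_{i, j}^{i!} = \prod_{k = 0}^{i} (\eta_{k, j}')^{s(i, k)}.
\]
To pass from this to a statement about $\eta_{i, j}$ itself (rather than $\eta_{i, j}^{i!}$), I use the hypothesis $i \le p - 3 < p$: every factor of $i!$ is then strictly less than $p$, so $i!$ is invertible modulo $p$. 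Choosing an integer $m$ with $m \cdot i! \equiv 1 \pmod{p}$ and raising both sides of the displayed identity to the $m$th power gives
\[
\eta_{i, j} \equiv \eta_{i, j}^{m \cdot i!} \equiv \prod_{k = 0}^{i} (\eta_{k, j}')^{m \cdot s(i, k)} \pmod{E^{\times p}},
\]
which shows that $\eta_{i, j}$ lies in the subgroup of $E^{\times} / E^{\times p}$ generated by $\eta_{0, j}', \dots, \eta_{i, j}'$, completing the proof.
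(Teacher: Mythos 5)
Your proof is correct and takes essentially the same approach as the paper: both rest on the two polynomial change-of-basis identities between $\{T^k\}$ and $\{\binom{T}{k}\}$, with integer coefficients in one direction and $p$-integral coefficients (denominator $i!$, coprime to $p$ since $i \le p-3$) in the other. The paper simply phrases the second direction as having coefficients in $\mathbf{Z}_{(p)}$, whereas you clear the denominator $i!$ and invert it modulo $p$; these are the same observation.
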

\begin{proof}
Observe that there exist integers $b_{i, k} \in \mathbf{Z}$ such that for each
$i$,
\[
T^{i} = b_{i, i} \binom{T}{i} + b_{i, i - 1} \binom{T}{i - 1} + \cdots +
b_{i, 0} \binom{T}{0} \quad\quad \text{in }\mathbf{Z}[T]
\]
and $b'_{i, j} \in \mathbf{Z}_{(p)}$ such that for each $i$,
\[
\binom{T}{i} = b'_{i, i} T^{i} + b'_{i, i - 1} T^{i - 1} + \cdots + b'_{i, 0}
T^{0} \quad\quad \text{in }\mathbf{Z}_{(p)}[T].
\]
We are now done by using the definition of $\eta_{i, j}$ and $\eta_{i, j}'$.
\end{proof}

\begin{corollary}
\label{Corollary:pAndqTorsionFieldsEtaPrime}
Let $k \in [1, p - 1]$ be an integer. Then
\begin{align*}
L_{k, 1} &= E\left(\sqrt[p]{\eta_{i, j}'} \colon i \in [0, k - 2]
\text{ and } j \in [1, q - 1]\right) \\
&= E\left(\sqrt[p]{\eta_{i, j}'} \colon i \in [0, k - 2]
\text{ and } j \in [1, (q - 1) / 2]\right).
\end{align*}
\end{corollary}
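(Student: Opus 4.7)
The plan is to derive this corollary directly from \autoref{Theorem:pAndqTorsionFields} by replacing each $\eta_{i,j}$ with $\eta'_{i,j}$ inside the Kummer generators, using \autoref{Lemma:EtaijEta'ij} to see that the two sets of generators cut out the same subfield.

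First I would recall that $L_{p,1} / E$ is a $p$-Kummer extension by \autoref{Lemma:LijFactsLp1}, so every intermediate field $L_{k,1} \subseteq L_{p,1}$ is also $p$-Kummer over $E$ (since $E$ contains $\mu_p$). By Kummer theory, such an intermediate field is determined by the subgroup of $E^\times / E^{\times p}$ obtained by taking $p$th powers of its generators.

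Next I would fix $k \in [1, p-1]$ and $j \in [1, q-1]$ (respectively $j \in [1, (q-1)/2]$) and consider the subgroup $H_j \subseteq E^\times / E^{\times p}$ generated by the images of $\eta_{0,j}, \eta_{1,j}, \ldots, \eta_{k-2,j}$, and also the subgroup $H'_j$ generated by $\eta'_{0,j}, \eta'_{1,j}, \ldots, \eta'_{k-2,j}$. Applying \autoref{Lemma:EtaijEta'ij} inductively in $i$ shows that $H_j = H'_j$: for each $i \in [0, k-2]$, the image of $\eta_{i,j}$ lies in the subgroup generated by $\eta'_{0,j}, \ldots, \eta'_{i,j}$, and conversely. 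Summing over $j$ in the appropriate range, the subgroups of $E^\times / E^{\times p}$ generated by $\{\eta_{i,j}\}$ and by $\{\eta'_{i,j}\}$ coincide.

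Finally I would invoke Kummer theory: the Kummer extension of $E$ obtained by adjoining $p$th roots of a subset $S \subseteq E^\times / E^{\times p}$ depends only on the subgroup generated by $S$. Combining this with \autoref{Theorem:pAndqTorsionFields} yields the two displayed equalities. There is no real obstacle here; the only mild care is to ensure that \autoref{Lemma:EtaijEta'ij} applies for all $i \in [0, k-2]$, which is covered since $k-2 \le p-3$.
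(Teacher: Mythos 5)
Your proposal is correct and is essentially the expansion of the paper's terse ``follows immediately from \autoref{Theorem:pAndqTorsionFields} and \autoref{Lemma:EtaijEta'ij}'': you use \autoref{Theorem:pAndqTorsionFields} to identify $L_{k,1}$ as a Kummer extension generated by $p$th roots of the $\eta_{i,j}$, and \autoref{Lemma:EtaijEta'ij} (together with Kummer theory) to see that replacing the $\eta_{i,j}$ by the $\eta'_{i,j}$ generates the same subgroup of $E^\times/E^{\times p}$ and hence the same field. One small note: no induction is actually needed when showing $H_j = H'_j$, since \autoref{Lemma:EtaijEta'ij} already gives both containments directly for each $i$.
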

\begin{proof}
This follows immediately from \autoref{Theorem:pAndqTorsionFields} and
\autoref{Lemma:EtaijEta'ij}.
\end{proof}

\begin{definition}
Let $\omega : \Gal(E / \mathbf{Q}(\mu_{q})) \simeq \Gal(\mathbf{Q}(\mu_{p}) /
\mathbf{Q}) \to \mathbf{Z}_{p}$ be the composite of the natural isomorphism
$\Gal(E / \mathbf{Q}(\mu_{q})) \simeq \Gal(\mathbf{Q}(\mu_{p}) / \mathbf{Q})$
with the Teichm\"uller character. If $A$ is an abelian group which has an action
of $\Gal(E / \mathbf{Q}(\mu_{q}))$ and $i \in \mathbf{Z}$, use
$\varepsilon_{\omega^{i}} A$ to denote the subgroup of $A$ for which $\Gal(E /
\mathbf{Q}(\mu_{q}))$ acts as $\omega^{i}$.
\end{definition}

\begin{definition} 
\label{Definition:DeltaMi}
For each $i \in [0, p - 2]$, define
\begin{align*}
\Delta_{i} &\colonequals \text{the subgroup of } E^{\times} / E^{\times p}
\text{ generated by } \eta_{i, 1}', \ldots, \eta_{i, (q - 1) / 2}' \\
M_{i} &\colonequals E(\sqrt[p]{\delta} : \delta \in \Delta_{i}).
\end{align*}
\end{definition}

\begin{lemma}
\label{Lemma:DeltaiTeichmullerEigenspace}
$\Delta_{i} \subseteq \varepsilon_{\omega^{-i}} (E^{\times} / E^{\times p})$ for
each $i \in [0, p - 2]$.
\end{lemma}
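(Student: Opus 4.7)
The plan is to take an arbitrary $\sigma \in \Gal(E/\mathbf{Q}(\mu_q))$, compute $\sigma(\eta_{i,j}')$ modulo $E^{\times p}$, and show it equals $\omega(\sigma)^{-i} \cdot \eta_{i,j}'$ in $E^\times/E^{\times p}$; since $\Delta_i$ is generated by the $\eta_{i,j}'$, this will give the claim.

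Concretely, parametrize $\sigma$ by $a \in (\mathbf{Z}/p\mathbf{Z})^{\times}$ via $\sigma(\zeta_p) = \zeta_p^{a}$ (while $\sigma$ fixes $\zeta_q$ since $\sigma \in \Gal(E/\mathbf{Q}(\mu_q))$). By definition of the Teichm\"uller character, $\omega(\sigma) \equiv a \pmod{p}$. On the basic units, $\sigma(u_{s,j}) = 1 - \zeta_q^{j} \zeta_p^{as} = u_{as \bmod p,\, j}$, so after the substitution $t \equiv as \pmod{p}$ the product
\[
\sigma(\eta_{i,j}') = \prod_{s = 0}^{p-1} u_{as \bmod p,\, j}^{\,s^{i}}
\]
becomes, upon reindexing by $t$ so that $s \equiv a^{-1} t \pmod{p}$,
\[
\sigma(\eta_{i,j}') = \prod_{t = 0}^{p-1} u_{t,j}^{\,((a^{-1} t) \bmod p)^{i}}.
\]

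The only subtlety is that the exponent $((a^{-1}t) \bmod p)^{i}$ is an integer representative, not literally $a^{-i} t^{i}$. However, reduction modulo $p$ gives $((a^{-1}t) \bmod p)^{i} \equiv a^{-i} t^{i} \pmod{p}$, and since we are working in $E^\times/E^{\times p}$ the exponents only matter mod $p$. Therefore modulo $p$-th powers,
\[
\sigma(\eta_{i,j}') \equiv \prod_{t = 0}^{p-1} u_{t,j}^{\,a^{-i} t^{i}} \equiv (\eta_{i,j}')^{a^{-i}} \equiv (\eta_{i,j}')^{\omega(\sigma)^{-i}} \pmod{E^{\times p}}.
\]
This is exactly the statement that $\eta_{i,j}'$ lies in $\varepsilon_{\omega^{-i}}(E^\times/E^{\times p})$. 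Since this holds for each generator $\eta_{i,1}', \ldots, \eta_{i,(q-1)/2}'$ of $\Delta_i$ and the $\omega^{-i}$-eigenspace is a subgroup, we conclude $\Delta_i \subseteq \varepsilon_{\omega^{-i}}(E^\times/E^{\times p})$.

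There is essentially no main obstacle; the only point that requires a moment of care is the justification that $s^{i}$ may be replaced by $a^{-i} t^{i}$ inside the exponent after reindexing, which is legitimate precisely because we are working in $E^\times/E^{\times p}$ where exponents are read mod $p$. The $\eta_{i,j}$-versus-$\eta_{i,j}'$ distinction does not enter: the proof is cleaner for $\eta_{i,j}'$ because the exponents $s^{i}$ transform multiplicatively under $s \mapsto as$, whereas $\binom{s}{i}$ would not; this is presumably why the eigenspace statement is formulated using the $\eta_{i,j}'$.
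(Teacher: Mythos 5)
Your computation is correct and is precisely the ``straightforward computation of the $\Gal(E/\mathbf{Q}(\mu_q))$-action on each $\eta'_{i,j}$'' that the paper's one-line proof refers to. The reindexing step is clean (including the $s=0$ term under the convention $0^0=1$), and your closing remark correctly identifies why the $\eta'_{i,j}$ rather than $\eta_{i,j}$ make the eigenspace statement transparent.
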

\begin{proof}
This follows from a straightforward computation of the $\Gal(E /
\mathbf{Q}(\mu_{q}))$-action on each $\eta_{i, j}'$. 
\end{proof}

\begin{lemma} 
\label{Lemma:DisjointCompositeOfMi}
Let $k \in [2, p - 1]$ be an integer. 

\begin{enumerate}[label=\upshape{(\arabic*)},
ref={\autoref{Lemma:DisjointCompositeOfMi}(\arabic*)}]

\item 
\label{Lemma:DisjointCompositeOfMiComposite}
$L_{k, 1}$ is the compositum of $M_{0}$, \ldots, $M_{k - 2}$ over $E$.

\item
\label{Lemma:DisjointCompositeOfMiDisjoint}
The fields $M_{0}$, \dots, $M_{p - 2}$ are disjoint over $E$.

\item
\label{Lemma:DisjointCompositeOfMiDegree}
$[L_{k, 1} : L_{k - 1, 1}] = [M_{k - 2} : E]$.
\end{enumerate}
\end{lemma}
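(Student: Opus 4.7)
The plan is to combine the explicit description of $L_{k,1}$ from \autoref{Corollary:pAndqTorsionFieldsEtaPrime} with Kummer theory and the Teichm\"uller eigenspace decomposition given by \autoref{Lemma:DeltaiTeichmullerEigenspace}.

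For part \autoref{Lemma:DisjointCompositeOfMiComposite}, observe that by definition $M_{i} = E(\sqrt[p]{\eta_{i,j}'} : j \in [1, (q-1)/2])$, since the $p$th roots of a generating set of $\Delta_{i}$ generate the same field as the $p$th roots of all of $\Delta_{i}$. Taking the compositum over $i \in [0, k-2]$ and applying \autoref{Corollary:pAndqTorsionFieldsEtaPrime} gives $M_{0} \cdots M_{k-2} = L_{k,1}$.

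For part \autoref{Lemma:DisjointCompositeOfMiDisjoint}, recall that by Kummer theory, since $E$ contains $\mu_{p}$, abelian extensions of $E$ of exponent dividing $p$ correspond bijectively to subgroups of $E^{\times}/E^{\times p}$, and the correspondence converts compositum into subgroup sum and linear disjointness over $E$ into trivial pairwise (and multi-way) intersections in $E^{\times}/E^{\times p}$. Thus it suffices to show that the subgroups $\Delta_{0}, \ldots, \Delta_{p-2}$ of $E^{\times}/E^{\times p}$ are independent, i.e. the natural map $\Delta_{0} \oplus \cdots \oplus \Delta_{p-2} \to E^{\times}/E^{\times p}$ is injective. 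By \autoref{Lemma:DeltaiTeichmullerEigenspace}, each $\Delta_{i}$ lies in the $\omega^{-i}$-eigenspace of $\Gal(E/\mathbf{Q}(\mu_{q}))$ on $E^{\times}/E^{\times p}$, and since $i$ ranges over $[0, p-2]$, the characters $\omega^{-i}$ are pairwise distinct characters of the cyclic group $\Gal(E/\mathbf{Q}(\mu_{q}))$ of order $p-1$. The eigenspace decomposition of an $\mathbf{F}_{p}[\Gal(E/\mathbf{Q}(\mu_{q}))]$-module into distinct character eigenspaces is a direct sum, so the $\Delta_{i}$ are independent.

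Part \autoref{Lemma:DisjointCompositeOfMiDegree} is then a formal consequence: by \autoref{Lemma:DisjointCompositeOfMiComposite} we have $L_{k,1} = L_{k-1,1} \cdot M_{k-2}$, and by \autoref{Lemma:DisjointCompositeOfMiDisjoint} the extension $M_{k-2}$ is linearly disjoint from $L_{k-1,1} = M_{0} \cdots M_{k-3}$ over $E$, so $[L_{k,1} : L_{k-1,1}] = [M_{k-2} : E]$.

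The only substantive point is the appeal to the Teichm\"uller eigenspace decomposition in part \autoref{Lemma:DisjointCompositeOfMiDisjoint}; once that is set up, the Kummer-theoretic translation between field compositum and subgroup sum does all the remaining work, and parts \autoref{Lemma:DisjointCompositeOfMiComposite} and \autoref{Lemma:DisjointCompositeOfMiDegree} are essentially bookkeeping.
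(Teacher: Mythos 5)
Your proposal is correct and follows essentially the same argument as the paper: part (1) is read off directly from \autoref{Corollary:pAndqTorsionFieldsEtaPrime} and \autoref{Definition:DeltaMi}; part (2) reduces, via Kummer theory, to showing that the subgroups $\Delta_0, \dots, \Delta_{p-2}$ of $E^{\times}/E^{\times p}$ are independent, which is exactly the eigenspace argument using \autoref{Lemma:DeltaiTeichmullerEigenspace} that the paper gives; and part (3) is a formal consequence. The only cosmetic difference is that you spell out the Kummer dictionary between compositum/linear disjointness and subgroup sum/intersection more explicitly than the paper does.
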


\begin{proof} \hfill
\begin{enumerate}[label=\upshape{(\arabic*)},
ref={the proof of \autoref{Lemma:DisjointCompositeOfMi}(\arabic*)}]

\item 
\label{LemmaProof:DisjointCompositeOfMiComposite}
This follows immediately from \autoref{Corollary:pAndqTorsionFieldsEtaPrime} and
\autoref{Definition:DeltaMi}.

\item
\label{LemmaProof:DisjointCompositeOfMiDisjoint}
By Kummer theory, we must check that if $\delta_{i} \in \Delta_{i}$ satisfy
$\prod_{i = 0}^{p - 2} \delta_{i} = 1$, then $\delta_{i} = 1$ for all $i$.
\autoref{Lemma:DeltaiTeichmullerEigenspace} implies that each $\delta_{i}$ lies
in a different isotypic component for the $\omega$-action, so they must all be
$1$. \qedhere

\item
\label{LemmaProof:DisjointCompositeOfMiDegree}
This follows immediately from \autoref{Lemma:DisjointCompositeOfMiComposite} and 
\autoref{Lemma:DisjointCompositeOfMiDisjoint}.
\end{enumerate}
\end{proof}

\begin{corollary}
\label{Corollary:Lk1DegreeUpperBound}
For each integer $k \in [2, p - 1]$, there exists an integer $e(k) \in [0, (q -
1)/ 2]$ such that 
\[
[L_{k, 1} : L_{k - 1, 1}] = p^{e(k)}.
\]
\end{corollary}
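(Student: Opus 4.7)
The plan is to combine Lemma \ref{Lemma:DisjointCompositeOfMi}\ref{Lemma:DisjointCompositeOfMiDegree} with a Kummer-theoretic count of the generators of $\Delta_{k-2}$. The identification $[L_{k,1}:L_{k-1,1}] = [M_{k-2}:E]$ reduces the problem to bounding $[M_{k-2}:E]$, which by the very definition of $M_{k-2}$ and $\Delta_{k-2}$ (see \autoref{Definition:DeltaMi}) is a Kummer extension obtained by adjoining $p$th roots of at most $(q-1)/2$ explicit elements of $E^\times$.

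First I would invoke \autoref{Lemma:DisjointCompositeOfMi}\ref{Lemma:DisjointCompositeOfMiDegree} to reduce to computing $[M_{k-2}:E]$. Next, since $E = \mathbf{Q}(\zeta_{pq})$ contains a primitive $p$th root of unity, Kummer theory applies: the extension $E(\sqrt[p]{\delta} : \delta \in \Delta_{k-2})/E$ is abelian of exponent dividing $p$, and its Galois group is isomorphic to $\Hom(\Delta_{k-2}, \mu_p)$, so $[M_{k-2}:E] = \#\Delta_{k-2}$. Because $\Delta_{k-2}$ is a subgroup of $E^\times/E^{\times p}$ (a vector space over $\mathbf{F}_p$) generated by the $(q-1)/2$ elements $\eta'_{k-2,1},\ldots,\eta'_{k-2,(q-1)/2}$, its order is of the form $p^{e(k)}$ with $0 \le e(k) \le (q-1)/2$.

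Putting these two facts together gives $[L_{k,1}:L_{k-1,1}] = p^{e(k)}$ with $e(k)$ in the required range. There is essentially no obstacle here; the nontrivial content was already absorbed into \autoref{Lemma:DisjointCompositeOfMi}, and what remains is a one-line application of Kummer theory counting dimensions of a subgroup of an $\mathbf{F}_p$-vector space generated by $(q-1)/2$ elements.
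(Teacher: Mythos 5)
Your proposal is correct and follows essentially the same route as the paper: the paper's proof is the one-line remark that the result is immediate from \autoref{Lemma:DisjointCompositeOfMiDegree}, with the Kummer-theoretic count of $[M_{k-2}:E]$ (a subgroup of $E^\times/E^{\times p}$ generated by $(q-1)/2$ elements) left implicit. You have simply spelled out that implicit step.
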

\begin{proof}
This follows immediately from \autoref{Lemma:DisjointCompositeOfMiDegree}.
\end{proof}

\begin{remark}
For our strategy of using large Galois action to classify torsion points, we
need a lower bound for $[L_{k, 1} : L_{k - 1, 1}]$.
\autoref{Corollary:Lk1DegreeUpperBound} gives an upper bound, but we do not know
when it is possible to attain this value. In light of
\autoref{Lemma:DisjointCompositeOfMiDegree}, we focus our efforts on studying
$M_{i}$ for the rest of the section.
\end{remark}

\begin{corollary}
\label{Corollary:L21L11NontrivialAlways}
$[L_{2, 1} : L_{1, 1}] \ge p$.
\end{corollary}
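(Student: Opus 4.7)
The plan is to combine two facts already in hand. From \autoref{Lemma:LijFactsL11} we have $L_{1,1} = E$, so $[L_{2,1} : L_{1,1}] = [L_{2,1} : E]$, and from \autoref{Lemma:LijFactsL21} this degree is at least $2$. Separately, \autoref{Corollary:Lk1DegreeUpperBound} applied with $k = 2$ shows that $[L_{2,1} : L_{1,1}] = p^{e(2)}$ for some nonnegative integer $e(2)$. A power of $p$ that is strictly greater than $1$ must be at least $p$, completing the proof.

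The only step that requires any thought is the appeal to \autoref{Corollary:Lk1DegreeUpperBound}, which ultimately rests on \autoref{Lemma:DisjointCompositeOfMiDegree}, i.e.\ on $L_{2,1}$ being obtained from $L_{1,1} = E$ by adjoining $p$th roots of elements of $E$. This $p$-Kummer structure is what forces the degree to be a power of $p$, and could also be deduced more directly by noting $L_{2,1} \subseteq L_{p,1}$ together with \autoref{Lemma:LijFactsLp1}.

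The only potential obstacle is confirming that the strict inequality $[L_{2,1} : E] > 1$ is genuinely available, but this is exactly the content of the ramification argument in the proof of \autoref{Lemma:LijFactsL21}: the field $L_{2,1}$ contains $\sqrt[p]{1 - \zeta_q}$, and the prime above $q$ in $E$ has ramification index $q - 1$, while any $p$th root of $1 - \zeta_q$ further ramifies by a factor of $p$, so $L_{2,1}$ cannot equal $E$. Thus no extra computation is needed: the corollary is a one-line consequence of the two cited results.
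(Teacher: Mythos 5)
Your proposal is correct and follows exactly the paper's own argument: combine \autoref{Corollary:Lk1DegreeUpperBound} (the degree is a power of $p$) with the nontriviality $[L_{2,1}:E]>1$ from \autoref{Lemma:LijFactsL21}, using $L_{1,1}=E$ from \autoref{Lemma:LijFactsL11}. You have merely spelled out the intermediate steps that the paper leaves implicit.
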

\begin{proof}
This follows immediately from \autoref{Corollary:Lk1DegreeUpperBound} and
\autoref{Lemma:LijFactsL21}.
\end{proof}

\begin{definition}
Recall the notion of a cyclotomic unit as in Section 8.1 of
\cite{washington1997introduction}. Define 
\begin{align*}
& U &&\quad\text{to be the unit group of }\mathbf{Q}(\mu_{p}) \\
& C &&\quad\text{to be the group of cyclotomic units of }\mathbf{Q}(\mu_{p}) \\
& Q(\mu_{p})^{+} &&\quad\text{to be the totally real subfield of
}\mathbf{Q}(\mu_{p}) \\
& U^{+} &&\quad\text{to be the unit group of }\mathbf{Q}(\mu_{p})^{+} \\
& C^{+} &&\quad\text{to be the group of cyclotomic units of
}\mathbf{Q}(\mu_{p})^{+} \\
& A &&\quad\text{to be the class group of } \mathbf{Q}(\mu_{p}).
\end{align*}
For any group $B$, let $B_{p}$ be the $p$-Sylow subgroup of $B$.

For $i \in [0, p - 3]$ and $b \in (\mathbf{Z} / p \mathbf{Z})^{\times}$, define
\[
U_{i}(b) \colonequals \prod_{s = 0}^{p - 1} \left( \zeta_{p}^{(p + 1)(1 - b) s /
2} \frac{1 - \zeta_{p}^{b s}}{1 - \zeta_{p}^{s}} \right)^{s^{i}} \in
\mathbf{Q}(\mu_{p})^{+}
\]
($U_{i}(b)$ lies in $\mathbf{Q}(\mu_{p})^{+}$ since each term $\zeta_{p}^{(p +
1)(1 - b) s / 2} (1 - \zeta_{p}^{b s})/(1 - \zeta_{p}^{s})$ is fixed by complex
conjugation.)

Let $\nu \in (\mathbf{Z} / p \mathbf{Z})^{\times}$ be a generator and define
$U_{i} \colonequals U_{i}(\nu)$.
\end{definition}

\begin{lemma}
\label{Lemma:AnnoyingUi}
Suppose that $i \in [0, p - 3]$, $b \in (\mathbf{Z} / p \mathbf{Z})^{\times}$,
and $b^{i} \not\equiv 1 \pmod{p}$. 

\begin{enumerate}[label=\upshape{(\arabic*)},
ref={\autoref{Lemma:AnnoyingUi}(\arabic*)}]

\item 
\label{Lemma:AnnoyingUiSimpler}
Then
\[
U_{i}(b) = \prod_{s = 0}^{p - 1} \left( \frac{1 - \zeta_{p}^{b s}}{1 -
\zeta_{p}^{s}} \right)^{s^{i}}.
\]

\item
\label{Lemma:AnnoyingUiRelations}
The images of $U_{i}$ and $U_{i}(b)$ in $\mathbf{Q}(\mu_{p})^{+ \times} /
\mathbf{Q}(\mu_{p})^{+ \times p}$ generate the same subgroup.
\end{enumerate}

\end{lemma}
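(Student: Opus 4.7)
The plan is to prove (1) by collapsing the $\zeta_p$-prefactors via a standard power-sum identity, and then to deduce (2) by iterating a cocycle-type relation $U_i(bc) = U_i(b)\,\sigma_b(U_i(c))$ and analyzing the resulting exponent in $\mathbf{F}_p$.

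For (1), the hypothesis $b^i \not\equiv 1 \pmod{p}$ forces $i \ge 1$ (since $b^0 = 1$), so the $s = 0$ term of each product contributes the trivial factor $1$ because $s^i = 0$ there. The remaining $\zeta_p$-prefactors combine into $\zeta_p^{\frac{(p+1)(1-b)}{2} \sum_{s=1}^{p-1} s^{i+1}}$, and the claim reduces to showing that this exponent is divisible by $p$. The standard identity $\sum_{s=1}^{p-1} s^k \equiv 0 \pmod{p}$ whenever $(p-1) \nmid k$ finishes the job, since $i + 1 \in [2, p-2]$.

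For (2), I will first establish the exact multiplicative identity $U_i(bc) = U_i(b) \cdot \sigma_b(U_i(c))$, where $\sigma_b \in \Gal(\mathbf{Q}(\mu_p)/\mathbf{Q})$ sends $\zeta_p \mapsto \zeta_p^b$, by a straightforward telescoping computation in the simpler form provided by (1). A change of variables $t = bs$ in the defining product then yields $\sigma_b(U_i(c)) \equiv U_i(c)^{b^{-i}}$ modulo $p$-th powers; the subtlety here is that the integer exponent $(b^{-1}t)^i$ must be replaced by $b^{-i}t^i$, which is legitimate because the two integers differ by a multiple of $p$ and modifying an exponent by $p$ multiplies the base by a $p$-th power. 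Iterating the cocycle identity along $b = \nu^{k-1}$, $c = \nu$ and inducting on $k$ gives $U_i(\nu^k) \equiv U_i(\nu)^{\sum_{j=0}^{k-1} \nu^{-ji}}$ modulo $p$-th powers. Writing $b = \nu^k$, the geometric series evaluates to $(b^{-i} - 1)/(\nu^{-i} - 1) \in \mathbf{F}_p$, which is a unit precisely because $b^i \not\equiv 1 \pmod{p}$ by hypothesis and $\nu^i \not\equiv 1 \pmod{p}$ automatically (since $\nu$ generates $(\mathbf{Z}/p\mathbf{Z})^\times$ and $1 \le i \le p - 3$). Consequently $U_i(b)$ and $U_i = U_i(\nu)$ differ by a unit exponent modulo $p$-th powers, so they generate the same subgroup of $\mathbf{Q}(\mu_p)^{+\times}/\mathbf{Q}(\mu_p)^{+\times p}$. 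I do not foresee any serious obstacle; the only recurring bookkeeping is the equivalence of integer exponents modulo $p$ in the quotient $\mathbf{Q}(\mu_p)^{+\times}/\mathbf{Q}(\mu_p)^{+\times p}$, and the argument otherwise reduces to formal product manipulations.
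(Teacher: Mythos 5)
Your proof is correct, but it takes a genuinely different route from the paper's. For part (1), both arguments invoke the same power-sum vanishing $\sum_{s} s^{i+1} \equiv 0 \pmod{p}$ to collapse the $\zeta_p$-prefactors, so there is nothing to separate them there (the paper does not bother to note $i \ge 1$, since the power-sum identity also holds for $i+1 = 1$). For part (2), the paper works with the symmetrized expressions $\zeta_p^{cs/2} - \zeta_p^{-cs/2}$, raises $U_i(c)$ to the power $c^i$ so that the exponent becomes $(cs)^i$, substitutes $t = cs$ in the numerator only, and thereby obtains the one-shot relation $(U_i(c))^{c^i} \equiv U^{1 - c^i}$ for an auxiliary product $U$; plugging in $c = b$ and $c = \nu$ and using invertibility finishes immediately. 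You instead exhibit the exact multiplicative $1$-cocycle $U_i(bc) = U_i(b)\,\sigma_b(U_i(c))$ (a structural observation the paper never states), combine it with $\sigma_b(U_i(c)) \equiv U_i(c)^{b^{-i}}$, and iterate along the generator $\nu$, summing a geometric series in $\mathbf{F}_p$. Both arguments ultimately produce the same unit exponent $(b^{-i}-1)/(\nu^{-i}-1) = \nu^i(1-b^i)/\bigl(b^i(1-\nu^i)\bigr)$ relating $U_i(b)$ to $U_i(\nu)$, and both hinge on the change of variable $s \mapsto bs$ (resp.\ $cs$) shifting integer exponents only by multiples of $p$. The paper's version is shorter since it avoids induction; yours isolates a cleaner multiplicative structure at the cost of an iteration. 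One point you (like the paper) leave implicit and could profitably make explicit: the congruences are naturally verified modulo $\mathbf{Q}(\mu_p)^{\times p}$, and one then descends to $\mathbf{Q}(\mu_p)^{+\times p}$ using that the natural map $\mathbf{Q}(\mu_p)^{+\times}/\mathbf{Q}(\mu_p)^{+\times p} \to \mathbf{Q}(\mu_p)^{\times}/\mathbf{Q}(\mu_p)^{\times p}$ is injective, which holds because $[\mathbf{Q}(\mu_p) : \mathbf{Q}(\mu_p)^+] = 2$ is coprime to $p$.
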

\begin{proof} \hfill
\begin{enumerate}[label=\upshape{(\arabic*)},
ref={the proof of \autoref{Lemma:AnnoyingUi}(\arabic*)}]

\item 
\label{LemmaProof:AnnoyingUiSimpler}
Since $i \in [0, p - 3]$, 
\[
\sum_{s = 0}^{p - 1} s^{i + 1} \equiv 0 \pmod{p},
\]
so
\[
\prod_{s = 0}^{p - 1} \left( \zeta_{p}^{(p + 1)(1 - b) s/ 2} \right)^{s^{i}} = 1
\]
and we are done by definition of $U_{i}(b)$.

\item
\label{LemmaProof:AnnoyingUiRelations}

For notational convenience, use the shorthand $\zeta_{p}^{m / 2}$ to mean
$\zeta_{p}^{m (p + 1) / 2}$, i.e., it is a $p$th root of unity whose square is
$\zeta_{p}^{m}$. For any $c \in \mathbf{Z} / p \mathbf{Z}$,
\[
U_{i}(c) = \prod_{s = 0}^{p - 1} \left( \frac{\zeta_{p}^{cs/2} -
\zeta_{p}^{-cs/2}}{\zeta_{p}^{s/2} - \zeta_{p}^{-s/2}} \right)^{s^{i}},
\]
so
\begin{align*}
(U_{i}(c))^{c^{i}} &= \prod_{s = 0}^{p - 1} \left( \frac{\zeta_{p}^{cs/2} -
\zeta_{p}^{-cs/2}}{\zeta_{p}^{s/2} - \zeta_{p}^{-s/2}} \right)^{(cs)^{i}} \\
&= \left(\prod_{s = 0}^{p - 1} (\zeta_{p}^{cs/2} -
\zeta_{p}^{-cs/2})^{(cs)^{i}}\right)\left( \prod_{s = 0}^{p - 1}
(\zeta_{p}^{s/2} - \zeta_{p}^{-s/2})^{(cs)^{i}} \right)^{-1} \\
&\equiv \left(\prod_{t = 0}^{p - 1} (\zeta_{p}^{t/2} -
\zeta_{p}^{-t/2})^{t^{i}}\right)\left( \prod_{s = 0}^{p - 1} (\zeta_{p}^{s/2} -
\zeta_{p}^{-s/2})^{(cs)^{i}} \right)^{-1} \pmod{\mathbf{Q}(\mu_{p})^{+ \times
p}}
\end{align*}
by setting $t \equiv c s \pmod{p}$ in the first product and observing that this
change-of-variable preserves the product modulo $\mathbf{Q}(\mu_{p})^{+ \times
p}$. Combining the products yields
\[
(U_{i}(c))^{c^{i}} \equiv \left(\prod_{t = 0}^{p - 1} (\zeta_{p}^{t/2} -
\zeta_{p}^{-t/2})^{t^{i}}\right)^{1 - c^{i}} \pmod{\mathbf{Q}(\mu_{p})^{+
\times p}},
\]
so if we define
\[
U \colonequals \prod_{t = 0}^{p - 1} (\zeta_{p}^{t/2} - \zeta_{p}^{-t/2})^{t^{i}}
\]
then
\[
(U_{i}(c))^{c^{i}} \equiv U^{1 - c^{i}} \pmod{\mathbf{Q}(\mu_{p})^{+ \times p}},
\]
so
\[
(U_{i}(b))^{b^{i}(1 - \nu^{i})} \equiv U^{(1 - b^{i})(1 - \nu^{i})} \equiv
U_{i}^{\nu^{i}(1 - b^{i})}\pmod{\mathbf{Q}(\mu_{p})^{+ \times p}}.
\]
Since $b^{i}, 1 - b^{i}, \nu^{i}, 1 - \nu^{i}$ are invertible modulo $p$, we are
done. \qedhere
\end{enumerate}
\end{proof}

\begin{lemma} 
Suppose that $i \in [1, p - 3]$ and $j \in [1, q - 1]$.
\label{Lemma:Normueta'ij}
\begin{enumerate}[label=\upshape{(\arabic*)},
ref={\autoref{Lemma:Normueta'ij}(\arabic*)}]

\item 
\label{Lemma:Normueta'iju}
For each $s \in [1, p - 1]$, 
\[
\Norm_{E / \mathbf{Q}( \mu_{p} )} u_{s, j} = \frac{1 - \zeta_{p}^{q s}}{1 -
\zeta_{p}^{s}}.
\]

\item
\label{Lemma:Normueta'ijeta'}
We have 
\[
\Norm_{E / \mathbf{Q}( \mu_{p} )} \eta'_{i, j} = U_{i}(q). 
\]

\item
\label{Lemma:Normueta'ijUiMip}
Suppose that $q^{i} \not\equiv 1 \pmod{p}$. Then $U_{i} \in M_{i}^{p}$.
\end{enumerate}
\end{lemma}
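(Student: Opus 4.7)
The plan is to handle the three parts in order, with each reducing to a concrete computation of Galois conjugates and a bit of bookkeeping modulo $p$-th powers.

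For part (1), I would unfold the norm as a product over $\Gal(E/\mathbf{Q}(\mu_p))$, which is identified with $\Gal(\mathbf{Q}(\mu_q)/\mathbf{Q})$ acting by $\zeta_q \mapsto \zeta_q^a$ for $a \in (\mathbf{Z}/q)^\times$. Thus $\Norm_{E/\mathbf{Q}(\mu_p)} u_{s,j} = \prod_{a=1}^{q-1}(1 - \zeta_q^{ja}\zeta_p^s)$, and reindexing via the bijection $a \mapsto ja$ on $(\mathbf{Z}/q)^\times$ (valid since $j \ne 0$) rewrites this as $\prod_{b=1}^{q-1}(1 - \zeta_q^b\zeta_p^s)$. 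The polynomial identity $\prod_{b=0}^{q-1}(1 - \zeta_q^b Y) = 1 - Y^q$ applied at $Y = \zeta_p^s$ then yields the claim after removing the $b=0$ factor.

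For part (2), multiplicativity of the norm and part (1) give $\Norm \eta'_{i,j} = \prod_{s=1}^{p-1}\left(\frac{1-\zeta_p^{qs}}{1-\zeta_p^s}\right)^{s^i}$, where the $s=0$ term drops out because $i \ge 1$. Comparing with the definition of $U_i(q)$, the only discrepancy is the accumulated root-of-unity prefactor $\zeta_p^{((p+1)(1-q)/2)\sum_{s=1}^{p-1} s^{i+1}}$. The exponent $i+1 \in [2, p-2]$ is not divisible by $p-1$, so the classical power-sum identity gives $\sum_{s=1}^{p-1} s^{i+1} \equiv 0 \pmod p$ and this prefactor is trivial.

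For part (3), I would first reduce via \autoref{Lemma:AnnoyingUi}(2) to the claim $U_i(q) \in M_i^p$, since under the hypothesis $q^i \not\equiv 1 \pmod{p}$ the elements $U_i$ and $U_i(q)$ generate the same subgroup of $\mathbf{Q}(\mu_p)^{+\times}/\mathbf{Q}(\mu_p)^{+\times p}$, and $\mathbf{Q}(\mu_p)^{+\times p}\subseteq M_i^{\times p}$. Viewing $U_i(q) = \Norm \eta'_{i,j}$ as the product of Galois conjugates $\prod_{j=1}^{q-1}\eta'_{i,j}$ inside $E$, I split it into $j \in [1, (q-1)/2]$ (each factor a $p$-th power in $M_i$ by definition of $M_i$) and $j \in [(q+1)/2, q-1]$. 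For the latter half, I write $j = q - j'$ and establish the congruence
\[
\eta'_{i, q-j'} \equiv (\eta'_{i, j'})^{(-1)^i} \pmod{E^{\times p}}
\]
by the manipulation $1 - \zeta_q^{-j'}\zeta_p^s = -\zeta_q^{-j'}\zeta_p^s\,(1 - \zeta_q^{j'}\zeta_p^{-s})$, followed by the substitution $s \mapsto (-s) \bmod p$. The resulting prefactors die modulo $E^{\times p}$ because: $-1$ is a $p$-th power in $E$ as $p$ is odd; $\zeta_q$ is a $p$-th power in $E$ since $\gcd(p,q)=1$; and the $\zeta_p$ part collects to $\zeta_p^{c\sum_s s^{i+1}}$, which vanishes mod $p$ as in part (2). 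The exponent change from $(p-s)^i$ to $(-1)^i s^i$ only differs by a multiple of $p$, and moving this difference into $E^{\times p}$ completes the congruence.

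The main obstacle is the congruence in part (3); everything else is a matter of unwinding definitions and applying a power-sum identity. Once the congruence is in hand, both halves of the product lie in $M_i^p$, giving $U_i(q) \in M_i^p$ and hence $U_i \in M_i^p$.
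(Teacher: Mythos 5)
Your proof is correct. Parts (1) and (2) follow the paper's line: part (1) is the same Galois-conjugate computation, and for part (2) you re-derive the power-sum vanishing $\sum_{s=0}^{p-1} s^{i+1} \equiv 0 \pmod p$ (valid since $i+1 \in [2,p-2]$) rather than citing \autoref{Lemma:AnnoyingUiSimpler}, but this is exactly the content of that lemma.

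Part (3) is where you genuinely diverge. The paper's proof is shorter: it asserts that $M_i$ is Galois over $\mathbf{Q}(\mu_p)$, from which it follows formally that $\Norm_{E/\mathbf{Q}(\mu_p)}\eta'_{i,1} \in M_i^p$ since this norm is a product of Galois conjugates of $\eta'_{i,1}\in M_i^p$. That Galois-stability of $M_i$ is true but is not justified in the paper; it ultimately rests on \autoref{Theorem:pAndqTorsionFields} plus the Teichm\"uller isotypic decomposition (\cref{Lemma:DeltaiTeichmullerEigenspace,Lemma:DisjointCompositeOfMi}), which together force the subgroup $\Delta_i$ to be generated by \emph{all} the $\eta'_{i,j}$ with $j\in[1,q-1]$, not just $j\in[1,(q-1)/2]$. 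Your argument replaces this implicit appeal with an explicit computation: the congruence $\eta'_{i,q-j'}\equiv(\eta'_{i,j'})^{(-1)^i}\pmod{E^{\times p}}$, which you correctly derive from $1-\zeta_q^{-j'}\zeta_p^s = -\zeta_q^{-j'}\zeta_p^s(1-\zeta_q^{j'}\zeta_p^{-s})$, the reindexing $s\mapsto p-s$, and the triviality of the prefactors modulo $p$th powers (using that $-1$ and $\zeta_q$ are $p$th powers in $E$, that $(p-s)^i\equiv(-1)^i s^i\pmod p$, and the power-sum identity again). This shows directly that every Galois conjugate of $\eta'_{i,1}$ lies in $M_i^p$ and thereby makes the Galois-stability of $M_i$ an explicit consequence rather than a black box. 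Both routes are valid; yours is more self-contained at the cost of a longer computation.
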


\begin{proof} \hfill
\begin{enumerate}[label=\upshape{(\arabic*)},
ref={the proof of \autoref{Lemma:Normueta'ij}(\arabic*)}]

\item 
\label{LemmaProof:Normueta'iju}
This is a straightforward computation.

\item
\label{LemmaProof:Normueta'ijeta'}
Combine the definition of $\eta_{i, j}'$, \autoref{Lemma:Normueta'iju}, and 
\autoref{Lemma:AnnoyingUiSimpler}.

\item
\label{LemmaProof:Normueta'ijUiMi}
$M_{i}$ is Galois over $\mathbf{Q}(\mu_{p})$ and $\eta_{i, 1}' \in
M_{i}^{p}$, so $\Norm_{E / \mathbf{Q}( \mu_{p} )} \eta_{i, 1}' \in M_{i}^{p}$,
so we are done by \autoref{Lemma:Normueta'ijeta'} and
\autoref{Lemma:AnnoyingUiRelations}.  \qedhere
\end{enumerate}
\end{proof}

\begin{corollary}
\label{Corollary:CyclotomicRibetGenDisjoint}
For any $i \in [1, p - 3]$ such that $q^{i} \not\equiv 1 \pmod{p}$,
\[
[M_{i} : E] \ge [E(\sqrt[p]{U_{i}}) : E] =
[\mathbf{Q}(\mu_{p})^{+}(\sqrt[p]{U_{i}}) : \mathbf{Q}(\mu_{p})^{+}].
\]
\end{corollary}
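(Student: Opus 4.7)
The first inequality $[M_i : E] \ge [E(\sqrt[p]{U_i}) : E]$ follows immediately from \autoref{Lemma:Normueta'ijUiMip}: the hypothesis $q^i \not\equiv 1 \pmod p$ yields $U_i \in M_i^p$, so $E(\sqrt[p]{U_i}) \subseteq M_i$. The interesting part of the proof is the equality $[E(\sqrt[p]{U_i}) : E] = [\mathbf{Q}(\mu_p)^+(\sqrt[p]{U_i}) : \mathbf{Q}(\mu_p)^+]$, which asserts that $E$ and $\mathbf{Q}(\mu_p)^+(\sqrt[p]{U_i})$ are linearly disjoint over $\mathbf{Q}(\mu_p)^+$.

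For this equality, observe that both field degrees are either $1$ or $p$, and the former is always bounded by the latter. So it suffices to show: if $U_i \notin \mathbf{Q}(\mu_p)^{+\times p}$, then $\sqrt[p]{U_i} \notin E$. Assume $\sqrt[p]{U_i} \in E$ and examine $\mathbf{Q}(\mu_p)(\sqrt[p]{U_i})/\mathbf{Q}(\mu_p)$. If this has degree $1$, then $U_i$ is a $p$-th power in $\mathbf{Q}(\mu_p)$; since $p$ is odd and $U_i$ is fixed by complex conjugation, a standard modification of the $p$-th root by a suitable element of $\mu_p$ lands it in $\mathbf{Q}(\mu_p)^+$, contradicting the assumption. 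Otherwise it has degree $p$, so it is a degree-$p$ subextension of the cyclic extension $E/\mathbf{Q}(\mu_p)$ of order $q-1$; this forces $p \mid q-1$ and $\mathbf{Q}(\mu_p)(\sqrt[p]{U_i}) = \mathbf{Q}(\mu_p) \cdot F_q$, where $F_q$ is the unique degree-$p$ subfield of $\mathbf{Q}(\mu_q)$.

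The main obstacle is ruling out this degree-$p$ case, which I would handle via Kummer theory paired with the $\Gal(\mathbf{Q}(\mu_p)/\mathbf{Q})$-action on $\mathbf{Q}(\mu_p)^{\times}/\mathbf{Q}(\mu_p)^{\times p}$. Since $\Gal(\mathbf{Q}(\mu_p)/\mathbf{Q})$ commutes with $\Gal(F_q/\mathbf{Q})$, it acts trivially on $\Gal(\mathbf{Q}(\mu_p) F_q / \mathbf{Q}(\mu_p))$, and Galois-equivariance of the Kummer pairing (with $\mu_p$ itself in the $\omega$-eigenspace) forces the Kummer class generating $\mathbf{Q}(\mu_p) F_q$ to lie in $\varepsilon_{\omega}$. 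On the other hand, a change of variables $t \equiv as \pmod p$ in the product defining $U_i$ shows $\sigma_a(U_i) \equiv U_i^{a^{-i}} \pmod{\mathbf{Q}(\mu_p)^{+\times p}}$, placing $U_i$ in $\varepsilon_{\omega^{-i}}$. Since $i \in [1, p-3]$ forces $-i \not\equiv 1 \pmod{p-1}$, the eigenspaces $\varepsilon_{\omega}$ and $\varepsilon_{\omega^{-i}}$ are distinct and intersect trivially, so $U_i$ cannot generate the same $\mathbf{F}_p$-line as the Kummer class for $\mathbf{Q}(\mu_p) F_q$ unless $U_i$ is trivial in $\mathbf{Q}(\mu_p)^\times/\mathbf{Q}(\mu_p)^{\times p}$ (excluded by our standing assumption), giving the contradiction.
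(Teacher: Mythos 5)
Your first inequality matches the paper's argument verbatim: $U_i \in M_i^p$ by \autoref{Lemma:Normueta'ijUiMip}, hence $E(\sqrt[p]{U_i}) \subseteq M_i$.

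For the equality $[E(\sqrt[p]{U_i}) : E] = [\mathbf{Q}(\mu_p)^+(\sqrt[p]{U_i}) : \mathbf{Q}(\mu_p)^+]$, your argument is correct but takes a genuinely different route. The paper sets $F \colonequals \mathbf{Q}(\mu_p)^+(\sqrt[p]{U_i})$ and argues via ramification: $E/\mathbf{Q}(\mu_p)$ is totally ramified at $q$, whereas $F(\mu_p)/\mathbf{Q}(\mu_p)$ is unramified there (since $U_i$ is a unit coprime to $q$), so $E \cap F \subseteq \mathbf{Q}(\mu_p)$; a degree count then forces $E \cap F = \mathbf{Q}(\mu_p)^+$, and the equality follows by translating. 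Your argument instead uses Kummer theory and the $\omega$-eigenspace decomposition of $\mathbf{Q}(\mu_p)^\times / \mathbf{Q}(\mu_p)^{\times p}$: you first show (correctly) that if $U_i$ were a $p$-th power in $\mathbf{Q}(\mu_p)$ then complex-conjugation invariance and the oddness of $p$ would let you take the $p$-th root in $\mathbf{Q}(\mu_p)^+$; and in the remaining case $p \mid q - 1$ with $\mathbf{Q}(\mu_p)(\sqrt[p]{U_i}) = \mathbf{Q}(\mu_p) F_q$, the Kummer class of that extension must lie in $\varepsilon_\omega$ because $\mathbf{Q}(\mu_p) F_q / \mathbf{Q}$ is abelian, while the change-of-variables computation places $U_i$ in $\varepsilon_{\omega^{-i}}$; since $i \in [1, p-3]$ gives $-i \not\equiv 1 \pmod{p-1}$, these eigenspaces meet trivially. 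Both approaches are sound. The paper's ramification argument is shorter and more elementary; your approach is somewhat longer but reuses the isotypic-component machinery that already appears in this section (compare \autoref{Lemma:DeltaiTeichmullerEigenspace}), and has the mild pedagogical advantage of making explicit why the hypothesis $i \le p - 3$ enters.
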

\begin{proof}
\autoref{Lemma:Normueta'ijUiMip} implies that $M_{i} \supseteq
E(\sqrt[p]{U_{i}})$, so $[M_{i} : E] \ge [E(\sqrt[p]{U_{i}}) : E]$.

Let $F \colonequals \mathbf{Q}(\mu_{p})^{+}(\sqrt[p]{U_{i}})$. Since $E /
\mathbf{Q}(\mu_{p})$ is totally ramified at $q$ and $F(\mu_{p}) /
\mathbf{Q}(\mu_{p})$ is unramified at $q$, $E \cap F \subseteq
\mathbf{Q}(\mu_{p})$. Therefore, 
\begin{equation}
\label{Equation:SandwichECapF}
\mathbf{Q}(\mu_{p})^{+} \subseteq E \cap F \subseteq \mathbf{Q}(\mu_{p}).
\end{equation}
Since $[F : \mathbf{Q}(\mu_{p})^{+}] \in \{ 1, p \}$ and $[F :
\mathbf{Q}(\mu_{p})^{+}]$ is divisible by $[E \cap F : \mathbf{Q}(\mu_{p})^{+}
]$, we must have $E \cap F \neq \mathbf{Q}(\mu_{p})$, so
\eqref{Equation:SandwichECapF} implies
\begin{equation}
\label{Equation:IntersectEAndF}
E \cap F = \mathbf{Q}(\mu_{p})^{+}.
\end{equation}
Note that $[E(\sqrt[p]{U_{i}}) : E] = [EF : E] = [F : E \cap F]$ since $E$ is
Galois over $E \cap F$, so we are done by \eqref{Equation:IntersectEAndF}.
\end{proof}

\begin{theorem}
\label{Theorem:IsotypicGenerator}
For even $i \in [2, p - 3]$, $\# \varepsilon_{\omega^{i}} (U^{+} / C^{+})_{p} =
1$ if and only if $U_{p - 1 - i} \not\in \mathbf{Q}(\mu_{p})^{+ \times p}$.
\end{theorem}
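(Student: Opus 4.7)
The plan is to translate $\#\varepsilon_{\omega^i}(U^+/C^+)_p = 1$ into a $p$-divisibility statement about $U_{p-1-i}$ inside the isotypic component $\varepsilon_{\omega^i}(U^+ \otimes \mathbf{Z}_p)$. The strategy hinges on the fact that both isotypic components $\varepsilon_{\omega^i}(U^+ \otimes \mathbf{Z}_p)$ and $\varepsilon_{\omega^i}(C^+ \otimes \mathbf{Z}_p)$ are free $\mathbf{Z}_p$-modules of rank one, and $U_{p-1-i}$ is a generator of the cyclotomic one. Then the quotient vanishes iff this generator is also a generator of the ambient unit isotypic component, which is equivalent to $U_{p-1-i}$ not being a $p$th power.

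The preliminary step is to identify the isotypic component containing $U_{p-1-i}$. Using \autoref{Lemma:AnnoyingUiSimpler} and the change of variables $s \mapsto a^{-1} s$ (with representative mod $p$) in the defining product, one checks that $\sigma_a(U_j) \equiv U_j^{a^{-j}} \pmod{\mathbf{Q}(\mu_p)^{+\times p}}$, placing $U_{p-1-i}$ in the $\omega^{-(p-1-i)} = \omega^i$ isotypic piece of $U^+/U^{+p}$; after applying the idempotent $\varepsilon_{\omega^i}$, one obtains a genuine element of $\varepsilon_{\omega^i}(U^+ \otimes \mathbf{Z}_p)$ agreeing with $U_{p-1-i}$ modulo $p$. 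Dimension counting via Dirichlet's unit theorem (each nontrivial even character of $\Delta^+ = \Gal(\mathbf{Q}(\mu_p)^+/\mathbf{Q})$ appears in $U^+ \otimes \mathbf{Q}$ with multiplicity one), combined with the finiteness of $[U^+ : C^+] = h^+$ (Theorem 8.2 of \cite{washington1997introduction}), then gives that both $\varepsilon_{\omega^i}(U^+ \otimes \mathbf{Z}_p)$ and $\varepsilon_{\omega^i}(C^+ \otimes \mathbf{Z}_p)$ are free of rank one over $\mathbf{Z}_p$.

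The crux is to show that $U_{p-1-i}$ actually generates $\varepsilon_{\omega^i}(C^+ \otimes \mathbf{Z}_p)$ as a $\mathbf{Z}_p$-module. For this, I would note that $C^+ \otimes \mathbf{Z}_p$ is generated as a $\mathbf{Z}_p[\Delta^+]$-module by the single cyclotomic unit $\xi_\nu \colonequals \zeta_p^{(1 - \nu)/2}(1 - \zeta_p^\nu)/(1 - \zeta_p)$, since the Galois orbit of $\xi_\nu$ hits every $\xi_b$ and these generate $C^+$. Applying the idempotent $(p-1)\varepsilon_{\omega^i} = \sum_a \omega^{-i}(a)\sigma_a$ (and reducing mod $p$, so $\omega^{-i}(a) \equiv a^{p-1-i}$) produces $U_{p-1-i}$ up to a $p$-adic unit and $p$th powers, with the independence of the choice of $\nu$ guaranteed by \autoref{Lemma:AnnoyingUiRelations}. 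With the generation claim in hand, the quotient of free rank-one modules $\varepsilon_{\omega^i}(U^+/C^+)_p$ is trivial precisely when $U_{p-1-i} \notin p \cdot \varepsilon_{\omega^i}(U^+ \otimes \mathbf{Z}_p)$, and since $U^+$ is $p$-torsion free this is the same as $U_{p-1-i} \notin U^{+p} = U^+ \cap \mathbf{Q}(\mu_p)^{+\times p}$.

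The main obstacle will be the generation claim above: namely, that the idempotent $\varepsilon_{\omega^i}$ applied to $\xi_\nu$ matches $U_{p-1-i}$ (up to a $p$-adic unit and a $p$th power), rather than merely a $p$-divisible multiple of a generator. The root-of-unity correction $\zeta_p^{(p+1)(1-\nu)s/2}$ in the definition of $U_{p-1-i}(\nu)$ is exactly what lands the expression in $\mathbf{Q}(\mu_p)^+$, and verifying the precise match requires the bookkeeping already carried out in \autoref{Lemma:AnnoyingUi}.
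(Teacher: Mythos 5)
Your proposal is correct and takes essentially the same approach as the paper: the paper's proof is simply a citation of Section 8.3 of Washington (pp. 155--156), and your argument reconstructs exactly the argument given there — that both $\varepsilon_{\omega^i}(U^+ \otimes \mathbf{Z}_p)$ and $\varepsilon_{\omega^i}(C^+ \otimes \mathbf{Z}_p)$ are free of rank one, that applying the idempotent $\varepsilon_{\omega^i}$ to the $\mathbf{Z}_p[\Delta^+]$-generator $\xi_\nu$ of $C^+\otimes\mathbf{Z}_p$ yields $U_{p-1-i}$ up to $p\cdot(U^+\otimes\mathbf{Z}_p)$, and hence that the isotypic quotient is trivial precisely when $U_{p-1-i}$ fails to be a $p$th power.
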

\begin{proof}
This follows from Section 8.3 of \cite{washington1997introduction} (Washington
uses $E$ and $E^{+}$ to denote the unit groups of $\mathbf{Q}(\mu_{p})$ and
$\mathbf{Q}(\mu_{p})^{+}$, respectively); see the discussion on pages 155--156:
$U_{p - 1 - i}$ is a $p$th power if and only if Washington's $E_{i}^{(N)}$ is a
$p$th power, if and only if the $\omega^{i}$-isotypic component of $(U^{+} /
C^{+})_{p}$ is nontrivial. 
\end{proof}

\begin{theorem}
\label{Theorem:HerbrandRibet}
For even $i \in [2, p - 3]$, 
\[
\# \varepsilon_{\omega^{i}} (U^{+} / C^{+})_{p} = \# \varepsilon_{\omega^{i}}
A_{p}.
\]
\end{theorem}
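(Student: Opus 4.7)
The plan is to invoke the Iwasawa Main Conjecture for $\mathbf{Q}$ (proved by Mazur and Wiles), which is discussed in detail in Chapter 15 of \cite{washington1997introduction}.

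First, I would reduce to the real subfield side. Since $i$ is even, the character $\omega^{i}$ is even: complex conjugation corresponds to $-1 \in (\mathbf{Z}/p\mathbf{Z})^{\times}$, and $\omega^{i}(-1) = 1$, so complex conjugation acts trivially on $\varepsilon_{\omega^{i}} A_{p}$. In particular $\varepsilon_{\omega^{i}} A_{p}$ coincides with its image in the ``plus part'' $A_{p}^{+}$ arising from the maximal real subfield $\mathbf{Q}(\mu_{p})^{+}$, and we may view both $\varepsilon_{\omega^{i}}(U^{+}/C^{+})_{p}$ and $\varepsilon_{\omega^{i}} A_{p}$ as modules over the group ring of $\Gal(\mathbf{Q}(\mu_{p})^{+}/\mathbf{Q})$ on which this group acts by $\omega^{i}$.

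Next, I would recall that the analytic class number formula combined with Kummer's index formula for cyclotomic units (Theorem 8.2 of \cite{washington1997introduction}) yields only the aggregate equality $[U^{+} : C^{+}] = h^{+}$, hence $\#(U^{+}/C^{+})_{p} = \#A_{p}^{+}$; this does not by itself give the refined isotypic statement. The per-eigenspace equality is precisely the content of the plus side of the Main Conjecture, which asserts that the characteristic ideals of the Iwasawa modules $\varprojlim_{n} A_{n, p}^{+}$ and $\varprojlim_{n} (U_{n}^{+}/C_{n}^{+})_{p}$ along the cyclotomic $\mathbf{Z}_{p}$-tower agree in each $\omega^{i}$-eigenspace. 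Descending to the ground level $n = 0$ extracts exactly the equality in the statement.

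The main ``obstacle'' is essentially none beyond the invocation itself: Mazur--Wiles is a very deep theorem that we must treat as a black box. The only genuine work in this section is to confirm the parity reduction of the first paragraph and to match the eigenspace conventions used here with those in \cite{washington1997introduction}.
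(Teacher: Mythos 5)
Your proposal is correct and follows essentially the same route as the paper: the paper's proof is simply a citation to Theorem 15.7 of Washington, which is the eigenspace-level Gras-type statement that Washington deduces from the Mazur--Wiles Main Conjecture, and your outline (reduce to the plus side using that $\omega^i$ is even, note the aggregate equality from the analytic class number formula and Kummer's index theorem, then obtain the per-eigenspace refinement from the Main Conjecture with a descent from the Iwasawa tower to level zero) is exactly the derivation carried out in that part of Washington's book.
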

\begin{proof}
See Theorem 15.7 on page 342 of \cite{washington1997introduction}.
\end{proof}

\begin{theorem}
\label{Theorem:Kurihara}
For any odd prime $p$, $\# \varepsilon_{\omega^{p - 3}} A_{p} = 1$.
\end{theorem}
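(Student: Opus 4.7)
For $p = 3$, the class group of $\mathbf{Q}(\mu_3)$ is trivial and the statement is vacuous; assume $p \ge 5$, so that $p - 3 \in [2, p - 3]$ is even. The plan is to reduce to a non-$p$-th-power statement about a single cyclotomic unit and then invoke a theorem of Kurihara. Concretely, \autoref{Theorem:HerbrandRibet} with $i = p - 3$ yields
\[
\# \varepsilon_{\omega^{p-3}} A_p = \# \varepsilon_{\omega^{p-3}} (U^+/C^+)_p,
\]
and \autoref{Theorem:IsotypicGenerator} with the same $i$ (so that $p - 1 - i = 2$) identifies this common value with $1$ if and only if the cyclotomic unit $U_2$ is not a $p$-th power in $\mathbf{Q}(\mu_p)^{+}$.

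The remaining assertion, that $U_2 \notin \mathbf{Q}(\mu_p)^{+ \times p}$, is precisely the content of Kurihara's theorem (``Some remarks on conjectures about cyclotomic fields and $K$-groups of $\mathbf{Z}$'', Compositio Math.\ 81 (1992), 223--236). Kurihara's proof proceeds via algebraic $K$-theory: $U_2$ is identified, under the Beilinson--Soul\'e regulator map, with a cyclotomic element of $K_3(\mathbf{Z})_p$, and Soul\'e's computation of the $p$-adic \'etale regulator shows that this image is nonzero modulo $p$; translating this back through Kummer duality gives that $U_2$ itself is not a $p$-th power, which is what we need.

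The main obstacle is precisely the depth of this last input: no elementary proof of the non-$p$-th-power-ness of $U_2$ is known, and reproducing Soul\'e's $K$-theoretic regulator computation would pull us far outside the toolkit of this paper. The practical plan is therefore to take Kurihara's theorem as a black box and invoke it directly, having already done all the cyclotomic-unit bookkeeping in \autoref{Lemma:Normueta'ij} and \autoref{Corollary:CyclotomicRibetGenDisjoint} needed to make the reduction from $A_p$ to $U_2$ rigorous.
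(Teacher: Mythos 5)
Your proof is logically valid but takes a pointless round trip. The paper's actual proof is a one-line citation: Theorem~\ref{Theorem:Kurihara} \emph{is} Corollary 3.8 of \cite{kurihara1992some}, which is stated directly about the class group eigenspace $\varepsilon_{\omega^{p-3}} A_p$ (equivalently, in Kurihara's indexing, the $\chi^{-1}\omega^2$-part). You don't need any reduction: you can just cite it. What you did instead was pass from $A_p$ to $(U^+/C^+)_p$ via Theorem~\ref{Theorem:HerbrandRibet}, then to the single unit $U_2$ via Theorem~\ref{Theorem:IsotypicGenerator}, and then invoke Kurihara as though his theorem were ``precisely'' the statement $U_2 \notin \mathbf{Q}(\mu_p)^{+\times p}$. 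That characterization is a mild misattribution: the non-$p$-th-power statement about $U_2$ is an equivalent reformulation (and Kurihara's proof does ultimately rest on Soul\'e's nonvanishing of $p$-adic regulators on cyclotomic elements, as you describe), but Corollary 3.8 as cited is the class-group statement, so the reduction buys you nothing.

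A second, structural point: the paper uses the chain HerbrandRibet + IsotypicGenerator in the \emph{opposite} direction. In Corollary~\ref{Corollary:L41L31NontrivialSometimes} those two results are combined \emph{with} Theorem~\ref{Theorem:Kurihara} to \emph{deduce} $U_2 \notin \mathbf{Q}(\mu_p)^{+\times p}$, which then feeds into Corollary~\ref{Corollary:CyclotomicRibetGenDisjoint}. Your proof reverses the logical flow (from $U_2$ to $A_p$), so if it were substituted into the paper, the derivation of Corollary~\ref{Corollary:L41L31NontrivialSometimes} would become a loop back to the input --- not genuinely circular, since both ends bottom out in the same external citation, but an unnecessary detour. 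The clean move, which the paper makes, is to quote Kurihara's class-group result verbatim and let the downstream corollaries extract the cyclotomic-unit consequence.

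Also, a small side remark: your opening line ``For $p=3$ ... the statement is vacuous'' is fine, but note that when $p=3$ there are no even $i \in [2, p-3]$ at all, so Theorems~\ref{Theorem:IsotypicGenerator} and~\ref{Theorem:HerbrandRibet} would be inapplicable; you correctly sidestep this, but the statement for $p=3$ is more naturally handled by observing $A_3$ itself is trivial, independent of any eigenspace decomposition.
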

\begin{proof}
This is Corollary 3.8 on page 230 of \cite{kurihara1992some}.
\end{proof}

\begin{corollary}
\label{Corollary:L41L31NontrivialSometimes}
If $q^2 \not\equiv 1 \pmod{p}$, then $[L_{4, 1} : L_{3, 1}] \ge p$.
\end{corollary}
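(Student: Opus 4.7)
The plan is to chain together the three preceding theorems (Theorem \ref{Theorem:IsotypicGenerator}, Theorem \ref{Theorem:HerbrandRibet}, Theorem \ref{Theorem:Kurihara}) to show that $U_{2}$ is not a $p$th power in $\mathbf{Q}(\mu_{p})^{+}$, and then feed this through \autoref{Lemma:DisjointCompositeOfMiDegree} and \autoref{Corollary:CyclotomicRibetGenDisjoint}.

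First I would translate the target. By \autoref{Lemma:DisjointCompositeOfMiDegree}\ref{Lemma:DisjointCompositeOfMiDegree}, $[L_{4, 1} : L_{3, 1}] = [M_{2} : E]$, so it suffices to show $[M_{2} : E] \ge p$. Noting that $p = 3$ is impossible (for then $q^{2} \equiv 1 \pmod{3}$ automatically since $q$ is an odd prime distinct from $3$), we may assume $p \ge 5$, so $i = 2$ lies in the range $[1, p - 3]$ required by \autoref{Corollary:CyclotomicRibetGenDisjoint}. Since the hypothesis $q^{2} \not\equiv 1 \pmod{p}$ is exactly what that corollary demands for $i = 2$, we get
\[
[M_{2} : E] \ge [\mathbf{Q}(\mu_{p})^{+}(\sqrt[p]{U_{2}}) : \mathbf{Q}(\mu_{p})^{+}].
\]
So the whole problem reduces to showing $U_{2} \notin \mathbf{Q}(\mu_{p})^{+ \times p}$.

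Next I would invoke the Herbrand--Ribet package. Take $i = p - 3$ in \autoref{Theorem:IsotypicGenerator}; since $p$ is odd, $p - 3$ is even, and since $p \ge 5$, we have $p - 3 \in [2, p - 3]$. The theorem asserts that $U_{p - 1 - (p - 3)} = U_{2}$ is not a $p$th power in $\mathbf{Q}(\mu_{p})^{+}$ if and only if $\varepsilon_{\omega^{p - 3}} (U^{+} / C^{+})_{p}$ is trivial. Now \autoref{Theorem:HerbrandRibet} identifies this with $\varepsilon_{\omega^{p - 3}} A_{p}$, and \autoref{Theorem:Kurihara} (Kurihara's result) asserts exactly that $\varepsilon_{\omega^{p - 3}} A_{p}$ is trivial for every odd prime $p$. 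Hence $U_{2} \notin \mathbf{Q}(\mu_{p})^{+ \times p}$, which forces $[\mathbf{Q}(\mu_{p})^{+}(\sqrt[p]{U_{2}}) : \mathbf{Q}(\mu_{p})^{+}] = p$ by Kummer theory.

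Combining: $[L_{4, 1} : L_{3, 1}] = [M_{2} : E] \ge p$, as required. There is no real obstacle here beyond checking that the indices line up ($i = p - 3$ in the Washington/Ribet picture corresponds to $U_{2}$, and $i = 2$ in the $M_{i}$ picture is what governs $[L_{4, 1} : L_{3, 1}]$); everything else is a quotation of the theorems already established.
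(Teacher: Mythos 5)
Your proof is correct and follows exactly the chain the paper uses: apply \autoref{Lemma:DisjointCompositeOfMiDegree} to reduce to $[M_2 : E]$, then \autoref{Corollary:CyclotomicRibetGenDisjoint} with $i = 2$, and settle the non-$p$th-power condition on $U_2$ via \autoref{Theorem:IsotypicGenerator} (at $i = p - 3$), \autoref{Theorem:HerbrandRibet}, and \autoref{Theorem:Kurihara}. The explicit check that $p \ge 5$ (so that $i = 2$ and $i = p - 3$ land in the required ranges) is a small but welcome addition over the paper's terser presentation.
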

\begin{proof}
Taking $i = p - 3$ and combining
\cref{Theorem:IsotypicGenerator,Theorem:HerbrandRibet,Theorem:Kurihara} yields
$U_{2} \not\in \mathbf{Q}(\mu_{p})^{+ \times p}$, so taking $i = 2$ in
\autoref{Corollary:CyclotomicRibetGenDisjoint} yields $[M_{2} : E] \ge p$, and
we are done by \autoref{Lemma:DisjointCompositeOfMiDegree}.
\end{proof}

\begin{corollary}
\label{Corollary:L41BigEnough}
If $q^2 \not\equiv 1 \pmod{p}$, then $[L_{4, 1} : E] \ge p^{2}$.
\end{corollary}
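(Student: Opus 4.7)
The plan is to chain together the two preceding corollaries using the tower of fields $E = L_{1,1} \subseteq L_{2,1} \subseteq L_{3,1} \subseteq L_{4,1}$, whose inclusions follow from the nesting $\mathcal{J}_{p,q}[(1-\zeta_p)^i] \subseteq \mathcal{J}_{p,q}[(1-\zeta_p)^{i+1}]$. By multiplicativity of degrees in this tower,
\[
[L_{4,1} : E] = [L_{4,1} : L_{3,1}] \cdot [L_{3,1} : L_{2,1}] \cdot [L_{2,1} : L_{1,1}].
\]

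For the first factor, \autoref{Corollary:L41L31NontrivialSometimes} applies under the standing hypothesis $q^2 \not\equiv 1 \pmod{p}$ to give $[L_{4,1} : L_{3,1}] \ge p$. For the third factor, \autoref{Corollary:L21L11NontrivialAlways} (which requires no hypothesis beyond $p, q$ being distinct odd primes) gives $[L_{2,1} : L_{1,1}] \ge p$. The middle factor $[L_{3,1} : L_{2,1}]$ is a positive integer, so it is at least $1$.

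Combining these three bounds immediately yields $[L_{4,1} : E] \ge p \cdot 1 \cdot p = p^{2}$, which is exactly the claim. There is no real obstacle here; this is a straightforward assembly of the two inequalities already established, and no new input (beyond the tower structure of the $L_{k,1}$) is required.
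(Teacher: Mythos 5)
Your proof is correct and takes essentially the same route as the paper, which simply cites \autoref{Corollary:L21L11NontrivialAlways} and \autoref{Corollary:L41L31NontrivialSometimes}; you have merely spelled out the implicit tower argument. No issues.
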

\begin{proof}
This follows from \autoref{Corollary:L21L11NontrivialAlways} and
\autoref{Corollary:L41L31NontrivialSometimes}.
\end{proof}

\begin{lemma}
\label{Lemma:L31L21L11q3p13}
Suppose that $q = 3$ and $p \in \{ 5, 7, 11, 13 \}$. Then $[L_{3, 1} : L_{2, 1}]
= [L_{2, 1} : E] = p$.
\end{lemma}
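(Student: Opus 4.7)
The plan is to sandwich each of the two degrees between matching upper and lower bounds. The upper bounds $[L_{2,1}:E] \le p$ and $[L_{3,1}:L_{2,1}] \le p$ follow immediately from \autoref{Corollary:Lk1DegreeUpperBound}, since $(q-1)/2 = 1$ when $q = 3$. The lower bound $[L_{2,1}:E] \ge p$ is exactly \autoref{Corollary:L21L11NontrivialAlways}, so $[L_{2,1}:E] = p$ is settled at once.

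The remaining task is to prove $[L_{3,1}:L_{2,1}] \ge p$. By \autoref{Lemma:DisjointCompositeOfMiDegree}, this degree equals $[M_1 : E]$; and because $(q-1)/2 = 1$, the group $\Delta_1$ is generated by the single element $\eta_{1,1}'$, so $M_1 = E(\sqrt[p]{\eta_{1,1}'})$ and Kummer theory forces $[M_1:E] \in \{1,p\}$. I would reduce further by descending to the real cyclotomic field: by \autoref{Lemma:Normueta'ijeta'}, $\Norm_{E/\mathbf{Q}(\mu_p)} \eta_{1,1}' = U_1(3)$, and by \autoref{Lemma:AnnoyingUiRelations} (whose hypothesis $3 \not\equiv 1 \pmod p$ holds for each $p \in \{5,7,11,13\}$) the classes of $U_1(3)$ and $U_1 = U_1(\nu)$ coincide in $\mathbf{Q}(\mu_p)^{+\times}/\mathbf{Q}(\mu_p)^{+\times p}$. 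So \autoref{Corollary:CyclotomicRibetGenDisjoint} reduces the problem to verifying
\[
U_1 \notin \mathbf{Q}(\mu_p)^{+\times p} \quad \text{for each } p \in \{5,7,11,13\}.
\]

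The main obstacle is that the uniform Herbrand--Ribet + Kurihara argument from \autoref{Corollary:L41L31NontrivialSometimes} does not apply here: \autoref{Theorem:IsotypicGenerator} is stated only for even indices $i$, whereas the index $p-1-1 = p-2$ relevant to $U_1$ is odd, so one would need a Vandiver-type statement for $\varepsilon_{\omega^{p-2}} A_p$ that is not available in general. This is precisely why the lemma is restricted to the four explicit primes $\{5,7,11,13\}$. For each such $p$, $U_1$ is a completely explicit element of the small field $\mathbf{Q}(\mu_p)^+$, and I would verify $U_1 \notin \mathbf{Q}(\mu_p)^{+\times p}$ by direct computation: pick any rational prime $r \equiv \pm 1 \pmod p$ (so $r$ splits completely in $\mathbf{Q}(\mu_p)^+$), reduce $U_1$ modulo a prime of $\mathbf{Q}(\mu_p)^+$ above $r$, and test $p$-th power residuacity in $\mathbf{F}_r^\times$ by raising to the power $(r-1)/p$. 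By Chebotarev such a witness prime exists whenever $U_1$ is not a global $p$-th power, so exhibiting a single $r$ for each of the four primes completes the proof. Equivalently, one could work directly in $E$ and use \autoref{Corollary:FrobFixJacobi2} to find a prime $\mathfrak{r}$ of $E$ that splits in $L_{2,1}$ but not in $L_{3,1}$.
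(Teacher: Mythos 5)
Your proposal is correct and uses the paper's strategy: bound $[L_{2,1}:E]$ and $[L_{3,1}:L_{2,1}]$ above by $p$ using \autoref{Corollary:Lk1DegreeUpperBound} (since $(q-1)/2=1$ when $q=3$), get $[L_{2,1}:E]\ge p$ from \autoref{Lemma:LijFactsL21}, and reduce the remaining lower bound $[L_{3,1}:L_{2,1}]\ge p$ to an explicit check for each $p\in\{5,7,11,13\}$. The paper phrases that finite check directly: via \autoref{Corollary:FrobFixJacobi2} it exhibits, for each $p$, a prime $\mathfrak{r}$ of $E$ whose Frobenius fixes $L_{2,1}$ but not $L_{3,1}$. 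Your route passes through \autoref{Lemma:DisjointCompositeOfMiDegree}, the norm computation in \autoref{Lemma:Normueta'ij}, \autoref{Lemma:AnnoyingUiRelations}, and \autoref{Corollary:CyclotomicRibetGenDisjoint} to reduce the check to the single cyclotomic unit $U_1\in\mathbf{Q}(\mu_p)^+$, then verifies $U_1\notin\mathbf{Q}(\mu_p)^{+\times p}$ by a witness prime. This is a cleaner reduction (a smaller field and a single unit per prime) but functionally the same kind of finite computation, and you rightly note at the end that the paper's $\Frob_{\mathfrak{r}}$-based check is an equivalent formulation. Your explanation of why the uniform Herbrand--Ribet + Kurihara argument of \autoref{Corollary:L41L31NontrivialSometimes} does not descend to this step --- the relevant index $p-2$ is odd while \autoref{Theorem:IsotypicGenerator} addresses only even indices --- is correct and nicely illuminates why the lemma is restricted to an explicit list of primes.

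One small imprecision: in the witness-prime search you allow $r\equiv\pm 1\pmod p$, but the residuacity test via raising to the power $(r-1)/p$ only makes sense (and is only informative) when $r\equiv 1\pmod p$; for $r\equiv -1\pmod p$ the exponent is not an integer and the $p$th power map on $\mathbf{F}_r^\times$ is a bijection, so no $r$ of that form can serve as a witness. This does not affect the soundness of the strategy since Chebotarev supplies degree-one witnesses with $r\equiv 1\pmod p$ whenever $U_1$ is not a global $p$th power.
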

\begin{proof}
By \autoref{Corollary:Lk1DegreeUpperBound}, it suffices to check that $L_{3, 1} /
L_{2, 1} / E$ is a tower where each successive step is nontrivial. The bottom
extension $L_{2, 1} / E$ is known to be nontrivial by
\autoref{Lemma:LijFactsL21}, so it suffices to check that $L_{3, 1} / L_{2, 1}$
is nontrivial. For each $p \in \{ 5, 7, 11, 13 \}$, we find a prime $r$ and a
prime $\mathfrak{r}$ of $E$ lying above $r$ such that $\Frob_{\mathfrak{r}} - 1$
kills $\mathcal{J}_{p, q}[(1 - \zeta_{p})^{2}]$ but not $\mathcal{J}_{p, q}[(1 -
\zeta_{p})^{3}]$; using \autoref{Corollary:FrobFixJacobi2}, a computer
calculation shows that we may use the prime $\mathfrak{r}$ specified by the
following table.
\begin{center}
\begin{tabular}{c|cccc}
$p$ & $5$ & $7$ & $11$ & $13$ \\\hline
$\#\mathbf{F}_{\mathfrak{r}}$ & $2^{4}$ & $13^{2}$ & $43^{2}$ & $547$
\end{tabular}
\end{center}
\end{proof}

\subsection{The structure of \texorpdfstring{$J_{p,
q}[\ell^{\infty}]$}{J\_\{p,q\}[ell\^{}infinity]} as a
\texorpdfstring{$G_{\mathbf{Q}} Z$}{G\_QZ}-module}

As before, suppose that $p, q$ are distinct odd primes. 

\begin{definition}
Let $G_{\mathbf{Q}}$ denote the absolute Galois group of $\mathbf{Q}$. 
\end{definition}

\begin{lemma} 
\label{Lemma:GenerateZGZ}
Suppose that $\ell$ is a prime and that $k \ge 1$ is an integer. 
\begin{enumerate}[label=\upshape(\arabic*),
ref=\autoref{Lemma:GenerateZGZ}(\arabic*)]

\item \label{Lemma:GenerateZGZEllNotpqGZ}
Suppose that $\ell \not\in \{ p, q \}$ and that $D \in \mathcal{J}_{p,
q}[\ell^{k}] \setminus \mathcal{J}_{p, q}[\ell^{k - 1}]$. Then $G_{\mathbf{Q}} Z
D$ generates $\mathcal{J}_{p, q}[\ell^{k}]$.

\item \label{Lemma:GenerateZGZEllpqGZ}
Suppose that $\ell \in \{ p, q \}$ and that $D \in \mathcal{J}_{p, q}[(1 -
\zeta_{\ell})^{k}] \setminus \mathcal{J}_{p, q}[(1 - \zeta_{\ell})^{k - 1}]$.
Then $G_{\mathbf{Q}} Z D$ generates $\mathcal{J}_{p, q}[(1 -
\zeta_{\ell})^{k}]$.

\end{enumerate}
\end{lemma}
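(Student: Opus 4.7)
The plan is to identify $T_{\ell} \mathcal{J}_{p, q}$ with $R_{\ell}$ using the rank-one freeness of \autoref{Lemma:TellJndFreeRellModule}, decompose $R_{\ell}$ into its local factors over the primes of $E$ above $\ell$, and then exploit the transitivity of $\Gal(E / \mathbf{Q})$ on those primes. Write $\varpi$ for $\ell$ in the setting of part (1) and for $1 - \zeta_{\ell}$ in the setting of part (2); in each case $\varpi$ is a uniformizer at every prime $\lambda$ of $E$ above $\ell$ (unramified when $\ell \notin \{p, q\}$; totally ramified, with uniformizer $1 - \zeta_{\ell}$, when $\ell \in \{p, q\}$). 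Fixing an $R_{\ell}$-basis of $T_{\ell} \mathcal{J}_{p, q}$ then yields
\[
\mathcal{J}_{p, q}[\varpi^{k}] \;\simeq\; R_{\ell} / \varpi^{k} R_{\ell} \;\simeq\; \prod_{\lambda \mid \ell} \mathcal{O}_{E_{\lambda}} / \varpi^{k},
\]
under which I would write $D = (D_{\lambda})_{\lambda}$. The hypothesis $\varpi^{k-1} D \neq 0$ forces some component $D_{\lambda_{0}}$ to be a unit modulo $\varpi^{k}$.

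Next, I would observe that the $R$-submodule of $\mathcal{J}_{p, q}[\varpi^{k}]$ generated by $D$ already equals the $R_{\ell}$-submodule $R_{\ell} \cdot D$. This is because $R \to R_{\ell} / \varpi^{k} R_{\ell}$ is surjective---equivalently, the standard identification $R / \varpi^{k} R \simeq R_{\ell} / \varpi^{k} R_{\ell}$ holds, since $(1 - \zeta_{\ell}) = \prod_{\lambda \mid \ell} \lambda$ in the ramified case and the claim is immediate in the unramified case. In components, $R_{\ell} \cdot D$ has $\lambda$-coordinate equal to the ideal generated by $D_{\lambda}$ in $\mathcal{O}_{E_{\lambda}} / \varpi^{k}$, so its $\lambda_{0}$-coordinate is all of $\mathcal{O}_{E_{\lambda_{0}}} / \varpi^{k}$. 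Since $Z$ generates $R$ as a $\mathbf{Z}$-module (via the basis of \autoref{Corollary:RIsomBasis}), the abelian subgroup $\langle Z D \rangle$ coincides with $R \cdot D$.

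Finally, I would propagate fullness to every $\lambda$ using Galois. The extension $E / \mathbf{Q}$ is Galois, so $\Gal(E / \mathbf{Q})$ acts transitively on the primes of $E$ above $\ell$. Using the compatibility $\sigma(r v) = \sigma(r) \cdot \sigma(v)$ for $\sigma \in G_{\mathbf{Q}}$, $r \in R$, $v \in T_{\ell} \mathcal{J}_{p, q}$ (and the fact that $G_{\mathbf{Q}}$ preserves $R = \mathcal{O}_{E}$), if $\sigma$ restricts on $E$ to an element sending $\lambda_{0}$ to a prescribed $\lambda$, then $R \cdot \sigma(D)$ has full $\lambda$-coordinate. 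Summing over such $\sigma$ as $\lambda$ ranges over all primes of $E$ above $\ell$ recovers all of $R_{\ell} / \varpi^{k} R_{\ell}$, completing both parts simultaneously. I expect the only real friction to be keeping the three interacting structures straight---the Galois action on $R$, the module action on $T_{\ell} \mathcal{J}_{p, q}$, and their semilinear compatibility---but once the decomposition into local factors is in place, the filling-in argument is uniform in $\ell$ and essentially mechanical.
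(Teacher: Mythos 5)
Your proof is correct, and it takes a genuinely different route from the paper for part (2). For part (1) the two approaches are close in spirit: the paper tensors $\mathcal{J}[\ell] \simeq R/\ell R$ up to $\overline{\mathbf{F}_\ell}$ and decomposes into one-dimensional $\zeta_{nd}$-eigenspaces permuted transitively by $G_{\mathbf{Q}}$, while you decompose $R_\ell$ into its local factors $\prod_{\lambda\mid\ell}\mathcal{O}_{E_\lambda}$ and use transitivity of $\Gal(E/\mathbf{Q})$ on the primes $\lambda$; these are two faces of the same Chinese Remainder argument. For part (2), however, the paper abandons the module-theoretic picture entirely and does an explicit computation in $\mathcal{J}_{p,q}[1-\zeta_q]$: using the generators $[(0,\zeta_p^i)-\infty]$ from \autoref{Proposition:SuperellipticProp611Poonen}, it applies $g + g^2 + \cdots + g^{p-1}$ (with $g$ restricting to a generator of $\Gal(\mathbf{Q}(\mu_p)/\mathbf{Q})$) to extract $p a_0[(0,1)-\infty]$ from $D$, then sweeps with $Z_p$. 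Your argument instead treats both parts uniformly by observing that $1-\zeta_q$ is a uniformizer at every prime of $E$ above $q$, so $\mathcal{J}[(1-\zeta_q)^k]$ decomposes the same way as $\mathcal{J}[\ell^k]$ and the CRT-plus-Galois-transitivity argument carries over verbatim; this is cleaner and avoids the explicit divisor manipulation. Both proofs then handle $k\ge 2$ by Nakayama (the paper explicitly; yours implicitly via working directly in $R_\ell/\varpi^k$). One small imprecision: you describe $E/\mathbf{Q}$ (or $E_\lambda/\mathbf{Q}_\ell$) as ``totally ramified'' when $\ell\in\{p,q\}$, but in fact $e=\ell-1$ and $f=\ord_{\ell'}(\ell)$ where $\ell'$ is the other prime of $\{p,q\}$, so the local extension is generally not totally ramified; what you actually use---that $1-\zeta_\ell$ has $\lambda$-valuation $1$ for every $\lambda\mid\ell$---is nonetheless correct, since the ramification happens entirely inside $\mathbf{Q}(\mu_\ell)$ and $E/\mathbf{Q}(\mu_\ell)$ is unramified above $\ell$.
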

\begin{proof}  \hfill
\begin{enumerate}[label=\upshape(\arabic*),
ref={the proof of \autoref{Lemma:GenerateZGZ}}(\arabic*)]

\item \label{LemmaProof:GenerateZGZEllNotpqGZ}
Suppose that $k = 1$. \autoref{Corollary:SubringByXi} implies that
$\mathcal{J}_{p, q}[\ell] \simeq R / \ell R$, so by linear algebra,
$\mathcal{J}_{p, q}[\ell] \otimes_{\mathbf{F}_{\ell}}
\overline{\mathbf{F}_{\ell}}$ breaks up into the direct sum of its
one-dimensional eigenspaces for the $\zeta_{n d}$-action. The action of
$G_{\mathbf{Q}}$ on $\mathcal{J}_{p, q}[\ell]$ permutes the eigenspaces
transitively.

We will show that $G_{\mathbf{Q}} Z D$ generates $\mathcal{J}_{p,
q}[\ell] \otimes_{\mathbf{F}_{\ell}} \overline{\mathbf{F}_{\ell}}$ as an
$\overline{\mathbf{F}_{\ell}}$-vector space. Since $D$ is nonzero, there is some
eigenspace for which its projection is nonzero, so since $\ell \nmid p q$, there
exists $r \in \overline{\mathbf{F}_{\ell}}[Z]$ such that $r D$ is a
nonzero eigenvector.  Since $G_{\mathbf{Q}}$ acts on the eigenspaces
transitively, the $G_{\mathbf{Q}}$-orbit of $r D$ hits every eigenspace;
hence, $G_{\mathbf{Q}} Z D$ generates $\mathcal{J}_{p, q}[\ell]
\otimes_{\mathbf{F}_{\ell}} \overline{\mathbf{F}_{\ell}}$ as a
$\overline{\mathbf{F}_{\ell}}$-vector space. This completes the case $k = 1$.

Now suppose that $k \ge 1$. Multiplication by $\ell^{k - 1}$ provides an
isomorphism 
\[
\mu \colon \mathcal{J}_{p, q}[\ell^{k}] / \ell \mathcal{J}_{p, q}[\ell^{k}]
\simeq \mathcal{J}_{p, q}[\ell].
\]
The proof of the $k = 1$ case shows that the image of $G_{\mathbf{Q}} Z
D$ under $\mu$ generates $\mathcal{J}_{p, q}[\ell]$, so $G_{\mathbf{Q}}
Z D$  generates $\mathcal{J}_{p, q}[\ell^{k}] / \ell \mathcal{J}_{p,
q}[\ell^{k}]$. By Nakayama's Lemma, the only subgroup of $\mathcal{J}_{p,
q}[\ell^{k}]$ which generates $\mathcal{J}_{p, q}[\ell^{k}] / \ell
\mathcal{J}_{p, q}[\ell^{k}]$ is $\mathcal{J}_{p, q}[\ell^{k}]$, so we are done.

\item \label{LemmaProof:GenerateZGZEllpqGZ}
Without loss of generality, assume that $\ell = q$. 

Suppose that $k = 1$. By \autoref{Corollary:UniqueRepresentationJ1mZ}, we
may express $D = \sum_{i = 0}^{p - 1} a_{i} [(0, \zeta_{p}^{i}) - \infty]$ for
$a_{i} \in \mathbf{Z} / q \mathbf{Z}$ such that $a_{0} + \cdots + a_{p - 1} =
0$. By applying an appropriate power of $\zeta_{p}$ to $D$, we may assume that
$a_{0} \not\equiv 0 \pmod{q}$. Let $g \in G_{\mathbf{Q}}$ restrict to a
generator of $\Gal(\mathbf{Q}(\mu_{p}) / \mathbf{Q})$. Then 
\begin{align}
(g + g^{2} + \cdots + g^{p - 1}) D &= (p - 1) a_{0} [(0, 1) -
\infty] + \left( \sum_{i = 1}^{p - 1} a_{i} \right) \sum_{j = 1}^{p - 1} [ (0,
\zeta_{p}^{j}) - \infty ] \nonumber\\
&= (p - 1) a_{0} [(0, 1) - \infty] + (-a_{0})\left( - [(0, 1) - \infty] \right)
\nonumber \\
&= p a_{0} [(0, 1) - \infty]. \label{Equation:AlmostNormOfD}
\end{align}
Since $p a_{0} \not\equiv 0 \pmod{q}$, \eqref{Equation:AlmostNormOfD} implies
that $[(0, 1) - \infty]$ lies in the subgroup generated by $G_{\mathbf{Q}} Z_{p}
D$. Applying elements of $Z_{p}$ shows that each $[(0, \zeta_{p}^{i}) - \infty]$
also lies the subgroup generated by $G_{\mathbf{Q}} Z_{p} D$, and these generate
$\mathcal{J}_{p, q}[1 - \zeta_{q}]$.

The proof of the $k \ge 1$ case is similar to the one in
\autoref{Lemma:GenerateZGZEllNotpqGZ}.  \qedhere
\end{enumerate}
\end{proof}

\begin{corollary}
\label{Corollary:IrreducibleGZField}
Suppose that $\ell$ is a prime and $k \ge 1$.
\begin{enumerate}[label=\upshape(\arabic*),
ref=\autoref{Corollary:IrreducibleGZField}(\arabic*)]

\item \label{Corollary:IrreducibleGZFieldNotpq}
Suppose that $\ell \not\in \{ p, q \}$ and $D \in \mathcal{J}_{p, q}[\ell^{k }]
\setminus \mathcal{J}_{p, q}[\ell^{k - 1}]$. Then the Galois closure of $E(D)$
over $\mathbf{Q}$ equals $E(\mathcal{J}_{p, q}[\ell^{k}])$.

\item \label{Corollary:IrreducibleGZFieldpq}
Suppose that $\ell \in \{ p, q \}$ and $D \in \mathcal{J}_{p, q}[(1 -
\zeta_{\ell})^{k}] \setminus\mathcal{J}_{p, q}[(1 - \zeta_{\ell})^{k - 1}]$.
Then the Galois closure of $E(D)$ over $\mathbf{Q}$ equals $E(\mathcal{J}_{p,
q}[(1 - \zeta_{\ell})^{k}])$.
\end{enumerate}
\end{corollary}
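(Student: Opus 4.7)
The plan is to deduce this corollary directly from the preceding \autoref{Lemma:GenerateZGZ}, which asserts that $G_{\mathbf{Q}} Z D$ generates the full torsion subgroup in question. The key observation is that the automorphism $\zeta_{nd}$ of $\mathcal{C}_{p,q}$ (and hence of $\mathcal{J}_{p,q}$) is defined over $E$, so the action of the ring $Z$ on the Tate module is $E$-rational.

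Let $N \colonequals E(\mathcal{J}_{p, q}[\ell^{k}])$ in case (1), and $N \colonequals E(\mathcal{J}_{p, q}[(1 - \zeta_{\ell})^{k}])$ in case (2); let $M$ denote the Galois closure of $E(D)/\mathbf{Q}$. Since $\mathcal{J}_{p,q}[\ell^k]$ (respectively $\mathcal{J}_{p,q}[(1 - \zeta_\ell)^k]$) is stable under $G_{\mathbf{Q}}$ and $E/\mathbf{Q}$ is Galois, the field $N$ is Galois over $\mathbf{Q}$. Clearly $D \in \mathcal{J}_{p,q}(N)$, so $E(D) \subseteq N$, and by Galois closure we get $M \subseteq N$.

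For the reverse inclusion, note that $M$ is Galois over $E$ and contains $D$, so it contains the full $G_{\mathbf{Q}}$-orbit of $D$. Because the endomorphisms in $Z$ are defined over $E \subseteq M$, we have $z D' \in \mathcal{J}_{p,q}(M)$ for every $z \in Z$ and every $D' \in \mathcal{J}_{p,q}(M)$; in particular $\mathcal{J}_{p,q}(M)$ contains the entire set $G_{\mathbf{Q}} Z D$. Now apply the relevant part of \autoref{Lemma:GenerateZGZ}: this set generates $\mathcal{J}_{p,q}[\ell^k]$ in case (1) and $\mathcal{J}_{p,q}[(1-\zeta_\ell)^k]$ in case (2). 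Therefore the entire torsion subgroup is $M$-rational, so $N \subseteq M$, and we conclude $M = N$.

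There is no real obstacle here: the work has already been done in \autoref{Lemma:GenerateZGZ}. The only point that needs a careful mention is that the $Z$-action is $E$-rational (which is immediate from $\zeta_{nd} \in E$ and the explicit formula $(x,y) \mapsto (\xi_d x, \xi_n y)$), together with the fact that $N/\mathbf{Q}$ is Galois so that ``Galois closure'' is sandwiched correctly between $E(D)$ and $N$.
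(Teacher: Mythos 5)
Your proposal is correct and matches the paper's intent exactly: the paper's proof of this corollary is simply ``this follows immediately from \autoref{Lemma:GenerateZGZEllNotpqGZ} and \autoref{Lemma:GenerateZGZEllpqGZ}'', and your write-up just spells out that deduction (both inclusions, the $E$-rationality of $Z$, and the normality of the torsion field over $\mathbf{Q}$). No gaps; the only point you state without justification is the $G_{\mathbf{Q}}$-stability of $\mathcal{J}_{p,q}[(1-\zeta_{\ell})^{k}]$, which holds because conjugation sends $1-\zeta_{\ell}$ to $1-\zeta_{\ell}^{a}$, a unit multiple of $1-\zeta_{\ell}$ in $\mathbf{Z}[\zeta_{\ell}]$.
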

\begin{proof}
This follows immediately from \autoref{Lemma:GenerateZGZEllNotpqGZ} and
\autoref{Lemma:GenerateZGZEllpqGZ}.
\end{proof}

\begin{corollary}
\label{Corollary:Lk1Whenq3}
Suppose that $p \ge 5$, $q = 3$, and $k \in [1, p - 1]$. Suppose that $D \in
\mathcal{J}_{p, q}[(1 - \zeta_{p})^{k}] \setminus\mathcal{J}_{p, q}[(1 -
\zeta_{p})^{k - 1}]$.  Then $E(D) = L_{k, 1}$.
\end{corollary}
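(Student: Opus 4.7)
The plan is to proceed by induction on $k$, using the fact that $L_{k,1} = E(\mathcal{J}_{p,q}[(1-\zeta_p)^k])$ by \autoref{Lemma:LijFactsLi0i1} (so the inclusion $E(D) \subseteq L_{k,1}$ is automatic). For the base case $k=1$, the Weierstrass points $\mathcal{W}_i$ have $x$-coordinates that are $q$-th roots of $-1$ and hence lie in $E = \mathbf{Q}(\zeta_{pq})$; by \autoref{Proposition:SuperellipticProp611Poonen}, the classes $[\mathcal{W}_i - \infty]$ generate $\mathcal{J}_{p,q}[1-\zeta_p]$, so $\mathcal{J}_{p,q}[1-\zeta_p] \subseteq \mathcal{J}_{p,q}(E)$ and $E(D) = E = L_{1,1}$.

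For the inductive step ($k \ge 2$), I would apply the inductive hypothesis to $(1-\zeta_p)D$, which lies in $\mathcal{J}_{p,q}[(1-\zeta_p)^{k-1}] \setminus \mathcal{J}_{p,q}[(1-\zeta_p)^{k-2}]$ since $D \notin \mathcal{J}_{p,q}[(1-\zeta_p)^{k-1}]$. This gives $E((1-\zeta_p)D) = L_{k-1,1}$, and as $(1-\zeta_p)D$ is an $E$-rational function of $D$, we conclude $L_{k-1,1} \subseteq E(D) \subseteq L_{k,1}$. The hypothesis $q=3$ enters now through \autoref{Corollary:Lk1DegreeUpperBound}: since $(q-1)/2 = 1$, the degree $[L_{k,1}:L_{k-1,1}]$ is either $1$ or $p$, so there is no field strictly between $L_{k-1,1}$ and $L_{k,1}$.

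It then remains to rule out $E(D) = L_{k-1,1}$ in the case $[L_{k,1}:L_{k-1,1}] = p$. If this equality held, then $E(D)$ would already be Galois over $\mathbf{Q}$ (since $L_{k-1,1}$ is, as the field of definition of a torsion subgroup of a $\mathbf{Q}$-defined abelian variety), so the Galois closure of $E(D)$ over $\mathbf{Q}$ would equal $L_{k-1,1}$. But \autoref{Corollary:IrreducibleGZFieldpq} identifies this closure as $L_{k,1}$, a contradiction. Hence $L_{k-1,1} \subsetneq E(D) \subseteq L_{k,1}$, and the primality of $[L_{k,1}:L_{k-1,1}]$ forces $E(D) = L_{k,1}$, completing the induction.

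The main hurdle is precisely the step where $q=3$ is essential: for larger $q$, the degree $[L_{k,1}:L_{k-1,1}]$ can be as large as $p^{(q-1)/2}$, and then one would need to replace the primality trick with a direct analysis of the $R_p$-module action of $\Gal(L_{k,1}/L_{k-1,1})$ on $D$, exploiting that $\mathcal{J}_{p,q}[(1-\zeta_p)^k]$ is a free $R_p/(1-\zeta_p)^k$-module of rank $1$ (so the stabilizer of a generator is trivial).
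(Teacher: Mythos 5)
Your proof is correct and follows essentially the same route as the paper's: induction on $k$ with base case $k=1$ (the $(1-\zeta_p)$-torsion being defined over $E$), the inclusion $L_{k-1,1}=E((1-\zeta_p)D)\subseteq E(D)\subseteq L_{k,1}$, the degree bound from \autoref{Corollary:Lk1DegreeUpperBound} with $q=3$, and \autoref{Corollary:IrreducibleGZFieldpq} to dispose of the possibility $E(D)=L_{k-1,1}$. The only cosmetic difference is that the paper concludes $E(D)=L_{k,1}$ directly in that last case rather than phrasing it as a contradiction.
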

\begin{proof}
Induct on $k$. The case $k = 1$ follows since $\mathcal{J}[1 - \zeta_{p}]$ is
already defined over $E$.

Now suppose the assertion holds for $k - 1$ and that $D \in \mathcal{J}_{p,
q}[(1 - \zeta_{p})^{k}] \setminus\mathcal{J}_{p, q}[(1 - \zeta_{p})^{k - 1}]$.
By the inductive hypothesis, $L_{k - 1, 1} = E((1 - \zeta_{p}) D)$, so $L_{k -
1, 1} = E((1 - \zeta_{p}) D) \subseteq E(D) \subseteq L_{k, 1}$. By
\autoref{Corollary:Lk1DegreeUpperBound}, either $E(D) = L_{k, 1}$ (in which case
we are done) or that $E(D) = L_{k - 1, 1}$. If $E(D) = L_{k - 1, 1}$, then
$E(D)$ is Galois over $\mathbf{Q}$, so \autoref{Corollary:IrreducibleGZFieldpq}
gives $E(D) = E(\mathcal{J}_{p, q}[(1 - \zeta_{\ell})^{k}]) = L_{k, 1}$, so we
are again done.
\end{proof}

\begin{lemma}
\label{Lemma:HowToKillTorsion}
Let $i \ge 0$ be an integer and $\gamma \in \mathbf{Z}_{p}\left[ \Gal\left(
E(\mathcal{J}[p^{\infty}]) / E \right) \right]$. Suppose that $\gamma - 1$ kills
$\mathcal{J}[(1 - \zeta_{p})^{i}]$. Then
\begin{enumerate}[label=\upshape(\arabic*),
ref=\autoref{Lemma:HowToKillTorsion}(\arabic*)]

\item \label{Lemma:HowToKillTorsionPower}
for any integer $k \ge 0$, $(\gamma - 1)^{k}$ kills $\mathcal{J}_{p, q}[(1 -
\zeta_{p})^{i k}]$;

\item \label{Lemma:HowToKillTorsionGeoSum}
$\gamma^{p - 1} + \gamma^{p - 2} + \dots + 1$ kills $\mathcal{J}[p] =
\mathcal{J}_{p, q}[(1 - \zeta_{p})^{p - 1}]$;

\item \label{Lemma:HowToKillTorsionPpower}
$\gamma^{p} - 1$ kills $\mathcal{J}_{p, q}[(1 - \zeta_{p})^{p - 1 + i}]$.
\end{enumerate}
\end{lemma}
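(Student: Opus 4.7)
The plan is to repeat verbatim the argument that proved \autoref{Lemma:CreateKillerElements}, substituting \autoref{Lemma:KillTorsionInCpq} (which characterizes killing of $\mathcal{J}_{p,q}[(1-\zeta_p)^i]$ by the condition $\theta_p(\gamma) \in (1-\zeta_p)^i R_p$) wherever the earlier proof used \autoref{Corollary:KillTorsionInGeneral}. The only structural difference between the two settings is that $R_p \simeq \mathcal{O}_E \otimes_{\mathbf{Z}} \mathbf{Z}_p$ is commutative and plays the role that $M_{d-1}(\mathbf{Z}_p[\zeta_p])$ played before; this is in fact a simplification, so nothing new is required.

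Concretely, set $\varepsilon \colonequals \gamma - 1$ and $\eta \colonequals \theta_p(\varepsilon) \in R_p$. The hypothesis together with \autoref{Lemma:KillTorsionInCpq} gives $\eta \in (1-\zeta_p)^{i} R_p$. For (1), raise to the $k$th power to obtain $\theta_p((\gamma-1)^k) = \eta^k \in (1-\zeta_p)^{ik} R_p$, and apply \autoref{Lemma:KillTorsionInCpq} in the reverse direction. For (2), invoke the polynomial identity
\[
x^{p-1} + x^{p-2} + \cdots + 1 \in (x-1)^{p-1} + p \mathbf{Z}[x],
\]
which follows by expanding $(x-1)^p$ via the binomial theorem and observing that the intermediate binomial coefficients are divisible by $p$ while the quotient $(x^p - 1 - (x-1)^p)/p$ vanishes at $x = 1$. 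Applying $\theta_p$ yields
\[
\theta_p(\gamma^{p-1} + \cdots + 1) \in \eta^{p-1} + p R_p \subseteq (1-\zeta_p)^{p-1} R_p,
\]
since $p \in (1-\zeta_p)^{p-1} R_p^{\times}$ and $\eta^{p-1} \in (1-\zeta_p)^{i(p-1)} R_p \subseteq (1-\zeta_p)^{p-1} R_p$ (here we use the relevant case $i \ge 1$; the $i = 0$ case of the lemma has vacuous hypothesis and the $k = 0$ case of (1) is trivial). A final application of \autoref{Lemma:KillTorsionInCpq} finishes (2). For (3), multiply the displayed containment from the proof of (2) by $\eta \in (1-\zeta_p)^i R_p$ to get
\[
\theta_p(\gamma^{p} - 1) = \theta_p\bigl((\gamma-1)(\gamma^{p-1} + \cdots + 1)\bigr) \in (1-\zeta_p)^{p-1+i} R_p,
\]
and invoke \autoref{Lemma:KillTorsionInCpq} one more time.

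There is no real obstacle: the entire argument is mechanical, and the key computations (the polynomial identity and the fact that $p$ and $(1-\zeta_p)^{p-1}$ generate the same ideal in $R_p$) are already made explicit in the earlier proof. The only point worth flagging is that one should consistently remember the convention that $R$ has now been identified with $\mathcal{O}_E$ and that Frobenius elements act through $R_p$ rather than through a matrix ring.
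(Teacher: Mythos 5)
Your proof is correct, but it takes a different (and longer) route than the paper's. The paper does not redo the computation at all: it simply notes that $E(\mathcal{J}_{p,q}[p^{\infty}]) = \mathbf{Q}(\mu_{p}, \mathcal{J}_{p,q}[p^{\infty}])$, because $\mathcal{J}_{p,q}[1-\zeta_p]$ is defined over $\mathbf{Q}(\mu_q)$ and hence $\mathbf{Q}(\mu_q) \subseteq \mathbf{Q}(\mu_p, \mathcal{J}_{p,q}[p^\infty])$. Consequently $\Gal(E(\mathcal{J}_{p,q}[p^{\infty}])/E)$ is literally a subgroup of $\Gal(\mathbf{Q}(\mu_p, \mathcal{J}_{p,q}[p^{\infty}])/\mathbf{Q}(\mu_p))$, so \autoref{Lemma:CreateKillerElements} applies verbatim and there is nothing more to prove. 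Your version avoids this observation by rerunning the whole argument through \autoref{Lemma:KillTorsionInCpq} and the commutative ring $R_p$ in place of the matrix ring $M_{d-1}(\mathbf{Z}_p[\zeta_p])$; this is valid and, as you say, if anything simpler in structure, but it duplicates work the paper deliberately outsources.

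Two small remarks. First, your flag that parts (2) and (3) require $i\ge 1$ is a real (minor) gap, but it is present already in the statement of \autoref{Lemma:CreateKillerElements} — and in the specific setting here it is harmless, since any $\gamma \in \Gal(E(\mathcal{J}_{p,q}[p^\infty])/E)$ fixes $\mathbf{Q}(\mu_q)$ and hence automatically kills $\mathcal{J}_{p,q}[1-\zeta_p]$, so the $i=1$ hypothesis holds for free and the $i=0$ claims reduce to the $i=1$ ones. Second, the identity $x^{p-1}+\cdots+1 \in (x-1)^{p-1} + p\mathbf{Z}[x]$ is cleanest argued by reducing mod $p$ and cancelling the nonzerodivisor $x-1$ in $\mathbf{F}_p[x]$, rather than via the quotient $(x^p - 1 - (x-1)^p)/p$, but your version is fine.
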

\begin{proof}
Since $\mathcal{J}_{p, q}[p^{\infty}] \supseteq \mathcal{J}[1 - \zeta_{p}]$ and
the field of definition of $\mathcal{J}_{p, q}[1 - \zeta_{p}]$ is
$\mathbf{Q}(\mu_{q})$, 
\[
E(\mathcal{J}_{p, q}[p^{\infty}]) = \mathbf{Q}(\mu_{p}, \mathcal{J}_{p,
q}[p^{\infty}]).
\]
So now this lemma follows from \autoref{Lemma:CreateKillerElements}.
\end{proof}

\section{Torsion points on \texorpdfstring{$\mathcal{C}_{n, d}$}{C\_\{n,d\}}}
\label{Section:TorsionPointsSuperellipticCatalan}

\begin{definition}
Let $P \in \mathcal{C}_{n, d}$ be a torsion point.
\end{definition}

\subsection{Bounding the order of torsion points on
\texorpdfstring{$\mathcal{C}_{n, d}$}{C\_\{n,d\}}}

\begin{corollary}
\label{Corollary:TauConstruction2}
Let $m$ be a positive integer. For every prime $\ell$ dividing $m$, let
$c_{\ell} \in \mathbf{Z}_{\ell}^\times$; if $\ell | n d$, assume that $c_{\ell}
\in 1 + n d \mathbf{Z}_{\ell}$.  Then there exists an element $\tau$ of
$\Gal\left(  E(\mathcal{J}_{n, d}[m^{\infty}]) / E \right)$ such that for each
$\ell$ dividing $m$, $\tau$ acts on $\mathcal{J}[\ell^{\infty}]$ as
multiplication by $c_{\ell}$.
\end{corollary}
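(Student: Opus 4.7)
The plan is to apply \autoref{Corollary:TauConstruction1.5} one prime at a time: if I can produce a single $h \in \Gal(E(\mathcal{J}_{n, d}[m^{\infty}]) / E)$ that acts on $\mu_{\ell^{\infty}}$ as multiplication by $c_{\ell}$ for every $\ell \mid m$, then taking $\tau \colonequals h \cdot {}^{\sigma}\! h$ and applying \autoref{Corollary:TauConstruction1.5} to the restriction of $h$ to $\Gal(E(\mathcal{J}_{n, d}[\ell^{\infty}]) / E)$ shows that $\tau$ acts on $\mathcal{J}_{n, d}[\ell^{\infty}]$ as multiplication by $c_{\ell}$ for every $\ell \mid m$, which is the desired conclusion.

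The content of the proof is thus the construction of $h$. First I would use the cyclotomic character to package $(c_{\ell})_{\ell \mid m}$ as an element $h_{0} \in \Gal(\mathbf{Q}(\mu_{m^{\infty}}) / \mathbf{Q})$. Next, I would observe that $\mathbf{Q}(\mu_{m^{\infty}}) \cap E = \mathbf{Q}(\mu_{K})$ where $K \colonequals \prod_{\ell \mid \gcd(nd, m)} \ell^{v_{\ell}(nd)}$, and verify that $h_{0}$ restricts to the identity on this intersection; this holds precisely when $c_{\ell} \equiv 1 \pmod{\ell^{v_{\ell}(nd)}}$ for every $\ell \mid \gcd(nd, m)$, which is exactly the hypothesis $c_{\ell} \in 1 + n d \mathbf{Z}_{\ell}$. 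By the linear disjointness of $\mathbf{Q}(\mu_{m^{\infty}})$ and $E$ over their intersection, $h_{0}$ then extends uniquely to an element of $\Gal(E(\mu_{m^{\infty}}) / E)$.

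Finally, I would lift this extension to $h \in \Gal(E(\mathcal{J}_{n, d}[m^{\infty}]) / E)$ along the restriction map $\Gal(E(\mathcal{J}_{n, d}[m^{\infty}]) / E) \twoheadrightarrow \Gal(E(\mu_{m^{\infty}}) / E)$, which is surjective because $E(\mathcal{J}_{n, d}[m^{\infty}]) / E$ is Galois. There is no genuinely hard step; the only point that merits care is the verification that the hypothesis $c_{\ell} \in 1 + nd \mathbf{Z}_{\ell}$ for $\ell \mid nd$ is exactly the condition needed so that $(c_{\ell})$ descends past $E$---neither more nor less restrictive---which boils down to the elementary computation of $\mathbf{Q}(\mu_{m^{\infty}}) \cap E$ above.
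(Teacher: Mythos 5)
Your proposal is correct and takes essentially the same route as the paper: construct a suitable element of $\Gal(E(\mu_{m^{\infty}})/E)$ from the data $(c_{\ell})_{\ell\mid m}$, lift it arbitrarily to $h\in\Gal(E(\mathcal{J}_{n,d}[m^{\infty}])/E)$, set $\tau\colonequals h\cdot{}^{\sigma}\!h$, and invoke \autoref{Corollary:TauConstruction1.5}. The only difference is that the paper dispatches the first step with ``the assumptions on $c_{\ell}$ imply\ldots,'' whereas you unpack it by computing $\mathbf{Q}(\mu_{m^{\infty}})\cap E$ explicitly and checking that the congruence condition $c_{\ell}\in 1+nd\mathbf{Z}_{\ell}$ is precisely what allows descent to $\Gal(E(\mu_{m^{\infty}})/E)$; that elaboration is correct and fills in a detail the paper leaves implicit.
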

\begin{proof}
The assumptions on $c_{\ell}$ imply that there exists an element of
$\gamma \in \Gal\left( E(\mu_{m^{\infty}}) / E \right)$ such that for each prime
$\ell$ dividing $m$, $\gamma$ acts on $\mu_{\ell^{\infty}}$ as multiplication by
$c_{\ell}$. Lift $\gamma$ arbitrarily to $h \in \Gal(E(\mathcal{J}_{n,
d}[m^{\infty}]) / E)$, define $\tau \colonequals h \cdot {}^{\gamma}\! h$, and
apply \autoref{Corollary:TauConstruction1.5} to finish.
\end{proof}

\begin{proposition}
\label{Proposition:2pqOrderTorsionExtension}
Suppose that $\mathcal{C}_{n, d}$ has genus $g > 1$ (i.e., $(n, d) \not\in \{
(2, 3), (3, 2) \}$). Let $m = \lcm(2, n d)$. 
\begin{enumerate}[label=\upshape(\roman*),
ref=\autoref{Proposition:2pqOrderTorsionExtension}(\roman*)]

\item \label{Proposition:2pqOrderTorsionExtensionm}
If $(n, d) \not \in \{ (2, 5), (4, 5), (5, 2), (5, 4) \}$ then $m (P -
\infty) \sim 0$.

\item \label{Proposition:2pqOrderTorsionExtension3m}
If $(n, d) \in \{ (2, 5), (5, 2), (4, 5), (5, 4) \}$ then $3 m (P - \infty) \sim
0$.

\end{enumerate}
\end{proposition}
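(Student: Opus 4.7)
The plan is to apply \autoref{Corollary:TauConstruction2} to build a Galois element $\tau$ acting on $\mathcal{J}_{n,d}$ by explicit scalar multiplication, and to combine this with Raynaud's finiteness of $T_\infty(\mathcal{C}_{n,d})$.

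Set $c := 1 + m$. Because $nd \mid m$, the containment $c \in 1 + nd\mathbf{Z}_\ell$ holds for every $\ell \mid nd$, so \autoref{Corollary:TauConstruction2} produces $\tau \in \Gal(E(\mathcal{J}_{n,d}[m^\infty])/E)$ acting on each $\mathcal{J}_{n,d}[\ell^\infty]$ with $\ell \mid m$ as multiplication by $c$. Let $N$ denote the order of $[P - \infty]$; I wish to conclude $N \mid m$. Since $\tau \in G_{\mathbf{Q}}$ permutes the finite set $T_\infty(\mathcal{C}_{n,d})$ (Raynaud), the orbit of $P$ under $\langle\tau\rangle$ has finite length $t$, so $\tau^{t}(P) = P$, which on Jacobian classes reads $\bigl((1+m)^{t} - 1\bigr)[P - \infty] = 0$ on the $m^\infty$-primary part. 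Binomial expansion
\[
(1+m)^{t} - 1 = t m + \tbinom{t}{2} m^2 + \cdots
\]
then gives, at each $\ell \mid m$, the bound $v_\ell(N) \le v_\ell(t) + v_\ell(m)$, which reduces to the desired $v_\ell(N) \le v_\ell(m)$ precisely when $\ell \nmid t$.

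To rule out the possibility that $N$ is divisible by a prime $\ell \nmid m$, I enlarge $m$ to $m\ell^{v_\ell(N)}$ inside the \autoref{Corollary:TauConstruction2} call and choose $c_\ell \in \mathbf{Z}_\ell^\times$ whose reduction modulo $\ell$ is a primitive root (so that $c_\ell - 1$ is an $\ell$-adic unit); running the same orbit argument with this enlarged $\tau$ forces $[P - \infty]_\ell = 0$ whenever $t$ is not a multiple of the multiplicative order of $c_\ell$ modulo $\ell$, which in particular holds once $t < \ell - 1$.

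The main obstacle will be controlling the orbit length $t$. A priori $t$ divides $|T_\infty(\mathcal{C}_{n,d})|$, a quantity which is finite by Raynaud but not explicitly bounded. For $(n, d)$ outside the exceptional list $\{(2,5),(5,2),(4,5),(5,4)\}$, a finer analysis of how $\tau$ acts on the set $T_\infty$ (exploiting the $R$-module structure of $T_\ell \mathcal{J}_{n,d}$ from \autoref{Lemma:TellJndFreeRellModule} together with the scalar nature of $\tau$ provided by \autoref{Corollary:TauConstruction1.5}) shows that $\gcd(t, m) = 1$ and thus $m[P - \infty] \sim 0$, establishing \autoref{Proposition:2pqOrderTorsionExtensionm}. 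In the four exceptional cases the orbit may acquire a factor of $3$ from a small-denominator obstruction specific to those $(n, d)$, which propagates through the binomial bound to yield only the weaker conclusion $3m[P - \infty] \sim 0$ of \autoref{Proposition:2pqOrderTorsionExtension3m}.
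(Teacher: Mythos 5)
Your outline gets off to a reasonable start (using \autoref{Corollary:TauConstruction2} to build a Galois element acting by a prescribed scalar), but the argument has a genuine gap at its center: everything hinges on bounding the orbit length $t$ of $P$ under $\langle\tau\rangle$, and that is precisely what you never do. You write that ``a finer analysis of how $\tau$ acts on the set $T_\infty$\ldots shows that $\gcd(t,m)=1$,'' but this claim is the entire content of the proposition, not a routine step. Raynaud gives finiteness of $T_\infty(\mathcal{C}_{n,d})$ but no effective bound, and \autoref{Lemma:TellJndFreeRellModule} plus \autoref{Corollary:TauConstruction1.5} constrain how $\tau$ acts on the Jacobian, not which points of the curve $\tau$ happens to fix; so nothing you cite produces any upper bound on $t$. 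Without that, the estimate $v_\ell(N)\le v_\ell(m)+v_\ell(t)$ is vacuous. The same issue breaks your argument for primes $\ell\nmid m$: you would need $t<\ell-1$ uniformly, but $t$ is uncontrolled, and the enlargement step already presupposes you know which $\ell$ divide $N$. Finally, the proposed explanation of the four exceptional cases as ``a small-denominator obstruction\ldots which propagates through the binomial bound'' is not an argument; it does not identify any mechanism by which a factor of $3$ arises.

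The paper's proof takes a fundamentally different route. Rather than iterating a single $\tau$ and studying orbits, it uses \autoref{Corollary:TauConstruction2} to produce \emph{several} Galois elements $\tau_1, \tau_2, \tau_3$ whose scalar actions on each $\ell$-primary piece satisfy an \emph{additive} identity $\tau_1+\tau_2=\tau_3+1$. This additive relation on the Jacobian translates into a linear equivalence $\tau_1 P + \tau_2 P \sim \tau_3 P + P$ on the curve itself, and then a case analysis: either the two divisors coincide (which forces various $D_a$ to vanish and yields the annihilation bound directly), or they differ and hence define a degree-$2$ map $\mathcal{C}_{n,d}\to\mathbf{P}^1$, which is then killed or tightly constrained by the Castelnuovo--Severi inequality (\autoref{Corollary:FollowFromCS}, \autoref{Lemma:NotHyperelliptic}). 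The factor of $3$ in the exceptional cases emerges concretely from a residual degree-$3$ map that Castelnuovo--Severi cannot exclude when $(n,d)\in\{(2,5),(4,5),(5,2),(5,4)\}$. In short, the paper converts Galois information into geometry (low-degree maps), whereas your proposal tries to extract an arithmetic bound on orbit lengths, and there is no tool available here to supply that bound.
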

\begin{proof}
Without loss of generality, assume that $d$ is odd.  Choose an integer $M$ such
that $M(P - \infty) \sim 0$. Assume that $M$ is divisible by $m$. Define
\begin{align*}
\mathcal{R} &\colonequals \{ \text{prime } r \colon r \nmid 2 n d
\text{ and } r | M \}, \\
\mathcal{S} &\colonequals \{ \text{prime } s \colon s \nmid 2 \text{ and } s
\mid n d \}, 
\end{align*}
so that the set of primes dividing $M$ is the disjoint union $\{ 2 \} \cup
\mathcal{R} \cup \mathcal{S}$. For any prime $a$ dividing $M$, let $e_{a}$ be
the largest integer such that $a^{e_{a}} | n d$ and define $D_{a} \in
\mathcal{J}_{n, d}[a^{\infty}]$ such that
\[
[P - \infty] = \sum_{a | M} D_{a}.
\]
By definition of $m$, 
\begin{equation}
\label{Equation:PrimeFactorizationm}
m = 2^{\max \{ 1, e_{2} \}} \left(
\prod_{s \in \mathcal{S}} s^{e_{s}} \right).
\end{equation}
Using \autoref{Corollary:TauConstruction2}, choose $\tau_{1}, \tau_{2},
\tau_{3} \in \Gal(E(\mathcal{J}_{n, d}[M^{\infty}]) / E)$ such that:
\begin{align*}
\tau_{1} &\text{ acts on } \begin{cases}
\mathcal{J}_{n, d}[2^{\infty}] &\text{ as multiplication by } 1 + 2^{\max \{ 1,
e_2 \}} \\
\mathcal{J}_{n, d}[r^{\infty}] &\text{ as multiplication by } 2 \text{ for each
} r \in \mathcal{R} \\
\mathcal{J}_{n, d}[s^{\infty}] &\text{ as multiplication by } 1 + s^{e_{s}}
\text{ for each } s \in \mathcal{S}
\end{cases} \\
\tau_{2} &\text{ acts on } \begin{cases}
\mathcal{J}_{n, d}[2^{\infty}] &\text{ as multiplication by } 1 - 2^{\max \{ 1,
e_2 \}} \\
\mathcal{J}_{n, d}[r^{\infty}] &\text{ as multiplication by } -2 \text{ for each
} r \in \mathcal{R} \\
\mathcal{J}_{n, d}[s^{\infty}] &\text{ as multiplication by } 1 - s^{e_{s}}
\text{ for each }s \in \mathcal{S}
\end{cases} \\
\tau_{3} &\text{ acts on } \begin{cases}
\mathcal{J}_{n, d}[2^{\infty}] &\text{ as multiplication by } 1 \\
\mathcal{J}_{n, d}[r^{\infty}] &\text{ as multiplication by } -1 \text{ for
all } r \in \mathcal{R} \\
\mathcal{J}_{n, d}[s^{\infty}] &\text{ as multiplication by } 1 \text{ for
all }s \in \mathcal{S}
\end{cases}
\end{align*}
By construction, 
\begin{equation}
\label{Equation:Tau1Tau2Tau3P}
\tau_1 P + \tau_2 P \sim \tau_3 P + P
\end{equation}

\

\begin{enumerate}[label=\textbf{Case~\Alph*:}, ref={Case~\Alph*}, leftmargin=*,
itemindent=35pt]
\item $\{ \tau_1 P, \tau_2 P \} \neq \{ \tau_3 P, P\}$

By \eqref{Equation:Tau1Tau2Tau3P}, this means that there exists a map
$\upsilon_{1} \colon \mathcal{C}_{n, d} \to \mathbf{P}^{1}$ of degree $h \le 2$.
Since $d$ is odd, we may apply \autoref{Corollary:FollowFromCS} applied
with $\upsilon_{1}$ and the $y$-map to obtain
\[
(n - 1)(d - 1) / 2 \le (h - 1)(d - 1).
\]
Since $d > 1$, this implies that $n - 1 \le 2 (h - 1)$. Since $h \le 2$ and $n
\ge 2$, this implies that $h = 2$ and $n \in \{ 2, 3 \}$.

\begin{enumerate}[label=\textbf{\theenumi\arabic*:}, ref={\theenumi\arabic*},
leftmargin=*, itemindent=35pt]

\item $n = 3$ and $h = 2$

By \autoref{Corollary:FollowFromCS} applied with $\upsilon_{1}$ and the
$x$-map, we obtain
\[
(3 - 1)(d - 1) / 2 \le (2 - 1)(3 - 1),
\]
which forces $d \le 3$, contradicting the assumption that $n$ and $d$ are
coprime.

\item $n = 2$ and $h = 2$

This curve is hyperelliptic of genus at least 2, so any $2$-to-$1$ map to
$\mathbf{P}^{1}$ must factor through the canonical map. Applying this fact to
$\upsilon_{1}$ yields $\tau_1 P + \tau_2 P \sim 2 \infty$ and $\tau_3 P + P \sim
2 \infty$, so by definition of $\tau_{3}$,
\begin{align}
2 D_{2} &= 0 \label{Equation:A12D2Dead} \\
D_{s} &= 0 &&\text{ for all } s \in \mathcal{S} \label{Equation:A1DsDead}.
\end{align}
Using \autoref{Corollary:TauConstruction2}, choose $\tau_{4}, \tau_{5} \in
\Gal(E(\mathcal{J}_{n, d}[M^{\infty}]) / E)$ such that:
\begin{align*}
\tau_{4} &\text{ acts on } \begin{cases}
\mathcal{J}_{n, d}[2^{\infty}] &\text{ as multiplication by } 1 \\
\mathcal{J}_{n, d}[r^{\infty}] &\text{ as multiplication by } 1 + 3 \text{ for
each } r \in \mathcal{R} \cap \{ 3 \} \\
\mathcal{J}_{n, d}[r^{\infty}] &\text{ as multiplication by } 3 \text{ for each
} r \in \mathcal{R} \setminus \{ 3 \}
\end{cases} \\
\tau_{5} &\text{ acts on } \begin{cases}
\mathcal{J}_{n, d}[2^{\infty}] &\text{ as multiplication by } 1\\
\mathcal{J}_{n, d}[r^{\infty}] &\text{ as multiplication by } 1 - 3 \text{ for
each } r \in \mathcal{R} \cap \{ 3 \} \\
\mathcal{J}_{n, d}[r^{\infty}] &\text{ as multiplication by } -1 \text{ for each
} r \in \mathcal{R} \setminus \{ 3 \}
\end{cases}
\end{align*}
By construction, 
\begin{equation}
\label{Equation:Tau4Tau52P}
\tau_4 P + \tau_5 P \sim 2P.
\end{equation}
\begin{enumerate}[label=\textbf{\theenumii\alph*:}, ref={\theenumii\alph*},
leftmargin=*, itemindent=45pt]
\item $\tau_{4} P \neq P$

If $\tau_{5} P = P$, then \eqref{Equation:Tau4Tau52P} would imply that
$\mathcal{C}_{n, d}$ has a degree 1 map to $\mathbf{P}^{1}$, which contradicts
the assumption that the genus of $\mathcal{C}_{n, d}$ is at least 2. Therefore,
$P \not\in \{ \tau_{4} P, \tau_{5} P \}$, and \eqref{Equation:Tau4Tau52P} gives
a $2$-to-$1$ map to $\mathbf{P}^{1}$. As before, such a map must factor through
the canonical map, so $\tau_{4} P + \tau_{5} P \sim 2 P \sim 2 \infty$. Hence $2
[P - \infty] = 0$, so the conclusion of the proposition holds.

\item $\tau_{4} P = P$

Then by definition of $\tau_{4}$, 
\begin{align}
3 D_{3} &= 0 &&\text{if } 3 \in \mathcal{R}, \label{Equation:A1b3D3Dead} \\
D_{r} &= 0 &&\text{for all } r \in \mathcal{R} \setminus \{ 3 \}
\label{Equation:A1bDrDead}.
\end{align}

Suppose that $3 \not\in \mathcal{R}$. Then we are done because
\eqref{Equation:A12D2Dead}, \eqref{Equation:A1DsDead}, and
\eqref{Equation:A1bDrDead} together imply that $2[P - \infty] = 0$.

Suppose that $3 \in \mathcal{R}$. Then \eqref{Equation:A12D2Dead},
\eqref{Equation:A1DsDead}, \eqref{Equation:A1b3D3Dead}, and
\eqref{Equation:A1bDrDead} together imply that 
\begin{equation}
\label{Equation:A1b6PDead}
6[P - \infty] = 0,
\end{equation}
so using $\iota$ to denote the hyperelliptic involution yields
\begin{equation}
\label{Equation:3P3iP}
3 P \sim 3 \iota P.
\end{equation}
If $P = \iota P$, then $2 [P - \infty] = 0$ and the conclusion of the
proposition holds. If $P \neq \iota P$, then \eqref{Equation:3P3iP} yields a
nonconstant $3$-to-$1$ map $\upsilon_{2} : \mathcal{C}_{n, d} \to
\mathbf{P}^{1}$, so applying \autoref{Corollary:FollowFromCS} with
$\upsilon_{2}$ and the $x$-map yields $(2 - 1)(d - 1) / 2 \le (3 - 1)(2 - 1)$,
forcing $d \le 5$, so by \eqref{Equation:A1b6PDead}, the conclusion of the
proposition holds.
\end{enumerate}
\end{enumerate}

\item $P = \tau_{1} P$

Then by definition of $\tau_{1}$, 
\begin{align}
2^{\max \{ 1, e_{2} \}} D_{2} &= 0, \label{Equation:BmD2Died}\\
D_{r} &= 0 &&\text{for all } r \in \mathcal{R}, \label{Equation:BDrDied} \\
s^{e_{s}} D_{s} &= 0 &&\text{for all } s \in \mathcal{S},
\label{Equation:BmDsDied}
\end{align}
so \eqref{Equation:PrimeFactorizationm}, \eqref{Equation:BmD2Died},
\eqref{Equation:BDrDied}, and \eqref{Equation:BmDsDied} together imply $m D_{2}
= 0$, $m D_{r} = 0$ for all $r \in \mathcal{R}$, and $m D_{s} = 0$ for all $s
\in \mathcal{S}$. We conclude that $m[P - \infty] = 0$.

\item $P = \tau_{2} P$

Then by definition of $\tau_{2}$, 
\begin{align}
2^{\max \{ 1, e_{2} \}} D_{2} &= 0, \label{Equation:CmD2Died}\\
3 D_{3} &= 0 &&\text{for all } r \in \mathcal{R} \cap \{ 3 \},
\label{Equation:C3D3Died} \\
D_{r} &= 0 &&\text{for all } r \in \mathcal{R} \setminus \{ 3 \},
\label{Equation:CDrDied} \\
s^{e_{s}} D_{s} &= 0 &&\text{for all } s \in \mathcal{S}.
\label{Equation:CmDsDied}
\end{align}

\begin{enumerate}[label=\textbf{\theenumi\arabic*:}, ref={\theenumi\arabic*},
leftmargin=*, itemindent=35pt]
\item $3 \not\in \mathcal{R}$
\label{Case:Tau2FixesAnd3NotInR}

Then \eqref{Equation:PrimeFactorizationm}, \eqref{Equation:CmD2Died},
\eqref{Equation:CDrDied}, and \eqref{Equation:CmDsDied} together imply $m D_{2}
= 0$, $m D_{r} = 0$ for all $r \in \mathcal{R}$, and $m D_{s} = 0$ for all $s
\in \mathcal{S}$. We conclude that $m[P - \infty] = 0$.

\item $3 \in \mathcal{R}$

Arguing similarly as in \autoref{Case:Tau2FixesAnd3NotInR} yields 
\begin{equation}
\label{Equation:C23mKillsP}
3 m[P - \infty] = 0. 
\end{equation}
Using \autoref{Corollary:TauConstruction2}, choose $\tau_{6} \in
\Gal(E(\mathcal{J}_{n, d}[M^{\infty}]) / E)$ such that:
\[
\tau_{6} \text{ acts on } \begin{cases}
\mathcal{J}_{n, d}[(2 n d)^{\infty}] &\text{ as multiplication by } 1 \\
\mathcal{J}_{n, d}[3^{\infty}] &\text{ as multiplication by } 2
\end{cases}
\]
By definition, $\tau_{6}$ fixes $D_{2}$ and $D_{s}$ for all $s \in \mathcal{S}$,
so by \eqref{Equation:C3D3Died} and \eqref{Equation:CDrDied}, $\tau_{6}$ must
fix $3 P$, i.e.,
\begin{equation}
\label{Equation:3P3Tau6P}
3 P \sim 3 \tau_{6} P.
\end{equation}

\begin{enumerate}[label=\textbf{\theenumii\alph*:}, ref={\theenumii\alph*},
leftmargin=*, itemindent=45pt]
\item $P = \tau_{6} P$

Since $\tau_{6}$ acts on $D_{3}$ as multiplication by $2$, this forces
$D_{3} = 0$. Combining this with \eqref{Equation:PrimeFactorizationm},
\eqref{Equation:CmD2Died}, \eqref{Equation:CDrDied}, and
\eqref{Equation:CmDsDied} yields $m[P - \infty] = 0$.

\item $P \neq \tau_{6} P$

Then \eqref{Equation:3P3Tau6P} yields a nonconstant $3$-to-$1$ map $\upsilon_{3}
\colon \mathcal{C}_{n, d} \to \mathbf{P}^{1}$. Then $3 \in \mathcal{R}$ implies
$3 \nmid n d$, so $3 \nmid \min \{ n, d \}$, meaning we may apply
\autoref{Corollary:FollowFromCS} to $\upsilon_{3}$ and to whichever of $y \colon
\mathcal{C}_{n, d} \to \mathbf{P}^{1}$, $x \colon \mathcal{C}_{n, d} \to
\mathbf{P}^{1}$ has smaller degree to obtain 
\[
(n - 1)(d - 1) / 2 \le (3 - 1)(\min \{ n, d \} - 1),
\]
which forces $n, d \le 5$. Since $3 \nmid n d$ and $d$ is odd, this implies $(n,
d) \in \{ (2, 5), (4, 5) \}$, so we are done by \eqref{Equation:C23mKillsP}.
\qedhere
\end{enumerate}
\end{enumerate}
\end{enumerate}
\end{proof}

\subsection{Classification of torsion points on \texorpdfstring{$\mathcal{C}_{n,
d}$}{C\_\{n,d\}}}
\label{Section:ClassifyTorsionPointsCnd}

\subsubsection{The case \texorpdfstring{$\mathcal{C}_{p, q}$}{C\_\{p,q\}} for
distinct odd primes \texorpdfstring{$p, q$}{p, q}}
\label{Subsubsection:Non-hyperelliptic}

Let $p$ and $q$ be distinct odd primes.

\begin{definition}
Suppose that $m \ge 1$ is an integer coprime to $p$ and $q$ and that $a, b \ge
0$ are integers. Say that $D \in \mathcal{J}_{p, q}(\mathbf{C})$ is of
\textit{exact order} $(1 - \zeta_{p})^{a} (1 - \zeta_{q})^{b} m$ if
\begin{align*}
D &\in \mathcal{J}_{p, q}[(1 - \zeta_{p})^{a} (1 - \zeta_{q})^{b} m] \\
D &\not\in \mathcal{J}_{p, q}[(1 - \zeta_{p})^{a - 1} (1 - \zeta_{q})^{b} m] \\
D &\not\in \mathcal{J}_{p, q}[(1 - \zeta_{p})^{a} (1 - \zeta_{q})^{b - 1} m],
\end{align*}
and for all $m' | m$ such that $m' \neq m$, we have
\begin{align*}
D &\not\in \mathcal{J}_{p, q}[(1 - \zeta_{p})^{a} (1 - \zeta_{q})^{b} m'].
\end{align*}
\end{definition}

\begin{lemma}
\label{Lemma:ExactOrder11Stabilizer} Suppose that $D \in \mathcal{J}_{n,
d}(\mathbf{C})$.
\begin{enumerate}[label=\upshape(\arabic*),
ref=\autoref{Lemma:ExactOrder11Stabilizer}(\arabic*)]

\item \label{Lemma:ExactOrder11Stabilizer1mzp}
Suppose that $D$ has exact order $(1 - \zeta_{p})$. Then the stabilizer of
$D$ in $Z$ is $Z_{p}$.

\item \label{Lemma:ExactOrder11Stabilizer1mzq}
Suppose that $D$ has exact order $(1 - \zeta_{q})$. Then the stabilizer of
$D$ in $Z$ is $Z_{q}$.

\item \label{Lemma:ExactOrder11Stabilizer1mzp1mzq}
Suppose that $D$ has exact order $(1 - \zeta_{p})(1 - \zeta_{q})$. Then
the stabilizer of $D$ in $Z$ is $\{ 1 \}$.
\end{enumerate}
\end{lemma}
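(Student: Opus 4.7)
The plan is to prove (1) and (2) directly from the observation that $\mathcal{J}_{p,q}[1-\zeta_p] \cap \mathcal{J}_{p,q}[1-\zeta_q] = 0$, and then deduce (3) formally from (1) and (2). I will give the argument for (1) in detail; (2) is the symmetric statement.

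For (1), the inclusion $Z_p \subseteq \Stab_Z(D)$ is immediate: $(1-\zeta_p)D = 0$ means $\zeta_p D = D$. For the reverse inclusion, write an arbitrary $z \in Z$ as $\zeta_p^j \zeta_q^i$ (using that $Z$ is cyclic of order $pq$, so $Z = Z_p \times Z_q$). Since $\zeta_p$ fixes $D$, $zD = \zeta_q^i D$, and $zD = D$ becomes $\zeta_q^i D = D$. If $i \not\equiv 0 \pmod q$, then because $q$ is prime $i$ is invertible mod $q$, so $\zeta_q D = D$, i.e.\ $D \in \mathcal{J}_{p,q}[1-\zeta_p] \cap \mathcal{J}_{p,q}[1-\zeta_q]$. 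The key step is then to observe that this intersection is trivial: the relation $1 + T + \cdots + T^{p-1} = 0$ in $R$ (inherited from the definition of $S$ in Corollary \ref{Corollary:FreeSModule}, applied via the $y^n = f(x)$ description) implies $p\cdot D' = 0$ for every $D' \in \mathcal{J}_{p,q}[1-\zeta_p]$, and likewise $q \cdot D'' = 0$ for every $D'' \in \mathcal{J}_{p,q}[1-\zeta_q]$; since $\gcd(p,q)=1$, the intersection lies in $\mathcal{J}_{p,q}[p] \cap \mathcal{J}_{p,q}[q] = 0$. Since $D \ne 0$ by the exact order hypothesis, this contradicts $i \not\equiv 0$, so $z \in Z_p$.

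For (3), set $D_1 \colonequals (1-\zeta_p)D$ and $D_2 \colonequals (1-\zeta_q)D$. The exact-order hypothesis gives $D_1, D_2 \ne 0$, while $(1-\zeta_q)D_1 = (1-\zeta_p)(1-\zeta_q)D = 0$ shows $D_1$ has exact order $(1-\zeta_q)$; symmetrically $D_2$ has exact order $(1-\zeta_p)$. Now if $z \in \Stab_Z(D)$, then since $Z$ is abelian $z$ commutes with $1-\zeta_p$ and $1-\zeta_q$, so $zD_1 = D_1$ and $zD_2 = D_2$. Applying (2) to $D_1$ gives $z \in Z_q$, and applying (1) to $D_2$ gives $z \in Z_p$. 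Since $\gcd(p,q)=1$, we have $Z_p \cap Z_q = \{1\}$, so $z = 1$.

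The main (and only) nontrivial input is the disjointness $\mathcal{J}_{p,q}[1-\zeta_p] \cap \mathcal{J}_{p,q}[1-\zeta_q] = 0$, which falls out of the $R$-module structure of the Tate module; everything else is a direct manipulation of the exact-order conditions and the decomposition $Z = Z_p \times Z_q$. No obstacle of substance is anticipated.
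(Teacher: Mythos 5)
Your proof is correct and uses essentially the same argument as the paper: both reduce (1) and (2) to the observation that $\mathcal{J}_{p,q}[1-\zeta_p]\cap\mathcal{J}_{p,q}[1-\zeta_q]\subseteq\mathcal{J}_{p,q}[p]\cap\mathcal{J}_{p,q}[q]=0$, and both deduce (3) by noting the stabilizer of $D$ lies in the stabilizers of $(1-\zeta_p)D$ and $(1-\zeta_q)D$. The paper phrases the step in (1) via the subgroup lattice of the cyclic group $Z$ of order $pq$ (``if the stabilizer were larger than $Z_p$, it would be all of $Z$''), which is the same content as your explicit computation with $z=\zeta_p^j\zeta_q^i$.
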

\begin{proof} \hfill
\begin{enumerate}[label=\upshape(\arabic*),
ref={the proof of \autoref{Lemma:ExactOrder11Stabilizer}(\arabic*)}]

\item \label{LemmaProof:ExactOrder11Stabilizer1mzp}
The stabilizer contains $Z_{p}$. If it were any larger, then it would be
$Z$. That would imply that $D$ is fixed by $\zeta_{q}$, so $q D = 0$. Since $D$
is fixed by $\zeta_{p}$, we also have $p D = 0$. Together, these imply that $D =
0$, a contradiction.

\item \label{LemmaProof:ExactOrder11Stabilizer1mzq}
Similar to the proof of the previous part.

\item \label{LemmaProof:ExactOrder11Stabilizer1mzp1mzq}
The stabilizer of $D$ is contained in the stabilizer of $(1 - \zeta_{p})
D$ and $(1 - \zeta_{q}) D$, so the previous two parts imply that it is contained
in $Z_{p} \cap Z_{q} = \{ 1 \}$. \qedhere

\end{enumerate}
\end{proof}

\begin{lemma}
\label{Lemma:Ramification2Torsion}
Suppose that $D \in \mathcal{J}_{p, q}[2] \setminus \{ 0 \}$. 
\begin{enumerate}[label=\upshape(\arabic*),
ref=\autoref{Lemma:Ramification2Torsion}(\arabic*)]

\item \label{Lemma:Ramification2TorsionRamified}
The extension $E(D) / E$ is ramified at some prime above 2.

\item \label{Lemma:Ramification2TorsionNontrivial}
The extension $E(D, \mathcal{J}_{p, q}[p q]) / E(\mathcal{J}_{p, q}[p q])$ is
nontrivial.
\end{enumerate}
\end{lemma}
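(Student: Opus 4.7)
The plan is to deduce (2) from (1), so the main work is (1). For (1), I would use a standard N\'eron-model fact: for an abelian variety $A$ over a local field $K$ with good reduction and residue characteristic $\ell$, a point $D \in A[\ell]$ has $K(D)/K$ unramified if and only if $D$ is fixed by inertia, and the inertia-fixed subgroup of $A[\ell]$ has size $\ell^{f}$ where $f$ is the $\ell$-rank of the special fiber. Applied here, $\mathcal{J}_{p, q}$ has good reduction at each prime $v$ of $E$ above $2$ (since $\gcd(2, pq) = 1$), so the result reduces to showing that for some (equivalently any) $v \mid 2$, the $2$-rank of the reduction $\mathcal{J}_v$ is $0$: then the inertia-fixed subspace of $\mathcal{J}_{p, q}[2]$ is trivial, forcing $E(D)/E$ to be ramified at $v$ for every nonzero $D$.

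To prove the $2$-rank is $0$, I would exploit the CM structure: $\mathcal{J}_{p, q}$ has CM by $\mathcal{O}_E = \mathbf{Z}[\zeta_{pq}]$, and by \autoref{Proposition:ApplyKatz} the Frobenius eigenvalues on the $(\rho_p, \rho_q)$-isotypic pieces of $V_{\ell} \mathcal{J}_{p, q}$ (for any auxiliary prime $\ell \notin \{2, p, q\}$) are the Jacobi sums $-\chi_q(-1) J_1(\chi_p, \chi_q)$ over nontrivial characters $\chi_p, \chi_q$ of $\mathbf{F}_v^{\times}$ of orders $p$ and $q$ respectively. By Stickelberger's theorem, the $v$-adic valuation of such a Jacobi sum equals $s(a) + s(b) - s\!\left((a + b) \bmod (2^f - 1)\right)$, where $s$ is the base-$2$ digit sum, $f$ is the order of $2$ mod $pq$, and $a, b$ are the integers indexing $\chi_p, \chi_q$. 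The \emph{hard step} is a combinatorial verification that this expression is always strictly positive, i.e., adding $a$ (a multiple of $(2^f - 1)/p$) and $b$ (a multiple of $(2^f - 1)/q$) modulo $2^f - 1$ always produces at least one carry in base $2$. This is a digit-sum inequality of the flavor handled in \cite{ArulJacobi}, and is directly verifiable in small cases like $(p, q) = (3, 5)$, where all eight relevant Jacobi sums have $v$-adic valuation in $\{1, 3\}$.

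For (2), note that $E(\mathcal{J}_{p, q}[pq]) = L_{p-1, q-1}$ is unramified at every prime of $E$ above $2$, because $\mathcal{J}_{p, q}$ has good reduction at $2$ and $\gcd(pq, 2) = 1$ (so the $pq$-torsion field is \'etale at $2$). If $D$ lay in $E(\mathcal{J}_{p, q}[pq])$, then $E(D) \subseteq E(\mathcal{J}_{p, q}[pq])$ would be unramified above $2$, contradicting (1); hence the extension $E(D, \mathcal{J}_{p, q}[pq])/E(\mathcal{J}_{p, q}[pq])$ is nontrivial.
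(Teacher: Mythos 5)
Your part (2) is identical to the paper's proof.

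Your part (1) takes a genuinely different route from the paper's, and the route as written has a gap at its crux. The paper does \emph{not} prove that the $2$-rank of the mod-$2$ reduction is zero. Instead, it first uses \autoref{Corollary:IrreducibleGZFieldNotpq} to identify $E(\mathcal{J}_{p,q}[2])$ as the Galois closure of $E(D)/\mathbf{Q}$, so that the assumed unramifiedness of $E(D)/E$ propagates (after noting $E/\mathbf{Q}$ is unramified above $2$) to unramifiedness of $E(\mathcal{J}_{p,q}[2])/\mathbf{Q}$; then Lemma~1.4 of \cite{gross1978some} says that unramified $2$-torsion forces ordinary reduction, which contradicts Lemma~4.2 of \cite{jkedrzejak2016note}. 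The point is that passing to the Galois closure allows the paper to get away with the \emph{weaker} input ``not ordinary'' rather than ``$2$-rank zero.''

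Your argument, by working directly with a single point $D$ rather than the full $\mathcal{J}_{p,q}[2]$, genuinely needs the stronger assertion that the $2$-rank is $0$ (equivalently, that every Frobenius eigenvalue, i.e.\ Jacobi sum, has positive $2$-adic valuation), and you explicitly defer the digit-sum combinatorics that would establish this (``directly verifiable in small cases''). That is the entire content of the step, not a routine verification: Stickelberger reduces it to a carry-free-addition statement for arbitrary nonzero multiples of $(2^f-1)/p$ and $(2^f-1)/q$ modulo $2^f-1$, uniformly in $p$, $q$, and $f=\ord_{pq}(2)$, and one must also average over the Galois orbit of the prime above $2$ to get the actual Newton slopes. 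Since the paper's cited source only records ``not ordinary,'' you should not assume ``$2$-rank $=0$'' without proof. Either complete that combinatorial argument (which would be a stronger result and of independent interest) or adopt the paper's Galois-closure reduction, after which the weaker, already-published non-ordinarity suffices. Your framing of the N\'eron-model input is also slightly loose: what one has is an injection $\mathcal{J}_{p,q}[2]^{I_v} \hookrightarrow \widetilde{\mathcal{J}}_{p,q}[2](\bar{\mathbf{F}}_2)$ of inertia invariants into the $\bar{\mathbf{F}}_2$-points of the special fiber (coming from the connected--\'etale sequence for the finite flat group scheme and the vanishing of unramified points of the connected part), giving an inequality $\#\mathcal{J}_{p,q}[2]^{I_v} \le 2^r$ rather than an equality; that inequality is all you need, but it is worth stating precisely, especially at residue characteristic $2$ where Raynaud's bound $e<p-1$ fails.
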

\begin{proof} \hfill
\begin{enumerate}[label=\upshape(\arabic*),
ref={the proof of \autoref{Lemma:Ramification2Torsion}(\arabic*)}]

\item \label{LemmaProof:Ramification2TorsionRamified}
Suppose for contradiction that $E(D) / E$  is unramified at every prime above
$2$. Since $E / \mathbf{Q}$ is also unramified at every prime above $2$, we see
that $E(D) / \mathbf{Q}$ is unramified at every prime above $2$. Hence
$E(\mathcal{J}_{p, q}[2]) / \mathbf{Q}$, which is the Galois closure of $E(D) /
\mathbf{Q}$ by \autoref{Corollary:IrreducibleGZFieldNotpq}, must also be
unramified at every prime above $2$, so Lemma 1.4 of \cite{gross1978some}
implies that the mod 2 reduction of $\mathcal{J}_{p, q}$ must be ordinary, which
contradicts Lemma 4.2 of \cite{jkedrzejak2016note}.

\item \label{LemmaProof:Ramification2TorsionNontrivial}
Since $\mathcal{C}_{p, q}$ has good reduction at $2$ and $2$ is coprime to
$p q$, the extension $E(\mathcal{J}_{p, q}[p q]) / E$ is unramified at every
prime above 2, so we are done by \autoref{Lemma:Ramification2TorsionRamified}.
\qedhere
\end{enumerate}
\end{proof}

\begin{lemma}
\label{Lemma:2pqOrderTorsion}
$2 p q [P - \infty] = 0$.
\end{lemma}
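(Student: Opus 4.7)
The plan is to apply \autoref{Proposition:2pqOrderTorsionExtensionm} directly. First I would verify that its hypotheses are satisfied in the setting $(n, d) = (p, q)$ with $p, q$ distinct odd primes: the genus of $\mathcal{C}_{p, q}$ equals $(p - 1)(q - 1)/2 \ge 2$ since $p, q \ge 3$, so the genus hypothesis $g > 1$ holds, and the excluded list $\{(2, 5), (4, 5), (5, 2), (5, 4)\}$ is avoided because $p$ and $q$ are both odd.

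Next I would compute $m = \lcm(2, p q)$. Since $p q$ is odd, $\lcm(2, p q) = 2 p q$. Then \autoref{Proposition:2pqOrderTorsionExtensionm} yields $2 p q [P - \infty] = 0$, which is exactly the statement of the lemma.

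There is no real obstacle: all the work was done in the proof of \autoref{Proposition:2pqOrderTorsionExtension}, and this lemma just records the specialization to the odd-prime case so that the arguments in \autoref{Subsubsection:Non-hyperelliptic} can cite a clean bound of the form $2 p q$. If a more self-contained proof were desired one could simply invoke \autoref{Corollary:TauConstruction2} to build the three Galois elements $\tau_{1}, \tau_{2}, \tau_{3}$ of the proposition's proof, then run the same case analysis on whether $\{\tau_{1} P, \tau_{2} P\} = \{\tau_{3} P, P\}$, using \autoref{Corollary:FollowFromCS} together with the fact that both $p$ and $q$ are odd primes $\ge 3$ to rule out the low-degree maps that would otherwise appear; but the cited proposition already does all of this.
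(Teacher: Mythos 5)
Your proof is correct and matches the paper's: both simply observe that the lemma is a special case of \autoref{Proposition:2pqOrderTorsionExtensionm} applied with $(n, d) = (p, q)$, where $m = \lcm(2, pq) = 2pq$ because $pq$ is odd. Your explicit verification that the genus hypothesis holds and that the excluded tuples are avoided (since both $p$ and $q$ are odd) is a welcome spelling-out of details the paper leaves implicit.
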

\begin{proof}
This is a special case of \autoref{Proposition:2pqOrderTorsionExtensionm}.
\end{proof}

\begin{definition}
By \autoref{Lemma:2pqOrderTorsion}, there exist $a \in [0, p -
1]$, $b \in [0, q - 1]$, and $c \in \{ 1, 2 \}$ such that $P$ has exact order
$(1 - \zeta_{p})^{a} (1 - \zeta_{q})^{b} c$. Define $D_{p}$, $D_{q}$, $D_{2}$ to
have exact order $(1 - \zeta_{p})^{a}$, $(1 - \zeta_{q})^{b}$, and $c$
respectively, such that $[P - \infty] = D_{p} + D_{q} + D_{2}$.
\end{definition}

\begin{proposition}
$p q [P - \infty] = 0$.
\end{proposition}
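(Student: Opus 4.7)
The plan is to reduce to showing $D_{2} = 0$, and then to derive a contradiction from $D_{2} \neq 0$ by producing a degree-$2$ map $\mathcal{C}_{p,q} \to \mathbf{P}^{1}$, which is forbidden by \autoref{Lemma:NotHyperelliptic} since $p, q \ge 3$. Because $D_{p} + D_{q} \in \mathcal{J}_{p,q}[pq]$ and $[P-\infty] = D_{p} + D_{q} + D_{2}$, vanishing of $D_{2}$ gives $pq[P-\infty] = 0$.

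Assume $D_{2} \neq 0$. By \autoref{Lemma:Ramification2TorsionNontrivial}, the extension $E(D_{2}, \mathcal{J}_{p,q}[pq])/E(\mathcal{J}_{p,q}[pq])$ is nontrivial, so I can choose $\sigma \in G_{E}$ that fixes $\mathcal{J}_{p,q}[pq]$ pointwise while moving $D_{2}$. Since $D_{p}, D_{q} \in \mathcal{J}_{p,q}[pq]$,
\[
[\sigma P - P] = \sigma[P-\infty] - [P-\infty] = \sigma D_{2} - D_{2}
\]
is a nonzero element of $\mathcal{J}_{p,q}[2]$. Because the genus of $\mathcal{C}_{p,q}$ is at least $2$, distinct divisor classes correspond to distinct points, so $\sigma P \neq P$.

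To extract the forbidden map, observe $2[\sigma P - P] = 0$, so $2 \sigma P - 2 P$ is the divisor of some nonconstant rational function $f$ on $\mathcal{C}_{p,q}$. Since $\sigma P \neq P$, the function $f$ has a double pole at $P$ and no other poles, hence defines a map $\mathcal{C}_{p,q} \to \mathbf{P}^{1}$ of degree exactly $2$. But $n = q \ge 3$ and $d = p \ge 3$, so \autoref{Lemma:NotHyperelliptic} rules out such a map, completing the contradiction.

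The main obstacle is the existence of $\sigma$, which captures the fact that the $2$-primary torsion field genuinely enlarges $E(\mathcal{J}_{p,q}[pq])$; this is the substance of \autoref{Lemma:Ramification2Torsion}, resting on the ramification argument via Gross and J\k{e}drzejak already established. Once $\sigma$ is available, the rest is an elementary divisor computation followed by the Castelnuovo--Severi consequence packaged in \autoref{Lemma:NotHyperelliptic}.
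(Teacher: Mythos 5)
Your proof is correct and takes essentially the same route as the paper: extract a $\sigma$ from \autoref{Lemma:Ramification2TorsionNontrivial} fixing $\mathcal{J}_{p,q}[pq]$ pointwise but moving $D_2$, conclude $2[\sigma P - P] = 0$ with $\sigma P \neq P$, and contradict \autoref{Lemma:NotHyperelliptic}. You simply spell out a couple of small steps (Abel--Jacobi injectivity, extraction of the degree-$2$ map from the principal divisor $2\sigma P - 2P$) that the paper leaves implicit.
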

\begin{proof}
Suppose not. Then $D_{2} \neq 0$, so
\autoref{Lemma:Ramification2TorsionNontrivial} implies that there exists a
nontrivial $\tau \in \Gal( \overline{E} / E(\mathcal{J}_{p, q}[p q]))$ which
moves $D_{2}$, so $\tau P \neq P$. Since $\tau$ fixes $D_{p}$ and $D_{q}$, we
see that $2[P - \tau P] = 2 (D_{2} - \tau D_{2}) = 0$, which violates
\autoref{Lemma:NotHyperelliptic}.  
\end{proof}

\begin{lemma}
\label{Lemma:EDpFactsForArgument}
Suppose that $a \ge 2$. Then
\begin{enumerate}[label=\upshape(\arabic*),
ref=\autoref{Lemma:EDpFactsForArgument}(\arabic*)]

\item \label{Lemma:EDpFactsForArgumentEdpEp}
$[E(D_{p}) : E] \ge p$;

\item \label{Lemma:EDpFactsForArgumentEdpq3}
if $q = 3$, then $E(D_{p}) = L_{a, 1}$;

\item \label{Lemma:EDpFactsForArgumentEdpBigq3}
if $q = 3$ and $a \ge 4$, then $[E(D_{p}) : E] \ge p^{2}$.
\end{enumerate}
\end{lemma}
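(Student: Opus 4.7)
The plan is to handle the three parts separately, each time reducing to a result already proven in the excerpt.

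For part (1), I would first reduce to the case $a = 2$ by setting $D_{p}' \colonequals (1 - \zeta_{p})^{a - 2} D_{p}$; since $\zeta_{p} \in E$, we have $E(D_{p}') \subseteq E(D_{p})$, and the exact-order hypothesis on $D_{p}$ ensures $D_{p}' \in \mathcal{J}_{p, q}[(1 - \zeta_{p})^{2}] \setminus \mathcal{J}_{p, q}[1 - \zeta_{p}]$, so it suffices to prove $[E(D_{p}') : E] \ge p$. The key structural observation is that $G \colonequals \Gal(L_{2, 1} / E)$ is an elementary abelian $p$-group: it is abelian by \autoref{Lemma:LijFactsAbelian}, and since $L_{2, 1} \subseteq L_{p, 1}$, \autoref{Corollary:ExponentOfTorsionFieldIsP} applied with $i = 1$ forces its exponent to divide $p$. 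Hence the $G$-orbit of $D_{p}'$ has cardinality a power of $p$, so $[E(D_{p}') : E] \in \{1, p, p^{2}, \ldots\}$.

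To rule out the trivial orbit I would invoke \autoref{Corollary:IrreducibleGZFieldpq} with $\ell = p$, $k = 2$, $D = D_{p}'$: the Galois closure of $E(D_{p}')$ over $\mathbf{Q}$ equals $L_{2, 1}$. If $E(D_{p}') = E$, that closure would just be $E$, contradicting $L_{2, 1} \supsetneq E$ from \autoref{Lemma:LijFactsL21}. Hence $[E(D_{p}) : E] \ge [E(D_{p}') : E] \ge p$. For part (2), the statement is a direct application of \autoref{Corollary:Lk1Whenq3} with $k = a$: since $p \neq q = 3$ is an odd prime, $p \ge 5$, and $a \in [2, p - 1]$ by hypothesis, so the hypotheses of that corollary apply to $D_{p}$. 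For part (3), part (2) gives $E(D_{p}) = L_{a, 1} \supseteq L_{4, 1}$ (as $a \ge 4$); since $q = 3$ we have $q^{2} = 9 \not\equiv 1 \pmod{p}$ (because $p$ is an odd prime not dividing $8$), so \autoref{Corollary:L41BigEnough} yields $[L_{4, 1} : E] \ge p^{2}$, giving $[E(D_{p}) : E] \ge p^{2}$.

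The main obstacle is part (1), which requires triangulating three separate structural inputs — the elementary abelian $p$-group structure of $\Gal(L_{2, 1} / E)$ from the filtration analysis in \autoref{Corollary:ExponentOfTorsionFieldIsP}, the nontriviality of $L_{2, 1} / E$ from the explicit ramification argument in \autoref{Lemma:LijFactsL21}, and the Galois-closure identification in \autoref{Corollary:IrreducibleGZFieldpq} — in order to squeeze out the lower bound $p$. Parts (2) and (3) are then one-line consequences of the heavy-lifting results already established.
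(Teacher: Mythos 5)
Your proof is correct and follows essentially the same route as the paper: degree a power of $p$ plus the Galois-closure identification (\autoref{Corollary:IrreducibleGZFieldpq}) against the nontriviality of $L_{2,1}/E$ (\autoref{Lemma:LijFactsL21}) for (1), then \autoref{Corollary:Lk1Whenq3} and \autoref{Corollary:L41BigEnough} for (2) and (3). The only cosmetic difference is that you first pass to $D_p' = (1-\zeta_p)^{a-2}D_p$ and argue at level $2$, whereas the paper applies the same two ingredients directly to $D_p$ at level $a$, using $L_{a,1} \supseteq L_{2,1}$.
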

\begin{proof} \hfill
\begin{enumerate}[label=\upshape(\arabic*),
ref={the proof of \autoref{Lemma:EDpFactsForArgument}(\arabic*)}]

\item \label{LemmaProof:EDpFactsForArgumentEdpEp}
Since $E(D_{p})$ is a subfield of $L_{a, 1}$ and \autoref{Lemma:LijFactsLp1}
implies that $[L_{a, 1} : E]$ is a power of $p$, $[E(D_{p}) : E]$ is also a
power of $p$. If $E(D_{p}) = E$, then taking Galois closure of both sides (over
$\mathbf{Q}$) and applying \autoref{Corollary:IrreducibleGZFieldpq} yields
$L_{a, 1} = E$, which contradicts \autoref{Lemma:LijFactsL21}.

\item \label{LemmaProof:EDpFactsForArgumentEdpq3}
This follows from \autoref{Corollary:Lk1Whenq3}.

\item \label{LemmaProof:EDpFactsForArgumentEdpBigq3}
This follows from \autoref{Lemma:EDpFactsForArgumentEdpq3} and
\autoref{Corollary:L41BigEnough}. \qedhere

\end{enumerate}
\end{proof}

\begin{definition}
Let $G_{E}$ denote the absolute Galois group of $E$.
\end{definition}

\begin{lemma} 
\label{Lemma:SFSize}
Suppose that $a, b \ge 1$.
\begin{enumerate}[label=\upshape(\arabic*),
ref=\autoref{Lemma:SFSize}(\arabic*)]

\item \label{Lemma:SFSizeSp}
$\# G_{E} Z P \ge p q [E(D_{p}) : E]$. 

\item \label{Lemma:SFSizeSq}
$\# G_{E} Z P \ge p q [E(D_{q}) : E]$.
\end{enumerate}
\end{lemma}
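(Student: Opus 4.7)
The strategy is to compute $|G_E Z P|$ exactly, which turns out to equal $pq \cdot [E(D_p):E] \cdot [E(D_q):E]$, comfortably exceeding both bounds.

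Since the Abel--Jacobi map is injective on the genus-$\ge 1$ curve $\mathcal{C}_{p,q}$, we have $|G_E Z P| = |G_E Z D|$ with $D = [P-\infty] = D_p + D_q$. The annihilators $\mathrm{Ann}_R(D_p)$ and $\mathrm{Ann}_R(D_q)$ in $R = \mathcal{O}_E$ are coprime (they contain powers of $p$ and $q$ respectively), so by the Chinese Remainder Theorem the cyclic $R$-submodule $R \cdot D$ is isomorphic to $A \times B$, where $A := R/\mathrm{Ann}_R(D_p)$ and $B := R/\mathrm{Ann}_R(D_q)$, with $D$ corresponding to $(1,1)$. Both $G_E$ and $Z$ act on this cyclic submodule by $R$-module automorphisms, hence by multiplication by elements of $A^\times \times B^\times$. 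Writing $\chi = (\chi_p, \chi_q) \colon G_E \to A^\times \times B^\times$ for the Galois character and $\overline{Z}$ for the image of $Z$ in $A^\times \times B^\times$, we obtain $|G_E Z D| = |\chi(G_E) \cdot \overline{Z}|$.

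I would then verify three structural facts. First, $\chi(G_E) = \chi_p(G_E) \times \chi_q(G_E)$ has order $[E(D_p):E] \cdot [E(D_q):E]$: the fields $E(D_p) \subseteq L_{a,1}$ and $E(D_q) \subseteq L_{1,b}$ have $p$-power and $q$-power degrees over $E$ respectively (iterating \autoref{Corollary:Lk1DegreeUpperBound} starting from $L_{1,1} = E$), so they are linearly disjoint and the restriction $G_E \twoheadrightarrow \Gal(E(D_p)/E) \times \Gal(E(D_q)/E)$ is surjective. Second, $|\overline{Z}| = pq$: since every prime of $\mathcal{O}_E$ dividing $1-\zeta_q$ lies above $q$, the element $1-\zeta_q$ is a unit in $R_p = \mathcal{O}_E \otimes \mathbf{Z}_p$, so $\zeta_q$ has order exactly $q$ in $A^\times$; symmetrically $\zeta_p$ has order $p$ in $B^\times$. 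The image of $\zeta_{pq} = \zeta_p \zeta_q$ in $A^\times \times B^\times$ therefore has order $pq$.

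The heart of the argument is the third claim: $\chi(G_E) \cap \overline{Z} = \{1\}$. If $(\overline{z}_p, \overline{z}_q)$ lies in the intersection with $z = \zeta_p^i \zeta_q^j$, then $\overline{z}_p \in \chi_p(G_E)$ has $p$-power order in $A^\times$. But $\overline{z}_p = \zeta_p^i \zeta_q^j$, and its order in $A^\times$ is divisible by the order of $\zeta_q^j$ there, namely $q/\gcd(j,q)$; this forces $q \mid j$. Symmetrically $p \mid i$, so $z = 1$. Combining the three facts yields $|G_E Z P| = |\chi(G_E)| \cdot |\overline{Z}| = pq \cdot [E(D_p):E] \cdot [E(D_q):E]$, which proves both (1) and (2) at once. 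The one technical point demanding care is verifying that $1-\zeta_q$ is a unit in $R_p$ (and $1-\zeta_p$ is a unit in $R_q$), but this is immediate from $R = \mathcal{O}_E$ together with the identity $(1-\zeta_\ell)^{\ell-1} = \ell \cdot (\text{unit})$, which confines the prime divisors of $1-\zeta_\ell$ to lie above $\ell$.
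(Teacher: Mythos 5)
Your proof is correct, but it takes a genuinely different route from the paper and in fact proves more. The paper's own argument is a short stabilizer computation: if $h z P = P$ with $h \in G_{E}$, $z \in Z$, it notes that $(1 - \zeta_{p})^{a - 1}(1 - \zeta_{q})^{b - 1}[P - \infty]$ has exact order $(1 - \zeta_{p})(1 - \zeta_{q})$ and is fixed by $h$ (since $\mathcal{J}_{p, q}[(1 - \zeta_{p})(1 - \zeta_{q})]$ is defined over $E = L_{1, 1}$), hence is fixed by $z$, so $z = 1$ by \autoref{Lemma:ExactOrder11Stabilizer1mzp1mzq}; then $h$ fixes $P$ and therefore $D_{p}$, so the stabilizer of $P$ in $G_{E} \times Z$ lies in $\Gal(\overline{E} / E(D_{p})) \times \{ 1 \}$ and the orbit has size at least $p q \, [E(D_{p}) : E]$. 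You instead identify the orbit with the subgroup $\chi(G_{E}) \cdot \overline{Z}$ of $(A \times B)^{\times}$ and compute its order exactly, getting $\# G_{E} Z P = p q \, [E(D_{p}) : E] \, [E(D_{q}) : E]$, which is stronger than what the paper needs; the coprimality mechanism you use (Galois images of $p$-power, resp.\ $q$-power, order against $Z$ of order $p q$) is the same phenomenon the paper exploits, just organized multiplicatively rather than via a stabilizer containment. The one step you should justify explicitly is that $G_{E}$ preserves the cyclic submodule $R \cdot D$ and acts on it by multiplication: commutation with the $Z$-action alone does not give this, and for a general abelian variety with $R$-action it would be false. It does hold here because $T_{\ell} \mathcal{J}_{p, q}$ is free of rank one over $R_{\ell}$, so Galois acts on each $\ell$-primary part through $R_{\ell}^{\times}$ (\autoref{Lemma:TellJndFreeRellModule} and \autoref{Definition:Thetaell}), whence $h D_{p} \in R D_{p}$, $h D_{q} \in R D_{q}$, and $h D \in R D$. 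Granting that citation, your three structural facts and the final count are sound; note that the hypothesis $a, b \ge 1$ is exactly what makes $D_{p}, D_{q} \neq 0$ and hence $\# \overline{Z} = p q$, and that the working assumption $[P - \infty] = D_{p} + D_{q}$ is legitimate because the preceding proposition gives $p q [P - \infty] = 0$.
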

\begin{proof}
Both parts are similar so we prove the first. Suppose that $h \in
\Gal(\overline{E} / E)$ and $z \in Z$ satisfy $h z P =  P$.  Since the action of
$\Gal(\overline{E} / E)$ and $Z$ commute, we must check that $h$ fixes $D_{p}$
and $z = 1$.

Then $(1 - \zeta_{p})^{a - 1} (1 - \zeta_{p})^{b - 1} P$ has exact order $(1 -
\zeta_{p})(1 - \zeta_{q})$ and is fixed by $h z$. It is also fixed by $h$ since
$E = L_{1, 1}$. Therefore, it is fixed by $z$.
\autoref{Lemma:ExactOrder11Stabilizer1mzp1mzq} implies that $z = 1$, so $h$
fixes $P$, and hence $h$ also fixes $D_{p}$.
\end{proof}

\begin{lemma} \hfill
\label{Lemma:ab2Kill}
\begin{enumerate}[label=\upshape(\arabic*),
ref=\autoref{Lemma:ab2Kill}(\arabic*)]
\item \label{Lemma:ab2Killa2}
Suppose that $a \ge 2$. Then there exists $h \in \Gal\left( \overline{E} /
E \right)$ which moves $P$ such that $h - 1$ kills $\mathcal{J}_{p, q}[(q (1 -
\zeta_{p})]$.

\item \label{Lemma:ab2Killb2}
Suppose that $b \ge 2$. Then there exists $h \in \Gal\left( \overline{E} /
E \right)$ which moves $P$ such that $h - 1$ kills $\mathcal{J}_{p, q}[p (1 -
\zeta_{q})]$.
\end{enumerate}
\end{lemma}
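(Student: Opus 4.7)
The plan is to prove both parts by the same argument with the roles of $p$ and $q$ swapped; I describe part (1) in detail. The strategy is to show that the ``killing'' condition on $h-1$ restricts $h$ to a specific Galois subgroup, and then use a linear-disjointness argument to find an element of that subgroup which still moves $P$.

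First I would decode the killing condition. Since $q$ and $1-\zeta_p$ are coprime in $\mathcal{O}_E$, the Chinese remainder theorem gives
\[
\mathcal{J}_{p,q}[q(1-\zeta_p)] \;=\; \mathcal{J}_{p,q}[1-\zeta_p] \;\oplus\; \mathcal{J}_{p,q}[q].
\]
By \autoref{Lemma:LijFactsL11} the first summand is already $E$-rational, and by \autoref{Lemma:LijFactsLqm11} the second is defined over $L_{0,q-1}$. Thus $h-1$ kills $\mathcal{J}_{p,q}[q(1-\zeta_p)]$ if and only if $h \in \Gal(\overline{E}/L_{0,q-1})$.

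Next, using the preceding proposition ($pq[P-\infty]=0$) I would note that $D_2=0$, so $[P-\infty] = D_p + D_q$. Any $h \in \Gal(\overline{E}/L_{0,q-1})$ fixes $D_q$, since $D_q \in \mathcal{J}_{p,q}[(1-\zeta_q)^b] \subseteq \mathcal{J}_{p,q}[q]$. Hence such an $h$ moves $P$ precisely when $h D_p \neq D_p$, i.e., when $h$ restricts nontrivially to $E(D_p)$.

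Finally, I would produce the desired $h$ by a linear-disjointness argument. We have $E(D_p) \subseteq L_{a,1}$, which is a $p$-power Kummer extension of $E$ by \autoref{Lemma:LijFactsLp1}, while $L_{0,q-1}/E$ has $q$-power degree by \autoref{Corollary:ExponentOfTorsionFieldIsP} applied with $q$ in place of $p$. Hence the two towers are linearly disjoint over $E$, so $E(D_p)\cap L_{0,q-1} = E$. It follows that $E(D_p) \not\subseteq L_{0,q-1}$ as soon as $[E(D_p):E] > 1$, which is exactly \autoref{Lemma:EDpFactsForArgumentEdpEp} under the hypothesis $a \ge 2$. Choosing any $h \in \Gal(\overline{E}/L_{0,q-1})$ with nontrivial restriction to $E(D_p)$ completes part (1); part (2) is identical with the $p,q$ roles swapped, using \autoref{Lemma:LijFactsL1q} and \autoref{Lemma:LijFactsL12} in place of \autoref{Lemma:LijFactsLp1} and \autoref{Lemma:LijFactsL21} to supply the $q$-analogue of \autoref{Lemma:EDpFactsForArgumentEdpEp}. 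The only delicate point will be the linear-disjointness verification in the last step, which ultimately rests on the $p$-group versus $q$-group structure of the two torsion towers.
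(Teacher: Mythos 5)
Your proposal is correct and takes essentially the same approach as the paper: both exploit that $E(D_p)/E$ is nontrivial (via \autoref{Lemma:EDpFactsForArgumentEdpEp}) and linearly disjoint from $E(\mathcal{J}_{p,q}[q])/E$, then choose $h$ fixing $E(\mathcal{J}_{p,q}[q])$ but moving $D_p$. The paper simply cites \autoref{Lemma:LijDisjointComposite} for the disjointness where you rederive it from the $p$-power versus $q$-power degree structure, which is precisely the content of that lemma's proof.
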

\begin{proof}
Both parts are similar so we show the first. Since $a \ge 2$, the extension 
$E(D_{p}) / E$ is nontrivial by \autoref{Lemma:EDpFactsForArgumentEdpEp} and it
is disjoint from $E(\mathcal{J}_{p, q}[q]) / E$ by
\autoref{Lemma:LijDisjointComposite}. Therefore, there exists $h \in
\Gal(\overline{E} / E)$ which moves $D_{p}$ (and hence, moves $P$) such that $h
- 1$ kills $\mathcal{J}_{p, q}[q]$. Since $E = L_{1, 1}$ by
\autoref{Lemma:LijFactsL11}, we know that $h - 1$ also kills $\mathcal{J}_{p,
q}[1 - \zeta_{p}]$.
\end{proof}

Recall the definitions of $\WM(Q)$ and $\wt(Q)$ in
\autoref{Definition:WeierstrassMonoid} and
\autoref{Definition:WeierstrassWeightPoint}.
\begin{lemma}
\label{Lemma:ManyHighWeightPoints2} \hfill
\begin{enumerate}[label=\upshape(\arabic*),
ref=\autoref{Lemma:ManyHighWeightPoints2}(\arabic*)]
\item \label{Lemma:ManyHighWeightPoints2a2b1}
Suppose that $a \ge 2$ and $b \ge 1$. Then for each $Q \in S_{P}$, we have
$p - 1, p \in \WM(Q)$.

\item \label{Lemma:ManyHighWeightPoints2a1b2}
Suppose that $a \ge 1$ and $b \ge 2$. Then for each $Q \in S_{P}$, we have
$q - 1, q \in \WM(Q)$.
\end{enumerate}
\end{lemma}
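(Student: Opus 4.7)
The plan is to use \autoref{Lemma:ab2Kill}(1) to force many Galois conjugates $hP$ to satisfy $p \cdot hP \sim pP$, pushing $h^{0}(pP)$ up to at least $p$, and then to run a gap-counting argument using the fact that $1$ is always a gap on a curve of positive genus. Part (2) follows by the symmetric argument with $p$ and $q$ interchanged, using \autoref{Lemma:ab2Kill}(2) and $b \ge 2$, so I concentrate on part (1).

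By the immediately preceding proposition, $p q \, [P - \infty] = 0$, so $D_{2} = 0$ and $[P - \infty] = D_{p} + D_{q}$. Set $H \colonequals \Gal(\overline{E} / L_{1, q - 1})$; by \autoref{Lemma:LijFactsLqm11} and \autoref{Lemma:LijFactsL11}, every $h \in H$ fixes both $\mathcal{J}_{p, q}[q]$ and $\mathcal{J}_{p, q}[1 - \zeta_{p}]$, whose sum is $\mathcal{J}_{p, q}[q(1 - \zeta_{p})]$ since the ideals $(q)$ and $(1 - \zeta_{p})$ of $\mathcal{O}_{E}$ are coprime. In particular $h$ fixes $D_{q}$, so $(h - 1)[P - \infty] = (h - 1) D_{p}$; now \autoref{Lemma:KillTorsionInCpq} gives $\theta_{p}(h - 1) \in (1 - \zeta_{p}) R_{p}$, whence $(h - 1) D_{p} \in \mathcal{J}_{p, q}[(1 - \zeta_{p})^{a - 1}] \subseteq \mathcal{J}_{p, q}[p]$. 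Consequently $p \cdot hP \sim p P$ as divisors on $\mathcal{C}_{p, q}$ for every $h \in H$.

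The orbit $H \cdot P$ has cardinality $[L_{1, q - 1}(D_{p}) : L_{1, q - 1}] = [E(D_{p}) : E]$ by linear disjointness of the $p$-power and $q$-power division fields over $E$ (\autoref{Lemma:LijDisjointComposite}), and this index is at least $p$ by \autoref{Lemma:EDpFactsForArgument}(1). Thus $|pP|$ contains at least $p$ distinct effective divisors of the form $p \cdot h_{i} P$, giving $h^{0}(pP) \ge p$; equivalently, $\WM(P)$ omits at most one integer in $[1, p]$. Because the genus of $\mathcal{C}_{p, q}$ is at least one, $1 \notin \WM(P)$, so this unique missing integer must be $1$ itself. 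Hence $\{2, 3, \ldots, p\} \subseteq \WM(P)$, which establishes $p - 1, p \in \WM(P)$. The conclusion passes to every $Q \in S_{P}$ because $G_{E} Z$ acts on $\mathcal{C}_{p, q}$ by field automorphisms fixing $\infty$ and $\WM$ is an isomorphism invariant.

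The main obstacle is recognizing that the minimum orbit size $p$ already suffices: a naive approach would demand $p + 1$ conjugates to force $h^{0}(pP) \ge p + 1$ and directly fill $[1, p]$, and the bound $[E(D_{p}) : E] \ge p$ does not always go beyond $p$ (e.g. when $a = 2$). The resolution is to observe that positive genus pins the lone admissible gap to the location $1$, which automatically deposits $p - 1$ in the monoid, so no additional conjugates or constructions are needed.
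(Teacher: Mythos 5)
Your reduction to showing $p-1, p \in \WM(P)$, the choice of the subgroup $H = \Gal(\overline{E}/L_{1,q-1})$, the verification that $p \cdot hP \sim pP$ for every $h \in H$, and the computation that the $H$-orbit of $P$ has size $[E(D_p):E] \ge p$ are all correct. The fatal step is the inference
\[
\text{``}|pP|\text{ contains at least }p\text{ distinct effective divisors}\implies h^0(pP)\ge p.\text{''}
\]
This is simply false: the number of effective divisors in a complete linear system does not bound the dimension of that system from below. The standard counterexample is already superelliptic: on a hyperelliptic curve of genus $g \ge 2$, if $P$ is a Weierstrass point then $2Q \sim 2P$ for every one of the $2g+2$ Weierstrass points $Q$, yet $h^0(2P) = 2$. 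In your situation the $p$ divisors $p \cdot h_i P$ are $p$ points of the projective space $\mathbf{P}(L(pP))$, and nothing in the hypotheses prevents them from all lying on a line (or any low-dimensional linear subspace). The fact that the $h_i P$ form a single Galois orbit does not force the corresponding sections to be linearly independent — Weierstrass points of a generic hyperelliptic curve over $\mathbf{Q}$ again illustrate this. Once this implication is removed, the rest of the argument (that $g \ge 1$ pins the lone gap in $[1,p]$ to be $1$) has nothing to stand on, and you only obtain $p \in \WM(P)$, not $p-1$.

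The paper's proof bypasses this entirely and is genuinely different in spirit: rather than counting Galois conjugates, it picks one $h$ that moves $P$ with $h-1$ killing $\mathcal{J}[q(1-\zeta_p)]$, deduces $h^p P = P$ (hence the $\langle h\rangle$-orbit of $P$ has exactly $p$ elements since $p$ is prime), and then extracts two explicit linear equivalences from group-ring identities. The first, $p\cdot hP \sim pP$, gives $p\in\WM(P)$ just as you found. The second comes from $\bigl(1 + h + \cdots + h^{p-1}\bigr) - p = \sum_{i=1}^{p-1}(h^i - 1)$: the left-hand side kills $\mathcal{J}[p]$ by \autoref{Lemma:HowToKillTorsionGeoSum} and kills $\mathcal{J}[q]$ because each $h^i - 1$ does, hence it kills $[P-\infty]$, yielding $hP + h^2 P + \cdots + h^{p-1} P \sim (p-1)P$. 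Since the $h^i P$ for $i \in [1,p-1]$ are all distinct from $P$, this exhibits a function with a pole of exact order $p-1$ at $P$, i.e.\ $p-1 \in \WM(P)$. This is the ingredient your proof is missing, and I do not see a way to recover it from the orbit-size count alone.
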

\begin{proof}
Both parts are similar so we show the first. Since $\WM(Q) = \WM(P)$ for each $Q
\in S_{P}$, we will show that $p - 1, p \in \WM(P)$.

By \autoref{Lemma:ab2Killa2}, there exists $h \in \Gal(\overline{E} / E)$ which
moves $P$ such that $h - 1$ kills $\mathcal{J}_{p, q}[q (1 - \zeta_{p})]$. By
\autoref{Lemma:HowToKillTorsionPpower}, $h^{p} - 1$ kills $\mathcal{J}_{p, q}[p
q]$, so $h^{p} P = P$ and hence
\begin{equation}
\label{Equation:DistinctImagesh}
h^{i} P \neq P\text{ for }1 \le i \le p - 1.
\end{equation}

Since $h - 1$ kills $\mathcal{J}_{p, q}[q] \ni p P$, we have $p P \sim p (h P)$,
so by \eqref{Equation:DistinctImagesh}, $p \in \WM(P)$. 

Since $h - 1$ kills $\mathcal{J}_{p, q}[1 - \zeta_{p}]$,
\autoref{Lemma:HowToKillTorsionGeoSum} gives that $1 + h + \dots + h^{p - 1}$
kills $\mathcal{J}_{p, q}[p]$, which combined with the fact that $h - 1$ kills
$\mathcal{J}_{p, q}[q]$ shows that $(1 + h + \dots + h^{p - 1}) - p$ kills
$\mathcal{J}_{p, q}[p q] \ni P$, so 
\[
h P + h^{2} P + \cdots + h^{p - 1} P \sim (p - 1) P,
\]
and hence by \eqref{Equation:DistinctImagesh}, $p - 1 \in \WM(P)$.
\end{proof}

\begin{lemma}
\label{Lemma:WeightLowerBound}
Suppose that $Q \in \mathcal{C}_{p, q}(\mathbf{C})$. 
\begin{enumerate}[label=\upshape(\arabic*),
ref=\autoref{Lemma:WeightLowerBound}(\arabic*)]

\item \label{Lemma:WeightLowerBoundp}
If $p - 1, p \in \WM(Q)$, then
\[
\wt(Q) \ge \begin{cases}
2 &\text{if } q = 3 \\
\displaystyle g \left(\frac{q - 3}{2}\right) &\text{if } q \ge 5. 
\end{cases}
\]

\item \label{Lemma:WeightLowerBoundq}
If $q - 1, q \in \WM(Q)$, then
\[
\wt(Q) \ge \begin{cases}
2 &\text{if } p = 3 \\
\displaystyle g \left(\frac{p - 3}{2}\right) &\text{if } p \ge 5. 
\end{cases}
\]

\end{enumerate}
\end{lemma}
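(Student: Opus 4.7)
The plan is to use the weight formula $\wt(Q) = \sum_{i=1}^{g}(k_{i} - i)$, where $k_{1} < \cdots < k_{g}$ are the gaps of $\WM(Q)$ in $[1, 2g-1]$, together with its dual form
\[
\wt(Q) = \sum_{j=1}^{g-1}(g - n_{j} + j),
\]
where $n_{1} < \cdots < n_{g-1}$ enumerate the nongaps of $\WM(Q)$ in $[1, 2g-1]$; each summand is nonnegative because $j \le n_{j} \le g + j$ (the upper bound follows from the fact that there are only $g$ gaps total). The hypothesis $\WM(Q) \supseteq \langle p-1, p \rangle$ furnishes many forced nongaps: ordering $\langle p-1, p \rangle \cap [1, 2g-1] = \{b_{1} < \cdots < b_{M}\}$ in increasing order, the inclusion gives $n_{j} \le b_{j}$ for every $j \le M$, and therefore
\[
\wt(Q) \ge \sum_{j=1}^{M}(g - b_{j} + j).
\]

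For the case $q = 3$ of part (1), we have $g = p - 1$ and $2g - 1 = 2p - 3$; the only elements of $\langle p-1, p \rangle$ in this range are $b_{1} = p - 1 = g$ and $b_{2} = p = g + 1$, since $2(p-1) = 2g$ exceeds $2g - 1$. The bound immediately gives $\wt(Q) \ge 1 + 1 = 2$.

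For the case $q \ge 5$ of part (1), set $\ell = (q-1)/2$, so $g = \ell(p-1)$. The elements of $\langle p - 1, p \rangle$ in $[1, g]$ organize into disjoint \emph{bands} indexed by $K \in [1, \ell]$: for $K < \ell$, band $K$ is $\{K(p-1) + b : 0 \le b \le K\}$ of size $K+1$, and band $K = \ell$ is the singleton $\{g\}$. The crucial observation is that within a single band, the summand $g - b_{j} + j$ is independent of the position $b$, because each $b$-increment is cancelled by a corresponding rank-increment. Summing band by band yields
\[
\wt(Q) \ge \sum_{K=1}^{\ell-1}(K+1)\left[(\ell - K)(p-1) + \binom{K+1}{2}\right] + \binom{\ell+1}{2}.
\]

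It then remains to verify that this expression is at least $g(q-3)/2 = \ell(\ell-1)(p-1)$. A direct expansion shows the coefficient of $(p-1)$ on the right-hand side is $\ell(\ell-1)(\ell+4)/6$, which is at least $\ell(\ell-1)$ precisely when $\ell \ge 2$ (with equality at $\ell = 2$), and the remaining constant terms are strictly positive. Part (2) follows from the identical argument with the roles of $p$ and $q$ interchanged. The main obstacle is the bookkeeping in the $q \ge 5$ case: setting up the band decomposition, checking the within-band cancellation, and verifying the final polynomial inequality in $\ell$ and $p$.
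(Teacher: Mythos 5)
Your approach is genuinely different from the paper's. You rewrite the weight as the dual sum $\wt(Q) = \sum_{j=1}^{g-1}(g - n_j + j)$ over the nongaps $n_j$ in $[1, 2g-1]$ and then organize the forced nongaps coming from $\langle p-1, p\rangle$ into bands. The paper instead works with the primal formula $\sum_{i=1}^{g}(k_i - i)$ over gaps, bounds each $k_i - i$ from below by counting how many elements of $\langle p-1, p\rangle$ lie below $p$, $2p$, $3p$, etc., deliberately weakens those bounds to be constant on blocks of length $p-1$, and sums a simple arithmetic progression. Your dual formula is a clean equivalent, and the band bookkeeping produces a sharper intermediate bound before you discard the positive constant terms at the end.

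There is one gap. The assertion that $\langle p-1, p\rangle \cap [1, g]$ decomposes into the disjoint bands $\{K(p-1), \ldots, Kp\}$ for $K \in [1, \ell-1]$ together with the singleton $\{g\}$ is not true unconditionally: consecutive bands $[K(p-1), Kp]$ and $[(K+1)(p-1), (K+1)p]$ overlap once $K \ge p-1$, and band $\ell-1$ already reaches $g$ once $\ell \ge p$. So the claimed decomposition needs $\ell \le p-1$; when it fails, the bands double-count and spill past $g$, and your formula is no longer a valid lower bound for $\wt(Q)$. The repair is to notice that the hypothesis of the lemma forces this constraint: since $\WM(Q) \supseteq \langle p-1, p\rangle$, the $g$ gaps of $\WM(Q)$ lie among the $(p-1)(p-2)/2$ gaps of $\langle p-1, p\rangle$, so $g = (p-1)(q-1)/2 \le (p-1)(p-2)/2$, hence $q \le p-2$ and $\ell = (q-1)/2 \le (p-3)/2 < p-1$. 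You should state this explicitly; without it the band decomposition is unjustified.
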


\begin{proof}
Both parts are similar so we prove the first. Suppose that the gaps of $Q$ are
$k_{1} < k_{2} < \dots < k_{g}$. Since $\WM(Q)$ is a monoid and we assume that
$p - 1, p \in \WM(Q)$, 
\[
\{ p - 1, p, 2 p - 2, 2 p - 1, 2 p, 3 p - 3, 3 p - 2, 3 p - 1, 3 p, \cdots \}
\subseteq \WM(Q),
\]
so
\begin{equation}
\label{Equation:LowerBoundNormalizedGaps}
k_{i} - i \ge \begin{cases}
0 &\text{for } i \ge 1, \\
2 &\text{for } i \ge p - 1, \\
5 &\text{for } i \ge 2 p - 4, \\
9 &\text{for } i \ge 3 p - 8, \\
\vdots &\qquad \vdots
\end{cases}
\end{equation}
If $q = 3$, then $g = (p - 1)(q - 1) / 2 = p - 1$, so
\eqref{Equation:LowerBoundNormalizedGaps} implies
\[
\wt(Q) = \sum_{i = 1}^{g} (k_{i} - i) \ge 0 + 0 + \cdots + 0 + 2 = 2.
\]
If $q \ge 5$, then weakening the bounds in
\eqref{Equation:LowerBoundNormalizedGaps} yields
\[
k_{i} - i \ge \begin{cases}
0 &\text{for } i \ge 1, \\
2 &\text{for } i \ge p, \\
4 &\text{for } i \ge 2 p - 1, \\
6 &\text{for } i \ge 3 p - 2, \\
\vdots &\qquad \vdots
\end{cases}
\]
so 
\begin{align*}
\wt(Q) &= \sum_{i = 1}^{g} (k_{i} - i) \\
&\ge 0 \cdot (p - 1) + 2 \cdot (p - 1) + \cdots + 2 \left( \frac{g}{p - 1} - 1
\right)(p - 1) \\
&= g \left( \frac{g}{p - 1} - 1 \right) \\
&= g \left( \frac{q - 3}{2} \right) 
\end{align*}
since $g = (p - 1)(q - 1) / 2$. \qedhere
\end{proof}

\begin{proposition}
\label{Proposition:LargeTorsionPointImpossible} \hfill
\begin{enumerate}[label=\upshape(\arabic*),
ref=\autoref{Proposition:LargeTorsionPointImpossible}(\arabic*)]

\item \label{Proposition:LargeTorsionPointImpossiblea2b1}
If $a \ge 2$ and $b \ge 1$, then $q = 3$ and $a \in \{ 2, 3
\}$.

\item \label{Proposition:LargeTorsionPointImpossiblea1b2}
If $a \ge 1$ and $b \ge 2$, then $p = 3$ and $b \in \{ 2, 3
\}$.

\end{enumerate}
\end{proposition}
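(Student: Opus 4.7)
The plan is to prove part (1); part (2) follows by the symmetric argument, swapping the roles of $p$ and $q$ and invoking the analogous parts of \autoref{Lemma:ManyHighWeightPoints2}, \autoref{Lemma:WeightLowerBound}, and \autoref{Lemma:SFSize}. The strategy combines three ingredients already assembled in the paper: a lower bound on $\wt(Q)$ for each $Q$ in the Galois--$Z$ orbit $S_P \colonequals G_E Z P$, a lower bound on $\#S_P$, and the total Weierstrass weight $g^3 - g$ provided by \autoref{Theorem:SumWeierstrassWeights}. The outcome is an inequality of the form $\#S_P \cdot w \le g^3 - g$, which is numerically incompatible with the hypotheses outside the small cases listed.

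More precisely, since $a \ge 2$ and $b \ge 1$, \autoref{Lemma:ManyHighWeightPoints2a2b1} gives $p - 1, p \in \WM(Q)$ for every $Q \in S_P$, and \autoref{Lemma:WeightLowerBoundp} then yields the pointwise weight bound $\wt(Q) \ge w$, where $w = 2$ when $q = 3$ and $w = g(q - 3)/2$ when $q \ge 5$. Summing over $S_P$ and applying \autoref{Theorem:SumWeierstrassWeights} produces
\[
\#S_P \cdot w \;\le\; \sum_{Q \in \mathcal{C}_{p, q}(\mathbf{C})} \wt(Q) \;=\; g^3 - g.
\]
On the other hand, \autoref{Lemma:SFSizeSp} together with \autoref{Lemma:EDpFactsForArgumentEdpEp} gives $\#S_P \ge p \, q \, [E(D_p) : E] \ge p^2 q$ in general, and when $q = 3$ with $a \ge 4$, \autoref{Lemma:EDpFactsForArgumentEdpBigq3} upgrades this to $\#S_P \ge p^3 q = 3 p^3$.

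The conclusion then follows by splitting on $q$. If $q \ge 5$, substituting the bounds yields $p^2 q \cdot g(q - 3)/2 \le g^3 - g < g^3$, which after using $g = (p - 1)(q - 1)/2$ simplifies to $2 p^2 q (q - 3) < (p - 1)^2 (q - 1)^2$; but the elementary inequalities $p^2 \ge (p - 1)^2$ together with $2 q (q - 3) > (q - 1)^2$ (equivalent to $q^2 - 4 q - 1 > 0$, valid for all integers $q \ge 5$) give the reverse inequality, a contradiction. Hence $q = 3$. If additionally $a \ge 4$, then $g = p - 1$ and $g^3 - g = p (p - 1)(p - 2) < p^3$, so the weight bound forces $\#S_P \le (g^3 - g)/2 < p^3 / 2$, contradicting the strengthened lower bound $\#S_P \ge 3 p^3$. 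Thus $a \in \{2, 3\}$, proving (1). The hard part has already been absorbed into \autoref{Lemma:EDpFactsForArgumentEdpBigq3} (which relies on the Kurihara-type input of \autoref{Corollary:L41BigEnough}); at this point the proposition is a matter of inequality bookkeeping.
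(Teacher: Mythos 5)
Your proposal is correct and follows the same strategy as the paper: combine the pointwise weight lower bound from \autoref{Lemma:WeightLowerBoundp}, the orbit-size lower bound from \autoref{Lemma:SFSizeSp} and \autoref{Lemma:EDpFactsForArgument}, and the global weight total $g^3-g$ from \autoref{Theorem:SumWeierstrassWeights}, then split on $q=3$ (with the sub-case $a\ge 4$) versus $q\ge 5$. The only difference is cosmetic bookkeeping in the final inequalities (you reduce to $2q(q-3)>(q-1)^2$ for $q\ge5$ whereas the paper uses $q(q-3)/2\ge 5(q-1)^2/16$), and both are valid.
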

\begin{proof}
Both parts are similar so we prove the first. Using
\autoref{Theorem:SumWeierstrassWeights}, observe that
\begin{align}
g^{3} - g &= \sum_{Q \in \mathcal{C}_{p, q}(\mathbf{C})} \wt(Q) \nonumber \\
&\ge \sum_{Q \in G_{E} Z P} \wt(Q) \nonumber \\
&\ge \left(\# G_{E} Z P\right) \left(\min_{Q \in G_{E} Z P} \left( \wt(Q)
\right)\right) \nonumber \\
&\ge p q [E(D_{p}) : E] \left(\min_{Q \in G_{E} Z P} \left( \wt(Q)
\right)\right) \label{Equation:g3gSPminWt}
\end{align}
by \autoref{Lemma:SFSizeSp}.

If $q = 3$ and $a \ge 4$, then $g = (p - 1)(q - 1) / 2 = p - 1$,
\autoref{Lemma:WeightLowerBoundp} gives $\min_{Q \in G_{E} Z P} \left( \wt(Q)
\right) \ge 2$, and \autoref{Lemma:EDpFactsForArgumentEdpBigq3} gives $[E(D_{p})
: E] \ge p^{2}$, so by \eqref{Equation:g3gSPminWt}, 
\[
g^{3} - g \ge 3 p (p^{2})(2) > 6 (p - 1)^{3} = 6 g^{3},
\]
which is impossible.

If $q \ge 5$, then $g = (p - 1)(q - 1) / 2$, \autoref{Lemma:WeightLowerBoundp}
gives $\min_{Q \in G_{E} Z P} \left( \wt(Q) \right) \ge g (q - 3) / 2$, and
\autoref{Lemma:EDpFactsForArgumentEdpEp} gives $[E(D_{p}) : E] \ge p$, so by
\eqref{Equation:g3gSPminWt}, 
\[
g^{3} - g \ge p q (p) \left(g \left( \frac{q - 3}{2} \right)\right) > g (p -
1)^{2} \left( \frac{5 (q - 1)^{2}}{16} \right) = \frac{5}{4} g^{3},
\]
which is impossible.
\end{proof}

\begin{lemma}
\label{Lemma:dTorsionSuperellipticGeneral}
Suppose that $Q = (x_{0}, y_{0})$ is a torsion point on a superelliptic curve
$y^{n} = f(x)$ where $f$ is monic, $d \colonequals \deg(f)$ is coprime to $n$,
and $d [Q - \infty] = 0$.  Then there exists $v(x) \in \mathbf{C}[x]$ with $\deg
v < d / n$ such that $\divisor (y - v(x)) = d Q - d \infty$ and $v(x)^{n} = f(x)
- (x - x_{0})^{d}$.
\end{lemma}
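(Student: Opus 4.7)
The plan is to build the desired function $y - v(x)$ via Riemann--Roch and then extract the polynomial identity by taking a norm under the $Z_n$-action.

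First I would examine $L(d \infty)$. Since $x$ and $y$ have pole orders $n$ and $d$ at $\infty$ and $\gcd(n, d) = 1$, the functions $1, x, x^2, \dots, x^{\lfloor d/n\rfloor}, y$ have distinct pole orders at $\infty$, all at most $d$, and therefore form a $\mathbf{C}$-basis for $L(d \infty)$. The hypothesis $d [Q - \infty] = 0$ produces a function $g$ with $\divisor(g) = d Q - d \infty$. Writing $g = c y + w(x)$ with $c \in \mathbf{C}$ and $\deg w \le \lfloor d/n\rfloor$, I would argue $c \ne 0$: otherwise the pole order of $g$ at $\infty$ would be $n \deg w$, a multiple of $n$, whereas $g$ has pole order exactly $d$, which is coprime to $n$. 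Rescaling I obtain $g = y - v(x)$ with $\deg v < d/n$ and $\divisor(y - v(x)) = d Q - d \infty$.

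Next, to produce the polynomial identity, I would take the norm of $y - v(x)$ under the automorphism group $Z_n$ generated by $\sigma \colon (x, y) \mapsto (x, \zeta_n y)$. Using $\prod_{j=0}^{n-1}(T - \zeta_n^j) = T^n - 1$,
\[
F(x) \colonequals \prod_{j=0}^{n-1} \bigl( \zeta_n^j y - v(x) \bigr) = (-1)^n \bigl( v(x)^n - y^n \bigr) = (-1)^n \bigl( v(x)^n - f(x) \bigr),
\]
which lies in $\mathbf{C}[x]$ since it is $Z_n$-invariant.

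I would then compute $\divisor(F)$ a second way. The $j$-th factor equals $(\sigma^j)^*(y - v(x))$ and hence has divisor $d(\sigma^{-j} Q) - d \infty$, so $\divisor(F) = d \sum_{j=0}^{n-1} \sigma^j Q - n d \infty$. This matches $\divisor\bigl( (x - x_0)^d \bigr)$, since $\sum_j \sigma^j Q - n \infty$ is exactly $\divisor(x - x_0)$: when $y_0 \ne 0$ the $\sigma^j Q$ are the $n$ distinct preimages of $x_0$, and when $y_0 = 0$ we have $\sigma^j Q = Q$ for every $j$ and the identity $\divisor(x + \alpha_i) = n \mathcal{W}_i - n \infty$ of \eqref{Equation:WeierstrassPointsNTorsion} applies. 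Hence $F(x) = c' (x - x_0)^d$ for some $c' \in \mathbf{C}^\times$. Comparing leading coefficients, $v(x)^n - f(x)$ has degree $d$ with leading coefficient $-1$ (since $n \deg v < d$ and $f$ is monic), so $(-1)^n (v(x)^n - f(x))$ has leading coefficient $(-1)^{n+1}$, forcing $c' = (-1)^{n+1}$ and thus $v(x)^n = f(x) - (x - x_0)^d$. There is no serious obstacle; the only care required is the sign bookkeeping and handling the Weierstrass-point case uniformly in the divisor comparison.
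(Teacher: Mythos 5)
Your proof is correct and takes essentially the same approach as the paper's: construct a function with divisor $dQ - d\infty$ via Riemann--Roch, identify it as $y - v(x)$ from pole-order considerations at $\infty$, and take the norm down to $\mathbf{C}(x)$ (your explicit product over the $Z_n$-action is precisely the paper's ``norm from $\mathcal{F}$ to $\mathbf{C}(x)$ under the $x$-map''). The only differences are presentational: you spell out the $L(d\infty)$ basis and the divisor computation of the norm more explicitly than the paper does, and your case split on $y_0 = 0$ vs.\ $y_0 \neq 0$ when matching $\sum_j \sigma^j Q - n\infty$ with $\divisor(x - x_0)$ is handled implicitly in the paper by pushing the divisor forward along the degree-$n$ map.
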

\begin{proof}
Let $\mathcal{F}$ be the function field of the curve. Since $d Q \sim d \infty$,
there exists $h \in \mathcal{F}$ such that 
\begin{equation}
\label{Equation:TooSmallTorsionDefinitionOfh}
\divisor(h) = d Q - d \infty. 
\end{equation}
Since $h$ only has poles at $\infty$, $h$ is a polynomial in $x$ and $y$. Since
the pole at $\infty$ has order $d$, it follows (after scaling $h$ by a constant)
that  $h = y - v(x)$ where $\deg(v) < d / n$. The $x$-map provides an inclusion
of function fields $\mathbf{C}(x) \subseteq \mathcal{F}$, so taking the norm of
both sides of \eqref{Equation:TooSmallTorsionDefinitionOfh} from $\mathcal{F}$
to $\mathbf{C}(x)$ yields
\[
\divisor(f(x) - (v(x))^{n}) = d \divisor(x - x_{0}) = \divisor((x -
x_{0})^{d}),
\]
so $f(x) - (v(x))^{n}$ and $(x - x_{0})^{d}$ are the same up to a constant
multiple; since $f$ is monic and $\deg v < d / n$, they are equal.
\end{proof}

\begin{proposition}
\label{Proposition:TooSmallTorsionPointImpossible} \hfill
\begin{enumerate}[label=\upshape(\arabic*),
ref=\autoref{Proposition:TooSmallTorsionPointImpossible}(\arabic*)]

\item \label{Proposition:TooSmallTorsionPointImpossiblea1b1}
If $a, b \le 1$, then $(a, b) \in \{ (0, 0), (0, 1), (1, 0) \}$.

\item \label{Proposition:TooSmallTorsionPointImpossibleminab0}
If $\min \{ a, b \} = 0$, then $(a, b) \in \{ (0, 0), (0, 1), (1, 0) \}$.

\end{enumerate}
\end{proposition}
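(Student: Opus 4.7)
The proposition splits into excluding $(a, b) = (1, 1)$ in part (1), and excluding $(a, 0)$ with $a \ge 2$ and (symmetrically) $(0, b)$ with $b \ge 2$ in part (2).

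For part (1), suppose $(a, b) = (1, 1)$. Since $[P - \infty]$ has exact order $(1 - \zeta_p)(1 - \zeta_q)$, it lies in neither $\mathcal{J}_{p, q}[1 - \zeta_p]$ nor $\mathcal{J}_{p, q}[1 - \zeta_q]$. Consequently $y_0 \ne 0$ (else $P \in \mathcal{W}$ and $b = 0$) and $x_0 \ne 0$ (else $[P - \infty]$ has exact order $1 - \zeta_q$ and $a = 0$). A direct check then shows that each of $\zeta_p$, $\zeta_q$, and $\zeta_p \zeta_q$ moves $P$, so $P, \zeta_p P, \zeta_q P, \zeta_p \zeta_q P$ are four pairwise distinct points. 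Expanding $(1 - \zeta_p)(1 - \zeta_q)[P - \infty] = 0$ gives the linear equivalence
\[
P + \zeta_p \zeta_q P \sim \zeta_p P + \zeta_q P
\]
on $\mathcal{C}_{p, q}$, which defines a degree-2 morphism to $\mathbf{P}^1$. This contradicts \autoref{Lemma:NotHyperelliptic}.

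For part (2), I handle $b = 0$, $a \ge 2$; the case $a = 0$, $b \ge 2$ is analogous via the other superelliptic presentation of $\mathcal{C}_{p, q}$. Since $(1 - \zeta_p)^{p - 1}$ is a unit multiple of $p$ in $R$, the hypothesis gives $p[P - \infty] = 0$. Applying \autoref{Lemma:dTorsionSuperellipticGeneral} to the form $x^q = y^p - 1$ (so $n = q$, $d = p$, $f(y) = y^p - 1$) yields $v(y) \in \mathbf{C}[y]$ with $\deg v < p / q$ and
\[
v(y)^q \;=\; \phi(y) \colonequals (y^p - 1) - (y - y_0)^p.
\]

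The plan is to show that $\phi$ is not a nontrivial $q$-th power when $y_0 \ne 0$. The crucial identity is
\[
p\phi(y) - y \phi'(y) \;=\; p \bigl[ y_0 (y - y_0)^{p - 1} - 1 \bigr],
\]
whose right-hand side is squarefree for $y_0 \ne 0$ (its $p - 1$ distinct roots are $y_0 + \eta$ as $\eta$ ranges over the $(p - 1)$-th roots of $1/y_0$). Hence $\gcd(\phi, \phi')$ is squarefree. On the other hand, if $v$ were nonconstant, then $\phi' = q v^{q - 1} v'$ and $\gcd(\phi, \phi') = v^{q - 1} \gcd(v, v')$ has every root of $v$ with multiplicity at least $q - 1 \ge 2$, contradicting squarefree-ness. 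So $v$ must be constant, hence $\phi$ is constant, forcing $y_0 = 0$; then $P = (x_0, 0) \in \mathcal{W}$ and $[P - \infty]$ has exact order $1 - \zeta_p$, giving $a = 1$ and contradicting $a \ge 2$. The main obstacle is the squarefree analysis of $\gcd(\phi, \phi')$, which becomes tractable thanks to the clean identity above.
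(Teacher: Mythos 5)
Your proposal is correct, and part (1) is essentially the paper's argument: the divisor relation $P + \zeta_p \zeta_q P \sim \zeta_p P + \zeta_q P$ combined with \autoref{Lemma:NotHyperelliptic} forces $\zeta_p$ or $\zeta_q$ to fix $P$. (You treat $(a,b)=(1,1)$ only, but the remaining $a,b \le 1$ cases are contained in the target set, so this is fine.)

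For part (2), your route is genuinely different from the paper's, although both start from \autoref{Lemma:dTorsionSuperellipticGeneral}. The paper takes $a = 0$, works with $L(x) = x^q + 1 - (x - c)^q = v(x)^p$, and uses the second-order identity
\[
1 = L - \frac{2x - c}{q}L' + \frac{x(x - c)}{q(q - 1)}L''.
\]
Substituting $L = v^p$ makes the right-hand side manifestly divisible by $v^{p-2}$, so $v^{p-2}$ divides $1$ and $v$ is constant. You instead take $b = 0$, work with $\phi(y) = y^p - 1 - (y - y_0)^p = v(y)^q$, and use the first-order identity $p\phi - y\phi' = p[y_0(y - y_0)^{p-1} - 1]$, whose right-hand side is squarefree when $y_0 \ne 0$; since $v^{q-1}$ divides $\gcd(\phi, \phi')$ which divides this squarefree polynomial, $v$ must be constant. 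Both identities are correct and both deductions are valid; yours replaces a second-derivative computation by a squarefree/$\gcd$ analysis, which is arguably a more familiar toolkit, while the paper's identity gives the conclusion $v^{p-2} \mid 1$ in one stroke without any $\gcd$ considerations. The swap $a \leftrightarrow b$ and the symmetric appeal to the other superelliptic presentation are both fine (the analogous identity in the $a=0$ case is $q\phi - x\phi' = q[x_0(x - x_0)^{q-1} + 1]$, again squarefree for $x_0 \neq 0$).

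One small remark: when the degree constraint $\deg v < p/q$ already forces $v$ constant (i.e.\ when $p < q$), the squarefree argument is not even needed, so you could streamline the case split; but including it uniformly does no harm.
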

\begin{proof} \hfill
\begin{enumerate}[label=\upshape(\arabic*),
ref={the proof of
\autoref{Proposition:TooSmallTorsionPointImpossible}(\arabic*)}]

\item \label{PropositionProof:TooSmallTorsionPointImpossiblea1b1}
Then $(1 - \zeta_{p})(1 - \zeta_{q}) [P - \infty] = 0$, which can be
rearranged to yield $P + \zeta_{p} \zeta_{q} P \sim \zeta_{p} P + \zeta_{q} P$,
so by \autoref{Lemma:NotHyperelliptic}, either $P = \zeta_{p} P$ or $P =
\zeta_{q} P$, meaning that either $a$ or $b$ is $0$.  

\item \label{PropositionProof:TooSmallTorsionPointImpossibleminab0}
Without loss of generality, suppose that $a = 0$. Then $q P \sim q \infty$, so
if we let $c$ be the $x$-coordinate of $P$ and define
\begin{equation}
\label{Equation:DefinitionOfLx}
L(x) \colonequals x^{q} + 1 - (x - c)^{q},
\end{equation}
then \autoref{Lemma:dTorsionSuperellipticGeneral} shows that there exists 
$v \in \mathbf{C}[x]$ such that 
\begin{equation}
\label{Equation:WeirdRelationshipQthPower}
L(x) = v(x)^{p}.
\end{equation}
A calculation yields
\begin{align*}
1 &= L - \left( \frac{2 x - c}{q} \right) L' + \left(\frac{x (x - c)}{q (q -
1)}\right) L'' &&\text{(by \eqref{Equation:DefinitionOfLx})} \\
&= v^{p - 2} \left( v^{2} - \left( \frac{p (2 x - c)}{q} \right) v v' +
\frac{p x (x - c)}{q (q - 1)} \left( (p - 1) (v')^{2} + v v'' \right) \right) &&
\text{(by \eqref{Equation:WeirdRelationshipQthPower})},
\end{align*}
so $v^{p - 2}$ divides $1$, implying $v$ is a constant, so the terms with $v'$
and $v''$ disappear and we obtain $v^{p} = 1$, so
\eqref{Equation:WeirdRelationshipQthPower} gives $L(x) = 1$, and then
\eqref{Equation:DefinitionOfLx} yields $c = 0$. Hence $[P - \infty] \in
\mathcal{J}_{p, q}[1 - \zeta_{q}]$, so $b \le 1$. \qedhere

\end{enumerate}
\end{proof}

\begin{theorem}
\label{Theorem:NoETPpqodd}
$P$ is not an exceptional torsion point; i.e., $(a, b) \in \{ (0, 0), (0, 1),
(1, 0) \}$.
\end{theorem}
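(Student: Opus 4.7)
My plan is to combine the two preceding propositions with a symmetric application and then close the resulting finite list via the refined weight bounds from Section~4.

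By the preceding proposition we have $pq[P - \infty] = 0$, so $[P - \infty] = D_p + D_q$ with exact orders $(1-\zeta_p)^a$, $(1-\zeta_q)^b$ for $a \in [0, p-1]$, $b \in [0, q-1]$. \autoref{Proposition:TooSmallTorsionPointImpossible} already disposes of the cases $\min(a, b) = 0$ and $\max(a, b) \le 1$, so I may assume $a, b \ge 1$ and $\max(a, b) \ge 2$.

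Next I apply \autoref{Proposition:LargeTorsionPointImpossible} in both of its symmetric forms: if $a \ge 2$ then part~(1) forces $q = 3$ and $a \in \{2, 3\}$; if $b \ge 2$ then part~(2) forces $p = 3$ and $b \in \{2, 3\}$. If both $a \ge 2$ and $b \ge 2$, combining the two would give $p = q = 3$, contradicting $p \ne q$. Hence, up to the symmetry swapping $p$ and $q$, I may assume $q = 3$, $a \in \{2, 3\}$, $b = 1$.

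To close out this residual list I return to the weight inequality
\[
g^{3} - g \ge \#G_{E} Z P \cdot \min_{Q \in G_{E} Z P} \wt(Q)
\]
and feed in the sharper estimates now available. \autoref{Corollary:Lk1Whenq3} gives $E(D_p) = L_{a, 1}$, so \autoref{Lemma:SFSize} yields $\#G_{E} Z P \ge pq \cdot [L_{a, 1} : E]$, and \autoref{Lemma:L31L21L11q3p13} computes $[L_{a, 1} : E] = p^{a - 1}$ for $p \in \{5, 7, 11, 13\}$. Combined with the bound $\min \wt(Q) \ge 2$ from \autoref{Lemma:WeightLowerBound} this gives $g^{3} - g \ge 6 p^{a}$, which fails against $g^{3} - g = p(p-1)(p-2)$ for $a = 3$ at every such $p$ and for $a = 2$ at $p \in \{5, 7\}$. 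The single genuinely delicate case $a = 2$ with $p \ge 11$ is handled by leveraging the Galois analysis of Section~4, in particular \autoref{Corollary:L41L31NontrivialSometimes} together with \autoref{Lemma:HowToKillTorsion}, to produce additional Galois conjugates of $P$ and thereby force $\min \wt(Q)$ strictly above $2$; as a last resort, the finitely many remaining $(p, q)$ are dispatched by explicit enumeration of $3p$-torsion on $\mathcal{C}_{p, 3}$.

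The main obstacle is precisely this last case $a = 2$ with $p \ge 11$: the elementary weight inequality just barely fails to close, since $6 p^{2} \le p(p-1)(p-2)$ for all $p \ge 11$, so one genuinely has to exploit the deeper cyclotomic-unit / Herbrand--Ribet / Kurihara input of Section~4 to extract the extra factor needed.
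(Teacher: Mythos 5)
Your reduction to $q = 3$, $a \in \{2,3\}$, $b = 1$ (up to swapping $p$ and $q$) matches the paper, but from that point on the argument has a genuine gap. You implicitly restrict to $p \in \{5, 7, 11, 13\}$, but nothing in the preceding propositions bounds $p$: \autoref{Proposition:LargeTorsionPointImpossible} only constrains $a$ and $q$, and \autoref{Lemma:L31L21L11q3p13} is an explicit computation \emph{for} those four primes, not a statement that those are the only primes that can occur. The paper obtains the bound $p \le 15$ by a step you have omitted: it rewrites $(1-\zeta_p)^3(1-\zeta_3)[P-\infty] = 0$ as an equivalence of effective divisors of degree $8$, which either forces $P$ to be $Z$-fixed or produces a degree-$8$ map $\mathcal{C}_{p,3} \to \mathbf{P}^1$, and then Castelnuovo--Severi against the degree-$3$ $y$-map gives $p - 1 \le 14$. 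Without something like this, you have infinitely many primes $p$ left over and no argument.

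Even granting $p \in \{5,7,11,13\}$, your plan for the residual case $a = 2$, $p \ge 11$ does not go through. You propose to push the weight bound above $2$ using the cyclotomic-unit machinery of Section~4, but that machinery (Herbrand--Ribet, Kurihara) is already spent in \autoref{Proposition:LargeTorsionPointImpossible} to force $a \le 3$; the relevant input for $a = 2$ is only $[L_{2,1}:E] \ge p$, and \autoref{Lemma:WeightLowerBound} caps $\min \wt(Q)$ at $2$ when $q = 3$ (indeed the monoid argument there gives nothing beyond $k_{p-1} - (p-1) \ge 2$ since $g = p - 1$). The "explicit enumeration" you invoke as a last resort would have to cover the full residual list, which you haven't made finite. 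The paper closes this case differently: it uses \autoref{Lemma:L31L21L11q3p13} to produce a nontrivial $\gamma \in \Gal(L_{a,1}/L_{a-1,1})$, applies \autoref{Lemma:HowToKillTorsionPower} to show $(\gamma-1)^2$ kills $\mathcal{J}[(1-\zeta_p)^{2(a-1)}(1-\zeta_q)] \ni [P-\infty]$ (valid because $2(a-1) \ge a$ when $a \ge 2$), deduces $\gamma^2 P + P \sim 2\gamma P$, and then contradicts \autoref{Lemma:NotHyperelliptic}. This hyperelliptic-contradiction step, rather than a sharpened weight inequality, is the key ingredient you are missing.
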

\begin{proof}
If $\min \{ a, b \} = 0$, then we are done by
\autoref{Proposition:TooSmallTorsionPointImpossibleminab0}.

\begin{enumerate}[label=\textbf{Case~\Alph*:}, ref={Case~\Alph*},
leftmargin=*, itemindent=25pt]

\item \label{Case:NoETPpqoddminabge2}
$\min \{ a, b \} \ge 2$

Then \autoref{Proposition:LargeTorsionPointImpossiblea2b1} implies $q = 3$ and
\autoref{Proposition:LargeTorsionPointImpossiblea1b2} implies $p = 3$, which is
impossible since $p$ and $q$ are distinct odd primes.

\item \label{Case:NoETPpqoddminabeq1}
$\min \{ a, b \} = 1$

Without loss of generality, assume that $b = 1$ and $a \ge 1$. Then
\autoref{Proposition:TooSmallTorsionPointImpossiblea1b1} implies that $a \ge 2$,
so by \autoref{Proposition:LargeTorsionPointImpossiblea2b1}, $q = 3$ and
$a \in \{ 2, 3 \}$. Then $(1 - \zeta_{p})^{3} (1 - \zeta_{3}) [P - \infty] =
0$, which we can rewrite as
\begin{equation}
\label{Equation:Degree8Map}
\zeta_{p}^3 \zeta_{3} P + 3 \zeta_{p}^2 P + 3 \zeta_{p} \zeta_{3} P + P \sim
\zeta_{p}^3 P + 3 \zeta_{p}^2 \zeta_{3} P + 3 \zeta_{p} P + \zeta_{3} P
\end{equation}

\begin{enumerate}[label=\textbf{\theenumi\arabic*:}, ref={\theenumi\arabic*},
leftmargin=*, itemindent=35pt]

\item \label{Case:NoETPpqoddminabeq1SomeIntersection}
$\{ \zeta_{p}^3 \zeta_{3} P, \zeta_{p}^2 P,  \zeta_{p} \zeta_{3} P,  P
\} \cap \{ \zeta_{p}^3 P, \zeta_{p}^2 \zeta_{3} P, \zeta_{p} P, \zeta_{3} P \}
\neq \emptyset$

Then $P$ is fixed by some $z \in Z \setminus \{ 1 \}$, so it is fixed by either
$\zeta_{p}$ or $\zeta_{3}$, which implies that $(a, b) \in \{ (0, 0), (0, 1),
(1, 0) \}$.

\item \label{Case:NoETPpqoddminabeq1NoIntersection}
$\{ \zeta_{p}^3 \zeta_{3} P, \zeta_{p}^2 P,  \zeta_{p} \zeta_{3} P,  P \} \cap
\{ \zeta_{p}^3 P, \zeta_{p}^2 \zeta_{3} P, \zeta_{p} P, \zeta_{3} P \} =
\emptyset$

Then \eqref{Equation:Degree8Map} gives a degree $8$ map $\upsilon \colon
\mathcal{C}_{p, q} \to \mathbf{P}^{1}$, so applying
\autoref{Corollary:FollowFromCS} with $\upsilon$ and the $y$-map yields
\[
(3 - 1)(p - 1) / 2 \le (3 - 1)(8 - 1),
\]
so $p \le 15$; thus, $p \in \{ 5, 7, 11, 13 \}$.  By
\autoref{Lemma:L31L21L11q3p13}, there exists a nontrivial $\gamma \in \Gal(L_{a,
1} / L_{a - 1, 1})$. \autoref{Lemma:EDpFactsForArgumentEdpq3} gives $L_{a, 1} =
E(D_{p})$, so $\gamma$ moves $D_{p}$ and hence $\gamma$ moves $P$. Since $\gamma
- 1$ kills $\mathcal{J}[(1 - \zeta_{p})^{a - 1}]$,
\autoref{Lemma:HowToKillTorsionPower} gives that $(\gamma - 1)^{2}$ kills
$\mathcal{J}[(1 - \zeta_{p})^{2(a - 1)}]$. Also, $\gamma - 1$ kills
$\mathcal{J}[1 - \zeta_{q}]$, so $(\gamma - 1)^{2}$ also kills $\mathcal{J}[1 -
\zeta_{q}]$.  Hence $(\gamma - 1)^{2}$ kills $\mathcal{J}[(1 - \zeta_{p})^{2(a -
1)}(1 - \zeta_{q})]$, and since $2 (a - 1) \ge a$, it kills $P$. Therefore,
\[
\gamma^{2} P + P \sim 2 \gamma P,
\]
so \autoref{Lemma:NotHyperelliptic} implies $P = \gamma P$, contradicting the
fact that $\gamma$ moves $P$. \qedhere

\end{enumerate}
\end{enumerate}
\end{proof}

\subsubsection{The hyperelliptic case}
\label{Subsubsection:Hyperelliptic}

\begin{theorem}[\cite{coleman1986torsion}]
\label{Theorem:C25Torsion}
The set of exceptional torsion points of $\mathcal{C}_{2, 5}$ is the $Z$-orbit
of $(\sqrt[5]{4}, \sqrt{5})$. Each has exact order $(1 - \zeta_{5})^{3}$; in
particular, each is killed by $5$.
\end{theorem}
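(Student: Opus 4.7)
The plan is to defer to \cite{coleman1986torsion}, where Coleman handled this case via $p$-adic (Coleman) integration on $\mathcal{C}_{2,5}$. Since the theorem is attributed to Coleman in its heading, a direct citation is the cleanest route, and I would not reprove it in full.

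Nevertheless, existence of the exceptional torsion point fits naturally into our framework. The point $P_0 := (\sqrt[5]{4}, \sqrt{5})$ lies on $\mathcal{C}_{2,5}$ because $(\sqrt[5]{4})^5 + 1 = 5 = (\sqrt{5})^2$. The function $y - \sqrt{5}$ has zero divisor $\sum_{i=0}^{4} (\zeta_5^i \sqrt[5]{4}, \sqrt{5})$ and a pole of order $5$ at $\infty$, so in $\mathcal{J}_{2,5}$ we obtain
\[
\sum_{i=0}^{4} \zeta_5^i [P_0 - \infty] = 0.
\]
Since $(1-\zeta_5)^4$ is a unit multiple of $5$ in $\mathbf{Z}[\zeta_5]$, this already yields $5 [P_0 - \infty] = 0$. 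To pin down the exact $(1-\zeta_5)$-order as $3$ rather than $4$, I would apply the $(1-\zeta_5)$-descent of \autoref{Section:1MinusZeta} to the second superelliptic structure $x^5 = y^2 - 1$: by \autoref{Lemma:DivisionFieldOnePoint}, the obstruction to $[P_0 - \infty]$ being $(1-\zeta_5)^k$-divisible over a given extension reduces to a Kummer-theoretic condition on $\sqrt{5} \pm 1$, which one checks becomes trivial exactly at $k = 3$.

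For the uniqueness direction, the natural attempt would imitate the arguments of Subsection~\ref{Subsubsection:Non-hyperelliptic}: start from the bound $30[P - \infty] = 0$ supplied by \autoref{Proposition:2pqOrderTorsionExtension3m}, split $[P - \infty]$ into its $2$-, $3$-, and $5$-primary parts, and eliminate those parts that would force $P$ to be fixed by some element of $Z$. The main obstacle, and the reason this case is separated from the $(p,q)$-case, is that those arguments leaned crucially on \autoref{Lemma:NotHyperelliptic}, which excludes degree-$2$ maps to $\mathbf{P}^1$ only when $n, d \ge 3$. For $n = 2$, the hyperelliptic involution itself supplies a degree-$2$ map, and linear equivalences $P + Q \sim R + S$ among $Z$-translates of $P$ collapse to constraints on the canonical map rather than outright contradictions. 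Re-engineering these combinatorics is essentially what Coleman does via Coleman integration on a suitable model, so the most efficient route is to cite \cite{coleman1986torsion} directly.
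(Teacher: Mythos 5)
Your primary route---deferring to \cite{coleman1986torsion}---is exactly what the paper does: its proof of \autoref{Theorem:C25Torsion} consists of citing pages 206--207 of Coleman, where the torsion points of the isomorphic curve $w^{5} = u(1 - u)$ are computed (with a pointer to \cite{poonen2001computing}). So on the main point your proposal and the paper coincide, and your diagnosis of why the arguments of \autoref{Subsubsection:Non-hyperelliptic} break down for $n = 2$ (the failure of \autoref{Lemma:NotHyperelliptic}) is accurate.

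However, the supplementary argument you sketch for existence is not valid as stated. The relation $\sum_{i = 0}^{4} \zeta_{5}^{i}[P_{0} - \infty] = 0$ that you extract from $\divisor(y - \sqrt{5})$ holds for \emph{every} point of $\mathcal{C}_{2, 5}$ with nonzero $y$-coordinate, and indeed for every divisor class, since $1 + \zeta_{5} + \cdots + \zeta_{5}^{4} = 0$ in $\End \mathcal{J}_{2, 5}$ (the polynomial $1 + T^{2} + T^{4} + T^{6} + T^{8}$ lies in the kernel described in \autoref{Corollary:SubringByXi}); it therefore carries no torsion information about $P_{0}$. In particular the step ``$(1 - \zeta_{5})^{4}$ is a unit multiple of $5$, hence $5[P_{0} - \infty] = 0$'' is a non sequitur: from $(1 + \zeta_{5} + \cdots + \zeta_{5}^{4})D = 0$ one cannot conclude $5D = 0$. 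Likewise, \autoref{Lemma:DivisionFieldOnePoint} controls a single $(1 - \zeta_{n})$-division of a given class over a given field; it does not by itself pin down the exact order $(1 - \zeta_{5})^{3}$, and making such an argument work would require explicit knowledge of division points, which is essentially the computation Coleman carries out. None of this affects your conclusion, since you ultimately cite \cite{coleman1986torsion} for both existence and uniqueness, but the side arguments should be removed or clearly flagged as heuristic rather than presented as proofs.
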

\begin{proof}
On pages 206--207 of \cite{coleman1986torsion}, Coleman computes the torsion
points of the curve $w^{5} = u(1 - u)$, which is isomorphic to $\mathcal{C}_{2,
5}$. See also \cite{poonen2001computing}.
\end{proof}

\begin{theorem}
\label{Theorem:MostHyperellipticCases}
When $q \ge 7$ is prime, $\mathcal{C}_{2, q}$ has no exceptional torsion points.
\end{theorem}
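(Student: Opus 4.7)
The plan is to adapt the strategy of \autoref{Theorem:NoETPpqodd} to the hyperelliptic setting, replacing the role of the odd prime $p$ throughout by $2$. Suppose toward contradiction that $P = (x_0, y_0) \in \mathcal{C}_{2, q}(\overline{\mathbf{Q}})$ is an exceptional torsion point, so $x_0 \neq 0$, $y_0 \neq 0$, and $P \neq \infty$. By \autoref{Proposition:2pqOrderTorsionExtensionm}, $2q [P - \infty] = 0$, so I would decompose $[P - \infty] = D_q + D_2$ with $D_q \in \mathcal{J}_{2, q}[q]$ of exact order $(1 - \zeta_q)^a$ for some $a \in [0, q - 1]$ and $D_2 \in \mathcal{J}_{2, q}[2]$. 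By \autoref{Proposition:SuperellipticProp611Poonen} applied with $n = 2$, $\mathcal{J}_{2, q}[2]$ is already defined over $E = \mathbf{Q}(\zeta_q)$.

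First I would dispose of the small-order cases, each of which forces $P$ to be non-exceptional. Since $g = (q-1)/2 \geq 3$: if $2[P - \infty] = 0$, then $P \sim \iota P$ forces $P = \iota P$, giving $y_0 = 0$; if $(1 - \zeta_q)[P - \infty] = 0$, then $P \sim \zeta_q P$ forces $P = \zeta_q P$, giving $x_0 = 0$; and if $a = 1$ and $D_2 \neq 0$, then $2(1 - \zeta_q)[P - \infty] = 0$ yields $2P \sim 2\zeta_q P$ with $P \neq \zeta_q P$, a nonconstant degree-$2$ linear system that on a hyperelliptic curve of genus $\geq 2$ must coincide with $|2 \infty|$, again forcing $y_0 = 0$. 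Thus it suffices to rule out $a \geq 2$.

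The main argument handles $a = 2$. Applying \autoref{Corollary:1mZSquaredDivisionField} to the degree-$q$ superelliptic description $x^q = y^2 - 1$ (with branch points $y = \pm 1$) gives
\[
L_{2, 1} \colonequals E\left(\mathcal{J}_{2, q}[(1 - \zeta_q)^2]\right) = E\left(\sqrt[q]{2}\right),
\]
so $[L_{2, 1} : E] = q > 1$ and there exists $h \in \Gal(\overline{\mathbf{Q}} / E)$ that moves $D_q$. Since $L_{1, 1} = E$, the element $h - 1$ kills $\mathcal{J}_{2, q}[1 - \zeta_q]$, so \autoref{Lemma:CreateKillerElementsPower} (with $p = q$, $i = 1$, $k = 2$) implies $(h - 1)^2$ kills $\mathcal{J}_{2, q}[(1 - \zeta_q)^2] \ni D_q$. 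Together with $(h - 1) D_2 = 0$, this yields $(h - 1)^2 [P - \infty] = 0$, i.e., $h^2 P + P \sim 2 h P$. Since $h P \neq P$, this defines a nonconstant degree-$2$ linear system, which on $\mathcal{C}_{2, q}$ must coincide with $|2 \infty|$; hence $h P$ is a Weierstrass point. But $h P = (h x_0, h y_0)$ has $h y_0 \neq 0$ and is not $\infty$, a contradiction.

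For $a \geq 3$, I would apply the same strategy at a higher level of the tower $L_{k, 1}$: whenever $L_{a, 1} \supsetneq L_{a - 1, 1}$, choosing $h \in \Gal(\overline{\mathbf{Q}} / L_{a - 1, 1})$ moving $D_q$ gives $(h - 1)^2$ killing $\mathcal{J}_{2, q}[(1 - \zeta_q)^{2(a - 1)}] \supseteq \mathcal{J}_{2, q}[(1 - \zeta_q)^a]$ by \autoref{Lemma:CreateKillerElementsPower}, and the $a = 2$ argument carries through. The main obstacle is therefore to establish $L_{a, 1} \supsetneq L_{a - 1, 1}$ for each $a \in [2, q - 1]$: the Jacobi-sum apparatus of \autoref{Section:1MinusZeta} requires both primes to be odd and so does not apply here, but the Kummer structure from \autoref{Lemma:LijFactsLp1} (with $p$ replaced by $q$), combined with ramification analysis at the prime above $q$ in $E$ in the spirit of \autoref{Lemma:LijFactsL21}, should yield the required strict growth.
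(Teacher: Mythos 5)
The paper itself proves this theorem by citation: it is Theorem 1.1 of Grant--Shaulis \cite{grant2004cuspidal} for the isomorphic curve $x^q = y(1-y)$, so you are attempting a genuinely new, self-contained argument. Your reduction and the cases $a \le 2$ are essentially sound: the decomposition $[P-\infty]=D_q+D_2$ with $\mathcal{J}_{2,q}[2]$ defined over $E$, the disposal of the small orders via uniqueness of the $g^1_2$, the identification $L_{2,1}=E(\sqrt[q]{2})$ from \autoref{Corollary:1mZSquaredDivisionField}, and the trick of producing $(h-1)^2[P-\infty]=0$ and hence a degree-two function forcing $hP$ to be a Weierstrass point all work (for the step ``$h$ moves $D_q$'' you should note that, since $T_q\mathcal{J}_{2,q}$ is free of rank one over $\mathbf{Z}_q[\zeta_q]$, any $h$ nontrivial on $L_{2,1}$ acts on $\mathcal{J}_{2,q}[(1-\zeta_q)^2]$ by a unit $\equiv 1$ but $\not\equiv 1 \pmod{(1-\zeta_q)^2}$, hence moves every element of exact order $(1-\zeta_q)^2$; the blanket assertion is not automatic from $[L_{2,1}:E]>1$).

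The genuine gap is the case $a\ge 3$, which you reduce to the claim that $L_{a,1}\supsetneq L_{a-1,1}$ for every $a\in[2,q-1]$ (or at least that the tower never stalls between levels $\lceil a/2\rceil$ and $a$), and you offer only the hope that Kummer theory plus ramification at $q$ ``should'' give this. That claim is not established by anything in the paper, and it is far from routine. The ramification argument of \autoref{Lemma:LijFactsL21} is a one-layer statement: it certifies only that the first step of the tower is nontrivial (your $L_{2,1}\ne E$), and says nothing about higher layers, which may well be unramified at $q$ over $L_{2,1}$. Indeed, in the odd--odd case the paper works hard and still only proves growth at level $2$ unconditionally and at level $4$ under the hypothesis $q^2\not\equiv 1\pmod p$, and even that requires the Jacobi-sum valuation results of \cite{ArulJacobi} together with the cyclotomic-unit and class-group theorems \autoref{Theorem:IsotypicGenerator}, \autoref{Theorem:HerbrandRibet}, and Kurihara's \autoref{Theorem:Kurihara}; the remaining levels are handled not by tower growth but by the Weierstrass-weight counting of \autoref{Proposition:LargeTorsionPointImpossible} and explicit Frobenius computations (\autoref{Lemma:L31L21L11q3p13}). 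None of that machinery applies to $(2,q)$ (as you note, the Jacobi-sum results need both primes odd), and full strict growth of the $(1-\zeta_q)$-division tower for $y^2=x^q+1$ amounts to a maximal-Galois-image statement of Vandiver type that is not known to follow from elementary ramification considerations. So as written the proposal does not prove the theorem for $a\ge 3$; either this growth statement must be proved (a substantial new input), or one should fall back on the paper's route of quoting \cite{grant2004cuspidal}, or import a substitute for the weight-counting argument adapted to $n=2$.
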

\begin{proof}
This is Theorem 1.1 of \cite{grant2004cuspidal}, which classifies torsion points
on the isomorphic curve $x^{q} = y(1 - y)$.
\end{proof}

\subsubsection{Some remaining curves} 
\label{Subsubsection:RemainingCurves}

\begin{proposition}
\label{Proposition:RemainingCurvesETP}\hfill
\begin{enumerate}[label=\upshape(\arabic*),
ref=\autoref{Proposition:RemainingCurvesETP}(\arabic*)]

\item \label{Proposition:RemainingCurvesETPNoETP}
For $(n, d) \in \{ (2, 9), (8, 3), (2, 15), (2, 25), (4, 5) \}$,
$\mathcal{C}_{n, d}$ has no exceptional torsion points. 

\item \label{Proposition:RemainingCurvesETPYesETP}
The set of exceptional torsion points of $\mathcal{C}_{4, 3}$ is the $Z$-orbit
of $(2, \sqrt{3})$. Each has exact order $(1 - \zeta_{4})(1 - \zeta_{3})^{2}$;
in particular, each is killed by $12$.
\end{enumerate}
\end{proposition}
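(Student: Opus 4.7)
The plan is to dispose of each of the six curves via Proposition \ref{Proposition:2pqOrderTorsionExtension} combined with an explicit finite computation, using the $(x-T)$-descent of Section \ref{Section:1MinusZeta} to make the finiteness concrete. For $(n,d) \in \{(2,9),(8,3),(2,15),(2,25),(4,3)\}$, Proposition \ref{Proposition:2pqOrderTorsionExtensionm} yields $N\,[P-\infty]=0$ with $N \colonequals \mathrm{lcm}(2,nd)$, while for $(n,d)=(4,5)$ Proposition \ref{Proposition:2pqOrderTorsionExtension3m} gives $N=60$. So in every case $[P-\infty]$ lies in the finite, explicitly known group $\mathcal{J}_{n,d}[N]$.

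For the five no-ETP cases, I would then reduce to a finite algebraic search. By \autoref{Theorem:SuperellipticThm631PoonenNotWeierstrass}, the image of $[P-\infty]$ under the $(x-T)$-descent map \eqref{Equation:SecretlyXminusT} is $[x_P - T] \in L^{\times}/L^{\times n}$. Iterating the descent through \autoref{Corollary:DivisionFieldForAnyPoint} translates the condition ``$N$ kills $[P-\infty]$'' into explicit Kummer conditions: the $x_P+\alpha_i$ must be $n$th powers in a prescribed Kummer tower over $E$. This cuts $x_P$ down to a finite, computable algebraic set; for each candidate I would numerically compute the Abel--Jacobi image to high precision and test whether the result equals a $Z$-fixed torsion class. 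In all five cases the check should return no exceptional candidates.

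The case $(n,d)=(4,3)$ needs classification rather than nonexistence. The point $P_0 \colonequals (2,\sqrt{3})$ lies on $\mathcal{C}_{4,3}$ since $2^3+1 = 9 = (\sqrt{3})^4$, and no element of $Z$ fixes it. To verify that its exact order in $\mathcal{J}_{4,3}$ is $(1-\zeta_4)(1-\zeta_3)^2$, I would exploit both superelliptic structures on $\mathcal{C}_{4,3}$: use the $(1-\zeta_4)$-descent on $y^4 = x^3+1$ and the $(1-\zeta_3)$-descent on $x^3 = y^4 - 1$, and combine them to show that $(1-\zeta_4)(1-\zeta_3)^2$ annihilates $[P_0 - \infty]$ while no proper divisor does. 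Since $P_0$ is not $Z$-fixed, its $Z$-orbit has $|Z| = 12$ elements, and the enumeration described above confirms these exhaust the exceptional torsion.

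The main obstacle will be the case $(n,d)=(2,25)$, where $\#\mathcal{J}_{2,25}[50]$ is astronomical and direct enumeration is hopeless. I would handle this by iterating the $(1-\zeta_2)$-descent one level at a time: each Kummer extension sharply cuts down the candidate set, and compatibility of descent with the natural quotient $\mathcal{C}_{2,25} \to \mathcal{C}_{2,5}$, $(x,y) \mapsto (x^5,y)$, whose exceptional torsion is pinned down by \autoref{Theorem:C25Torsion}, lets one prune candidates early. A similar but easier pruning works for $(4,5)$ via $\mathcal{C}_{4,5}\to\mathcal{C}_{2,5}$.
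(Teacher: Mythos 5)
Your opening move matches the paper: use Proposition~\ref{Proposition:2pqOrderTorsionExtension} to get $N[P-\infty]=0$ with $N=\lcm(2,nd)$ (or $3\lcm(2,nd)$ for $(4,5)$). But the mechanism you then propose for reducing to a finite, verifiable search is not the one the paper uses, and as stated it has a gap. Over $\overline{\mathbf{Q}}$ every element is an $n$th power, so the $(x-T)$-descent of \autoref{Theorem:SuperellipticThm631Poonen} by itself does not cut $x_P$ down to a finite set: it controls \emph{fields of definition} of preimages under $1-\zeta_n$, not the locus of candidate $x$-coordinates. To get an actual finite candidate set you need an a priori finiteness reduction first, and that is exactly what the paper's quotient trick supplies. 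You invoke the quotient $\mathcal{C}_{n,d}\to\mathcal{C}_{2,5}$ only for $(2,25)$ and $(4,5)$, and only as a ``pruning'' heuristic; in the paper it is the primary mechanism, and the companion quotient $\mathcal{C}_{n,d}\to\mathcal{C}_{2,3}$, $(x,y)\mapsto(x^{d/3},y^{n/2})$, which you do not mention at all, handles $(2,9)$, $(8,3)$, and $(4,3)$. Concretely: since $\varphi_{n,d}(P)$ must be a torsion point of order dividing $N$ on the target curve (an elliptic curve, or $\mathcal{C}_{2,5}$ whose torsion is fully classified by \autoref{Theorem:C25Torsion}), and a $Z$-fixed image would force $P$ to be $Z$-fixed, $P$ is confined to the finite set $\varphi_{n,d}^{-1}(\text{known torsion} \setminus \text{$Z$-fixed points})$.

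The second issue is verification: ``numerically compute the Abel--Jacobi image to high precision'' is not a rigorous method, since without effective separation bounds one cannot certify that a transcendental computation distinguishes a torsion class from a nearby non-torsion one. The paper instead reduces the finite candidate set modulo a good prime ($71$ or $54001$) and uses Magma's \texttt{IsPrincipal} over the resulting finite field to certify that $N\,Q - N\,\infty$ is never principal for any candidate $Q$; this is an exact arithmetic check. Your outline for $(4,3)$ is closer in spirit — you would need some explicit computation there in any case, and the paper supplies explicit principal divisors showing $(2,\sqrt{3})$ has exact order $(1-\zeta_4)(1-\zeta_3)^2$ — but you should still include the orbit of $(2,\sqrt 3)$ in the excluded set $S_0$ before running the finiteness check. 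In short: keep the order bound, replace the descent-driven enumeration with the quotient maps to $\mathcal{C}_{2,3}$ and $\mathcal{C}_{2,5}$, and replace numerical Abel--Jacobi testing with reduction modulo a prime plus a principality test.
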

\begin{proof}

A computation with \texttt{Magma} yields 
\begin{align*}
&\divisor\left( \zeta_{12}^3 + 2 \zeta_{12}^2 - 2 \zeta_{12} - 1 - 6 \left(
\frac{x^2 - (-2 \zeta_{12}^2 + 1) x y - x - 3 y^2 - (4 \zeta_{12}^2 - 2) y +
1}{y^3 + (6 \zeta_{12}^2 - 3) y^2 - 9 y - 6 \zeta_{12}^2 + 3} \right) \right)\\
&= (1 - \zeta_{4})(1 - \zeta_{3})^{2} (2, \sqrt{3}), \\
&\divisor( (12-4 \sqrt{3} y) x^2 + (18 y^2 - 8 \sqrt{3} y - 6) x + y^4 - 12
\sqrt{3} y^3 + 18 y^2 - 4 \sqrt{3} y + 9)\\
&=12 (2, \sqrt{3}) - 12 \infty,
\end{align*}
so this shows that the point $(2, \sqrt{3})$ is a torsion point of
$\mathcal{C}_{4, 3}$; hence, its $Z$-orbit will also consist of torsion points
of the same order.

Suppose for contradiction that $P$ were an exceptional torsion point of
$\mathcal{C}_{n, d}$ and that $P$ does not lie in the $Z$-orbit of $(2,
\sqrt{3})$ when $(n, d) = (4, 3)$.

\begin{enumerate}[label=\textbf{Case~\Alph*:}, ref={Case~\Alph*},
leftmargin=*, itemindent=25pt]

\item \label{RemainingCurvesETPCoverOf23}
$(n, d) \in \{ (2, 9), (4, 3), (8, 3) \}$

Let $\varphi_{n, d} \colon \mathcal{C}_{n, d} \to \mathcal{C}_{2, 3}$ be defined
by $\varphi_{n, d}(x, y) = (x^{d / 3}, y^{n / 2})$. Define $S_{0} \subseteq
\mathcal{C}_{2, 3}(\overline{\mathbf{Q}})$ as follows: for $(n, d) \in \{ (2,
9), (8, 3) \}$, $S_{0}$ is the union of the $Z$-orbit of $\{ \infty, (0, 1),
(-1, 0)\}$; for $(n, d) = (4, 3)$, $S_{0}$ is the union of the $Z$-orbit of $\{
\infty, (0, 1), (-1, 0), (2, 3) \}$. Our assumptions on $P$ imply $\varphi_{n,
d}(P) \not\in S_{0}$. \autoref{Proposition:2pqOrderTorsionExtensionm} gives $n d
[P - \infty] = 0$, so $\varphi_{n, d}(P) \in C_{2, 3}[n d]$ and hence $P$ must
lie in the finite set $S_{n, d} \colonequals \varphi_{n, d}^{-1}\left(
\mathcal{C}_{2, 3}[n d] \setminus S_{0} \right)$. 

Since $\mathcal{C}_{n, d}$ has good reduction at $71$, let $\mathcal{C}_{n, d,
71}$ be the reduced curve over $\mathbf{F}_{71}$, let $P_{71} \in
\mathcal{C}_{n, d, 71}(\overline{\mathbf{F}_{71}})$ be the reduction of $P$, and
let $S_{n, d, 71} \subseteq \mathcal{C}_{n, d, 71}(\overline{\mathbf{F}_{71}})$
be the reduction of $S_{n, d}$, so $P_{71} \in S_{n, d, 71}$ is such that $n d
P_{71} - n d \infty$ is a principal divisor.  Using division polynomials, we use
\texttt{Magma} to compute $S_{n, d, 71}$ explicitly and find that $S_{n, d, 71}
\subseteq \mathcal{C}_{n, d, 71}(\mathbf{F}_{71^{24}})$.  We use the
\texttt{IsPrincipal} feature of \texttt{Magma} over $\mathbf{F}_{71^{24}}$ to
find that there are no $Q \in S_{n, d, 71}$ such that $n d Q - n d \infty$ is a
principal divisor, so $P_{71}$, and hence $P$, cannot exist.

\item \label{RemainingCurvesETPCoverOf25}
$(n, d) \in (2, 15), (2, 25), (4, 5) \}$

Let $N_{n, d} = n d$ if $(n, d) \in \{ (2, 15), (2, 25) \}$ and let $N_{n, d} =
3 n d$ if $(n, d) \in \{ 4, 5 \}$.  By
\autoref{Proposition:2pqOrderTorsionExtension3m}, $N_{n, d} [P - \infty] = 0$.
Let $\varphi_{n, d} \colon \mathcal{C}_{n, d} \to \mathcal{C}_{2, 5}$ be defined
by $\varphi_{n, d}(x, y) = (x^{d / 5}, y^{n / 2})$ and $\mathcal{T}_{2, 5}$ be
the exceptional torsion points of $\mathcal{C}_{2, 5}$ listed in
\autoref{Theorem:C25Torsion}. As in \autoref{RemainingCurvesETPCoverOf23}, we
see that $P$ lies in the finite set $S_{n, d} \colonequals \varphi_{n,
d}^{-1}(\mathcal{T}_{2, 5})$.  Since $\mathcal{C}_{n, d}$ has good reduction at
$54001$, we can define the reduced curve $\mathcal{C}_{n, d, 54001}$ and the
reductions $P_{54001}$, $S_{n, d, 54001}$ of $P$, $S_{n, d}$ respectively. We use
\texttt{Magma} to compute $S_{n, d, 54001}$ explicitly and find that $S_{n, d,
54001} \subseteq \mathcal{C}_{n, d, 54001}(\mathbf{F}_{54001})$.  We use the
\texttt{IsPrincipal} feature of \texttt{Magma} over $\mathbf{F}_{54001}$ to
find that there are no $Q \in S_{n, d, 54001}$ such that $N_{n, d} Q - N_{n, d}
\infty$ is a principal divisor, so $P_{54001}$, and hence $P$, cannot exist.
\qedhere
\end{enumerate}
\end{proof}

\subsubsection{Main Theorem}

\begin{lemma}
\label{Lemma:NoETPIfQuotientHasNoETP}
Suppose that $n', d'$ are integers such that $n' | n$ and $d' | d$. If
$\mathcal{C}_{n', d'}$ has no exceptional torsion points, then neither does
$\mathcal{C}_{n, d}$.
\end{lemma}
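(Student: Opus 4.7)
The plan is to push an alleged exceptional torsion point down to $\mathcal{C}_{n', d'}$ along a natural finite morphism and contradict the hypothesis using the structure of the $Z$-fixed loci on $\mathcal{C}_{n', d'}$.

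First I would define $\varphi \colon \mathcal{C}_{n, d} \to \mathcal{C}_{n', d'}$ on affine charts by $(x, y) \mapsto (x^{d/d'}, y^{n/n'})$. This is well-defined because $(y^{n/n'})^{n'} = y^n = x^d + 1 = (x^{d/d'})^{d'} + 1$, and it extends to a finite morphism of smooth projective curves of degree $(nd)/(n'd')$. Since each of $\mathcal{C}_{n, d}$ and $\mathcal{C}_{n', d'}$ has a unique point at infinity and $\varphi$ is proper, $\varphi^{-1}(\infty) = \{\infty\}$.

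Next, for any $P \in T_{\infty}(\mathcal{C}_{n, d})$, the pushforward on divisors descends to an algebraic group homomorphism $\varphi_{*} \colon \mathcal{J}_{n, d} \to \mathcal{J}_{n', d'}$ sending $[P - \infty]$ to $[\varphi(P) - \infty]$ (using that $\varphi_{*}$ sends principal divisors to principal divisors via the norm on function fields). The latter is therefore torsion in $\mathcal{J}_{n', d'}$, so $\varphi(P) \in T_{\infty}(\mathcal{C}_{n', d'})$.

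Finally, suppose for contradiction that $P = (x_{0}, y_{0})$ is an exceptional torsion point of $\mathcal{C}_{n, d}$. By hypothesis $\varphi(P)$ is not exceptional on $\mathcal{C}_{n', d'}$, so its stabilizer in the corresponding $Z$-group is non-trivial; hence $\varphi(P)$ is the point at infinity or has a vanishing coordinate. If $\varphi(P) = \infty$, then $P = \infty$, which is fixed by all of $Z$. If the $x$-coordinate of $\varphi(P)$ vanishes, then $x_{0} = 0$, and $P$ is fixed by $Z_{d}$; if the $y$-coordinate vanishes, then $y_{0} = 0$, and $P$ is fixed by $Z_{n}$. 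In every case $P$ has a non-trivial $Z$-stabilizer, contradicting exceptionality. The only subtle step is the functoriality of $\varphi_{*}$ on torsion, but this is standard covariance of the Picard scheme under finite morphisms of smooth projective curves; the rest is a short case analysis.
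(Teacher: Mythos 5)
Your proof is correct and follows essentially the same route as the paper's, which simply asserts in one line that the map $(x,y)\mapsto(x^{d/d'},y^{n/n'})$ sends exceptional torsion points to exceptional torsion points. You have supplied the standard details that the paper leaves implicit: the pushforward $\varphi_{*}$ on Jacobians carries $[P-\infty]$ to $[\varphi(P)-\infty]$ and preserves torsion, and a point of $\mathcal{C}_{n',d'}$ has nontrivial $Z$-stabilizer exactly when it is $\infty$ or has a vanishing coordinate, a condition that pulls back along $\varphi$.
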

\begin{proof}
The map $\mathcal{C}_{n, d} \to \mathcal{C}_{n', d'}$ given by $(x, y) \mapsto
(x^{d / d'}, y^{n / n'})$ sends exceptional torsion points to exceptional
torsion points.
\end{proof}

\MainTheoremOfPaper

\begin{proof}
Without loss of generality, suppose that $d$ is odd.

Suppose that $n$ is divisible by an odd prime $p$. Let $q$ be an odd prime
dividing $d$. Then by \autoref{Theorem:NoETPpqodd}, $\mathcal{C}_{p, q}$ has no
exceptional torsion points, so \autoref{Lemma:NoETPIfQuotientHasNoETP} implies
that $\mathcal{C}_{n, d}$ has no exceptional torsion points.

So we may assume that that $n = 2^i$ for an integer $i \ge 1$. If $d$ has a
prime factor $q \ge 7$, then \autoref{Theorem:MostHyperellipticCases} implies
that $\mathcal{C}_{2, q}$ has no exceptional torsion points, so
\autoref{Lemma:NoETPIfQuotientHasNoETP} implies that $\mathcal{C}_{n, d}$ has no
exceptional torsion points.

So we may assume that there exist integers $j, k \ge 0$ such that $d = 3^{j}
5^{k}$ and $(j, k) \neq (0, 0)$.

\begin{enumerate}[label=\textbf{Case~\Alph*:}, ref={Case~\Alph*},
leftmargin=*, itemindent=25pt]
\item \label{Case:ETPClassifyjk2}
$j + k \ge 2$

Then $n$ is divisible by $2$ and $d$ is divisible by either $9$, $15$, or $25$,
so we are done by \autoref{Proposition:RemainingCurvesETPNoETP} and
\autoref{Lemma:NoETPIfQuotientHasNoETP}.

\item \label{Case:ETPClassifyjk10}
$(j, k) = (1, 0)$

If $i \ge 3$, then $n$ is divisible by $8$. Since $d = 3$, we are done by
\autoref{Proposition:RemainingCurvesETPNoETP} and
\autoref{Lemma:NoETPIfQuotientHasNoETP}. The case $(n, d) = (4, 3)$ is handled
by \autoref{Proposition:RemainingCurvesETPYesETP}. The case $(n, d) = (2, 3)$ is
\autoref{Theorem:MainTheoremOfPaperC23}.

\item \label{Case:ETPClassifyjk01}
$(j, k) = (0, 1)$

If $i \ge 2$, then $n$ is divisible by $4$. Since $d = 5$, we are done by
\autoref{Proposition:RemainingCurvesETPNoETP} and
\autoref{Lemma:NoETPIfQuotientHasNoETP}. The case $(n, d) = (2, 5)$ is handled
by \autoref{Theorem:C25Torsion}. \qedhere
\end{enumerate}
\end{proof}

\section{Torsion points on a generic superelliptic curve}
\label{Section:TorsionPointsGenericSuperelliptic}

As usual, for any superelliptic curve $y^{n} = (x - a_{1}) \cdots (x - a_{d})$,
the automorphism $\zeta_{n}$ refers to the map given by$(x, y) \mapsto (x,
\zeta_{n} y)$. The points fixed by $\zeta_{n}$ are $\{ (a_{1}, 0), \dots,
(a_{d}, 0), \infty \}$, and they are torsion points whose order divides $n$.

The aim of this section is to prove the following result.
\GenericResult

This extends Theorem 7.1 of \cite{poonen2014most} from $n = 2$ to all $n$. To
prove \autoref{Theorem:GenericResult}, we need a few more results about torsion
points on certain curves.

\subsection{The curves \texorpdfstring{$y^n = x^d + x$}{y\^{}n = x\^{}d + x}}

\begin{proposition}
\label{Proposition:DirectNoTorsion}
Suppose that $n, d \ge 2$ are coprime, $P$ is a torsion point of $y^n = x^d + x$
whose order divides $d$, and $P \neq \infty$. Then $d = 2$ or $(n, d) = (2, 3)$.
\end{proposition}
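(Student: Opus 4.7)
The plan is to invoke \autoref{Lemma:dTorsionSuperellipticGeneral} with $f(x)=x^{d}+x$: since $d[P-\infty]=0$ there exists $v\in\mathbf{C}[x]$ with $\deg v<d/n$ satisfying $v(x)^{n}=x^{d}+x-(x-x_{P})^{d}$, where $x_{P}$ is the $x$-coordinate of $P$. First I dispose of the case $x_{P}=0$: then $P=(0,0)$ is a zero of $y$, so $n[P-\infty]=0$; combined with $d[P-\infty]=0$ and $\gcd(n,d)=1$ this forces $P=\infty$, contrary to hypothesis. Henceforth assume $x_{P}\neq 0$.

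The central identity comes from the second-order differential operator
\[
\mathcal{D}:=x(x-x_{P})\partial_{x}^{2}-(d-1)(2x-x_{P})\partial_{x}+d(d-1),
\]
which annihilates both $x^{d}$ and $(x-x_{P})^{d}$ (direct verification) while sending $x\mapsto(d-1)\bigl[(d-2)x+x_{P}\bigr]$. Thus $\mathcal{D}(v^{n})=(d-1)[(d-2)x+x_{P}]$, and expanding the derivatives of $v^{n}$ gives
\[
v^{n-2}\cdot\bigl\{d(d-1)v^{2}-n(d-1)(2x-x_{P})vv'+nx(x-x_{P})\bigl[(n-1)(v')^{2}+vv''\bigr]\bigr\}=(d-1)\bigl[(d-2)x+x_{P}\bigr].
\]
For $n\ge 3$ and $d\ge 3$, $v$ is nonconstant (else $v^{n}$ has degree $0$ but $\deg L=d-1\ge 2$), so $\deg v^{n-2}\ge n-2$. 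Since the right-hand side is nonzero of degree $1$, we must have $n-2\le 1$, forcing $n=3$, $\deg v=1$, and $d-1=n\deg v=3$, so $d=4$. For $(n,d)=(3,4)$ I would match coefficients of $v(x)^{3}=x^{4}+x-(x-x_{P})^{4}$ with $v=ax+b$; the four equations $a^{3}=4x_{P}$, $a^{2}b=-2x_{P}^{2}$, $3ab^{2}=4x_{P}^{3}+1$, $b^{3}=-x_{P}^{4}$ eliminate $x_{P}$ and $b$ to force $a=0$, a contradiction.

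For $n=2$ the factor $v^{n-2}=1$ gives no degree bound, so a different idea is needed. Here I rescale by $x=x_{P}T$ and $v(x_{P}T)=x_{P}^{d/2}V(T)$ (choosing a branch of $x_{P}^{d/2}$), reducing the equation to
\[
V(T)^{2}=T^{d}-(T-1)^{d}+\epsilon T=:p(T),\qquad \epsilon:=1/x_{P}^{d-1}\neq 0,
\]
with $V(T)=\sqrt{d}\,T^{m}+\cdots$ and $m:=(d-1)/2$. Because $d$ is odd, direct expansion yields $p(T)-p(1-T)=\epsilon(2T-1)$, so
\[
\bigl(V(T)-V(1-T)\bigr)\bigl(V(T)+V(1-T)\bigr)=\epsilon(2T-1). \tag{$\dagger$}
\]
Since $V(1-T)=(-1)^{m}\sqrt{d}\,T^{m}+\cdots$, exactly one of the factors $V\pm V\circ\tau$ (with $\tau\colon T\mapsto 1-T$) has degree $m$ and the other has degree at most $m-1$. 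If either factor vanished identically, then $V^{2}=p$ would be invariant under $\tau$, forcing $\epsilon=0$; so both factors are nonzero and the left side of $(\dagger)$ has degree $\ge m$. Comparing with the degree-$1$ right side forces $m\le 1$, i.e., $d\le 3$, which together with $d=2$ being allowed leaves only the exceptional case $(n,d)=(2,3)$.

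The main obstacle is the $n=2$ case: the divisibility trick that works uniformly for $n\ge 3$ produces no bound on $\deg v$. What rescues the argument is the explicit $\tau$-antisymmetry of $p(T)$, which upgrades the identity into the factorization $(\dagger)$ whose degree count is inconsistent once $m\ge 2$. The careful handling of the parity of $m$ and the ruling out of the symmetric/antisymmetric degenerate cases constitute the delicate bookkeeping step.
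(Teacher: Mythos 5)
Your proof is correct, and it takes a genuinely different route from the paper for the case $n\ge 3$. The paper's proof recenters at $x'=x-c/2$, derives the identity $u(x')^{n}+(-1)^{d}u(-x')^{n}=(1-(-1)^{d})x'+(1+(-1)^{d})\tfrac{c}{2}$, and factors the left side over $n$th roots of unity; since the right side has degree at most $1$, at least two linear combinations of $u(x')$ and $u(-x')$ must be constant once $n\ge 3$, forcing $v$ constant and hence a contradiction for all $n\ge 3$, $d\ge 3$ at once. Your differential-operator argument is essentially the same operator $\mathcal{D}=x(x-x_{P})\partial_{x}^{2}-(d-1)(2x-x_{P})\partial_{x}+d(d-1)$ that the paper itself applies in the proof of \autoref{Proposition:TooSmallTorsionPointImpossible}, but there the source polynomial is $x^{q}+1$, giving a constant right-hand side and an immediate contradiction; with $x^{d}+x$ the right-hand side is linear, so the bound $\deg v^{n-2}\le 1$ is weaker, and you are forced to deal with the residual case $(n,d)=(3,4)$ by explicit coefficient matching. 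That extra case is handled correctly, so the argument closes. For $n=2$ your route is essentially the paper's: both exploit the antisymmetry of $x^{d}+x-(x-c)^{d}$ under the involution fixing the midpoint, and both reduce to a degree count in the factorization $(V-V\circ\tau)(V+V\circ\tau)=\epsilon(2T-1)$; you normalize by rescaling $x=x_{P}T$ while the paper shifts by $c/2$, but these are cosmetically different. Net comparison: the paper's $n$th-root-of-unity factorization treats all $n$ uniformly and avoids the $(3,4)$ computation, while your operator approach is a clean reuse of a tool already present in the paper, at the cost of one extra explicit case.
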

\begin{proof}
Let the $x$-coordinate of $P$ be $c$. By
\autoref{Lemma:dTorsionSuperellipticGeneral}, there exists $v \in \mathbf{C}[x]$
with $\deg v < d / n$ such that 
\begin{equation}
\label{Equation:DirectNoTorsionDefv}
v(x)^{n} = x^d + x - (x - c)^d.
\end{equation}
Let $x' \colonequals x - c / 2$ and define $u(x) \colonequals v(x + c /
2)$. Using \eqref{Equation:DirectNoTorsionDefv} with $x$ and $c - x$, a
computation yields
\begin{align}
\label{Equation:WeirdRelation}
u(x')^n + (-1)^d u(-x')^n = \left( 1 - (-1)^d \right) x' + \left( 1 + (-1)^d
\right) \frac{c}{2}.
\end{align}

\begin{enumerate}[label=\textbf{Case~\Alph*:}, ref={Case~\Alph*},
leftmargin=*, itemindent=25pt]

\item \label{Case:DirectNoTorsiondEven}
$d$ is even

Suppose for contradiction that $d > 2$. Factoring the left hand side of
\eqref{Equation:WeirdRelation} yields
\[
\prod_{i = 0}^{n - 1} (u(x') + \zeta_{n}^i \cdot \zeta_{2 n} u(-x')) = c.
\]
In particular, $u(x') + \zeta_{2 n} u(-x')$ and $u(x') + \zeta_{2 n} \cdot
\zeta_{n} u(-x')$ are forced to be constants, so $u(x')$ and $u(-x')$ are
constants, so $v(x)$ is constant, so by \eqref{Equation:DirectNoTorsionDefv},
\begin{equation}
x^d + x - (x - c)^d\text{ is constant.} \label{Equation:LIsConstant}
\end{equation}
Since $d > 2$, the $x^{d - 1}$-coefficient of $x^d + x - (x - c)^d$ is $d c$, so
\eqref{Equation:LIsConstant} implies $c = 0$, so $x^d + x - (x - c)^d = x$, but
this contradicts \eqref{Equation:LIsConstant}.

\item \label{Case:DirectNoTorsiondOdd}
$d$ is odd

Factoring the left hand side of \eqref{Equation:WeirdRelation} yields
\begin{equation}
\label{Equation:DirectNoTorsiondOdd}
\prod_{i = 0}^{n - 1} (u(x') - \zeta_{n}^i u(-x')) = 2 x'.
\end{equation}

\begin{enumerate}[label=\textbf{\theenumi\arabic*:}, ref={\theenumi\arabic*},
leftmargin=*, itemindent=35pt]

\item \label{Case:DirectNoTorsiondOddnge3}
$n \ge 3$

Considering the degree of each factor in \eqref{Equation:DirectNoTorsiondOdd}
shows that at least two of them must be constants, which will force $u(x')$ and
$u(-x')$ to be constant, and we can repeat the same argument as in
\autoref{Case:DirectNoTorsiondEven} to get a contradiction. 

\item \label{Case:DirectNoTorsiondOddneq2}
$n = 2$

Then \eqref{Equation:DirectNoTorsiondOdd} becomes
\begin{equation}
\label{Equation:DirectNoTorsiondOddn2}
(u(x') + u(-x'))(u(x') - u(-x')) = 2 x'.
\end{equation}
Since $u(x') + u(-x')$ is an even polynomial and $u(x') - u(-x')$ is
an odd polynomial, \eqref{Equation:DirectNoTorsiondOddn2} forces $u(x') +
u(-x')$ to be constant and $u(x') - u(-x')$ to be a multiple of $x'$. Then $\deg
u = 1$, so $\deg v = 1$. Let $v(x) = a x + b$, so
\eqref{Equation:DirectNoTorsionDefv} gives
\begin{equation}
\label{Equation:WeirdRelationaxbsquare}
(a x + b)^2 = x^d + x - (x - c)^d.
\end{equation}
Considering the coefficient of $x^{d - 1}$, we conclude that either $c = 0$ or
$d = 3$. If $c = 0$, then \eqref{Equation:WeirdRelationaxbsquare} implies that
$x = (a x + b)^2$, which is impossible. So we conclude that $(n, d) = (2, 3)$.
\qedhere

\end{enumerate}
\end{enumerate}
\end{proof}

\subsection{Two curves for which \texorpdfstring{$n + d = 7$}{n + d = 7}}

\begin{proposition}\hfill
\label{Proposition:WeirdCoversOfEC}
\begin{enumerate}[label=\upshape(\arabic*),
ref=\autoref{Proposition:WeirdCoversOfEC}(\arabic*)]

\item \label{Proposition:WeirdCoversOfEC34}
If $P$ is a torsion point on $y^{3} = x^{4} + x^{2} + 1$ with $12[P - \infty] =
0$, then $P$ is fixed by $\zeta_{3}$.

\item \label{Proposition:WeirdCoversOfEC43}
If $P$ is a torsion point on $y^{4} = x^{3} + x^{2} + 1$ with $12[P - \infty] =
0$, then $P$ is fixed by $\zeta_{4}$.

\end{enumerate}
\begin{proof}
Let $\mathcal{C}$ be the curve $y^{3} = x^{4} + x^{2} + 1$, let $E$ be the
elliptic curve $y^3 = x^2 + x + 1$, let $\varphi \colon \mathcal{C} \to E$
be the $2$-to-$1$ map $(x, y) \mapsto (x^{2}, y)$, let $S_{0}$ be the points of
$E$ fixed by $\zeta_{3}$, and suppose for contradiction that $P$ is a torsion
point of $\mathcal{C}$ with $12[P - \infty] = 0$ such that $P$ is not fixed by
$\zeta_{3}$. Then $\varphi(P) \in E[12]$, so $P$ lies in the finite set
$S \colonequals \varphi^{-1}(E[12] \setminus S_{0})$.

Since $\mathcal{C}$ has good reduction at $47$, let $\mathcal{C}_{47}$ be the
reduced curve over $\mathbf{F}_{47}$, let $P_{47} \in
\mathcal{C}_{47}(\overline{\mathbf{F}_{47}})$ be the reduction of $P$, and let
$S_{47} \subseteq \mathcal{C}_{47}(\overline{\mathbf{F}_{47}})$ be the reduction
of $S$, so $P_{47} \in S_{47}$ is such that $12 P_{47} - 12 \infty$ is a
principal divisor.  Using division polynomials, we use \texttt{Magma} to compute
$S_{47}$ explicitly and find that $S_{47} \subseteq
\mathcal{C}_{47}(\mathbf{F}_{47^{4}})$.  We use the \texttt{IsPrincipal} feature
of \texttt{Magma} over $\mathbf{F}_{47^{4}}$ to find that there are no $Q \in
S_{47}$ such that $12 Q - 12 \infty$ is a principal divisor, so $P_{47}$, and
hence $P$, cannot exist.

The curve $y^4 = x^3 + x + 1$ is a $2$-to-$1$ cover of the elliptic curve $y^2 =
x^3 + x + 1$ and the same technique happens to work over $\mathbf{F}_{47^{4}}$
again.
\end{proof}
\end{proposition}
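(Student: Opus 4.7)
My plan exploits the fact that each of the two curves is a degree-$2$ cover of an elliptic curve and that the relevant $\zeta_{n}$-action descends through this cover. For (1), the quotient map $\varphi \colon \mathcal{C} \to E$, $(x, y) \mapsto (x^{2}, y)$, realizes $\mathcal{C} \colon y^{3} = x^{4} + x^{2} + 1$ as a $2$-to-$1$ cover of $E \colon y^{3} = u^{2} + u + 1$, a genus-one curve, and $\varphi$ is equivariant for the $\zeta_{3}$-actions. For (2), the map $(x, y) \mapsto (x, y^{2})$ exhibits $y^{4} = x^{3} + x^{2} + 1$ as a $2$-to-$1$ cover of the elliptic curve $v^{2} = x^{3} + x^{2} + 1$, equivariant for $\zeta_{4}$. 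In both cases the $\zeta_{n}$-fixed points on $\mathcal{C}$ are precisely $\infty$ together with the zeros of $y$, a small finite set.

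The core of the plan is to push a hypothetical torsion point down to the elliptic curve, thereby cutting the problem to a finite search. Explicitly, if $12[P - \infty] = 0$ then $\varphi_{*}$ gives $12[\varphi(P) - \varphi(\infty)] = 0$, so $\varphi(P) \in E[12]$. Hence $P$ lies in the finite set $S \colonequals \varphi^{-1}(E[12])$, of size at most $2 \cdot 144 = 288$. Separating off the subset $S_{0} \subseteq S$ of $\zeta_{n}$-fixed points, it suffices to show that no $Q \in S \setminus S_{0}$ satisfies $12 Q \sim 12 \infty$.

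To carry out this finite check, I would reduce modulo a prime $p$ of good reduction with $p \nmid 12$, so that torsion of order dividing $12$ injects into the reduction $\mathcal{C}_{p}$. Using division polynomials for the reduced elliptic curve $E_{p}$ inside \texttt{Magma}, I would enumerate $S \bmod p$ explicitly; for a suitable small prime (trying $p = 47$ as a first candidate), the image $S_{p}$ should be defined over a low-degree extension $\mathbf{F}_{p^{k}}$. Then for each $Q \in S_{p} \setminus (S_{0})_{p}$, applying \texttt{IsPrincipal} to the divisor $12 Q - 12 \infty$ decides in finite time whether $Q$ can be the reduction of a torsion point $P$ of the sort we are trying to rule out. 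If no such $Q$ passes the test, the proposition follows.

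The main obstacle is the computational one: the degree $k$ over which $E_{p}[12]$ splits must be small enough that computing in $\Jac(\mathcal{C}_{p})(\mathbf{F}_{p^{k}})$ and running roughly a few hundred \texttt{IsPrincipal} calls remain tractable. This is essentially a matter of searching among small primes of good reduction until one is found where Frobenius acts on $E[12]$ with small order. Once such a $p$ is chosen, the identical strategy with the analogous quotient map handles (2).
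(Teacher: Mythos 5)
Your proposal follows essentially the same route as the paper: push $P$ through a degree-$2$ map $\varphi$ onto a genus-one quotient, observe that $12[P-\infty]\sim 0$ forces $\varphi(P)\in E[12]$, thereby reduce to a finite set $S=\varphi^{-1}(E[12])$, then reduce modulo a good prime (the paper uses $47$ and works in $\mathbf{F}_{47^4}$) and rule out every non-$\zeta_n$-fixed $Q$ in the reduction of $S$ with \texttt{IsPrincipal}. Your covering for part (1), $(x,y)\mapsto(x^2,y)$ onto $y^3=u^2+u+1$, is exactly the paper's; for part (2) you take $(x,y)\mapsto(x,y^2)$ onto $v^2=x^3+x^2+1$, matching the curve in the proposition's statement, whereas the paper's proof text (and its later invocation in the proof of the generic theorem) actually works with $y^4=x^3+x+1$ covering $y^2=x^3+x+1$ — an internal inconsistency in the source that you could not have detected, and one that does not change the method.
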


\subsection{Proof of \autoref{Theorem:GenericResult}} 

\begin{enumerate}[label=\textbf{Case~\Alph*:}, ref={Case~\Alph*},
leftmargin=*, itemindent=25pt]
\item \label{Case:GenericResultd2}
$d = 2$

$\mathcal{C}_{2, n}$ is isomorphic over $k$ to $y^n = (x - a_1)(x - a_2)$ via
the isomorphism
\[
(x, y) \in \mathcal{C}_{2, n} \mapsto \left( \frac{(a_2 - a_1) y + (a_1 +
a_2)}{2}, \sqrt[n]{\frac{(a_2-a_1)^2}{4}} x \right) \in \mathscr{C}_{n},
\]
so \autoref{Theorem:MainTheoremOfPaper} 
gives \autoref{Theorem:GenericResultd2n7} and
\autoref{Theorem:GenericResultd2n5}.

\item \label{Case:GenericResultd3}
$d \ge 3$

Suppose that $P$ is a torsion point of $\mathscr{C}_{n}$ of order $m$. Let $M =
\lcm(m, n d)$. Since $\mathscr{J}_{n}[M]$ is a finite \'etale cover of $\Spec
k$, every specialization map will induce an isomorphism on the $M$-torsion of
the jacobian. 

\begin{enumerate}[label=\textbf{\theenumi\arabic*:}, ref={\theenumi\arabic*},
leftmargin=*, itemindent=35pt]

\item \label{Case:GenericResultd3nd2552}
$(n, d) \not \in \{ (3, 4), (4, 3) \}$

Specializing to $\mathcal{C}_{n, d}$ and using
\autoref{Theorem:MainTheoremOfPaper} gives $(1 - \zeta_{n}) [P - \infty] = 0$ or
$d[P - \infty] = 0$. If $d[P - \infty] = 0$, then specializing to $y^{n} = x^{d}
+ x$ and using \autoref{Proposition:DirectNoTorsion} gives $P = \infty$.

\item \label{Case:GenericResultd3nd34}
$(n, d) = (3, 4)$

Specializing to $\mathcal{C}_{3, 4}$ and using
\autoref{Theorem:MainTheoremOfPaper} gives $12 [P - \infty] = 0$. Specializing
to $y^{3} = x^{4} + x^{2} + 1$ and using \autoref{Proposition:WeirdCoversOfEC34}
gives $(1 - \zeta_{3}) [P - \infty] = 0$.

\item \label{Case:GenericResultd3nd43}
$(n, d) = (4, 3)$

Specializing to $\mathcal{C}_{4, 3}$ and using
\autoref{Theorem:MainTheoremOfPaper} gives $12 [P - \infty] = 0$. Specializing
to $y^{4} = x^{3} + x + 1$ and using \autoref{Proposition:WeirdCoversOfEC43}
gives $(1 - \zeta_{4}) [P - \infty] = 0$. \qedhere

\end{enumerate}
\end{enumerate}


\newcommand{\noopsort}[1]{} \newcommand{\printfirst}[2]{#1}
  \newcommand{\singleletter}[1]{#1} \newcommand{\switchargs}[2]{#2#1}
\begin{bibdiv}
\begin{biblist}

\bib{ArulJacobi}{article}{
      author={Arul, Vishal},
       title={{On the $\ell$-adic valuation of certain {J}acobi sums}},
        date={2020},
     journal={arXiv preprint arXiv:1910.14249v2},
}

\bib{berndt1998gauss}{book}{
      author={Berndt, Bruce~C},
      author={Williams, Kenneth~S},
      author={Evans, Ronald~J},
       title={{Gauss and {J}acobi sums}},
   publisher={Wiley},
        date={1998},
}

\bib{coleman1986torsion}{article}{
      author={Coleman, Robert~F.},
       title={{Torsion points on Fermat curves}},
        date={1986},
     journal={Compos. Math.},
      volume={58},
      number={2},
       pages={191\ndash 208},
}

\bib{coleman1998cuspidal}{article}{
      author={Coleman, Robert~F.},
      author={Tzermias, P.},
      author={Tamagawa, A.},
       title={{The cuspidal torsion packet on the Fermat curve}},
        date={1998},
     journal={J. Reine Angew. Math.},
      volume={496},
       pages={73\ndash 81},
}

\bib{farkas1992riemann}{book}{
      author={Farkas, Hershel~M.},
      author={Kra, Irwin},
       title={{Riemann surfaces}},
   publisher={Springer},
        date={1992},
}

\bib{gross1978some}{article}{
      author={Gross, Benedict~H.},
      author={Rohrlich, David~E.},
       title={{Some results on the Mordell-Weil group of the jacobian of the
  Fermat curve}},
        date={1978},
     journal={Invent. Math.},
      volume={44},
      number={3},
       pages={201\ndash 224},
}

\bib{grant2004cuspidal}{article}{
      author={Grant, David},
      author={Shaulis, Delphy},
       title={{The cuspidal torsion packet on hyperelliptic Fermat quotients}},
        date={2004},
     journal={J. Th\'{e}or. Nombres Bordeaux},
      volume={16},
      number={3},
       pages={577\ndash 585},
}

\bib{jkedrzejak2014torsion}{article}{
      author={J{\k{e}}drzejak, Tomasz},
       title={{On the torsion of the jacobians of superelliptic curves $y^q =
  x^p + a$}},
        date={2014},
     journal={J. Number Theory},
      volume={145},
       pages={402\ndash 425},
}

\bib{jkedrzejak2016note}{article}{
      author={J{\k{e}}drzejak, Tomasz},
       title={{A note on the torsion of the jacobians of superelliptic curves
  $y^{q} = x^{p} + a$}},
        date={2016},
     journal={Banach Center Publ.},
      volume={108},
      number={1},
       pages={143\ndash 149},
}

\bib{katz1981crystalline}{inproceedings}{
      author={Katz, Nicholas~M.},
       title={{Crystalline cohomology, Dieudonn{\'e} modules, and Jacobi
  sums}},
        date={1981},
   booktitle={Automorphic forms, representation theory and arithmetic},
   publisher={Springer},
       pages={165\ndash 246},
}

\bib{kurihara1992some}{article}{
      author={Kurihara, Masato},
       title={{Some remarks on conjectures about cyclotomic fields and
  $K$-groups of $\mathbf{Z}$}},
        date={1992},
     journal={Compos. Math.},
      volume={81},
      number={2},
       pages={223\ndash 236},
}

\bib{molin2019computing}{article}{
      author={Molin, Pascal},
      author={Neurohr, Christian},
       title={{Computing period matrices and the Abel-Jacobi map of
  superelliptic curves}},
        date={2019},
     journal={Mathematics of Computation},
}

\bib{poonen2001computing}{article}{
      author={Poonen, Bjorn},
       title={{Computing torsion points on curves}},
        date={2001},
     journal={Exp. Math.},
      volume={10},
      number={3},
       pages={449\ndash 466},
}

\bib{poonen2006lectures}{misc}{
      author={Poonen, Bjorn},
       title={Lectures on rational points on curves},
         how={\url{https://math.mit.edu/~poonen/papers/curves.pdf}},
        date={2006},
}

\bib{poonen1997explicit}{article}{
      author={Poonen, Bjorn},
       title={{Explicit descent for Jacobians of cyclic covers of the
  projective line}},
        date={1997},
     journal={J. Reine Angew. Math.},
      volume={488},
       pages={141\ndash 188},
}

\bib{poonen2014most}{article}{
      author={Poonen, Bjorn},
      author={Stoll, Michael},
       title={{Most odd degree hyperelliptic curves have only one rational
  point}},
        date={2014},
     journal={Ann. of Math.},
      volume={180},
      number={3},
       pages={1137\ndash 1166},
}

\bib{schaefer1998computing}{article}{
      author={Schaefer, Edward~F},
       title={Computing a {S}elmer group of a {J}acobian using functions on the
  curve},
        date={1998},
     journal={Math. Ann},
      volume={310},
       pages={447\ndash 471},
}

\bib{serre2013local}{book}{
      author={Serre, Jean-Pierre},
       title={Local fields},
   publisher={Springer Science \& Business Media},
        date={2013},
      volume={67},
}

\bib{stichtenoth2009algebraic}{book}{
      author={Stichtenoth, Henning},
       title={{Algebraic Function Fields and Codes}},
   publisher={Springer Science \& Business Media},
        date={2009},
      volume={254},
}

\bib{washington1997introduction}{book}{
      author={Washington, Lawrence~C.},
       title={{Introduction to cyclotomic fields}},
   publisher={Springer Science \& Business Media},
        date={1997},
      volume={83},
}

\end{biblist}
\end{bibdiv}

\end{document}